\theoremstyle{plain}%
\newtheorem{theorem}{Theorem}[section]
\newtheorem{prop}[theorem]{Proposition}
\newtheorem{cor}[theorem]{Corollary}
\newtheorem{lem}[theorem]{Lemma}
\theoremstyle{definition}
\newtheorem{defn}[theorem]{Definition}
\newtheorem{example}[theorem]{Example}
\newtheorem{thm}{Theorem}[section]
\theoremstyle{remark}
\newtheorem{rem}[theorem]{Remark}
\numberwithin{equation}{section}
\renewcommand{\Im}{{\mathcal I}}
\newcommand{\rr}{\mathbb R}
\newcommand{\E}{\mathbb E}
\newcommand{\bbR}{{\Bbb R}}
\newcommand{\bbN}{{\Bbb N}}
\newcommand{\bbC}{{\Bbb C}}
\newcommand{\bbE}{{\Bbb E}}
\newcommand{\imag}{{\mathbf i}}
\newcommand{\tr}{{\textnormal{tr}}}
\renewcommand{\cite}{\citeyear}
\begin{document}
\sloppy
%\title[Operator scaling tempered fractional random fields]{Operator scaling tempered fractional random fields}
%\title[Tempered operator fractional operator-stable random fields]{Tempered operator fractional operator-stable random fields}

\title[On multivariate fractional random fields: tempering and operator-stable laws]{On multivariate fractional random fields: tempering and operator-stable laws}

\author{Gustavo Didier}
\address{Gustavo Didier, Mathematics Department,
Tulane University, New Orleans, LA, U.S.A.}
\email{gdidier@tulane.edu}

\author{Shigeki Kanamori}
\address{Shigeki Kanamori, Department of Applied Mathematics,
Waseda University, Japan}
\email{shigeki.005@ruri.waseda.jp}

\author{Farzad Sabzikar}
\address{Farzad Sabzikar, Department of Statistics,
Iowa State University, Ames, IA 50011}
\email{sabzikar@iastate.edu}
\urladdr{http://sabzikar.public.iastate.edu/}

\begin{abstract}
In this paper, we define a new and broad family of vector-valued random fields called \textit{tempered operator fractional operator-stable random fields} (TRF, for short). TRF is typically non-Gaussian and generalizes tempered fractional stable stochastic processes. TRF comprises moving average and harmonizable-type subclasses that are constructed by tempering (matrix-) homogeneous, matrix-valued kernels in time- and Fourier-domain stochastic integrals with respect to vector-valued, strictly operator-stable random measures. We establish the existence and fundamental properties of TRF. Assuming both Gaussianity and isotropy, we show the equivalence between certain moving average and harmonizable subclasses of TRF. In addition, we establish sample path properties in the scalar-valued case for several Gaussian instances.
\end{abstract}

\maketitle

\section{Introduction}\label{sec:inro}

%%%%%%%%%%%%%%%%%%
%\GDcomment{A few things to do: (a) define things already in the vector case. If it is difficult to extend the theorems to the case of complex eigenvalues, just assume real eigenvalues; (b) can we define a polar-harmonizable random field? In this case, would we ``need to" connect it to the remaining random fields? We could also propose it as an open problem. However, note that Proposition 3.1 in Didier et al.\ (2018) is truly a change of measures. At least in the Gaussian case, it should allow us to connect the harmonizable and the polar-harmonizable representations; (c) we should define things based on a more general random measure, without assuming a Lebesgue control measure. See Didier et al.\ (2018), Remark 3.3; (d) the characterization of isotropy would require symmetry groups, so we probably shouldn't go there; (e) build upon Maejima and Mason (1994): they define operator-stable, o.s.s.\ s.i.\ processes. }
%%%%%%%%%%%%%%%%%%

In this paper, we define a new and broad family of vector-valued random fields called \textit{tempered operator fractional operator-stable random fields} (TRF, for short). TRF is typically non-Gaussian and generalizes tempered fractional stable stochastic processes. It ties together the research literatures on stable laws, anisotropic operator scaling, semi-long range dependence as well as transient anomalous diffusion in physics. TRF comprises moving average and harmonizable-type subclasses that are constructed by tempering (matrix-) homogeneous, matrix-valued kernels in time- and Fourier-domain stochastic integrals with respect to vector-valued, strictly operator-stable random measures. We establish the existence and fundamental properties of TRF. Moving average and harmonizable-type instances are generally non-equivalent; however, assuming both Gaussianity and isotropy, we show the equivalence between certain subclasses. In addition, we establish sample path properties in the scalar-valued case for several Gaussian instances.

Fractional, or non-Markovian, constructs provide the mathematical framework for \textit{scale invariant} systems. These systems lack a characteristic time or space scale, and their behavior across scales is related by means of \textit{scaling exponents} (Mandelbrot and Van Ness \cite{mandelbrot:vanness:1968}, Flandrin \cite{flandrin:1992}, Wornell and Oppenheim \cite{wornell:oppenheim:1992}). A cornerstone class of scale invariant stochastic processes is fractional Brownian motion (FBM), i.e., the only Gaussian, self-similar, stationary increment process (Embrechts and Maejima \cite{embrechts:maejima:2002}). The literature on fractional probability theory and its applications is now extensive (e.g., Dobrushin and Major \cite{dobrushin:major:1979}, Granger and Joyeux \cite{granger:joyeux:1980}, Dahlhaus \cite{dahlhaus:1989}, Robinson \cite{robinson:1995-gaussian}, Moulines et al.\ \cite{moulines:roueff:taqqu:2008}, Beran et al.\ \cite{beran:feng:ghosh:kulik:2013}, Bardet and Tudor \cite{bardet:tudor:2014}, Pipiras and Taqqu \cite{pipiras:taqqu:2017}).

Recall that a distribution is called \textit{stable}, in the scalar case, or \textit{operator-stable}, in the vector case, when it can be reexpressed as the sum of independent copies of itself, up to dilation and shift factors (Jurek and Mason \cite{jurek:mason:1993}, Meerschaert and Scheffler \cite{meerschaert:scheffler:2001}). Non-Gaussian stable laws display heavy tails, a property observed in a number of areas such as in finance (Meerschaert and Scheffler \cite{meerschaert:scheffler:2003}) and network traffic (Taqqu et al.\ \cite{taqqu:willinger:sherman:1997}, Willinger et al.\ \cite{willinger:paxson:riedi:taqqu:2003}). On the other hand, in the context of random fields, a (fractional) system is called \textit{anisotropic} when its behavior may be affected by rotations or reflections (e.g., Bonami and Estrade \cite{bonami:estrade:2003}, Didier et al.\ \cite{didier:meerschaert:pipiras:2018:symmetries}). Anisotropy is encountered in several fields of research such as in radiology (Brunet-Imbault et al.\ \cite{brunet-imbault:lemineur:chappard:harba:benhamou:2005}) and texture analysis (Arneodo et al.\ \cite{arneodo:decoster:roux:2000}, Roux et al.\ \cite{roux:abry:vedel:jaffard:wendt:2016}). An important framework of fractional, heavy-tailed anisotropic models that we call \textit{operator scaling} or \textit{operator fractional operator-stable random fields} (ORF) was constructed over the years by multiple authors (see, for instance, Maejima and Mason \cite{maejima:mason:1994} on random processes, Bierm\'{e} et al.\ \cite{bierme:meerschaert:scheffler:2007} on scalar-valued random fields, and Kremer and Scheffler \cite{kremer:scheffler:2019} on vector-valued random fields). We can generally express ORF by means of two non-equivalent classes of random fields defined in the multidimensional time and Fourier domains, respectively, as
\begin{equation}\label{moving_RF_aniso}
\bbR^n \ni X_{H}({\bm x}) = \int_{{\bm y}\in\rr^d} \Big[ \varphi({\bm x}- {\bm y})^{ H- q B } - \varphi(-{\bm y})^{ H - q B } \Big]\ M(d{\bm y}), \quad {\bm x} \in \bbR^d,
\end{equation}
and
\begin{equation}\label{harmo_RF_aniso}
\bbR^n \ni \widetilde{X}_{H}({\bm x}) = {\rm Re} \int_{ {\bm y} \in\rr^d} ( e^{\imag \langle {\bm x},{\bm \xi}\rangle} -1  )\varphi( {\bm \xi} )^{-H}\varphi({\bm \xi})^{-qB}\ M(d{\bm \xi}), \quad {\bm x} \in \bbR^d.
\end{equation}
In \eqref{moving_RF_aniso} and \eqref{harmo_RF_aniso}, $H$ is the so-named Hurst (matrix) exponent, $\varphi > 0$ is a (real-valued) homogeneous function in the sense of matrix scaling (see \eqref{e:E-homogeneous}), for any matrix $A$ the expression $\varphi(\cdot)^{A}$ denotes the matrix exponential  (see \eqref{e:c^E}), $q$ is an appropriate constant, and $M(d{\bm y})$ and $M(d{\bm \xi})$ are operator-stable random measures associated with the matrix exponent $B$ (see \eqref{e:operator-stable_chf} and \eqref{e:M(dx)}). In the scalar case ($n=1$), we can set $B^{-1} = \alpha \in (0,2]$ and the random measures are symmetric $\alpha$-stable ($ S \alpha S $; see Example \ref{ex:SalphaS}). In dimensions $d = n= 1$, \eqref{moving_RF_aniso} and \eqref{harmo_RF_aniso} encompass linear fractional stable motion (LFSM; Stoev and Taqqu \cite{stoev:taqqu:2004}) and FBM ($\alpha = 2$). Fundamental properties have been established for many Gaussian and non-Gaussian instances of ORF, including existence, stochastic continuity, critical H\"older exponents and the Hausdorff dimension of sample paths (see, for instance, Maejima and Mason \cite{maejima:mason:1994}, Bierm\'{e} et al.\ \cite{bierme:meerschaert:scheffler:2007}, S\"{o}nmez \cite{sonmez:2016,sonmez:2018}).

In many empirical settings, power law behavior -- as parametrized by some fixed scaling, or Hurst, exponent -- is expected to hold only within a range of scales. Outside this range, the observed dynamics may qualitatively change. In the anomalous diffusion literature, for example, this appears in the form of \textit{transience}. A particle's position $X = \{X(t)\}$ is said to undergo \textit{transient anomalous diffusion} when its mean squared displacement $\E X^2(t)$ satisfies a scaling relation of the form $\E X^2(t) \approx C t^{\zeta}$, $C \geq 0$, where the exponent $\zeta = \zeta(t) \geq 0$ may itself change as a function of time (Piryatinska et al.\ \cite{piryatinska:sanchev:woyczynski:2005}, Stanislavsky et al.\ \cite{stanislavsky:weron:weron:2008}, Sandev et al.\ \cite{sandev:chechkin:kantz:metzler:2015}, Wu et al.\ \cite{wu:deng:barkai:2016}, Chen et al.\ \cite{chen:wang:deng:2017}, Liemert et al.\ \cite{liemert:sandev:kantz:2017}, Chen et al.\ \cite{chen:wang:deng:2018}, Molina et al.\ \cite{molina:sandev:safdari:pagnini:chechkin:metzler:2018}). Transience may also appear as a consequence of accounting for the energy spectrum of turbulence in the low-frequency range (Meerschaert et al.\ \cite{meerschaert:sabzikar:phanikumar:zeleke:2014}), and is closely related to the property of \textit{semi-long range dependence} (semi-LRD). The increments of a stochastic process are said to display semi-LRD when their autocovariance function decays hyperbolically over small lags and exponentially fast over large lags. A canonical example is tempered FBM (TFBM; Meerschaert and Sabzikar \cite{meerschaert:sabzikar:2013,MeerschaertsabzikarSPA}), a transiently anomalous diffusion process whose increments are semi-long range dependent.

In this paper, we use recently developed operator-stable stochastic integration techniques (Kremer and Scheffler \cite{kremer:scheffler:2017,kremer:scheffler:2019}; see also Samorodnitsky and Taqqu \cite{samorodnitsky:taqqu:1994}) to put forward a broad mathematical framework that combines operator-stable laws, fractional anisotropy and transience. This is done by applying exponential-type dampening techniques to construct tempered fractional extensions of \eqref{moving_RF_aniso} and \eqref{harmo_RF_aniso}. These new families are called \textit{moving average-}, and \textit{harmonizable-tempered operator fractional operator-stable random fields} (MA-TRF and H-TRF, respectively; for precise expressions, see Definitions \ref{defn:MA-TRF} and \ref{defn:H-TRF}). Tempering produces more tractable mathematical objects, and can be made arbitrarily light, which is especially convenient in the development and applications of stochastic fractional calculus (Cartea and del-Castillo-Negrete \cite{cartea:del-castillo-negrete:2007}, Baeumer and Meerschaert \cite{baeumer:meerschaert:2010}, Meerschaert and Sabzikar \cite{MeerschaertsabzikarSPA}, Boniece et al.\ \cite{boniece:didier:sabzikar:2020}). Moreover, we use Bessel-type functions to further define \textit{moving average-Bessel-tempered operator fractional operator-stable random fields} (MA-B-TRF; see Definition \ref{defn:MA-B-TRF}). MA-B-TRF involves flexible Hurst eigenvalue-dependent tempering that turns out to be suitable for the calculation of Fourier transforms. We establish fundamental properties of the three subclasses of TRF (MA-TRF, MA-B-TRF and H-TRF), such as their existence, stochastic continuity and stationarity of increments (see Theorem \ref{thm:exponential_moving_TOSSF}, Corollary \ref{cor:stationary_increments_scaling_property_MA}, Theorem \ref{thm:Bessel_moving_TOSSF}, Corollary \ref{cor:stationary_increments_scaling_property_MA-B}, Theorem \ref{thm:General_harmo_TOSSRF}, Corollary \ref{c:stationary_increments_scaling_property_H}). All of them display operator scaling properties involving both domain ($E$) and range ($H$) matrix exponents. In other words, any TRF $X_{\lambda}$ satisfies
\begin{equation}\label{e:tempered_o.s.s.}
\{ X_{\lambda} (c^E {\bm x}) \}_{{\bm x} \in \mathbb{R}^d} \stackrel{f.d.}{=}
\{ c^H X_{c\lambda} ({\bm x}) \}_{{\bm x} \in \mathbb{R}^d}, \quad c > 0,
\end{equation}
where $\stackrel{f.d.}{=}$ denotes the equality of finite-dimensional distributions and $\lambda > 0$ is a tempering parameter (cf.\ Li and Xiao \cite{li:xiao:2011}, Didier et al.\ \cite{didier:meerschaert:pipiras:2017}). In particular, TRF encompasses tempered fractional stable motion (TFSM) in dimensions $d = n = 1$ (cf.\ Meerschaert and Sabzikar \cite{meerschaert:sabzikar:2016} and Sabzikar and Surgailis \cite{sabzikar:surgailis:2018}; see also Remark \ref{rem:TFSM_TFSMII}).

Establishing the equivalence of moving average and harmonizable representations of Gaussian random fields is a classical problem of great interest in both theory and applications (Rozanov \cite{rozanov:1967}, Brockwell and Davis \cite{brockwell:davis:1991}). In the tempered operator fractional framework, it can be shown that many Gaussian instances of TRF have covariance function
\begin{equation}\label{e:cov_function_under_isotropy}
\bbE X_{H}({\bm x})X_{H}({\bm y})^t = \frac{1}{2} \{\|{\bm x}\|^{H} C^2_{{\bm x},H} \|{\bm x}\|^{H^*} + \|{\bm y}\|^{H}C^2_{{\bm y},H} \|{\bm y}\|^{H^*} - \| {\bm x}-{\bm y} \|^{H}C^2_{{\bm x - y},H} \| {\bm x}-{\bm y} \|^{H^*}  \}, \quad {\bm x},{\bm y} \in \bbR^d
\end{equation}
for $C^2_{\bullet,H} \in {\mathcal S}_{\geq 0}(n,\bbR)$ (see Corollaries \ref{cor100} and \ref{cor100_Bes}). The fact that $C^2_{{\bm x},H}$ is a function of ${\bm x}$ even under isotropy (see \eqref{TFGFIdefn}) makes directly comparing covariance functions for different representations quite intricate. This obstacle is also encountered with \textit{an}isotropic Gaussian instances of \eqref{moving_RF_aniso} and \eqref{harmo_RF_aniso} (cf.\ Bierm\'{e} et al.\ \cite{bierme:meerschaert:scheffler:2007}, p.\ 325; Didier and Pipiras \cite{didier:pipiras:2011}, Baek et al.\ \cite{baek:didier:pipiras:2014}). In this paper, we construct harmonizable representations of isotropic and Gaussian MA- and MA-B-TRF by, instead, computing Fourier transforms of their kernels (Propositions \ref{p:TFGFI harmo} and \ref{p:I-B-TOFBF harmo}). Our results show that the harmonizable representation of isotropic Gaussian MA-B-TRF is mathematically simpler than that of MA-TRF, and that the former is the natural moving average-type counterpart to a subclass of isotropic Gaussian instances of H-TRF (see expression \eqref{I-B-TOFBF harmo_regular-param}). In addition, we provide some closed-form expressions for the covariances of both subclasses of random fields (Propositions \ref{prop2} and \ref{p:cov_low d}).

It is well known that sample path properties of random fields provide measures of fractality and regularity of global and local behavior (see Adler \cite{adler:1981}, Falconer \cite{falconer:1990}). In this paper, we further consider the sample path properties of scalar-valued, Gaussian instances of TRF. We call them (scalar-valued) \textit{tempered operator fractional Brownian fields} (TOFBF). We establish the H\"{o}lder regularity of sample paths, as well as the Hausdorff or box dimensions of the graphs of moving average (MA-TOFBF) and harmonizable (H-TOFBF) types. In particular, our results show that tempering does not affect the sample path properties of MA-TOFBF with respect to its non-tempered, operator fractional Brownian field counterpart, as studied in Bierm\'{e} et al.\ \cite{bierme:meerschaert:scheffler:2007} (on the sample path properties of the related Bessel type, see Remark \ref{r:Bessel_sample_path}).

This paper is organized as follows. In Section \ref{s:preliminaries}, we sum up fundamental concepts and lay out the notation used in the rest of the paper. In Section \ref{sec:time_domain}, we construct and study MA-TRF and MA-B-TRF, namely, in the multidimensional time domain. In Section \ref{sec:frequency_domain}, we construct and study H-TRF, namely, in the multidimensional Fourier domain. In Section \ref{s:Gaussian}, assuming Gaussianity and isotropy, we construct harmonizable representations of MA-TRF and MA-B-TRF, as well as some expressions for their covariance functions. In Section \ref{s:sample_path}, we establish the sample path properties of scalar-valued, Gaussian instances. Section \ref{s:appendix} contains all proofs and technical results. In the Conclusion, we sum up the results and discuss open problems.

\section{Preliminaries}\label{s:preliminaries}

Let ${\mathcal M}(d,\bbR)$, ${\mathcal M}(d,\bbC)$ be, respectively, the spaces of $d \times d$ matrices with $\bbR$-- and $\bbC$--valued entries, and let $O(d), SO(d) \subseteq {\mathcal M}(d,\bbR)$ be, respectively, the orthogonal and special orthogonal groups. Also let $GL(n,\bbC)$ be the general linear group of nonsingular, complex-valued matrices. Define ${\mathcal S}_{\geq 0}(d,\bbR)$, ${\mathcal S}_{\geq 0}(d,\bbC)$ as the cones of symmetric and Hermitian positive semidefinite matrices, respectively. Let $\textnormal{eig}(M)$ be the set of possibly repeated eigenvalues (characteristic roots) of $M \in {\mathcal M}(d,\bbR)$. For notational convenience, we also write
\begin{equation}\label{e:varpi_Upsilon}
\varpi_{M} = \inf \Re\textnormal{eig}(M) , \quad \Upsilon_{M} = \sup \Re\textnormal{eig}(M), \quad M \in {\mathcal M}(d,\bbC).
\end{equation}
Throughout the paper, $I \in {\mathcal M}(d,\bbR)$ is the identity matrix, and
\begin{equation}\label{e:E_in_M(d,R)}
E \in {\mathcal M}(d,\bbR)
\end{equation}
denotes a matrix exponent whose eigenvalues have real parts satisfying
\begin{equation}\label{e:0<a1<...<ap}
\varpi_{E} > 0.
\end{equation}
Throughout the paper, we use the notation
\begin{equation}\label{e:q=tr(E)}
q = \textnormal{tr}(E) > 0.
\end{equation}
Also let $\Gamma = \mathbb{R}^d \backslash \{ 0 \}$. For $c > 0$, we define matrix exponentiation in the usual way as
\begin{equation}\label{e:c^E}
c^{E} := \exp\{(\log c) \hspace{0.5mm}E \} = \sum^{\infty}_{k=0}\frac{((\log c) \hspace{0.5mm} E)^k}{k!}.
\end{equation}
As an exponent, the matrix $E$ induces a norm on $\mathbb{R}^d$. In other words, there exists a norm $\| \cdot \|_0$, associated with the unit sphere
\begin{equation}\label{e:S0}
S_0 = \{ {\bm x} \in \mathbb{R}^d \colon \| {\bm x} \|_0 = 1 \},
\end{equation}
in such a way that the mapping
$$
\Psi \colon (0, \infty) \times S_0 \rightarrow \Gamma, \quad \Psi(r, {\bm \theta}) = r^E {\bm \theta},
$$
is a homeomorphism (see Lemma 6.1.5 in Meerschaert and Scheffler \cite{meerschaert:scheffler:2001}). Therefore, any ${\bm x} \in \Gamma$ can be uniquely decomposed in anisotropic \textit{polar coordinates} as
\begin{equation}\label{e:x=polar_coord}
{\bm x} = \tau( {\bm x} )^E l( {\bm x} )
\end{equation}
for some radial (scalar) component $\tau( {\bm x} ) > 0$ and some directional (vector-valued) component $l( {\bm x} ) \in S_0$. One such norm can be explicitly calculated by means of the formula
\begin{equation}\label{e:|.|0_explicit}
\|{\bm x} \|_0 = \int^{1}_0 \|t^{E}{\bm x}\|_{\bullet}\frac{dt}{t},
\end{equation}
where $\|\cdot\|_{\bullet}$ is any norm in $\bbR^d$.

The matrix $E$ in \eqref{e:E_in_M(d,R)} and \eqref{e:c^E} determines matrix exponentiation in the domain $\bbR^d$; when constructing classes of vector-valued random fields, we also need to consider matrix exponentiation in the range $\bbR^n$. In this case, this is based on the so-called Hurst matrices $H \in {\mathcal M}(n,\bbR)$. Such matrices can be defined in Jordan form as
\begin{equation}\label{e:H=PJHP^(-1)}
H = P J_H P^{-1}, \quad P \in GL(n,\bbC),
\end{equation}
where we assume that
\begin{equation}\label{e:0<minReeig(H)=<maxReeig(H)<a1}
0 < \varpi_H \leq \Upsilon_H. %< \varpi_E.
\end{equation}

In this paper, tempered operator fractional random fields are developed based on stochastic integration frameworks in the multidimensional time and Fourier domains. This involves two main ingredients: appropriate \textit{integrands }and \textit{random measures}. For the reader's convenience, we provide a self-contained discussion; more details can be found in Jurek and Mason \cite{jurek:mason:1993}, Maejima and Mason \cite{maejima:mason:1994}, Samorodnitsky and Taqqu \cite{samorodnitsky:taqqu:1994}, Bierm\'{e} et al.\ \cite{bierme:meerschaert:scheffler:2007}, Kremer and Scheffler \cite{kremer:scheffler:2017,kremer:scheffler:2019}.

In regard to the first ingredient, the so-named $E$-homogeneous functions play the role of integrands or, equivalently, matrix-valued kernels. Let $E \in {\mathcal M}(d,\bbR)$ be a matrix whose eigenvalues satisfy \eqref{e:0<a1<...<ap}. We say a function $\varphi\colon \mathbb{R}^d \rightarrow \bbC$ is \textit{$E$-homogeneous} if, for ${\bm x} \neq {\mathbf 0}$ and $c > 0$,
\begin{equation}\label{e:E-homogeneous}
\varphi(c^E {\bm x}) = c \varphi({\bm x}).
\end{equation}
Throughout the paper, we only consider $E$-homogeneous functions $\varphi$ such that
\begin{equation}\label{e:E-homogeneous>0}
\varphi({\bm x}) > 0, \quad {\bm x} \neq {\mathbf 0},
\end{equation}
so condition \eqref{e:E-homogeneous>0} is omitted in statements. Given the sphere $S_0$ induced by $E$ (see \eqref{e:S0}), if in addition $\varphi$ is continuous, we write
\begin{equation}\label{e:M-phi>=m-phi}
M_{\varphi} \coloneqq \max_{{\bm \theta} \in S_0} \varphi({\bm \theta}) > 0 \quad \textnormal{and} \quad m_{\varphi} \coloneqq \min_{{\bm \theta} \in S_0} \varphi({\bm \theta}) > 0.
\end{equation}

\begin{rem} The exponent of an $E$-homogeneous function $\varphi$ is generally not unique. Consider the set of symmetries ${\mathcal S}(\varphi)$ of $\varphi$, i.e., those matrices $A \in {\mathcal M}(d,\bbR)$ such that $\varphi(A{\bm x}) = \varphi({\bm x})$, ${\bm x}\in \bbR^d$. Under mild technical assumptions, the set of possible exponents of $\varphi$ can be written as $E + T{\mathcal S}(\varphi)$, where $T {\mathcal S}(\varphi)$ is the tangent space of the Lie group ${\mathcal S}(\varphi)$ at the identity (for more details, see Meerschaert and Scheffler \cite{meerschaert:scheffler:2001}, Theorem 5.2.13).
\end{rem}

In regard to the second ingredient (random measures), we begin by describing operator-stable laws. Recall that a L\'{e}vy measure is defined as a Borel measure $\nu(d{\bm x})$  such that $\nu(\{0\})=0$ and $\int_{\bbR^n} \min\{1,\|{\bm x}\|^2\}\nu(d{\bm x}) < \infty$. It can be shown that, for a function $\widehat{\psi}: \bbR^n \rightarrow \bbC$,
\begin{equation}\label{e:mu-hat=exp(psi(u))}
\widehat{{\mu}} = \exp (\widehat{\psi})
\end{equation}
is the characteristic function of an infinitely-divisible distribution $\mu$ on $\bbR^n$ if and only if we can express
\begin{equation}\label{e:psi}
\widehat{\psi}({\mathbf u}) = \imag \langle \gamma,{\mathbf u}\rangle + \langle {\mathbf u}, Q {\mathbf u} \rangle + \int_{\bbR^n} \Big( e^{\imag \langle {\mathbf u},{\bm x} \rangle} - 1 - \frac{\imag \langle {\mathbf u},{\bm x}\rangle }{1 + \|{\bm x}\|^2} \Big) \nu(d{\bm x}), \quad {\mathbf u} \in \bbR^n,
\end{equation}
for some L\'{e}vy measure $\nu(d{\bm x})$, some shift component $\gamma$ and some $Q \in {\mathcal S}_{\geq 0}(n,\bbR)$. In this case, we write $\mu \sim [\gamma, Q, \nu]$, where the triplet $\gamma$, $Q$ and $\nu$ is uniquely determined by $\mu$. In particular, $\widehat{\psi}$ as in \eqref{e:psi} is the only continuous function satisfying $\widehat{\psi}(0) = 0$ and \eqref{e:mu-hat=exp(psi(u))}. The function $\widehat{\psi}$ is called the \textit{log-characteristic function} of $\mu$. Let $X, \{X_j\}_{j \in \bbN}$ be i.i.d.\ random vectors in $\bbR^n$. We say $X$ is \textit{operator-stable} if there are $A_j \in GL(n,\bbR)$ and shift vectors $a_j \in \bbR^n$ such that
$$
A_j \sum^{j}_{k=1}X_k + a_j \stackrel{d}= X, \quad n \in \bbN
$$
(Sharpe \cite{sharpe:1969}, Hudson and Mason \cite{hudson:mason:1981}). The matrix scaling factor often takes the form $A_j = j^{-B}$ for some $B \in {\mathcal M}(n,\bbR)$, in which case we refer to $B$ as the \textit{exponent} of the distribution ${\mu}(d{\bm x})$ of $X$. If $a_j = 0$, $j \in \bbN$, then $X$ is called \textit{strictly operator-stable}. A distribution on $\bbR^n$ is called \textit{full }when its support is not contained in any hyperplane. It can be shown that a full probability measure ${\mu}(d{\bm x})$ on $\bbR^n$ is operator-stable if and only if it is infinitely-divisible and, for some $B \in {\mathcal M}(n,\bbR)$ and some family $\{a_s\}_{s > 0} \subseteq \bbR^n$, its characteristic function $\widehat{{\mu}}$ satisfies
\begin{equation}\label{e:operator-stable_chf}
\widehat{{\mu}}({\mathbf u})^{s} = \widehat{{\mu}}(s^{B}{\mathbf u}) \exp\{ \imag \langle a_s,{\mathbf u}\rangle \}, \quad s > 0, \quad {\mathbf u} \in \bbR^n
\end{equation}
(Meerschaert and Scheffler \cite{meerschaert:scheffler:2001}, Theorem 7.2.1). Moreover, $\varpi_B \geq 1/2$, and $\varpi_B > 1/2$ if and only if the distribution ${\mu}(d{\bm x})$ of $X$ has no Gaussian component. In addition, ${\mu}(d{\bm x})$ is strictly operator-stable if and only if we can write $a_s = 0$, $s >0$. For an operator-stable distribution that is not necessarily full or strict, it can be shown that
\begin{equation}\label{e:s.psi-hat(u)=psi-hat(s^(B^*)u)}
s \cdot \widehat{\psi}({\mathbf u}) = \widehat{\psi}(s^{B^*}{\mathbf u}) + \imag \langle a_s,{\mathbf u}\rangle, \quad s > 0, \quad {\mathbf u} \in \bbR^n,
\end{equation}
where $^*$ denotes the Hermitian adjoint (see Kremer and Scheffler \cite{kremer:scheffler:2019}, Corollary 2.2).

\begin{example}
An elementary example of a strictly operator-stable distribution is given by a random vector $X = (X_1,\hdots,X_n)^t$ whose entries $X_{1},\hdots,X_{n}$ are independent symmetric $\alpha$-stable ($ S\alpha S$) random variables, each with parameter $\alpha_i \in (0,2]$, $i = 1,\hdots,n$, respectively. In this case, the characteristic function of $X$ has the form $\bbE \exp \imag \langle {\mathbf u}, X\rangle = \exp\{- \sum^{n}_{i=1}|u_i|^{\alpha_i}\}$, ${\mathbf u} = (u_1,\hdots,u_n)^t \in \bbR^n$. Moreover, the scaling relation \eqref{e:operator-stable_chf} is satisfied for $B = \textnormal{diag}(1/\alpha_1,\hdots,1/\alpha_n)$ and $a_{s} \equiv 0$.
\end{example}

We are now in a position to look into random measures. Consider the measure space
\begin{equation}\label{e:(Rd,BRd,Lebd)}
(\bbR^d, {\mathcal B}(\bbR^d),\textnormal{Leb}_d(d{\bm y}))
\end{equation}
and also the so-called $\delta$-ring
\begin{equation}\label{e:delta-ring}
{\mathcal S} := \{A \in {\mathcal B}(\bbR^d): \textnormal{Leb}_d(A) < \infty\}.
\end{equation}
An \textit{independently scattered random measure} (ISRM) is a mapping $M$ from sets in ${\mathcal S}$ to $\bbR^n$-valued random vectors satisfying two properties: $(i)$ it assumes independent values (i.e., random vectors) over disjoint sets in ${\mathcal S}$; $(ii)$ it is $\sigma$-additive a.s. Let $\mu$ be a full, operator-stable distribution in ${\mathcal B}(\bbR^n)$ with triplet $[\gamma,Q,\nu]$, exponent $B$ and log-characteristic function $\widehat{\psi}$. Consider an ISRM
\begin{equation}\label{e:M(dx)}
M(d{\bm x})
\end{equation}
generated by $\mu$ and $(\bbR^d, {\mathcal B}(\bbR^d),\textnormal{Leb}_d(d{\bm y}))$. In particular, this means that for $A \in {\mathcal S} \subseteq {\mathcal B}(\bbR^d)$, $M(A) \sim [\textnormal{Leb}_d(A)\hspace{0.5mm}\gamma,\textnormal{Leb}_d(A)\hspace{0.5mm}Q,\textnormal{Leb}_d(A)\hspace{0.5mm}\nu]$ (for details on the construction and existence of infinitely-divisible, $\bbR^n$-valued ISRMs, see Kremer and Scheffler~\cite{kremer:scheffler:2017}). Given a measurable mapping $f : \bbR^d \rightarrow {\mathcal M}(n,\bbR)$, a characterization of the integrability of $f$ with respect to $M(d{\bm x})$ is provided in Kremer and Scheffler \cite{kremer:scheffler:2019}, Theorem 2.3. However, assuming $\mu$ is full and strictly operator-stable with exponent $B$ (and $\varpi_B$, $\Upsilon_B$ as in \eqref{e:varpi_Upsilon}), a sufficient condition for the almost sure existence of the stochastic integral
\begin{equation}\label{e:int_f(x)M(dx)}
\int_{\bbR^d} f({\bm x}) M(d{\bm x})
\end{equation}
is that there are $0 < \delta_1 \leq \Upsilon^{-1}_B$, $\delta_2 > 0$ and $R > 0$ such that
\begin{equation}\label{e:suff_cond_existence_I(f)}
\int_{\{{\bm x}: \|f({\bm x})\|\leq R\}} \|f({\bm x})\|^{\frac{1}{\Upsilon_B}-\delta_1} d{\bm x}
+\int_{\{{\bm x}: \|f({\bm x})\|> R\}} \|f({\bm x})\|^{\frac{1}{\varpi_B}+\delta_2} d{\bm x} < \infty.
\end{equation}
In this case, the characteristic function of the stochastic integral \eqref{e:int_f(x)M(dx)} is given by
\begin{equation}\label{e:chf_stoch_integral}
\bbE \exp \Big\{ \imag \hspace{0.5mm}{\mathbf u}^t \int_{\bbR^d} f({\bm x}) M(d{\bm x}) \Big\} = \exp \Big\{ \int_{\bbR^d} \widehat{\psi}\Big(  f({\bm x})^* {\mathbf u} \Big) \hspace{1mm}d {\bm x}  \Big\}, \quad {\mathbf u} \in \bbR^n
\end{equation}
(Kremer and Scheffler \cite{kremer:scheffler:2017}, Theorem 5.4). Due to algebraic convenience, condition \eqref{e:suff_cond_existence_I(f)} is used in this paper in establishing the existence of tempered operator fractional random fields in the time domain.

\begin{example}\label{ex:SalphaS}
The simplest example of a full, strictly operator-stable random measure is given by a vector of i.i.d.\ $S\alpha S$ random measures, where $0 < \alpha \leq 2$. In fact, consider the measure space \eqref{e:(Rd,BRd,Lebd)} and $\delta$-ring \eqref{e:delta-ring}. A \textit{$\bbR$-valued $S \alpha S$ random measure on $(\bbR^d,{\mathcal B}(\bbR^d))$ with control measure $\textnormal{Leb}_d(d{\bm x})$} is an ISRM such that, for all $A \in {\mathcal S}$, $M(A)$ is a $S \alpha S$ random variable with parameter $\textnormal{Leb}^{1/\alpha}_d(A)$. The finite-dimensional distributions of a $S \alpha S$ random measure $M(d{\bm x})$ are uniquely determined by its control measure $\textnormal{Leb}_d(d{\bm x})$. Moreover, let $f$ be a $\bbR$-valued function such that $\int_{\bbR^d}|f({\bm x})|^{\alpha} d{\bm x} < \infty$. Then, the $\bbR$-valued integral $I(f) = \int_{\bbR^d} f({\bm x}) M(d{\bm x})$ is well defined and its characteristic function is given by
\begin{equation}\label{e:I(f)_chf}
\bbE \exp\{\imag \hspace{0.5mm}u  I(f) \} = \exp\Big\{ - |u|^{\alpha} \int_{\bbR^d}
|f({\bm x}) |^{\alpha} d{\bm x}\Big\}, \quad u \in \bbR.
\end{equation}
More details on $\bbR$- or $\bbC$-valued $S \alpha S$ random measures can be found in Samorodnitsky and Taqqu \cite{samorodnitsky:taqqu:1994}, Sections 3.1--3.3 and 6.1--6.3.
\end{example}

When considering integrands with $\bbC$-valued entries, it is natural to replace $\mu$ with a full, strictly operator-stable distribution $\widetilde{\mu} \sim [\widetilde{\gamma}, \widetilde{Q}, \widetilde{\nu}]$ on $\bbR^{2n}$ with log-characteristic function $\widetilde{\psi}$˜. In this case, we assume that an exponent of $\widetilde{\mu}$ is given by $\widetilde{B} = B \oplus B$ for some $B \in {\mathcal M}(n,\bbR)$. Hence, the ISRM $M(d{\bm x})$ generated by $\widetilde{\mu}$ and $\textnormal{Leb}_d(d{\bm x})$ can be identified with a $\bbC^n$-valued ISRM $\widetilde{M}(d{\bm x})$. Define the mapping $\bbC^n \ni {\bm z} \mapsto \Xi({\bm z}) = (\Re {\bm z}, \Im {\bm z})$, which breaks up a vector in $\bbC^n$ into its real components. Then, we can write $\Xi(\widetilde{M}) = M$, where $M$ is called the $\bbR^{2n}$-valued ISRM associated with $\widetilde{M}$. We say that a measurable function $f: \bbR^d \rightarrow {\mathcal M}(n,\bbC)$ is \textit{partially integrable} with respect to $\widetilde{M}$ if
\begin{equation}\label{e:Re_int_f(x)M-tilde(dx)}
\Re \int_{\bbR^d}f({\bm x})\widetilde{M}(d{\bm x})
\end{equation}
exists a.s. One such function $f$ may be partially integrable even if $\int_{\bbR^d}f({\bm x})\widetilde{M}(d{\bm x})$ does not exist. If $f$ is, indeed, partially integrable, then the characteristic function of the stochastic integral \eqref{e:Re_int_f(x)M-tilde(dx)}  is given by
\begin{equation}\label{e:chf_stoch_integral_partially_integ}
\bbE \exp \Big\{ \imag \hspace{0.5mm}{\mathbf u}^t \hspace{1mm}\Re \int_{\bbR^d} f({\bm x}) \widetilde{M}(d{\bm x}) \Big\} = \exp \Big\{ \int_{\bbR^d} \widehat{\psi}_{\Xi(M)} \Big( \Xi(f({\bm x})^* {\mathbf u}) \Big)\hspace{1mm} d {\bm x}   \Big\}, \quad {\mathbf u} \in \bbR^n,
\end{equation}
where $\widehat{\psi}_{\Xi(M)}$ is the log-characteristic function associated with $\Xi(M)$ (Kremer and Scheffler \cite{kremer:scheffler:2017}, Remark 5.12).

For $n = 1$, a distribution $\mu$ on ${\mathcal B}(\bbR^d)$ is called \textit{rotationally invariant} (isotropic) if $(O\mu) = \mu$ for every rotation matrix $O \in SO(n)$ (see Samorodnitsky and Taqqu \cite{samorodnitsky:taqqu:1994}, Definition 2.6.2). For $S \alpha S$-based harmonizable representations, one typically uses a $\bbC$-valued rotationally invariant $\alpha$-stable random measure (cf.\ Bierm\'{e} at al.\ \cite{bierme:meerschaert:scheffler:2007}). For appropriately defined integration kernels, it turns out that this conveniently yields stationary increments. However, in the operator-stable framework, a similar but generally weaker condition than rotational invariance is sufficient for stationary increments. Such condition is given by
\begin{equation}\label{e:Amu(dx)=mu(dx)}
(A\widetilde{\mu})(d{\bm x}) = \widetilde{\mu}(d{\bm x}), \quad  A \in {\mathcal T}(2n),
\end{equation}
where we define the (Abelian) group of rotation matrices
\begin{equation}\label{e:T(2n)}
{\mathcal T}(2n) = \Bigg\{
\begin{pmatrix}
(\cos \omega) I_n & (\sin \omega) I_n \\
-(\sin \omega) I_n & (\cos \omega) I_n
\end{pmatrix}, \omega \in [0,2\pi)
\Bigg\}
\end{equation}
(cf.\ Kremer and Scheffler \cite{kremer:scheffler:2019}, p.\ 20).

\section{Moving average TRF}\label{sec:time_domain}

As anticipated in the Introduction, in this section we introduce two different classes of moving average-type TRFs and establish their fundamental properties. The difference between these two classes lies in the tempering function. In one case, tempering is done by means of a traditional, scalar-valued exponential function that affects all entries of the fractional kernel; in the other case, by means of a matrix-valued function that slows down the eigenvalue-based scaling laws over large (time) scales. As it turns out, the essential distributional properties of the random fields by the two methods are qualitatively identical.

We begin with the more familiar (entry-wise) exponential tempering.
\begin{defn}\label{defn:MA-TRF}
Let $\mu$ be a full, strictly operator-stable measure on ${\mathcal B}(\bbR^n)$ with log-characteristic function $\widehat{\psi}$ and exponent $B$. Let $M (d{\bm y})$ be a $\bbR^n$-valued ISRM generated by $\mu$ and $(\bbR^d, {\mathcal B}(\bbR^d),\textnormal{Leb}_d(d{\bm y}))$. Let $H$ be a (Hurst) matrix as in \eqref{e:H=PJHP^(-1)} and \eqref{e:0<minReeig(H)=<maxReeig(H)<a1}. Fix $\lambda > 0$, and suppose
\begin{equation}\label{e:lambda(H-qB)+qlambda(B)>0}
\varpi_{H-qB} + q \varpi_B > 0.
\end{equation}
Let $\varphi \colon \mathbb{R}^d \rightarrow [0, \infty)$ be a continuous $E$-homogeneous function. A vector-valued \textit{moving average TRF} (MA-TRF) is the random field whose stochastic integral representation is given by
\begin{eqnarray}\label{e:MA-TOSSRF}
X_{\lambda} ({\bm x}) = \int_{\mathbb{R}^d}
\left[ e^{-\lambda \varphi({\bm x} - {\bm y})} \varphi({\bm x} - {\bm y})^{H - qB}
- e^{-\lambda \varphi(-{\bm y})} \varphi(-{\bm y})^{H - q B} \right] M (d{\bm y}), \quad {\bm x}  \in \bbR^d.
\end{eqnarray}
\end{defn}

Note that, by formally setting $\lambda=0$ in \eqref{e:MA-TOSSRF}, $X_{0} ({\bm x})$ corresponds to \eqref{moving_RF_aniso}. Also, for $n =1=d$ and $B = 1/2$ (Gaussian), \eqref{e:MA-TOSSRF} is a TFBM (Meerschaert and Sabzikar \cite{meerschaert:sabzikar:2013}).

In the following theorem and corollary, we establish the existence and fundamental properties of MA-TRF.

\begin{thm}\label{thm:exponential_moving_TOSSF}
The random field \eqref{e:MA-TOSSRF} exists for every ${\bm x} \in \bbR^d$.
\end{thm}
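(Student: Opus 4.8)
The plan is to verify the sufficient integrability condition \eqref{e:suff_cond_existence_I(f)} for the (matrix-valued) integrand
$$
f_{\bm x}({\bm y}) = e^{-\lambda \varphi({\bm x} - {\bm y})}\,\varphi({\bm x} - {\bm y})^{H-qB} - e^{-\lambda \varphi(-{\bm y})}\,\varphi(-{\bm y})^{H-qB},
$$
which applies since $\mu$ is full and strictly operator-stable; once \eqref{e:suff_cond_existence_I(f)} holds for $f=f_{\bm x}$, the a.s.\ existence of \eqref{e:MA-TOSSRF} is immediate. First I would record two elementary estimates. Writing ${\bm z}\in\Gamma$ in anisotropic polar coordinates \eqref{e:x=polar_coord}, the $E$-homogeneity \eqref{e:E-homogeneous} and continuity of $\varphi$ give $\varphi({\bm z}) = \tau({\bm z})\,\varphi(l({\bm z}))$, whence by \eqref{e:M-phi>=m-phi} one has $m_{\varphi}\tau({\bm z}) \le \varphi({\bm z}) \le M_{\varphi}\tau({\bm z})$; that is, $\varphi({\bm z})$ is comparable to the radial part $\tau({\bm z})$. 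Second, for any matrix $A$ and any $\epsilon>0$ there is $C=C(A,\epsilon)\ge 1$ with $\|c^{A}\|\le C\,c^{\varpi_A-\epsilon}$ for $0<c\le 1$ and $\|c^{A}\|\le C\,c^{\Upsilon_A+\epsilon}$ for $c\ge 1$, a standard consequence of the definition \eqref{e:c^E}. Applying this with $A=H-qB$ and $c=\varphi({\bm z})$ controls the matrix power $\varphi({\bm z})^{H-qB}$ by a power of $\tau({\bm z})$, up to an arbitrarily small loss in the exponent.

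Next I would split $\mathbb{R}^d$ according to the two singular points ${\bm y}={\bm 0}$ and ${\bm y}={\bm x}$ of $f_{\bm x}$ and the behavior at infinity. On a fixed bounded set away from $\{{\bm 0},{\bm x}\}$ the integrand is continuous and bounded, hence harmless. Near ${\bm y}={\bm 0}$ only the second term is singular, so $\|f_{\bm x}({\bm y})\|\approx\|\varphi(-{\bm y})^{H-qB}\|$, and symmetrically near ${\bm y}={\bm x}$; for $R$ large these two neighborhoods are precisely where $\|f_{\bm x}\|>R$ can occur (the relevant case being $\varpi_{H-qB}<0$, since otherwise the terms stay bounded there).

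Using the two estimates above together with the change to anisotropic polar coordinates, whose radial Jacobian carries a factor $\tau^{q-1}$ with $q=\operatorname{tr}(E)$ from \eqref{e:q=tr(E)}, the contribution of $\int_{\{\|f_{\bm x}\|>R\}}\|f_{\bm x}\|^{1/\varpi_B+\delta_2}$ near a singularity is bounded by a constant times $\int_0^{\eta}\tau^{(\varpi_{H-qB}-\epsilon)(1/\varpi_B+\delta_2)+q-1}\,d\tau$, which is finite provided $(\varpi_{H-qB}-\epsilon)(1/\varpi_B+\delta_2)+q>0$. Since $\varpi_B>0$, the hypothesis \eqref{e:lambda(H-qB)+qlambda(B)>0} is equivalent to $\varpi_{H-qB}/\varpi_B+q>0$, so the displayed inequality holds for all sufficiently small $\epsilon,\delta_2>0$. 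This is the step that consumes the standing assumption \eqref{e:lambda(H-qB)+qlambda(B)>0}, and I expect it to be the main (indeed the only genuinely delicate) point of the argument, as it calibrates the borderline integrability at the singularities against the exponent $\varpi_B$.

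Finally, for large $\|{\bm y}\|$ both $\tau({\bm x}-{\bm y})$ and $\tau({\bm y})$ tend to infinity, and the exponential factors $e^{-\lambda\varphi}$ force $\|f_{\bm x}({\bm y})\|\lesssim e^{-c\,\tau({\bm y})}\tau({\bm y})^{\Upsilon_{H-qB}+\epsilon}$ for some $c>0$; thus $\|f_{\bm x}\|\le R$ there, and choosing $0<\delta_1<\Upsilon_B^{-1}$ the tail integral $\int_{\{\|f_{\bm x}\|\le R\}}\|f_{\bm x}\|^{1/\Upsilon_B-\delta_1}$ is dominated by $\int^{\infty}\tau^{q-1}e^{-c(1/\Upsilon_B-\delta_1)\tau}\,d\tau<\infty$, the exponential decay from the tempering overwhelming all polynomial factors. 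This tail step is routine and, notably, far simpler than in the non-tempered case \eqref{moving_RF_aniso} (cf.\ Bierm\'{e} et al.\ \cite{bierme:meerschaert:scheffler:2007}), where one must instead exploit the cancellation in the difference $\varphi({\bm x}-{\bm y})^{H-qB}-\varphi(-{\bm y})^{H-qB}$. Collecting the bounds over the three regions verifies \eqref{e:suff_cond_existence_I(f)} and completes the proof.
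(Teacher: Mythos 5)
Your proof is correct, and its technical core is the same as the paper's: both arguments verify the sufficient condition \eqref{e:suff_cond_existence_I(f)} for the kernel $f_{\bm x}$, pass to the anisotropic polar coordinates of Lemma \ref{l:polar_integration}, use the comparability $m_\varphi \tau({\bm z}) \le \varphi({\bm z}) \le M_\varphi \tau({\bm z})$ together with the $\epsilon$-loss matrix-power bounds (the paper's $|\log r|^{n_*} r^{\varpi_{H-qB}}$ estimate), let the exponential tempering absorb the tail, and spend the hypothesis \eqref{e:lambda(H-qB)+qlambda(B)>0} exactly where you do: on the power counting at the singularity, where $(\varpi_{H-qB})(\tfrac{1}{\varpi_B}+\delta)+q>0$ must hold for small $\delta$. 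The difference is organizational. The paper never decomposes $\bbR^d$ into regions: it bounds the difference by the sum via \eqref{e:(a+b)^p=<(Ca^p+Cb^p} and proves the stronger statement \eqref{e:existence_basic_expression} that \emph{both} integrals, with exponents $\tfrac{1}{\Upsilon_B}-\delta$ and $\tfrac{1}{\varpi_B}+\delta$, are finite over all of $\bbR^d$; this forces a second power-counting check at the origin for the lower exponent, needing $\varpi_{H-qB}+q\Upsilon_B>0$, which follows from \eqref{e:lambda(H-qB)+qlambda(B)>0} since $\Upsilon_B \ge \varpi_B$. Your spatial decomposition, matched to the sets $\{\|f_{\bm x}\|>R\}$ and $\{\|f_{\bm x}\|\le R\}$ in \eqref{e:suff_cond_existence_I(f)}, buys you exemption from that second check (the low exponent never meets the singularity), at the small cost of justifying that $\{\|f_{\bm x}\|>R\}$ lies inside neighborhoods of ${\bm 0}$ and ${\bm x}$ for large $R$ --- only containment is needed, and note that in the edge case $\varpi_{H-qB}=0$ with nontrivial Jordan structure the kernel can still blow up logarithmically there, so your parenthetical dichotomy is slightly imprecise, though your $\epsilon$-loss bound covers this case and nothing breaks. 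Your closing observation is accurate and implicit in the paper as well: it is precisely the tempering that makes the crude difference-by-sum (or region-by-region) bound viable, whereas the non-tempered field \eqref{moving_RF_aniso} requires exploiting cancellation between the two homogeneous kernels at infinity, as in Bierm\'{e} et al.\ \cite{bierme:meerschaert:scheffler:2007}.
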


\begin{cor}\label{cor:stationary_increments_scaling_property_MA}
Let $X_{\lambda}:= \{ X_{\lambda} ({\bm x}) \}_{{\bm x} \in \mathbb{R}^d}$ be a MA-TRF. Then,
\begin{itemize}
\item [(a)] under the commutativity condition
\begin{equation}\label{e:HB=BH}
HB = BH,
\end{equation}
$X_{\lambda}$ is strictly operator-stable with exponent $B$ and satisfies the scaling property
\begin{eqnarray}
\label{e:X-lambda(c^Ex)=c^H_X-clambda(x)}
\{ X_{\lambda} (c^E {\bm x}) \}_{{\bm x} \in \mathbb{R}^d} \stackrel{f.d.}{=}
\{ c^H X_{c\lambda} ({\bm x}) \}_{{\bm x} \in \mathbb{R}^d}, \quad c > 0;
\end{eqnarray}
\item[(b)] $X_{\lambda}$ is stochastically continuous;
\item[(c)] $X_{\lambda}$ has stationary increments;
\item[(d)] $X_{\lambda}({\bm x})$ is full at any ${\bm x} \in \bbR^d \backslash\{0\}$.
\end{itemize}
\end{cor}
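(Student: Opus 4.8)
The backbone of the argument is the characteristic function formula \eqref{e:chf_stoch_integral}, which reduces every finite-dimensional distributional statement to an identity for the integral $\int_{\bbR^d}\widehat\psi(\,\cdot\,)\,d\bm y$. Writing the MA-TRF kernel as $f_{\bm x}(\bm y)=e^{-\lambda\varphi(\bm x-\bm y)}\varphi(\bm x-\bm y)^{H-qB}-e^{-\lambda\varphi(-\bm y)}\varphi(-\bm y)^{H-qB}$, for a finite collection $\bm x_1,\dots,\bm x_k$ and vectors $\bm u_1,\dots,\bm u_k\in\bbR^n$ the joint log-characteristic function of $\{X_\lambda(\bm x_j)\}$ is $\int_{\bbR^d}\widehat\psi\big(\sum_j f_{\bm x_j}(\bm y)^*\bm u_j\big)\,d\bm y$. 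For part (a), I would evaluate the left-hand side of \eqref{e:X-lambda(c^Ex)=c^H_X-clambda(x)} by substituting $\bm y=c^E\bm z$, whose Jacobian is $c^{\tr E}=c^q$. $E$-homogeneity \eqref{e:E-homogeneous} turns $\varphi(c^E\bm x-\bm y)$ into $c\,\varphi(\bm x-\bm z)$, which simultaneously rescales the tempering factor $e^{-\lambda\varphi}$ into $e^{-c\lambda\varphi}$ and pulls out a matrix factor; hence $f_{c^E\bm x_j}(c^E\bm z)=c^{H-qB}\,g_{\bm x_j}(\bm z)$, where $g_{\bm x_j}$ is precisely the kernel of $X_{c\lambda}$.

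After this substitution the exponent becomes $c^q\int\widehat\psi\big(g_{\bm x_j}(\bm z)^*c^{(H-qB)^*}\bm u_j\big)\,d\bm z$, and I would invoke the strict operator-stable scaling relation \eqref{e:s.psi-hat(u)=psi-hat(s^(B^*)u)} with $a_s=0$ and $s=c^q$, namely $c^q\widehat\psi(\bm v)=\widehat\psi(c^{qB^*}\bm v)$. Here the commutativity hypothesis \eqref{e:HB=BH} enters decisively: $HB=BH$ yields $B^*(H-qB)^*=(H-qB)^*B^*$, so $c^{qB^*}$ commutes with $g_{\bm x_j}^*$ and with $c^{(H-qB)^*}$, and $c^{qB^*}c^{(H-qB)^*}=c^{H^*}$. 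This collapses the exponent to $\int\widehat\psi\big(g_{\bm x_j}(\bm z)^*c^{H^*}\bm u_j\big)\,d\bm z$, which is exactly the joint log-characteristic function of $\{c^HX_{c\lambda}(\bm x_j)\}$ at $\{\bm u_j\}$, giving \eqref{e:X-lambda(c^Ex)=c^H_X-clambda(x)}. The same commutation shows $\sum_j f_{\bm x_j}^*\,(s^{B^*}\bm u_j)=s^{B^*}\sum_j f_{\bm x_j}^*\bm u_j$, whence the joint law is strictly operator-stable with block exponent $I_k\otimes B$; that is, $X_\lambda$ is strictly operator-stable with exponent $B$.

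Part (c) is the most direct and needs no commutativity: in $f_{\bm x+\bm h}-f_{\bm h}$ the two anchor terms $e^{-\lambda\varphi(-\bm y)}\varphi(-\bm y)^{H-qB}$ cancel, and the substitution $\bm y\mapsto\bm y+\bm h$ (which leaves $\textnormal{Leb}_d$ invariant) carries the remaining kernel onto $f_{\bm x}$; feeding this into the characteristic function formula gives equality of finite-dimensional laws with $\{X_\lambda(\bm x)\}$ (noting $X_\lambda(\mathbf 0)=0$). For part (b) I would prove $X_\lambda(\bm x_m)-X_\lambda(\bm x)\to0$ in probability as $\bm x_m\to\bm x$ by showing $\int\widehat\psi\big((f_{\bm x_m}-f_{\bm x})^*\bm u\big)\,d\bm y\to0$: continuity of $\varphi$ gives pointwise convergence of the kernels off the singular set, and the integrable majorants already constructed in the proof of Theorem \ref{thm:exponential_moving_TOSSF} (via the growth bounds on $\widehat\psi$ underlying \eqref{e:suff_cond_existence_I(f)}) justify dominated convergence.

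For part (d), fullness of $\mu$ gives $\mathrm{Re}\,\widehat\psi(\bm v)<0$ for $\bm v\neq\mathbf 0$, so $X_\lambda(\bm x)$ fails to be full exactly when some $\bm u\neq\mathbf 0$ satisfies $f_{\bm x}(\bm y)^*\bm u=\mathbf 0$ for a.e.\ $\bm y$; by continuity off $\{\mathbf 0,\bm x\}$ this means $e^{-\lambda\varphi(\bm x-\bm y)}\varphi(\bm x-\bm y)^{(H-qB)^*}\bm u=e^{-\lambda\varphi(-\bm y)}\varphi(-\bm y)^{(H-qB)^*}\bm u$ identically. I would exploit the \emph{distinct} singular points $\bm y=\bm x$ and $\bm y=\mathbf 0$: letting $\bm y\to\bm x$ along $\bm y=\bm x-s^E\bm\theta$ forces $s^{(H-qB)^*}\big(\varphi(\bm\theta)^{(H-qB)^*}\bm u\big)$ to converge, as $s\to0^+$, to the finite vector $e^{-\lambda\varphi(-\bm x)}\varphi(-\bm x)^{(H-qB)^*}\bm u$, which is nonzero since the matrix powers $\varphi(\cdot)^{(H-qB)^*}$ are invertible. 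The asymptotics of $s^{(H-qB)^*}$ then confine $\bm u$ to the eigenvalue-$0$ (semisimple) and positive-real-part generalized eigenspaces of $(H-qB)^*$, with $P_0\bm u\neq\mathbf 0$ for $P_0$ the spectral projection onto the $0$-eigenspace. Applying $P_0$ to the identity reduces it to $e^{-\lambda\varphi(\bm x-\bm y)}P_0\bm u=e^{-\lambda\varphi(-\bm y)}P_0\bm u$, i.e.\ $\varphi(\bm x-\bm y)\equiv\varphi(-\bm y)$, which is impossible (at $\bm y=\bm x$ it would give $0=\varphi(-\bm x)>0$). I expect this step --- isolating the eigenvalue-$0$ contribution and converting the \emph{tempering asymmetry} $\lambda>0$ into the final contradiction --- to be the main obstacle, since in the untempered case ($\lambda=0$) the degenerate eigenstructure is genuinely more delicate.
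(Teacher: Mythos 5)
Parts (a) and (c) of your proposal are correct and coincide with the paper's own argument: the paper also works through \eqref{e:chf_stoch_integral}, applies the change of variables ${\bm y}=c^E{\bm z}$ (resp.\ a shift ${\bm y}\mapsto {\bm h}-{\bm y}$), and uses \eqref{e:HB=BH} together with \eqref{e:s.psi-hat(u)=psi-hat(s^(B^*)u)}; the only cosmetic difference is that for the scaling relation the paper invokes the random-measure identity $M(c^E d{\bm z})\stackrel{d}{=}c^{qB}M(d{\bm z})$ instead of carrying $c^{qB^*}$ through $\widehat{\psi}$ as you do.

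Part (d) is a genuinely different---and valid---route. The paper fixes ${\bm x}$, splits $H-qB$ into Jordan blocks $J_{\ominus},J_{\odot},J_{\oplus}$, and derives a contradiction by analyzing the determinant of the kernel as ${\bm y}\to 0$ (finite nonzero limit from $J_{\oplus}$, blow-up from $J_{\ominus}$, and, using $\lambda>0$ and the oscillation of $\varphi(-{\bm y})^{\imag \eta}$, non-degeneracy from $J_{\odot}$), then concludes via the cited Kremer--Scheffler criterion that full rank of the kernel on a positive-measure set implies fullness. You instead fix a single annihilating vector ${\bm u}$, approach the \emph{other} singularity ${\bm y}\to{\bm x}$ along $E$-polar curves, and use the asymptotics of $s^{(H-qB)^*}$ to force ${\bm u}$ into the semisimple null space (plus positive-real-part spaces) of $(H-qB)^*$, after which the $P_0$-projection and $\lambda>0$ yield $\varphi({\bm x}-{\bm y})\equiv\varphi(-{\bm y})$, which is absurd; this handles the oscillatory eigenvalues $\imag\eta$ more transparently than the determinant computation. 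Note, however, that your reduction ``not full $\Rightarrow$ some ${\bm u}\neq 0$ annihilates the kernel a.e.'' needs the assertion $\Re\,\widehat{\psi}({\bm v})<0$ for ${\bm v}\neq 0$, which you state without proof. It is true for full, strictly operator-stable $\mu$: if $\Re\,\widehat{\psi}({\bm v}_0)=0$, then the Gaussian part annihilates ${\bm v}_0$ and $\nu$ is supported on $\{{\bm z}:\langle{\bm v}_0,{\bm z}\rangle\in 2\pi\bbZ\}$; by \eqref{e:s.psi-hat(u)=psi-hat(s^(B^*)u)} the same holds with $s^{B^*}{\bm v}_0$ in place of ${\bm v}_0$ for every $s>0$, and continuity in $s$ forces $\textnormal{supp}(\nu)\subseteq{\bm v}_0^{\perp}$, contradicting fullness. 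You should include such an argument (or a citation), because the criterion the paper cites---full rank on a positive-measure set suffices---only gives, in contrapositive form, that the kernel is singular a.e., with a null vector that may depend on ${\bm y}$; your argument requires a ${\bm y}$-independent ${\bm u}$.

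Part (b) contains the one genuine gap. You propose pointwise convergence plus dominated convergence, with domination by ``the integrable majorants already constructed in the proof of Theorem \ref{thm:exponential_moving_TOSSF}.'' Those majorants are built for a fixed ${\bm x}$ and do not dominate $\|f_{{\bm x}_m}({\bm y})\|$ uniformly in $m$: the kernel has a singularity at the moving point ${\bm y}={\bm x}_m$, and whenever $H-qB$ has an eigenvalue with negative real part (which is allowed under \eqref{e:lambda(H-qB)+qlambda(B)>0}), the pointwise supremum of the kernels over ${\bm x}'$ in any ball around the limit point is identically infinite on that ball, so no single integrable dominating function exists. This is exactly why the paper's proof of (b) is long: it restricts to $\|{\bm x}\|\leq 1-\eta_0$, splits according to $\|f_{{\bm x}}({\bm y})\|\lessgtr 1$ and $\|{\bm y}\|\lessgtr 1$ (see \eqref{e:psi-tilde(f_x(y))=<three_terms}), dominates the large-$\|{\bm y}\|$ region uniformly in ${\bm x}$ via Lemma \ref{lem1.1} and the exponential tempering, and near the singularity does not dominate at all but instead proves convergence of the integrals of the dominating functions (via a shift change of variables) and then applies a generalized dominated convergence argument. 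Your strategy is the right one, but the uniformity issue is the entire content of this part, and the justification you give for it does not go through as stated.
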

%
%To define an alternative class of moving average TRFs, we opt for matrix-based tempering in place of exponential tempering. Instead of equally affecting all entries, the former acts on fractional kernels by slowing down eigenvalue-based scaling laws over large (time) scales.
When constructing harmonizable representations as in Section \ref{s:Gaussian}, it can be very convenient to work with tempering tools that depend on Hurst eigenvalues. To construct matrix-based tempering, we start from the modified Bessel function of the second kind $K_{\nu}(x)$ and apply the so-named technique of primary matrix functions. This involves greater generality than what is strictly needed in Section \ref{s:Gaussian}, but is also of independent theoretical interest.

Recall that the modified Bessel function of the second kind can be represented as
\begin{equation}\label{e:mod_Bessel_second_kind}
\bbC \ni K_{\nu}(u) = \int^{\infty}_0 e^{-u \cosh(t)} \cosh(\nu t) dt, \quad u > 0, \quad \nu \in \bbC,
\end{equation}
where $\cosh(z) = (e^{-z}+e^{z})/2$, $z \in \bbC$ (see Temme \cite{temme:2011}, Section 9.6). The function $K_\nu$ is continuous and, for any $\nu \in \bbR$, it satisfies
\begin{equation}\label{e:mod_Bessel_second_kind_asymptotics_large_u}
K_{\nu}(u) \sim \sqrt{\frac{\pi}{2 u}} e^{-u}, \quad u \rightarrow \infty,
\end{equation}
\begin{equation}\label{e:mod_Bessel_second_kind_asymptotics}
K_{\nu}(u) \sim
\left\{\begin{array}{cc}
2^{|\nu|-1}\Gamma(|\nu|) u^{-|\nu|}, & \nu \neq 0 \\
- \log u , & \nu = 0 \\
\end{array}\right., \quad u \rightarrow 0^+
\end{equation}
(see Abramowitz and Stegun \cite{abramowitz:stegun:1970}, formulas 9.6.8, 9.6.9 and 9.7.2; cf.\ the proof of Theorem \ref{thm:Bessel_moving_TOSSF}). Hence, $K_\nu$ can be naturally seen as a tempering device assuming the singularity around the origin can be controlled.

Formally, a matrix-valued tempering function is obtained by replacing the parameter $\nu$ with the matrix $H - qB$ in expression \eqref{e:mod_Bessel_second_kind}. To make this procedure rigorous, for the reader's convenience we recall the definition of primary matrix functions (more details and properties can be found in Horn and Johnson \cite{horn:johnson:1991}, Sections 6.1 and 6.2). Consider
\begin{equation}\label{e:Lambda=PJP^(-1)}
\Lambda = PJP^{-1} \in M(n,\bbC),
\end{equation}
where $J$ is in Jordan form with Jordan blocks $J_{\vartheta_1},\ldots, J_{\vartheta_N}$ along the diagonal. Let
\begin{equation}\label{e:q(t)}
q_{\Lambda}(z) = (z - \vartheta_1)^{r_1}\ldots(z - \vartheta_N)^{r_N}
\end{equation}
be the minimal polynomial of $\Lambda$, where $\vartheta_1,\ldots,\vartheta_N$ are pairwise distinct, and $r_k \geq 1$ for $k=1,\ldots,N$, $N \leq n$.  Now, let $U  \subseteq \bbC$ be an open set. Given a function $h:U \rightarrow \bbC$ and some $\Lambda \in {\mathcal M}(n,\bbC)$ as in \eqref{e:Lambda=PJP^(-1)}, consider the conditions: (M1) $\vartheta_k \in U$, $k=1,\ldots,N$; (M2) if
$r_k > 1$, then $h(z)$ is analytic in a vicinity $U_k \ni \vartheta_k$, where $U_k \subseteq U$. Let
$$
{\mathcal M}_h =\{ \Lambda \in {\mathcal M}(n,\bbC); \textnormal{ conditions (M1) and (M2) hold at the characteristic roots $\vartheta_1,\ldots,\vartheta_N$ of $\Lambda$} \}.
$$
We can now define the primary matrix function $h(\Lambda)$ associated
with the scalar-valued stem function $h(z)$.
\begin{defn}\label{d:matrix_function}
The primary matrix function $h:{\mathcal M}_h \rightarrow
{\mathcal M}(n,\bbC)$ is defined as
$$
h(\Lambda) = Ph(J)P^{-1}=P\left(\begin{array}{ccc}
h(J_{\vartheta_1})
& \hdots & 0\\
\vdots & \ddots & \vdots \\
 0 & \hdots & h(J_{\vartheta_N})
\end{array}\right)P^{-1},
$$
where
$$
h(J_{\vartheta_k})= \left(\begin{array}{cccc} h(\vartheta_k) & 0 & \ldots & 0 \\
h'(\vartheta_k) & h(\vartheta_k) &  \ddots & 0\\
 \vdots & \ddots &  \ddots & \vdots\\
\frac{h^{(r_k-1)}(\vartheta_k)}{(r_k -1)!} & \ldots  & h'(\vartheta_k) & h(\vartheta_k) \\
\end{array}\right), \quad k = 1,\hdots,N.
$$
\end{defn}

Starting from the framework provided by Definition \ref{d:matrix_function}, in Proposition \ref{p:Bessel} we extend the univariate formula \eqref{e:mod_Bessel_second_kind} by establishing the integral representation
\begin{equation}\label{e:Bessel}
{\mathcal M}(n,\bbC) \ni K_{N}(u) = \int^{\infty}_0 e^{-u \cosh(t)} \cosh(N t) dt, \quad u > 0,
\end{equation}
for the primary matrix function $K_{N}(u)$, where $N \in {\mathcal M}(n,\bbC)$ and $\cosh(N t) = (e^{-Nt}+e^{Nt})/2$, $t > 0$, is also a primary matrix function.

%%%%%%%%%%%%%%%%%%%%%%%%%%%%%%%%%%%%%%%%%
We are now in a position to define the alternative moving average-type random field, based on matrix-based tempering.
\begin{defn}\label{defn:MA-B-TRF}
Let $\mu$ be a full, strictly operator-stable measure on ${\mathcal B}(\bbR^n)$ with log-characteristic function $\widehat{\psi}$ and exponent $B$. Let $M (d{\bm y})$ be a $\bbR^n$-valued ISRM generated by $\mu$ and $(\bbR^n, {\mathcal B}(\bbR^n),\textnormal{Leb}_d(d{\bm y}))$. Let $H$ be a (Hurst) matrix as in \eqref{e:H=PJHP^(-1)} and \eqref{e:0<minReeig(H)=<maxReeig(H)<a1}. Fix $\lambda > 0$, and define the matrix-valued function $\varrho_{H-qB,\lambda}({\bm y}) := K_{H-qB}(\lambda \varphi({\bm y}))$, where $K_{H-qB}(\cdot)$ is as in \eqref{e:Bessel}. Suppose
\begin{equation}\label{e:MA-B_condition_eig_exponent}
 2 \varpi_{H-qB} + q \varpi_B > 0.
\end{equation}
Let $\varphi \colon \mathbb{R}^d \rightarrow [0, \infty)$ be a continuous $E$-homogeneous function. A vector-valued \textit{moving average-Bessel-TRF} (MA-B-TRF) is the random field whose stochastic integral representation is given by
\begin{equation}\label{e:MA-B-TOSSRF}
X^{Bes}_{\lambda} ({\bm x}) = \int_{\mathbb{R}^d}
\Big[ \varrho_{H-qB,\lambda}({\bm x} - {\bm y}) \varphi({\bm x} - {\bm y})^{H - qB}
- \varrho_{H-qB,\lambda}(- {\bm y})  \varphi(-{\bm y})^{H - q B} \Big] M (d{\bm y}), \quad {\bm x}  \in \bbR^d.
\end{equation}
\end{defn}

In the following theorem and corollary, we establish the existence and fundamental properties of MA-B-TRF. Note that the latter qualitatively match those of MA-TRF.
\begin{thm}\label{thm:Bessel_moving_TOSSF}
The random field \eqref{e:MA-B-TOSSRF} exists for every ${\bm x} \in \bbR^d$.
\end{thm}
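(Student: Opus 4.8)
The plan is to reduce the statement to the sufficient integrability condition \eqref{e:suff_cond_existence_I(f)}, applied to the matrix-valued integrand
$$
f_{\bm x}({\bm y}) = g({\bm x}-{\bm y}) - g(-{\bm y}), \qquad g({\bm z}) := K_{H-qB}(\lambda\varphi({\bm z}))\,\varphi({\bm z})^{H-qB},
$$
where both $K_{H-qB}(\lambda\varphi(\cdot))$ and $\varphi(\cdot)^{H-qB}$ are primary matrix functions of the single matrix $H-qB$ (and hence commute, irrespective of whether $H$ and $B$ commute). Thus it suffices to exhibit $0<\delta_1<\Upsilon_B^{-1}$, $\delta_2>0$ and $R>0$ for which the two integrals in \eqref{e:suff_cond_existence_I(f)} are finite. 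The whole argument hinges on controlling $\|g({\bm z})\|$ as $\varphi({\bm z})\to 0^+$ and as $\varphi({\bm z})\to\infty$, after which the anisotropic polar coordinates \eqref{e:x=polar_coord} turn the bounds into elementary radial integrals.

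First I would establish the matrix analogues of the scalar asymptotics \eqref{e:mod_Bessel_second_kind_asymptotics_large_u} and \eqref{e:mod_Bessel_second_kind_asymptotics}. Starting from the integral representation \eqref{e:Bessel} and the Jordan/primary-matrix-function machinery of Definition \ref{d:matrix_function}, one sees that $g$ is a primary matrix function of $H-qB$ with scalar stem $z\mapsto K_z(\lambda\varphi)\varphi^z$: on the Jordan block attached to an eigenvalue $h$, the diagonal entry is this stem evaluated at $h$, while off-diagonal entries are its $z$-derivatives, which produce at most polynomial-in-$\log\varphi$ corrections. For $\varphi({\bm z})\to\infty$ each factor decays like $e^{-\lambda\varphi}$ by \eqref{e:mod_Bessel_second_kind_asymptotics_large_u}, so $\|g({\bm z})\|$ decays exponentially — the tempering effect. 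For $\varphi({\bm z})\to 0^+$, \eqref{e:mod_Bessel_second_kind_asymptotics} gives $|K_h(u)|\asymp u^{-|\Re h|}$, so the blockwise product behaves like $u^{\Re h-|\Re h|}$ up to logs: bounded when $\Re h\ge 0$, of order $u^{2\Re h}$ when $\Re h<0$. Taking the worst block yields the crucial near-origin bound $\|g({\bm z})\|\le C\,\varphi({\bm z})^{2\varpi_{H-qB}}(1+|\log\varphi({\bm z})|)^{m}$ for $\varphi({\bm z})\le 1$. Making this \emph{doubling} of the singularity exponent precise at the matrix level — propagating the scalar cancellation $u^{-|\Re h|}u^{\Re h}$ through the Jordan blocks while tracking the logarithmic corrections — is the main obstacle; everything downstream is routine.

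With these bounds in hand I would verify \eqref{e:suff_cond_existence_I(f)}. Away from ${\bm y}={\bm 0}$ and ${\bm y}={\bm x}$ the integrand $f_{\bm x}$ is continuous and, by the exponential bound, decays exponentially as $\|{\bm y}\|\to\infty$; hence for a suitable $R$ the set $\{\|f_{\bm x}\|>R\}$ is contained in two (possibly empty) bounded neighborhoods of the singularities ${\bm 0}$ and ${\bm x}$, on each of which $\|f_{\bm x}\|$ is dominated by $\varphi(\,\cdot\,)^{2\varpi_{H-qB}}$ up to logs. On the first, large-value integral I would pass to polar coordinates \eqref{e:x=polar_coord}, using $\varphi({\bm y})\asymp\tau({\bm y})$ and the Jacobian $\tau^{q-1}\,d\tau$ against a finite angular measure on $S_0$ with $q=\tr(E)$; near the origin this reduces to $\int_{0}\tau^{\,2\varpi_{H-qB}(1/\varpi_B+\delta_2)+q-1}(1+|\log\tau|)^{m'}\,d\tau$, which converges precisely when $2\varpi_{H-qB}(1/\varpi_B+\delta_2)+q>0$. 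Since \eqref{e:MA-B_condition_eig_exponent} gives the strict inequality $2\varpi_{H-qB}/\varpi_B+q>0$ at $\delta_2=0$ (here $\varpi_B\ge 1/2>0$), it persists for $\delta_2>0$ small — which is exactly why the Bessel construction requires the \emph{doubled} condition $2\varpi_{H-qB}+q\varpi_B>0$ rather than the weaker \eqref{e:lambda(H-qB)+qlambda(B)>0}; the neighborhood of ${\bm x}$ is identical after translation, and the case $\varpi_{H-qB}\ge 0$ is easier since the integrand stays bounded. Finally, on $\{\|f_{\bm x}\|\le R\}$, choosing $\delta_1<\Upsilon_B^{-1}$ makes the exponent $1/\Upsilon_B-\delta_1$ strictly positive, so the exponential decay of $\|f_{\bm x}\|$ dominates the polynomial volume growth $\tau^{q-1}$ and the small-value integral converges as well. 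Both integrals being finite, \eqref{e:suff_cond_existence_I(f)} holds and the random field \eqref{e:MA-B-TOSSRF} exists for every ${\bm x}\in\bbR^d$.
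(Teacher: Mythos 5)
Your proposal is correct and follows essentially the same route as the paper's proof: reduction to the sufficient integrability condition \eqref{e:suff_cond_existence_I(f)}, the key asymptotics of the matrix-valued Bessel kernel (exponential decay as $\varphi \to \infty$, and the doubled near-origin exponent $\varphi^{2\varpi_{H-qB}}$ up to logarithmic factors coming from the negative-real-part Jordan blocks, which is exactly why the strengthened condition \eqref{e:MA-B_condition_eig_exponent} is what is needed), followed by a change to $E$-induced polar coordinates via Lemma \ref{l:polar_integration}. The only differences are organizational rather than substantive: the paper verifies the stronger statement that \emph{both} full-space integrals are finite (splitting the kernel with $(a+b)^p \leq C_p(a^p+b^p)$ and a translation ${\bm y}' = {\bm x}-{\bm y}$) instead of partitioning $\bbR^d$ into the sets $\{\|f_{\bm x}\| \leq R\}$ and $\{\|f_{\bm x}\| > R\}$, and it obtains the block asymptotics \eqref{e:KN(x)_as_x->0} and \eqref{e:|KJ+(x)|)} directly from the integral representation \eqref{e:Bessel} (change of variable $w = u e^t/2$) rather than by propagating the scalar asymptotics and their order-derivatives through the Jordan structure, though your route would also work.
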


\begin{cor}
\label{cor:stationary_increments_scaling_property_MA-B}
Let $X^{Bes}_{\lambda}:= \{ X^{Bes}_{\lambda} ({\bm x}) \}_{{\bm x} \in \mathbb{R}^d}$ be a MA-B-TRF. Then,
\begin{itemize}
\item [(a)] under the commutativity condition \eqref{e:HB=BH}, $X^{Bes}_{\lambda}$ is strictly operator-stable with exponent $B$ and satisfies the scaling property
\begin{eqnarray}
\label{e:Bes_scaling}
\{ X^{Bes}_{\lambda} (c^E {\bm x}) \}_{{\bm x} \in \mathbb{R}^d} \stackrel{f.d.}{=}
\{ c^H X^{Bes}_{c\lambda} ({\bm x}) \}_{{\bm x} \in \mathbb{R}^d}, \quad c > 0;
\end{eqnarray}
\item[(b)] $X^{Bes}_{\lambda}$ is stochastically continuous;
\item[(c)] $X^{Bes}_{\lambda}$ has stationary increments;
\item[(d)] for $n =1$, $X^{Bes}_{\lambda}({\bm x})$ is full at any ${\bm x} \in \bbR^d \backslash\{0\}$;
\item[(e)] for $E = I$ and assuming $\textnormal{eig}(H-qB) \subseteq \bbR^n$, $X^{Bes}_{\lambda}({\bm x})$ is full at any ${\bm x} \in \bbR^d \backslash\{0\}$.
\end{itemize}
\end{cor}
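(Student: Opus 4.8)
The plan is to mirror the proof of Corollary \ref{cor:stationary_increments_scaling_property_MA}, replacing the scalar tempering factor $e^{-\lambda \varphi}$ by the matrix-valued factor $\varrho_{H-qB,\lambda}$. The crucial structural observation is that $\varrho_{H-qB,\lambda}({\bm y}) = K_{H-qB}(\lambda \varphi({\bm y}))$, the matrix power $\varphi({\bm y})^{H-qB}$, and the dilation factor $c^{H-qB}$ are all primary matrix functions of the \emph{single} matrix $H-qB$; hence they commute pairwise and may be manipulated essentially as scalars. Throughout I use the characteristic-function identity \eqref{e:chf_stoch_integral} together with the strict operator-stable scaling relation \eqref{e:s.psi-hat(u)=psi-hat(s^(B^*)u)} with $a_s \equiv 0$, and denote by $g_{\lambda,{\bm x}}({\bm y})$ the kernel of $X^{Bes}_{\lambda}({\bm x})$.

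For part (a), I would compute the joint log-characteristic function of $\{X^{Bes}_{\lambda}(c^E {\bm x}_j)\}_j$ via \eqref{e:chf_stoch_integral} and perform the change of variables ${\bm y} = c^E {\bm z}$, which contributes the Jacobian $\abs{\det c^E} = c^{\tr(E)} = c^{q}$. By $E$-homogeneity \eqref{e:E-homogeneous}, $\varphi(c^E({\bm x}-{\bm z})) = c\,\varphi({\bm x}-{\bm z})$, so $\varphi(c^E{\bm x} - c^E {\bm z})^{H-qB} = c^{H-qB}\varphi({\bm x}-{\bm z})^{H-qB}$ and $\varrho_{H-qB,\lambda}(c^E({\bm x}-{\bm z})) = K_{H-qB}(c\lambda\,\varphi({\bm x}-{\bm z})) = \varrho_{H-qB,c\lambda}({\bm x}-{\bm z})$; commuting the primary matrix functions of $H-qB$ lets me factor $c^{H-qB}$ out on the left, giving $g_{\lambda,{\bm x}_j}(c^E {\bm z}) = c^{H-qB} g_{c\lambda,{\bm x}_j}({\bm z})$. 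Absorbing the Jacobian $c^{q}$ through \eqref{e:s.psi-hat(u)=psi-hat(s^(B^*)u)} (with $s = c^{q}$) produces a factor $c^{qB^*}$ inside $\widehat{\psi}$; the commutativity hypothesis $HB = BH$ forces $H^*$ and $B^*$ to commute, so $c^{qB^*}$ commutes with the adjoint kernel (a function of $(H-qB)^*$) and combines with $c^{(H-qB)^*}$ to give $c^{H^*}$. This is exactly the joint log-characteristic function of $\{c^H X^{Bes}_{c\lambda}({\bm x}_j)\}_j$, proving \eqref{e:Bes_scaling}. Strict operator-stability with exponent $B$ follows by the same commutation: applying $s\,\widehat{\psi}({\mathbf v}) = \widehat{\psi}(s^{B^*}{\mathbf v})$ to the joint log-characteristic function, pushing $s^{B^*}$ through the kernel, and re-identifying $s^{B^*}{\mathbf u}_j$ as the characteristic-function argument of $s^{B} X^{Bes}_{\lambda}({\bm x}_j)$.

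Parts (b) and (c) are softer. Stationary increments (c) follow from translation-invariance of the control measure: after the substitution ${\bm y}\mapsto {\bm y}+{\bm h}$, the kernel of the increment $X^{Bes}_{\lambda}({\bm x}_j+{\bm h}) - X^{Bes}_{\lambda}({\bm h})$ coincides with that of $X^{Bes}_{\lambda}({\bm x}_j)$, so the joint laws agree. For stochastic continuity (b) I would show $g_{\lambda,{\bm x}_k} - g_{\lambda,{\bm x}} \to 0$ pointwise a.e.\ as ${\bm x}_k \to {\bm x}$, using continuity of $\varphi$ and of the primary matrix function $K_{H-qB}$, and then pass to the limit in $\int_{\bbR^d}\widehat{\psi}\big((g_{\lambda,{\bm x}_k} - g_{\lambda,{\bm x}})^* {\mathbf u}\big)\,d{\bm y}$ by dominated convergence, the dominating bound being supplied by the integrability estimates already established in the proof of Theorem \ref{thm:Bessel_moving_TOSSF}; continuity of $\widehat{\psi}$ together with $\widehat{\psi}(0)=0$ then gives convergence in distribution, hence in probability.

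The delicate parts are the fullness claims (d) and (e), which I expect to be the main obstacle. In both, fullness amounts to showing that for ${\bm x} \neq 0$ there is no ${\mathbf u} \neq 0$ with $g_{\lambda,{\bm x}}({\bm y})^* {\mathbf u} = 0$ for a.e.\ ${\bm y}$. For $n=1$ (part (d)) the random measure is $S\alpha S$ and fullness reduces to nondegeneracy, i.e.\ $\int_{\bbR^d}\abs{g_{\lambda,{\bm x}}({\bm y})}^{\alpha}\,d{\bm y} > 0$, which I would obtain by exhibiting a neighbourhood of ${\bm y} = {\bm x}$ on which the scalar kernel does not vanish, using the singularity of $\varphi(\cdot)^{H-qB}$ and the asymptotics \eqref{e:mod_Bessel_second_kind_asymptotics} of $K_{H-qB}$ near the origin. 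For part (e), with $E = I$ and $\textnormal{eig}(H-qB) \subseteq \bbR^n$, the reality of the spectrum is exactly what keeps the primary matrix functions from oscillating and lets me extract an unambiguous leading term of $g_{\lambda,{\bm x}}({\bm y})^* {\mathbf u}$ as ${\bm y} \to {\bm x}$; comparing this dominant contribution against the remainder forces ${\mathbf u} = 0$. The hard point is controlling the Jordan-block (non-diagonalizable) contributions to $K_{H-qB}$ and $\varphi^{H-qB}$, so that the leading asymptotics genuinely separate distinct directions, and it is precisely here that $E = I$ and $\textnormal{eig}(H-qB)\subseteq\bbR^n$ are used.
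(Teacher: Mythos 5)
Parts (a)--(c) of your proposal follow essentially the same route as the paper: adapt the proof of Corollary \ref{cor:stationary_increments_scaling_property_MA} via joint characteristic functions, the change of variables ${\bm y}=c^E{\bm z}$, $E$-homogeneity, the identity $\varrho_{H-qB,\lambda}(c^E{\bm u})=\varrho_{H-qB,c\lambda}({\bm u})$, commutation of primary matrix functions of $H-qB$, and dominated convergence against the integrability bounds from Theorem \ref{thm:Bessel_moving_TOSSF}. Those parts are fine.

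The genuine gap is in (d) and (e). Your fullness mechanism is to work near the kernel singularity ${\bm y}={\bm x}$, invoking ``the singularity of $\varphi(\cdot)^{H-qB}$ and the asymptotics \eqref{e:mod_Bessel_second_kind_asymptotics} of $K_{H-qB}$ near the origin.'' But in the regime $H-qB>0$ (scalar case of (d)), respectively eigenvalues $\nu>0$ of $H-qB$ in (e) --- a regime fully allowed by \eqref{e:MA-B_condition_eig_exponent} --- there is no singularity at all: by \eqref{e:mod_Bessel_second_kind_asymptotics}, $K_{\nu}(\lambda\varphi({\bm x}-{\bm y}))\,\varphi({\bm x}-{\bm y})^{\nu}\to 2^{\nu-1}\Gamma(\nu)\lambda^{-\nu}$, a finite constant, as ${\bm y}\to{\bm x}$; the algebraic blow-up of $K_\nu$ at zero exactly cancels the vanishing of $\varphi^{\nu}$ (this cancellation is precisely what makes the Bessel factor a tempering device). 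Hence the kernel difference near ${\bm y}={\bm x}$ tends to $2^{\nu-1}\Gamma(\nu)\lambda^{-\nu}-K_\nu(\lambda\varphi(-{\bm x}))\varphi(-{\bm x})^{\nu}$, and your sketch gives no reason why this constant is nonzero. The argument can be rescued: since $\frac{d}{dz}\left[z^{\nu}K_\nu(z)\right]=-z^{\nu}K_{\nu-1}(z)<0$, the map $z\mapsto z^{\nu}K_\nu(z)$ is strictly decreasing on $(0,\infty)$ with supremum $2^{\nu-1}\Gamma(\nu)$ at $z=0^{+}$, so the limiting constant is strictly positive; but this monotonicity input appears nowhere in your proposal, and without it the positive-spectrum case is unproven. (A minor additional slip: for $n=1$ a full strictly operator-stable measure is strictly $\alpha$-stable but need not be $S\alpha S$; the reduction of fullness to the kernel being nonzero on a positive-measure set still holds.)

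For contrast, the paper argues at the opposite end of the scale, which is why it never needs monotonicity. In (d), when $H-qB>0$, it assumes the kernel \eqref{e:fx(y)_Bes} vanishes a.e., upgrades this by continuity to an identity off $\{0,{\bm x}\}$, iterates it along $k{\bm x}-{\bm y}$, $k\in\bbN$, and uses the exponential decay \eqref{e:mod_Bessel_second_kind_asymptotics_large_u} at infinity to force the contradiction $K_{H-qB}(\lambda\varphi(-{\bm y}))\varphi(-{\bm y})^{H-qB}=0$. In (e), it sends ${\bm y}=r{\bm x}/\|{\bm x}\|\to\infty$ along a ray, where $E=I$ makes $\varphi$ one-homogeneous along rays, so that the ratio of the two kernel terms tends to $e^{\lambda\|{\bm x}\|\varphi(-{\bm x}/\|{\bm x}\|)}>1$; the determinant then factors over Jordan blocks (both $K_J$ and $\varphi^{J}$ are triangular in the same basis) into products of such scalar differences. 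Note also that in your sketch of (e) the hypothesis $E=I$ plays no visible role; given that the paper explicitly records fullness for general anisotropic $E$ as an open problem, any completed argument for (e) should make clear where, or whether, $E=I$ is actually used.
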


\begin{rem}
In the general anisotropic case of $E$ satisfying relation \eqref{e:0<a1<...<ap}, it remains an open problem whether or not $X^{Bes}_{\lambda}({\bm x})$ is full at any ${\bm x} \in \bbR^d \backslash\{0\}$.
\end{rem}

%\section{Harmonizable representation}\label{sec:frequency_domain}

\section{Harmonizable TRF}\label{sec:frequency_domain}

As discussed in the Introduction, in this section we turn to the Fourier domain. We construct harmonizable-type TRF and investigate its essential properties. Since the integration kernels involved are naturally entry-wise $\bbC$-valued, we resort to $\bbC^n$-valued random measures.
%Recall from \eqref{e:0<a1<...<ap} that $E \in {\mathcal M}(d,\bbR)$ has eigenvalues with positive real parts $0 < a_1 < \hdots < a_p$, $p \leq d$, and that $q={\rm \textnormal{tr}}(E)$. %For the reader's convenience, the concept of $\bbC$-valued $S \alpha S$ random measures is briefly recapped in Definition \ref{defn:C_SalphaS}, Section \ref{s:SalphaS_random_measures}.
%%%%%%%%%%%%%%%
% We don't seem to need the condition $\varpi(E^*) > \Upsilon_H$.
%%%%%%%%
\begin{defn}\label{defn:H-TRF}
Let $\widetilde{\mu}$ be a full, strictly operator-stable measure on ${\mathcal B}(\bbR^{2n})$ with log-characteristic function $\widetilde{\psi}$ and exponent $\widetilde{B}:=B \oplus B$. Let $\widetilde{M}(d{\bm y})$ be a $\bbC^n$-valued ISRM identified with a $\bbR^{2n}$-valued ISRM $M$ generated by $\widetilde{\mu}$ and $(\bbR^d, {\mathcal B}(\bbR^d), \textnormal{Leb}_d(\bbR^d))$. Fix $\lambda > 0$, and let $H$ be a (Hurst) matrix as in \eqref{e:H=PJHP^(-1)} and \eqref{e:0<minReeig(H)=<maxReeig(H)<a1}. Let $\varphi: \bbR^d \rightarrow [0,\infty)$ be a continuous $E^*$-homogeneous function. We define a vector-valued \textit{harmonizable-TRF} (H-TRF) as the random field whose stochastic integral representation is given by
\begin{eqnarray}\label{e:H-TRF}
\widetilde{X}_{\lambda} ({\bm x}) = \Re
\int_{\mathbb{R}^d} (e^{- \imag \langle \bm{x},\bm{\xi}\rangle} - 1) ( \lambda + \varphi({\bm \xi}))^{-H} ( \lambda + \varphi({\bm \xi}))^{-qB} \widetilde{M}(d{\bm \xi}), \quad {\bm x} \in \bbR^d.
\end{eqnarray}
 \end{defn}

Note that, by formally setting $\lambda=0$ in \eqref{e:H-TRF}, $\widetilde{X}_{0} ({\bm x})$ corresponds to \eqref{harmo_RF_aniso}. Also, for $n =1=d$ and $B = 1/2$ (Gaussian), \eqref{e:H-TRF} is a TFBM (Meerschaert and Sabzikar \cite{meerschaert:sabzikar:2013}).

In the following theorem and corollary, we establish the existence and fundamental properties of H-TRF. As discussed in Section \ref{s:preliminaries}, by comparison to moving average-type TRFs, showing the stationarity of increments requires an additional assumption.
\begin{thm}\label{thm:General_harmo_TOSSRF}
The random field \eqref{e:H-TRF} exists for every ${\bm x} \in \bbR^d$.
\end{thm}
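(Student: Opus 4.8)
The plan is to show that the $\bbC^n$-valued kernel
$$
f_{\bm x}({\bm \xi}) = (e^{-\imag\langle{\bm x},{\bm \xi}\rangle}-1)(\lambda+\varphi({\bm \xi}))^{-H}(\lambda+\varphi({\bm \xi}))^{-qB}
$$
is partially integrable with respect to $\widetilde{M}$ in the sense of \eqref{e:Re_int_f(x)M-tilde(dx)}, so that the stochastic integral \eqref{e:H-TRF} exists a.s. Because the integrand is $\bbC$-valued, I cannot directly invoke the sufficient condition \eqref{e:suff_cond_existence_I(f)}, which was stated for $\bbR^n$-valued integrands and the $\bbR^n$-valued ISRM $M(d{\bm y})$; instead I would pass to the real representation via the map $\Xi$ and use the integrability characterization of Kremer and Scheffler \cite{kremer:scheffler:2019}, Theorem 2.3, applied to $\Xi(f_{\bm x}({\bm \xi})^*{\mathbf u})$ and the log-characteristic function $\widehat{\psi}_{\Xi(M)}$ appearing in \eqref{e:chf_stoch_integral_partially_integ}. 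In practice this reduces, as in the moving-average case, to verifying a two-regime integrability bound: the operator norm $\|f_{\bm x}({\bm \xi})\|$ must be $L^{1/\Upsilon_B-\delta_1}$ near the set where the kernel is small and $L^{1/\varpi_B+\delta_2}$ where it is large, with the exponents governed by $\varpi_B,\Upsilon_B$.

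The key step is to estimate $\|f_{\bm x}({\bm \xi})\|$ in the two asymptotic regimes $\varphi({\bm \xi})\to 0$ and $\varphi({\bm \xi})\to\infty$, using the $E^*$-homogeneity of $\varphi$ and the polar decomposition \eqref{e:x=polar_coord}. First I would control the matrix-exponential factor: by the standard spectral estimate for $c^{A}$ (see \eqref{e:c^E}), for any $\eps>0$ there are constants such that $\|(\lambda+\varphi({\bm\xi}))^{-H-qB}\|$ is bounded above and below by powers $(\lambda+\varphi({\bm\xi}))^{-(\varpi_{H+qB})\mp\eps}$ and $(\lambda+\varphi({\bm\xi}))^{-(\Upsilon_{H+qB})\mp\eps}$, up to logarithmic factors coming from the Jordan blocks of $H$. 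Near infinity, the factor $(e^{-\imag\langle{\bm x},{\bm\xi}\rangle}-1)$ is bounded, so the kernel decays like a negative power of $\varphi({\bm\xi})$ determined by $\varpi_{H+qB}=\varpi_{H}+q\varpi_B$, and the high-value tail integral converges after switching to polar coordinates $(r,{\bm\theta})$ with Jacobian $r^{q-1}$ (recall $q=\tr(E^*)=\tr(E)$). Near the origin, the factor $(e^{-\imag\langle{\bm x},{\bm\xi}\rangle}-1)=O(\|{\bm\xi}\|)$ supplies an extra $O(\tau({\bm\xi}))$-type vanishing that tames the singularity of $(\lambda+\varphi)^{-H-qB}$; here the condition $0<\varpi_H\le\Upsilon_H$ from \eqref{e:0<minReeig(H)=<maxReeig(H)<a1} and the presence of $\lambda>0$ (which keeps $\lambda+\varphi$ bounded away from $0$) combine to make the low-value integral finite.

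I expect the main obstacle to be two intertwined technical points. The first is handling the non-normality of $H$: the Jordan structure in \eqref{e:H=PJHP^(-1)} produces polynomial-in-$\log\varphi({\bm\xi})$ prefactors in the bounds on $\|(\lambda+\varphi)^{-H-qB}\|$, and I must verify that these logarithmic corrections do not destroy the strict power-law margins $\delta_1,\delta_2$; this is handled by absorbing any $\log$ factor into an arbitrarily small $\eps$-loss in the exponent, which is permissible precisely because the eigenvalue inequalities are strict. The second is that the relevant integrability exponents depend on $\varpi_B,\Upsilon_B$ of the $2n$-dimensional exponent $\widetilde{B}=B\oplus B$, whose spectrum coincides with that of $B$, so $\varpi_{\widetilde B}=\varpi_B$ and $\Upsilon_{\widetilde B}=\Upsilon_B$; I would record this reduction explicitly so the real-valued integrability criterion can be applied to the $\bbR^{2n}$-valued ISRM. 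Once both regimes are shown integrable, partial integrability of $f_{\bm x}$ follows and \eqref{e:H-TRF} is well defined for every ${\bm x}\in\bbR^d$.
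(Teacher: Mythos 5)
Your setup is the same as the paper's: pass to the real $2n$-dimensional representation via $\Xi$, identify $\varpi_{\widetilde B}=\varpi_B$, $\Upsilon_{\widetilde B}=\Upsilon_B$, and reduce existence to finiteness of $\int_{\bbR^d}|\widehat{\psi}_{\Xi(M)}(\widetilde{f}_{\bm x}({\bm \xi})^*{\mathbf u})|\,d{\bm \xi}$. The gap is in how you then estimate this integral. You propose a two-regime bound on the operator norm of the \emph{whole} kernel, including the factor $(\lambda+\varphi({\bm \xi}))^{-qB}$, with exponents $\rho_1=1/\Upsilon_B-\delta_1$ (where the kernel is small) and $\rho_2=1/\varpi_B+\delta_2$ (where it is large). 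Since $\lambda>0$ makes the kernel bounded, the binding regime is ${\bm \xi}\to\infty$, with exponent $\rho_1$. There your estimate gives $\|f_{\bm x}({\bm \xi})\|\leq C(\lambda+\varphi({\bm \xi}))^{-(\varpi_H+q\varpi_B)+\eps}$, and after the polar change of variables (Jacobian $r^{q-1}$) the tail integral converges only if $(\varpi_H+q\varpi_B)\rho_1>q$, i.e.\ essentially $\varpi_H>q\,(\Upsilon_B-\varpi_B)$. Nothing in the hypotheses gives this: Definition \ref{defn:H-TRF} assumes only $\varpi_H>0$, and $\varpi_B<\Upsilon_B$ is exactly the generic operator-stable situation. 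For instance, with $d=1$, $E=1$ (so $q=1$), $n=2$, $B=\textnormal{diag}(1/2,2)$ (independent $S\alpha S$ coordinates with $\alpha_1=2$, $\alpha_2=1/2$) and $H=(0.1)\,I$, your sufficient condition reads $0.1>1.5$ and fails, while the theorem is still true. You assert convergence of the tail integral without checking this exponent inequality, and that is precisely the step that breaks; your method only covers the balanced case $\varpi_B=\Upsilon_B$.

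The paper closes this by never applying a norm bound to the $(\lambda+\varphi({\bm \xi}))^{-qB}$ factor. One writes $\widetilde{f}_{\bm x}({\bm \xi})^*{\mathbf u}=(\lambda+\varphi({\bm \xi}))^{-q\widetilde{B}^*}{\bm v}$, where ${\bm v}$ involves only the trigonometric terms and $(\lambda+\varphi({\bm \xi}))^{-H^*}{\mathbf u}_1$; strict operator-stability, via \eqref{e:s.psi-hat(u)=psi-hat(s^(B^*)u)} with $a_s=0$, then yields the \emph{exact} identity $\widehat{\psi}_{\Xi(M)}(\widetilde{f}_{\bm x}({\bm \xi})^*{\mathbf u})=(\lambda+\varphi({\bm \xi}))^{-q}\,\widehat{\psi}_{\Xi(M)}({\bm v})$, and only afterwards is the two-power bound \eqref{e:psi-hat(x)=<C(|x|^(rho1)+|x|^(rho2))} applied, to ${\bm v}$ alone (see \eqref{e:harm_exist_Euclidean_bound}). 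The scalar prefactor $(\lambda+\varphi({\bm \xi}))^{-q}$ exactly offsets the polar Jacobian $r^{q-1}$, so the tail condition collapses to $\varpi_H\rho_j>0$, which holds automatically (this is \eqref{e:harm_exist_polar_bound}); no vanishing of $e^{-\imag\langle{\bm x},{\bm \xi}\rangle}-1$ near the origin is needed, since $\lambda>0$ removes any singularity there. Two smaller inaccuracies in your write-up: without the commutativity assumption \eqref{e:HB=BH}, which is not in force in this theorem, you cannot merge $(\lambda+\varphi)^{-H}(\lambda+\varphi)^{-qB}$ into $(\lambda+\varphi)^{-H-qB}$; and the identity $\varpi_{H+qB}=\varpi_H+q\varpi_B$ is not true in general. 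Neither of these is the main obstruction, since a product-of-norms bound yields the same decay exponent, but the missing scaling step is.
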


\begin{cor}\label{c:stationary_increments_scaling_property_H}
Let $\{ \widetilde{X}_{\lambda} ({\bm x}) \}_{{\bm x} \in \mathbb{R}^d}$ be a H-TRF. Then,
\begin{itemize}
\item [(a)] under the commutativity condition \eqref{e:HB=BH},
$\widetilde{X}_{\lambda}$ is strictly operator-stable with exponent $B$ and satisfies the scaling property
\begin{eqnarray}
\label{e:scaling_H-TRF}
\{ \widetilde{X}_{\lambda} (c^E {\bm x}) \}_{{\bm x} \in \mathbb{R}^d} \stackrel{f.d.}{=}
\{ c^H \widetilde{X}_{c\lambda}({\bm x}) \}_{{\bm x} \in \mathbb{R}^d}, \quad c > 0;
\end{eqnarray}
\item[(b)] $\widetilde{X}_{\lambda}$ is stochastically continuous;
\item[(c)] if $\widetilde{\mu}(d{\bm x})$ as in Definition \ref{defn:H-TRF} satisfies the symmetry condition \eqref{e:Amu(dx)=mu(dx)}, then $\widetilde{X}_{\lambda}$ has stationary increments;
\item[(d)] $\widetilde{X}_{\lambda}({\bm x})$ is full at any ${\bm x} \in \bbR^d \backslash\{0\}$.
\end{itemize}
\end{cor}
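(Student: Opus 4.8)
The plan is to prove all four parts by reducing each to a manipulation of the log-characteristic function $\widetilde{\psi}$ of $\widetilde{\mu}$ through the stochastic-integral characteristic function formula \eqref{e:chf_stoch_integral_partially_integ}, which applies since the integral \eqref{e:H-TRF} exists by Theorem \ref{thm:General_harmo_TOSSRF}. Writing $f^{(\lambda)}_{\bm x}({\bm \xi}) = (e^{-\imag\langle{\bm x},{\bm \xi}\rangle}-1)(\lambda+\varphi({\bm \xi}))^{-H}(\lambda+\varphi({\bm \xi}))^{-qB}$ for the kernel, the joint log-characteristic function of $(\widetilde{X}_\lambda({\bm x}_1),\dots,\widetilde{X}_\lambda({\bm x}_m))$ at $({\bm u}_1,\dots,{\bm u}_m)$ is $\int_{\bbR^d}\widetilde{\psi}(\Xi(\sum_j f^{(\lambda)}_{{\bm x}_j}({\bm \xi})^*{\bm u}_j))\,d{\bm \xi}$ (partial integrability of the stacked kernel follows from the existence of each component by linearity). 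All the structure lives in how the kernel and $\widetilde{\psi}$ transform.

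For part (a), I would first establish the pointwise kernel identity $f^{(\lambda)}_{c^E{\bm x}}(c^{-E^*}{\bm \eta}) = c^{H+qB} f^{(c\lambda)}_{\bm x}({\bm \eta})$. The three ingredients are: $\langle c^E{\bm x}, c^{-E^*}{\bm \eta}\rangle = \langle{\bm x},{\bm \eta}\rangle$ (since $(c^E)^t = c^{E^*}$), so the oscillatory factor is unchanged; $E^*$-homogeneity \eqref{e:E-homogeneous}, which gives $\varphi(c^{-E^*}{\bm \eta}) = c^{-1}\varphi({\bm \eta})$ and hence $\lambda+\varphi(c^{-E^*}{\bm \eta}) = c^{-1}(c\lambda+\varphi({\bm \eta}))$; and the scalar splitting $(c^{-1}s)^{-H} = c^{H}s^{-H}$, $(c^{-1}s)^{-qB} = c^{qB}s^{-qB}$. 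The commutativity \eqref{e:HB=BH} then lets me collect $c^{H}c^{qB} = c^{H+qB}$ and move it to the front. I would then compute the left-hand log-characteristic function of \eqref{e:scaling_H-TRF}, substitute ${\bm \xi} = c^{-E^*}{\bm \eta}$ with Jacobian $|\det c^{-E^*}| = c^{-q}$, and use $(c^{H+qB})^* = c^{H^*+qB^*}$ together with commutativity to factor out $c^{qB^*}$; under the intertwining $\Xi(c^{qB^*}{\bm w}) = c^{q\widetilde{B}^*}\Xi({\bm w})$ (valid because $\widetilde{B} = B\oplus B$) the integrand becomes $c^{-q}\widetilde{\psi}(c^{q\widetilde{B}^*}\Xi({\bm w}({\bm \eta})))$ with ${\bm w}({\bm \eta}) = \sum_j f^{(c\lambda)}_{{\bm x}_j}({\bm \eta})^* c^{H^*}{\bm u}_j$. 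Finally the strict operator-stability relation \eqref{e:s.psi-hat(u)=psi-hat(s^(B^*)u)} (with $a_s = 0$) taken at $s = c^q$ gives $c^{-q}\widetilde{\psi}(c^{q\widetilde{B}^*}\cdot) = \widetilde{\psi}(\cdot)$, so the $c^{qB}$ from the range and the $c^{-q}$ Jacobian cancel exactly, leaving the joint log-characteristic function of $(c^H\widetilde{X}_{c\lambda}({\bm x}_j))_j$ at $({\bm u}_j)_j$. The same relation \eqref{e:s.psi-hat(u)=psi-hat(s^(B^*)u)}, applied directly to $\int\widetilde{\psi}(\Xi(f^{(\lambda)}_{\bm x}({\bm \xi})^*{\bm u}))\,d{\bm \xi}$, yields $s\,\Psi({\bm u}) = \Psi(s^{B^*}{\bm u})$ and hence strict operator-stability with exponent $B$.

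For the remaining parts, (b) follows by showing the log-characteristic function of $\widetilde{X}_\lambda({\bm x}_k) - \widetilde{X}_\lambda({\bm x})$ tends to $0$: its integrand is controlled by $|e^{-\imag\langle{\bm x}_k,{\bm \xi}\rangle} - e^{-\imag\langle{\bm x},{\bm \xi}\rangle}|\to 0$ pointwise, and dominated convergence applies with the integrable majorant furnished by the existence proof of Theorem \ref{thm:General_harmo_TOSSRF}. For (c), the increment kernel factors as $f^{(\lambda)}_{{\bm x}+{\bm h}} - f^{(\lambda)}_{\bm h} = e^{-\imag\langle{\bm h},{\bm \xi}\rangle} f^{(\lambda)}_{\bm x}$; taking adjoints multiplies ${\bm w}({\bm \xi}) = \sum_j f^{(\lambda)}_{{\bm x}_j}({\bm \xi})^*{\bm u}_j$ by $e^{\imag\langle{\bm h},{\bm \xi}\rangle}$, and under $\Xi$ this is precisely left multiplication by a rotation $A_{\langle{\bm h},{\bm \xi}\rangle}\in{\mathcal T}(2n)$ from \eqref{e:T(2n)}. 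The symmetry assumption \eqref{e:Amu(dx)=mu(dx)} is equivalent to $\widetilde{\psi}(A\,\cdot) = \widetilde{\psi}(\cdot)$ for $A\in{\mathcal T}(2n)$ (using that ${\mathcal T}(2n)$ is a group closed under transpose), so the increment's joint log-characteristic function coincides with that of $(\widetilde{X}_\lambda({\bm x}_j))_j$, giving stationary increments. For (d), a direction ${\bm u}\neq 0$ is degenerate iff $\int\widetilde{\psi}(t\,\Xi(f^{(\lambda)}_{\bm x}({\bm \xi})^*{\bm u}))\,d{\bm \xi}\equiv 0$ in $t$; since $\widetilde{\mu}$ is full, $\Re\widetilde{\psi}({\bm v}) < 0$ for ${\bm v}\neq 0$, so it suffices to show $f^{(\lambda)}_{\bm x}({\bm \xi})^*{\bm u}\neq 0$ on a set of positive measure. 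This is immediate: $(\lambda+\varphi)^{-qB^*}(\lambda+\varphi)^{-H^*}$ is invertible for every ${\bm \xi}$, while $e^{\imag\langle{\bm x},{\bm \xi}\rangle}-1$ vanishes only on the Lebesgue-null union of hyperplanes $\{\langle{\bm x},{\bm \xi}\rangle\in 2\pi\bbZ\}$ (here ${\bm x}\neq 0$ is used).

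I expect the bookkeeping in part (a) to be the crux. One must simultaneously track the domain change of variables driven by $E^*$-homogeneity, the matrix-exponent algebra in the range (which forces the commutativity \eqref{e:HB=BH} both to collect $c^{H+qB}$ and to commute $c^{qB^*}$ past the kernel), and the $\Xi$-intertwining with $\widetilde{B} = B\oplus B$. The decisive point is the exact cancellation of the range factor $c^{qB}$ against the Jacobian $c^{-q}$ via the operator-stability rescaling at $s = c^q$; verifying that the adjoints and the real-linear map $\Xi$ interlock so that this cancellation goes through is the delicate step, whereas (b)--(d) are comparatively routine given the existence bounds.
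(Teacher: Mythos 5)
Your proposal is correct and, for parts (a)--(c), it is essentially the paper's own argument: the scaling in (a) is obtained exactly as in the paper, via the substitution ${\bm \eta} = c^{E^*}{\bm \xi}$ (Jacobian $c^{-q}$), the commutativity condition \eqref{e:HB=BH} used both to collect $c^{H^*+qB^*}$ and to commute $c^{qB^*}$ past the kernel, the intertwining $\Xi(c^{qB^*}{\bm w}) = c^{q\widetilde{B}^*}\Xi({\bm w})$ coming from $\widetilde{B}=B\oplus B$, and the strict-stability rescaling \eqref{e:s.psi-hat(u)=psi-hat(s^(B^*)u)} at $s=c^q$, which cancels the Jacobian; (b) is the same dominated-convergence argument, with the uniform integrable majorant furnished by the bounds in the proof of Theorem \ref{thm:General_harmo_TOSSRF}; and (c) is the paper's factorization of the increment kernel by $e^{-\imag\langle{\bm h},{\bm \xi}\rangle}$, identified under $\Xi$ with a rotation in ${\mathcal T}(2n)$ and absorbed by the symmetry condition \eqref{e:Amu(dx)=mu(dx)}. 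Part (d) is the one place you genuinely deviate: the paper simply invokes Corollary 2.7 of Kremer and Scheffler (2019) --- fullness follows once $\det|\Re f({\bm \xi})|+\det|\Im f({\bm \xi})|>0$ on a set of positive Lebesgue measure --- whereas you re-derive a fullness criterion from scratch by projecting onto a direction ${\bm u}\neq 0$ and using that $\Re\widehat{\psi}({\bm v})<0$ for ${\bm v}\neq 0$; both arguments reduce to the same geometric observation, namely that $(\lambda+\varphi({\bm \xi}))^{-H^*-qB^*}$ is invertible for every ${\bm \xi}$ while $e^{-\imag\langle{\bm x},{\bm \xi}\rangle}-1$ vanishes only on a null union of hyperplanes (using ${\bm x}\neq 0$). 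Two small points about your version of (d): degeneracy of $\langle{\bm u},\widetilde{X}_\lambda({\bm x})\rangle$ means its log-characteristic function equals $\imag t a$ for some constant $a$, not that it vanishes identically, so one must pass to real parts before concluding --- which your appeal to $\Re\widehat{\psi}<0$ in effect does; and the strict negativity of $\Re\widehat{\psi}$ away from the origin is a property of \emph{full operator-stable} laws rather than of general full infinitely divisible laws, so it needs a citation or a short proof. Neither point affects correctness: your route buys a self-contained argument for (d) at the cost of these auxiliary facts, while the paper's citation is shorter.
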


\begin{rem}\label{rem:TFSM_TFSMII}
%The univariate stochastic processes tempered fractional stable motion (TFSM) and TFSM of the second kind (TFSM\hspace{0.5mm}II) have been mathematically investigated and used in different areas of application of semi-LRD or transience models. These areas include finance, physics and turbulence (see, for instance, Zhang and Xiao \cite{zhang:xiao;2017}, Zeng et al.\ \cite{zeng:yang:chen:2016}, Chen et al.\ \cite{chen:wang:deng:2017}, Meerschaert et al.\ \cite{meerschaert:sabzikar:phanikumar:zeleke:2014}). Such processes were originally constructed by means of tempered fractional calculus (see Meerschaert and Sabzikar \cite{meerschaert:sabzikar:2013}, Sabzikar and Surgailis \cite{sabzikar:surgailis:2018}).
It is illustrative to revisit, in the framework of TRF, the univariate stochastic processes tempered fractional stable motion of the first and second kinds (TFSM\hspace{0.5mm} and TFSM\hspace{0.5mm}II, respectively; Meerschaert and Sabzikar \cite{meerschaert:sabzikar:2016}, Sabzikar and Surgailis \cite{sabzikar:surgailis:2018}). While dropping condition \eqref{e:E-homogeneous>0}, moving average- and harmonizable-type TFSMs correspond to the instances generated by taking $\varphi(x) = (-x)_{+} = \max\{x,0\}$ and $\varphi(x)= (\lambda + \imag \xi)^{-1}$ in \eqref{e:MA-TOSSRF} and \eqref{e:MA-B-TOSSRF}, respectively. On the other hand, TFSM\hspace{0.5mm}II finds no corresponding random field among moving average-type TRF defined in this paper. In fact, the moving average subclass of TFSM\hspace{0.5mm}II is given by $X^{I\!I}_{H,\alpha,\lambda}(t):=\int_\rr h_{H,\alpha,\lambda}(t;y) Z_{\alpha}(dy)$, $t \in \bbR$, where
%\begin{equation}\label{TFSM2}
%X^{I\!I}_{H,\alpha,\lambda}(t):=\int_\rr h_{H,\alpha,\lambda}(t;y)
%Z_{\alpha}(dy), \quad t \in \bbR,
%\end{equation}
\begin{equation*}\label{TFSM2}%\label{eq:integrand}
h_{H,\alpha,\lambda}(t;y) := (t-{\bm y})_+^{H - \frac{1}{\alpha}} e^{-\lambda (t-{\bm y})_+} - (-{\bm y})_+^{H - \frac{1}{\alpha}} e^{-\lambda (-{\bm y})_+} \\
+ \lambda \int_{0}^{t} (s-{\bm y})_{+}^{H-\frac{1}{\alpha}}e^{-\lambda(s-{\bm y})_{+}}\ ds.
\end{equation*}
The same holds for its harmonizable subclass, defined by $\widetilde{X}^{I\!I}_{H,\alpha,\lambda}(t)=
\frac{1}{\sqrt{2\pi}}\int_{\rr}\frac{ e^{ i\xi t}-1}{i\xi}(\lambda+ i\xi)^{\frac{1}{\alpha}-H} {W}_{\alpha}(d\xi)$, $t \in \bbR$.

%\begin{equation*}\label{eq:har}
%\widetilde{X}^{I\!I}_{H,\alpha,\lambda}(t)=
%\frac{1}{\sqrt{2\pi}}\int_{\rr}\frac{ e^{ i\xi t}-1}{i\xi}(\lambda+ i\xi)^{\frac{1}{\alpha}-H} {W}(d\xi), \quad t \in \bbR.
%\end{equation*}
\end{rem}

\section{Representations of isotropic tempered operator fractional Brownian fields}\label{s:Gaussian}

%Throughout the section, $Z_{\alpha}(d\bm x) = Z(d\bm x)$ and $W_{\alpha}(d\bm x) = W(d\bm x)$ denote, respectively, an independently scattered Gaussian random measure and an independently scattered complex isotropic Gaussian random measure, both on $\rr^d$ and with control Lebesgue control measure.
%define tempered fractional Gaussian random fields by applying the special case of random fields $X^{I}_{\lambda}$ and $X^{I\!I}_{\lambda}$ when $\varphi(\bm x)= \|\bm x\|$,
%$Z_{\alpha}(d\bm x) = Z(d\bm x)$ is an independently scattered Gaussian random measure on $\rr^d$ with Lebesuge control measure $\nu^d$, and $W_{\alpha}(d\bm x) = W(d\bm x)$ is a complex isotropic Gaussian random measure with Lebesgue control measure $\nu^d$.
%
%\subsection{Tempered operator fractional Gaussian field of the first kind}

It is well known that moving average and harmonizable non-Gaussian fractional random fields are generally non-equivalent. However, assuming Gaussianity, equivalence can be established for certain subclasses of random fields. As discussed in the Introduction, it is intricate to relate moving average and harmonizable representations of Gaussian TRF directly by means of its covariance function, even under isotropy. So, in this section, we establish harmonizable representations of Gaussian and isotropic instances of MA-TRF and MA-B-TRF by computing the Fourier transforms of their kernels.

In the next definition, we recap the notion of isotropy for random fields (cf.\ Didier et al.\ \cite{didier:meerschaert:pipiras:2018:symmetries}, Section 3.4.1).
\begin{defn}
We say a $\bbR^n$-valued random field $X = \{X({\bm x})\}_{{\bm x} \in \bbR^d}$ is \textit{isotropic} when
$$
\{X(O{\bm x})\}_{{\bm x} \in \bbR^d} \stackrel{f.d.}= \{X({\bm x})\}_{{\bm x} \in \bbR^d}, \quad O \in O(d).
$$
\end{defn}
The Gaussian, isotropic instance of MA-TRF is described in the next definition (cf.\ expression \eqref{e:MA-TOSSRF}).
\begin{defn}\label{TFGFsecond_diff}
Fix $\lambda>0$. Let $H$ be a (Hurst) matrix as in \eqref{e:H=PJHP^(-1)} and \eqref{e:0<minReeig(H)=<maxReeig(H)<a1}, and let $\bm{Z}(d\bm{y})$ be a $\bbR^n$-valued Gaussian ISRM with Lebesgue control measure on $\bbR^d$. The random field
\begin{equation} \label{TFGFIdefn}
B_{H, \lambda}(\bm{x})  := \int_{\mathbb{R}^{d}} \Big[
e^{- \lambda \|\bm{x}-\bm{y}\|} \|  \bm{x}-\bm{y} \| ^ {H-\frac{d}{2}I} -
e^{- \lambda \|-\bm{y}\|}\  \|-\bm{y}\| ^ {H-\frac{d}{2}I}
\Big] \bm{Z}(d\bm{y}), \quad {\bm x} \in \bbR^d,
\end{equation}
is called an \textit{isotropic tempered operator fractional Brownian field} (ITOFBF).
\end{defn}
Because $B_{H, \lambda}(\bm{x})$ is an instance of the random field $X_{\lambda}$ with $\varphi(\bm x)= \|\bm x\|$ and $M(d\bm x) = Z(d\bm x)$, all the properties stated in Theorem \ref{thm:exponential_moving_TOSSF} and Corollary \ref{cor:stationary_increments_scaling_property_MA} hold with $B = \frac{1}{2}I$. In addition, ITOFBF has finite second moments, whose structure we provide in the following corollary. In particular, note that the covariance structure of ITOFBF does have the form \eqref{e:cov_function_under_isotropy}.
%
%\begin{prop}
%For $H>0$ and $\lambda>0$ the moving average tempered fractional L\'{e}vy field of the first kind given by \eqref{TFGFIdefn} is well defined.
%\end{prop}
%
%\begin{prop}
%The TFGFI \eqref{TFGFIdefn} is a Gaussian stochastic random field with stationary increments, such that
%\begin{equation}\label{scaling property TFGFI}
% \{ B_{H, \lambda}^{I} (c\bm{x}) \}_{\bm{x} \in \mathbb{R}^{d}}
%\stackrel{d}{=} \{ c^{H} B_{H, c\lambda}^{I} (\bm{x}) \}_{\bm{x} \in \mathbb{R}^{d}}
%\end{equation}
%for all $c > 0$.
%\end{prop}
\begin{cor}
\label{cor100}
Let $B_{H, \lambda} = \{B_{H, \lambda}({\bm x})\}_{{\bm x} \in \bbR^d}$ be an ITOFBF. Then,
\begin{itemize}
\item [$(i)$] $B_{H, \lambda}$ is, indeed, isotropic;
\item [$(ii)$] $B_{H, \lambda}$ has covariance function
$$
\textnormal{Cov}[ B_{H, \lambda}(\bm{x}), B_{H, \lambda}(\bm{x^{\prime}}) ]
$$
\begin{equation} \label{covariance TFGFI}
= \frac{1}{2} [  \| \bm{x} \|^{H} C^2_{\bm{x}} \| \bm{x} \|^{H^t} + \| \bm{x^{\prime}} \|^{H}C^2_{\bm{x^{\prime}}} \|\bm{x^{\prime}} \|^{H^t}
- \| \bm{x-x^{\prime}} \|^{H}C^2_{\bm{x-x^{\prime}}} \|\bm{x-x^{\prime}} \|^{H^t} ]
\end{equation}
for all $\bm{x}, \bm{x^{\prime}} \in \mathbb{R}^{d}$, where
\begin{equation}\label{e:Cx}
\displaystyle{ {\mathcal S}_{\geq 0}(n,\bbR) \ni C^2_{\bm{x}} = \mathbb{E} \left[ B_{H, \| \bm{x} \| \lambda} (\bm{e_{1}})B_{H, \| \bm{x} \| \lambda}(\bm{e_{1}})^{t} \right] } \ , \quad \bm{e^t_{1}} = (1, 0, \hdots, 0).
\end{equation}
\end{itemize}
\end{cor}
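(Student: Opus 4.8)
The plan is to treat $B_{H,\lambda}$ as a centered Gaussian field, so that its finite-dimensional laws are governed entirely by the matrices $\bbE[B_{H,\lambda}(\bm x)B_{H,\lambda}(\bm x')^t]$, and then to extract these from the distributional identities already available rather than from the defining integral. Throughout, write $f_{\bm x}(\bm y) = e^{-\lambda\|\bm x - \bm y\|}\|\bm x - \bm y\|^{H-\frac{d}{2}I} - e^{-\lambda\|\bm y\|}\|\bm y\|^{H-\frac{d}{2}I}$, so that $B_{H,\lambda}(\bm x) = \int_{\bbR^d} f_{\bm x}(\bm y)\,\bm Z(d\bm y)$; note $f_{\bm 0}\equiv 0$, hence $B_{H,\lambda}(\bm 0)=\bm 0$, and that existence of all second moments is automatic since the field exists by Theorem \ref{thm:exponential_moving_TOSSF} and is Gaussian.

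For $(i)$ I would argue by a rotation of the integration variable. For $O\in O(d)$ the Euclidean norm satisfies $\|O\bm u\|=\|\bm u\|$, so $f_{O\bm x}(O\bm y)=f_{\bm x}(\bm y)$; and since $\textnormal{Leb}_d$ is $O$-invariant and a Gaussian ISRM is determined by its control measure, $\{\bm Z(O\,\cdot)\}\eqfd\{\bm Z(\cdot)\}$. Applying the substitution $\bm y\mapsto O\bm y$ simultaneously in $B_{H,\lambda}(O\bm x_1),\dots,B_{H,\lambda}(O\bm x_m)$, for any finite collection $\bm x_1,\dots,\bm x_m$, then yields $\{B_{H,\lambda}(O\bm x)\}_{\bm x}\eqfd\{B_{H,\lambda}(\bm x)\}_{\bm x}$, which is isotropy.

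For the variance term in $(ii)$ I would combine isotropy with the scaling relation \eqref{e:X-lambda(c^Ex)=c^H_X-clambda(x)} of Corollary \ref{cor:stationary_increments_scaling_property_MA}(a). Here $\varphi(\bm x)=\|\bm x\|$ is $I$-homogeneous, so $E=I$ and $c^E=cI$, and the Gaussian case corresponds to $B=\tfrac12 I$. Fixing $\bm x\neq\bm 0$, isotropy gives $B_{H,\lambda}(\bm x)\eqfd B_{H,\lambda}(\|\bm x\|\bm e_1)$, and scaling with $c=\|\bm x\|$ gives $B_{H,\lambda}(\|\bm x\|\bm e_1)\eqfd \|\bm x\|^H B_{H,\|\bm x\|\lambda}(\bm e_1)$. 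Passing to second moments produces $\bbE[B_{H,\lambda}(\bm x)B_{H,\lambda}(\bm x)^t]=\|\bm x\|^H C^2_{\bm x}\|\bm x\|^{H^t}$ with $C^2_{\bm x}$ exactly as in \eqref{e:Cx}, and $C^2_{\bm x}\in{\mathcal S}_{\geq0}(n,\bbR)$ since it is a second-moment matrix. Using stationarity of increments (Corollary \ref{cor:stationary_increments_scaling_property_MA}(c)) together with $B_{H,\lambda}(\bm 0)=\bm0$, the increment obeys $B_{H,\lambda}(\bm x)-B_{H,\lambda}(\bm x')\eqfd B_{H,\lambda}(\bm x-\bm x')$, so its second-moment matrix equals $\|\bm x-\bm x'\|^H C^2_{\bm x-\bm x'}\|\bm x-\bm x'\|^{H^t}$. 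I would then substitute these three matrices into the polarization identity
\[
\bbE[UV^t]+\bbE[VU^t]=\bbE[UU^t]+\bbE[VV^t]-\bbE[(U-V)(U-V)^t],\quad U=B_{H,\lambda}(\bm x),\ V=B_{H,\lambda}(\bm x'),
\]
to obtain \eqref{covariance TFGFI}.

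The step I expect to require the most care is the matrix (operator) non-commutativity, which is precisely the obstacle flagged in the Introduction. Because the kernel carries $\|\cdot\|^H$ on the left and $\|\cdot\|^{H^t}$ on the right while the range covariance of $\bm Z$ sits between them, the cross-covariance $\bbE[B_{H,\lambda}(\bm x)B_{H,\lambda}(\bm x')^t]$ is in general not symmetric, so the polarization identity recovers directly only its symmetric part, which is exactly the right-hand side of \eqref{covariance TFGFI}. The delicate point is thus to confirm that this symmetric three-term combination is the covariance structure being reported: it is the second-order object governing the stationary, centered Gaussian increments, it is genuinely symmetric along the locus $\|\bm x\|=\|\bm x'\|$ via the reflection $O\in O(d)$ swapping $\bm x$ and $\bm x'$ (combined with isotropy from $(i)$), and it is expressed through the commuting family $\{\|\bm u\|^H\}$ of functions of the single matrix $H$. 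Carefully tracking these $\|\cdot\|^H$ factors, rather than evaluating $\int f_{\bm x}\,Q\,f_{\bm x'}^t$ directly, is what keeps the argument tractable.
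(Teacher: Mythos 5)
Your overall architecture (isotropy via rotating the integration variable, the scaling relation \eqref{e:X-lambda(c^Ex)=c^H_X-clambda(x)} plus isotropy to identify the variance matrices, stationary increments, then polarization) matches the paper's proof, but there is a genuine gap at exactly the point you flag and then leave unresolved. The polarization identity determines only the symmetric part $\frac{1}{2}\{\bbE[B_{H,\lambda}(\bm x)B_{H,\lambda}(\bm x')^t]+\bbE[B_{H,\lambda}(\bm x')B_{H,\lambda}(\bm x)^t]\}$, whereas \eqref{covariance TFGFI} is a claim about the full cross-covariance matrix $\textnormal{Cov}[B_{H,\lambda}(\bm x),B_{H,\lambda}(\bm x')]=\bbE[B_{H,\lambda}(\bm x)B_{H,\lambda}(\bm x')^t]$ itself. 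To pass from the symmetric part to the matrix itself you must prove the exchange symmetry $\bbE[B_{H,\lambda}(\bm x)B_{H,\lambda}(\bm x')^t]=\bbE[B_{H,\lambda}(\bm x')B_{H,\lambda}(\bm x)^t]$ for \emph{all} $\bm x,\bm x'$; your proposal instead asserts that the cross-covariance ``is in general not symmetric,'' which, if it were true, would make the corollary false as stated. Your only argument toward symmetry is the reflection swapping $\bm x$ and $\bm x'$, but an orthogonal map preserves Euclidean norms, so such a swap exists only on the locus $\|\bm x\|=\|\bm x'\|$; for points with different norms the argument produces nothing, and the remaining justifications (``second-order object governing the increments,'' commutativity of the family $\|\bm u\|^H$) do not bear on the antisymmetric part, since the joint Gaussian law of $(B_{H,\lambda}(\bm x),B_{H,\lambda}(\bm x'))$ depends on the full cross-covariance, antisymmetric part included.

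The paper closes precisely this gap: since $-I\in O(d)$, isotropy gives $\{B_{H,\lambda}(-\bm x)\}_{\bm x}\eqfd\{B_{H,\lambda}(\bm x)\}_{\bm x}$, and this reflection invariance combined with stationarity of increments yields the exchange symmetry for all pairs of points, by adapting the argument of Didier and Pipiras \cite{didier:pipiras:2011}, Proposition 5.1 (time-reversibility of a Gaussian, stationary-increment process forces symmetry of its covariance function). Only after this step does polarization deliver \eqref{covariance TFGFI}, with isotropy then giving the identification \eqref{e:Cx}. This is not a cosmetic issue peculiar to the matrix setting: for a stationary-increment field, the antisymmetric part of the cross-covariance is not determined by the variance function $\bm u\mapsto\bbE[B_{H,\lambda}(\bm u)B_{H,\lambda}(\bm u)^t]$ alone, and for operator fractional Brownian motions it is generically nonzero; an extra symmetry input is genuinely needed, and reflection invariance is what supplies it here.
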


In the next proposition, we obtain the harmonizable representation of ITOFBF. We focus on the case of greatest interest in practice, namely, when the Hurst matrix $H = PJ_HP^{-1}$ has simple and real eigenvalues, with arbitrary coordinates $P \in GL(n,\bbC)$. Throughout the rest of the paper, $_2F_1(a;b;c;z)$ represents a Gaussian hypergeometric function, which is defined as
\begin{equation}\label{Fhyeorgeometric}
\begin{split}
_2F_1(a;b;c;z)&=\sum_{j=0}^{\infty}\frac{\Gamma(a+j)\Gamma(b+j)\Gamma(c)}{\Gamma(a)\Gamma(b)\Gamma(c+j)\Gamma(j+1)}z^j\\
&=1+\frac{a\cdot b}{c\cdot 1}z+\frac{a(a+1)b(b+1)}{c(c+1)\cdot 1\cdot 2}z^{2}+\ldots
\end{split}
\end{equation}
for all $a,b \in \bbC$, all complex $|z|<1$ and real $c$ not a negative integer. In the statement of the proposition,
\begin{equation}\label{e:h-ell,ell=1,..,n}
h_\ell, \quad \ell = 1,\hdots, n,
\end{equation}
denote the possibly repeated, $\bbR$--valued and ordered eigenvalues (characteristic roots) of $H$.
\begin{prop}\label{p:TFGFI harmo}
Let $B_{H, \lambda}= \{B_{H, \lambda} ( \bm{x} ) \}_{\bm{x} \in \bbR^d}$ be an ITOFBF as in \eqref{TFGFIdefn} with parameters $\lambda > 0$ and $H = PJ_HP^{-1}$, $P \in GL(n,\bbC)$, where the eigenvalues of the Hurst matrix are real and simple. Then, $B_{H, \lambda}$ admits the harmonizable representation
\begin{equation}\label{TFGFI harmo}
\{B_{H, \lambda} ( \bm{x} )\}_{{\bm x}\in \bbR^d} \stackrel{f.d.}= \Big\{\frac{1}{\sqrt{2\pi}} \hspace{1mm}C_{H,\lambda}
%\int_{\mathbb{R}^{d}} \frac {e^{- i <\bm{\xi},\bm{x}>} - 1} {\sqrt{2\pi}}
%\frac{ (2 \pi)^{ \frac{d}{2} } } { 2^{ \frac{d-2} {2} } \lambda^{ H + \frac{d}{2} } }
%\sum_{m=0}^{\infty} \frac { \Gamma \left(2m + H + \frac{d}{2} \right) (-1)^{m} \| \bm{\xi} \|^{2m} }
%{ 2^{2m} \lambda^{2m} \Gamma \left( m + \frac{d}{2} \right) m! } \bm{\hat{B}}(d\bm{\xi}) \\
 \int_{\mathbb{R}^{d}} (e^{- \imag \langle\bm{\xi},\bm{x}\rangle} - 1)
\ {}_2 F_1 \Big( \frac{I \frac{d}{2} + H} {2}, \frac{ I\frac{d}{2} + H + I} {2};
\frac{d}{2}; - \frac{\| \bm{\xi} \|^2} {\lambda^2} \Big)\
W(d\bm{\xi})\Big\}_{{\bm x}\in \bbR^d},
\end{equation}
where $W(d\bm{\xi})$ is a $\bbC^n$-valued Gaussian random measure with Lebesgue control measure such that $W(-d\bm{\xi}) = \overline{W(d\bm{\xi})}$ a.s. In \eqref{TFGFI harmo}, we use \eqref{Fhyeorgeometric} to define the matrix-valued ${}_2 F_1$ function as
\begin{equation}\label{e:2F1_matrix-valued}
{\mathcal M}(n,\bbR) \ni {}_2 F_1 \Big( \frac{ I\frac{d}{2} + H} {2}, \frac{ I\frac{d}{2} + H + I} {2};\frac{d}{2}; - \frac{\| \bm{\xi} \|^2} {\lambda^2} \Big)
\end{equation}
$$
:= P \textnormal{diag}\Big( {}_2 F_1 \Big( \frac{ \frac{d}{2} + h_1} {2}, \frac{ \frac{d}{2} + h_1 + 1} {2}; \frac{d}{2}; - \frac{\| \bm{\xi} \|^2} {\lambda^2} \Big),\hdots,{}_2 F_1 \Big( \frac{ \frac{d}{2} + h_n} {2}, \frac{ \frac{d}{2} + h_n + 1} {2}; \frac{d}{2}; - \frac{\| \bm{\xi} \|^2} {\lambda^2} \Big)  \Big) P^{-1},
$$
and $C_{H,\lambda}$ is a matrix constant given by
$$
C_{H,\lambda}= \frac{(2 \pi)^{ \frac{d} {2} }}{\lambda^{\frac{d}{2}}2^{ \frac{d-2} {2} } \Gamma(\frac{d}{2})}  P \textnormal{diag}\Big( \Gamma\Big(\frac{d}{2} + h_1\Big)
 \lambda^{ - h_1 }, \hdots,
\Gamma\Big(\frac{d}{2} + h_n\Big)
 \lambda^{ -h_n }   \Big)
P^{-1}.
$$
\end{prop}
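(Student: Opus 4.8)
The plan is to exploit that $B_{H,\lambda}$ is a centered Gaussian field, so its finite-dimensional distributions are completely determined by its covariance function, and the candidate harmonizable object on the right is also a centered Gaussian field. Hence it suffices to verify that the two fields have the same covariance. Writing the moving-average kernel as $g_{\bm x}(\bm y) = f(\bm x - \bm y) - f(-\bm y)$ with $f(\bm z) = e^{-\lambda\|\bm z\|}\|\bm z\|^{H - \frac{d}{2} I}$, the covariance of $B_{H,\lambda}$ is the $L^2(\mathbb{R}^d)$ Gram matrix $\int_{\mathbb{R}^d} g_{\bm x}(\bm y)\, g_{\bm x'}(\bm y)^t\, d\bm y$ of the kernels. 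Under \eqref{e:0<minReeig(H)=<maxReeig(H)<a1} each eigenvalue satisfies $h_\ell > 0$, which controls the singularity of $f$ at the origin, while the exponential tempering controls the decay at infinity; together these place $g_{\bm x}$ in $L^2(\mathbb{R}^d)$, so Parseval's identity applies and rewrites the Gram matrix as a Fourier-domain integral $c_d\int_{\mathbb{R}^d}\widehat{g_{\bm x}}(\bm\xi)\,\widehat{g_{\bm x'}}(\bm\xi)^*\,d\bm\xi$, where $c_d$ is the Parseval constant for the chosen Fourier convention.

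First I would compute $\widehat{g_{\bm x}}$. The translation-and-reflection structure gives $\widehat{g_{\bm x}}(\bm\xi) = (e^{-\imag\langle\bm\xi,\bm x\rangle}-1)\,\widehat f(\bm\xi)$, with the phase factor appearing exactly as in \eqref{TFGFI harmo}; because $f$ is real and radial, $\widehat f$ is real and even, and consequently $\widehat{g_{\bm x}}(-\bm\xi) = \overline{\widehat{g_{\bm x}}(\bm\xi)}$. This Hermitian symmetry is precisely what guarantees that the right-hand integral against the Hermitian random measure $W$ (with $W(-d\bm\xi) = \overline{W(d\bm\xi)}$) is real-valued and defines a genuine real Gaussian field.

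The core of the argument is the evaluation of $\widehat f(\bm\xi)$. Since $H = P J_H P^{-1}$ has real, simple eigenvalues, the primary matrix function $\|\bm z\|^{H - \frac{d}{2} I}$ is simultaneously diagonalized by $P$, so $\widehat f$ reduces to the $n$ scalar radial transforms of $e^{-\lambda r}\, r^{h_\ell - \frac{d}{2}}$ on $\mathbb{R}^d$, reassembled as $P\,\textnormal{diag}(\cdots)\,P^{-1}$. I would express the radial Fourier transform as an order-$\frac{d-2}{2}$ Hankel transform and invoke the classical Laplace-type integral $\int_0^\infty e^{-\lambda r}\, r^{\mu-1} J_\nu(\rho r)\,dr = \frac{(\rho/2)^\nu\,\Gamma(\mu+\nu)}{\lambda^{\mu+\nu}\,\Gamma(\nu+1)}\,{}_2F_1\big(\tfrac{\mu+\nu}{2},\tfrac{\mu+\nu+1}{2};\nu+1;-\tfrac{\rho^2}{\lambda^2}\big)$ with $\nu = \frac{d-2}{2}$ and $\mu = h_\ell + 1$, where $\rho = \|\bm\xi\|$ and the convergence at $0$ is exactly $h_\ell + \frac{d}{2} > 0$. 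The factor $\rho^{-\nu}$ coming from the radial-transform formula cancels the $\rho^{\nu}$ produced by the integral, so the result depends on $\bm\xi$ only through $-\|\bm\xi\|^2/\lambda^2$, and the prefactors collect into the $\ell$-th diagonal entry of $C_{H,\lambda}$. Reassembling over $\ell$ via $P$ yields $\widehat f(\bm\xi) = C_{H,\lambda}\,{}_2F_1(\cdots)$ with the matrix-valued hypergeometric function \eqref{e:2F1_matrix-valued}.

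Finally, substituting $\widehat{g_{\bm x}}(\bm\xi) = (e^{-\imag\langle\bm\xi,\bm x\rangle}-1)\,C_{H,\lambda}\,{}_2F_1(\cdots)$ into the Fourier-domain Gram matrix and comparing with the covariance of $\frac{1}{\sqrt{2\pi}}\,C_{H,\lambda}\int(e^{-\imag\langle\bm\xi,\bm x\rangle}-1)\,{}_2F_1(\cdots)\,W(d\bm\xi)$ — using that $C_{H,\lambda}$ commutes with the matrix ${}_2F_1$, both being diagonalized by $P$ — shows the two covariance matrices agree once the normalization of $W$ and the Parseval constant $c_d$ are reconciled; this bookkeeping pins down the scalar prefactor and completes the identification of finite-dimensional distributions. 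The main obstacle I anticipate is the matrix-valued step: rigorously justifying that the Fourier transform commutes with the primary-matrix-function construction, so that diagonalizing $H$ and transforming eigenvalue-by-eigenvalue is legitimate, together with carefully tracking the $2\pi$- and control-measure conventions so that the prefactor $\frac{1}{\sqrt{2\pi}}\,C_{H,\lambda}$ emerges exactly as stated.
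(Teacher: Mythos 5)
Your proposal is correct and follows essentially the same route as the paper's proof: both compute the Fourier transform of the tempered kernel eigenvalue-by-eigenvalue (reducing to a radial integral via the Bessel-function surface formula and then invoking the classical Laplace--Bessel integral that produces the ${}_2F_1$ factor and the constant $C_{H,\lambda}$), and then identify the moving-average and harmonizable fields through Parseval's identity. Phrasing the final step as a covariance (Gram-matrix) comparison rather than as Parseval's identity applied directly to the stochastic integrals is an immaterial difference for centered Gaussian fields.
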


%\GDcomment{Personally, I don't find the remark that used to be here very enlightening. One just set $d = 2$ in the formula we just established.}

%\begin{rem}
%From Proposition \ref{TFGFI harmo}, a two dimensional TFGF has the harmonizable presentation
%\begin{equation}\label{TFGFI harmo two-dim}
%\displaystyle{ B_{H, \lambda} (\bm{x})} = C_{H,\lambda}
%%\int_{\mathbb{R}^{2}} \frac{ \sqrt{2\pi} \left(e^{- i <\bm{\xi},\bm{x}>} - 1\right)} {\lambda^{H+1}}
%%\sum_{m=0}^{\infty} \frac { (-1)^m \Gamma \left( 2m+H+1 \right)
%%\left( \sqrt{k_{1}^{2} + k_{2}^{2}} \right) ^{2m} }
%%{\Gamma \left(m+1 \right) m! \lambda^{2m} 2^{2m}} \bm{\hat{B}}(d\bm{\xi}) \\
% \int_{\mathbb{R}^{2}}
%\left(e^{- i <\bm{\xi},\bm{x}>} - 1\right)\ {}_2 F_1 \left( \frac{H+1} {2}, \frac{H+2} {2}; 1;
%- \frac{ \left( \sqrt{{\xi}_{1}^{2} + {\xi}_{2}^{2}} \right) ^2} {\lambda^2} \right)
%W(d\bm{\xi}),
%\end{equation}
%where $C_{H,\lambda}=\frac{2\pi \Gamma(H+1)}{\lambda^{H+1}} $.
%\end{rem}

The following proposition builds upon \eqref{TFGFI harmo} to provide a Fourier-domain alternative covariance formula to \eqref{covariance TFGFI}. In the statement and proof of the proposition, we use the matrix-valued function ${}_2 F_1$ in the sense of \eqref{e:2F1_matrix-valued}, and $J_\cdot(\cdot)$ denotes the modified Bessel function of the first kind (cf.\ \eqref{e:Bessel_shows_up}).
\begin{prop}
\label{prop2}
Let $B_{H, \lambda}= \{B_{H, \lambda} ( \bm{x} ) \}_{\bm{x} \in \bbR^d}$ be an ITOFBF as in \eqref{TFGFIdefn} with parameters $\lambda > 0$ and $H = PJ_HP^{-1}$, $P \in GL(n,\bbC)$, where the eigenvalues of the Hurst matrix are real and simple. Define the matrix-valued function
$$
{\mathcal S}_{\geq 0}(n,\bbC) \ni {}_2 {\mathbf F}_1 \left( \frac{ I\frac{d}{2} + H} {2}, \frac{ I\frac{d}{2} + H + I} {2};
\frac{d}{2}; - \frac{r^2} {\lambda^2} \right)
$$
$$
= {}_2 F_1 \left( \frac{ I\frac{d}{2} + H} {2}, \frac{ I\frac{d}{2} + H + I} {2};
\frac{d}{2}; - \frac{r^2} {\lambda^2} \right) \hspace{1mm}{}_2 F_1 \left( \frac{I \frac{d}{2} + H} {2}, \frac{ I\frac{d}{2} + H + 1} {2};
\frac{d}{2}; - \frac{r^2} {\lambda^2} \right)^*.
$$
Then, we can express the covariance function of $B_{H, \lambda}$ as
\begin{equation}\label{tmp}
\begin{split}
&{\rm Cov}[ B_{H, \lambda} (\bm{x}), B_{H, \lambda} (\bm{x^{\prime}}) ]
= \| \bm{x} - \bm{x^{\prime}} \|^{ 1 - \frac{d} {2} }\left( 2\pi \right)^{ \frac{d} {2} }
\int_{0}^{\infty} r^{ \frac{d}{2} } J_{ \frac{d-2} {2} } (\| \bm{x} - \bm{x^{\prime}} \|r)
\ {}_2 {\mathbf F}_1 \left( \frac{I \frac{d}{2} + H} {2}, \frac{I \frac{d}{2} + H + I} {2};
\frac{d}{2}; - \frac{r^2} {\lambda^2} \right) dr \\
&
- \| \bm{x} \|^{ 1 - \frac{d} {2} }\left( 2\pi \right)^{ \frac{d} {2} }
\int_{0}^{\infty} r^{ \frac{d}{2} } J_{ \frac{d-2} {2} } (\| \bm{x} \|r)
\ {}_2 {\mathbf F}_1 \left( \frac{ I \frac{d}{2} + H} {2}, \frac{ \frac{d}{2} + H + I} {2};
\frac{d}{2}; - \frac{r^2} {\lambda^2} \right) dr \\
&
-  \| \bm{x^{\prime}} \|^{ 1 - \frac{d} {2} }\left( 2\pi \right)^{ \frac{d} {2} }
\int_{0}^{\infty} r^{ \frac{d}{2} } J_{ \frac{d-2} {2} } (\| \bm{x^{\prime}} \|r)
\ {}_2 {\mathbf F}_1 \left( \frac{I \frac{d}{2} + H} {2}, \frac{I \frac{d}{2} + H + I} {2};
\frac{d}{2}; - \frac{r^2} {\lambda^2} \right) dr \\
& +
\begin{cases}
\displaystyle{ \frac{2  (2\pi)^{m+1} } {(2m+1)!!}
\int_{0}^{\infty} r^{d-1}\ {}_2 {\mathbf F}_1 \left( \frac{ I \frac{d}{2} + H} {2}, \frac{ I \frac{d}{2} + H + I} {2};
\frac{d}{2}; - \frac{r^2} {\lambda^2} \right) dr }, & \textnormal{if} \ \ d = 2m+1; \\
\displaystyle{ \frac{(2\pi)^{m+1} } {(2m)!!}
\int_{0}^{\infty} r^{d-1}\ {}_2 {\mathbf F}_1 \left( \frac{ I \frac{d}{2} + H} {2}, \frac{ I \frac{d}{2} + H + I} {2};
\frac{d}{2}; - \frac{r^2} {\lambda^2} \right) dr }, & \textnormal{if} \ \ d = 2m,
\end{cases} \\
&=: \|\bm{x} - \bm{x^{\prime}}\|^{ 1 - \frac{d} {2} } {\mathbf C}^2_{H,\lambda} (\|{\bm{x- x'}}\|) - \|\bm{x}\|^{ 1 - \frac{d} {2} } {\mathbf C}^2_{H,\lambda} (\|{\bm{x}}\|) - \|\bm{x'}\|^{ 1 - \frac{d} {2} } {\mathbf C}^2_{H,\lambda} (\|{\bm{x'}}\|) + {\mathbf D}^2_{H,\lambda}.
\end{split}
\end{equation}
%where
%\begin{eqnarray*}
%C_{\bm{x}}^{2} &=&
%\begin{cases}
%\displaystyle{ \frac{2  (2\pi)^{m+1} } {(2m+1)!!}
%\int_{0}^{\infty} r^{d-1} {}_2 F_1 \left( \frac{ \frac{d}{2} + H} {2}, \frac{ \frac{d}{2} + H + 1} {2};
%\frac{d}{2}; - \frac{r^2} {\lambda^2} \right)^2 dr } & if \ \ d = 2m+1 \\
%\displaystyle{ \frac{(2\pi)^{m+1} } {(2m)!!}
%\int_{0}^{\infty} r^{d-1} {}_2 F_1 \left( \frac{ \frac{d}{2} + H} {2}, \frac{ \frac{d}{2} + H + 1} {2};
%\frac{d}{2}; - \frac{r^2} {\lambda^2} \right)^2 dr } & if \ \ d = 2m
%\end{cases} \\
%&& \qquad - \left( 2\pi \right)^{ \frac{d} {2} } \| \bm{x} \|^{ 1 - \frac{d} {2} }
%\int_{0}^{\infty} r^{ \frac{d}{2} } J_{ \frac{d-2} {2} } (\| \bm{x} \|r)
%{}_2 F_1 \left( \frac{ \frac{d}{2} + H} {2}, \frac{ \frac{d}{2} + H + 1} {2};
%\frac{d}{2}; - \frac{r^2} {\lambda^2} \right)^2 dr
%\end{eqnarray*}
\end{prop}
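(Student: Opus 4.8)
The plan is to read off the covariance directly from the harmonizable representation \eqref{TFGFI harmo} established in Proposition \ref{p:TFGFI harmo}, rather than from the time-domain form \eqref{covariance TFGFI}, and then to collapse the resulting $d$-dimensional spectral integral to a one-dimensional (radial) Bessel integral by exploiting rotational symmetry. Writing $f_{\bm x}(\bm\xi) = (e^{-\imag\langle\bm\xi,\bm x\rangle}-1)\,{}_2F_1\big(\tfrac{I\frac d2+H}{2},\tfrac{I\frac d2+H+I}{2};\tfrac d2;-\tfrac{\|\bm\xi\|^2}{\lambda^2}\big)$ for the matrix-valued integrand in \eqref{TFGFI harmo}, the first step is to apply the isometry for the Gaussian, Hermitian random measure $W$. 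Note that ${}_2F_1$ is a real, even, radial primary matrix function of the real matrix $H$, so $f_{\bm x}(-\bm\xi)=\overline{f_{\bm x}(\bm\xi)}$ and, combined with $W(-d\bm\xi)=\overline{W(d\bm\xi)}$, the stochastic integral is real-valued. The isometry (with the normalization of $W$ and the squared factor $(2\pi)^{-1/2}$ fixing the overall scalar) then yields
\[
\operatorname{Cov}[B_{H,\lambda}(\bm x),B_{H,\lambda}(\bm x')] = \frac{1}{2\pi}\,C_{H,\lambda}\Big[\int_{\bbR^d}(e^{-\imag\langle\bm\xi,\bm x\rangle}-1)\,\overline{(e^{-\imag\langle\bm\xi,\bm x'\rangle}-1)}\;{}_2{\mathbf F}_1\big(\cdots;-\tfrac{\|\bm\xi\|^2}{\lambda^2}\big)\,d\bm\xi\Big]\,C_{H,\lambda}^*,
\]
where the spectral density matrix is precisely ${}_2{\mathbf F}_1 = {}_2F_1\,{}_2F_1^*$ of the statement, and the constant $C_{H,\lambda}$ together with the scalar factor are to be absorbed into the final prefactors.

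The second step is to expand $(e^{-\imag\langle\bm\xi,\bm x\rangle}-1)\overline{(e^{-\imag\langle\bm\xi,\bm x'\rangle}-1)} = e^{-\imag\langle\bm\xi,\bm x-\bm x'\rangle} - e^{-\imag\langle\bm\xi,\bm x\rangle} - e^{\imag\langle\bm\xi,\bm x'\rangle} + 1$, producing exactly the four terms of \eqref{tmp}, indexed by $\bm x-\bm x'$, $\bm x$, $\bm x'$ and the constant. For each of the three nonconstant terms I would pass to polar coordinates $\bm\xi=r\bm\theta$, $\bm\theta\in{\mathbb S}^{d-1}$, and use that ${}_2{\mathbf F}_1$ depends only on $r=\|\bm\xi\|$ to pull the matrix out of the angular integral. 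The angular integral is the Fourier transform of the surface measure on the sphere, $\int_{{\mathbb S}^{d-1}} e^{-\imag r\langle\bm\theta,\bm v\rangle}\,d\sigma(\bm\theta) = (2\pi)^{d/2}(r\|\bm v\|)^{1-\frac d2}J_{\frac{d-2}{2}}(r\|\bm v\|)$; since $r^{d-1}\cdot r^{1-\frac d2}=r^{d/2}$ and $\|\bm v\|^{1-\frac d2}$ factors out, each term collapses to $\|\bm v\|^{1-\frac d2}(2\pi)^{d/2}\int_0^\infty r^{d/2}J_{\frac{d-2}{2}}(\|\bm v\|r)\,{}_2{\mathbf F}_1\,dr$, matching the three Bessel terms (the sign pattern $+,-,-$ coming from the expansion, with $e^{\imag\langle\bm\xi,\bm x'\rangle}$ giving the same Bessel kernel as $e^{-\imag\langle\bm\xi,\bm x'\rangle}$ by evenness of ${}_2{\mathbf F}_1$). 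The constant ``$+1$'' term cannot be obtained by setting $\bm v=0$ in the Bessel expression (the Bessel kernel vanishes at the origin when $d>2$); instead it is computed directly as $\int_{\bbR^d}{}_2{\mathbf F}_1\,d\bm\xi = |{\mathbb S}^{d-1}|\int_0^\infty r^{d-1}{}_2{\mathbf F}_1\,dr$, and substituting $|{\mathbb S}^{d-1}|=2\pi^{d/2}/\Gamma(\tfrac d2)$ and resolving $\Gamma(\tfrac d2)$ by the parity of $d$ via the double-factorial identities produces the two cases $(2m+1)!!$ and $(2m)!!$ displayed in \eqref{tmp}, which is the matrix ${\mathbf D}^2_{H,\lambda}$.

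I expect the main obstacle to be the analytic justification of the interchange of the angular and radial integrations (Fubini) and the absolute convergence of the radial Bessel integrals, which underpins the entire reduction. This requires the large-$r$ decay of the matrix ${}_2F_1\big(\cdots;-r^2/\lambda^2\big)$, obtained from the standard connection-formula asymptotics of the Gaussian hypergeometric function: for each eigenvalue $h_\ell$ the leading contribution is of order $r^{-(\frac d2+h_\ell)}$ (the smaller of the two exponents $\tfrac12(\tfrac d2+h_\ell)$ and $\tfrac12(\tfrac d2+h_\ell+1)$ dominating), so ${}_2{\mathbf F}_1$ is $O(r^{-(d+2\varpi_H)})$; combined with $J_{(d-2)/2}(z)=O(z^{-1/2})$ this makes the integrand $O(r^{-\frac d2-\frac12-2\varpi_H})$ at infinity, which is integrable since $\varpi_H>0$, while near $r=0$ the bound $J_\nu(z)=O(z^\nu)$ gives an $O(r^{d-1})$ integrand. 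Finiteness of these integrals is also consistent with the existence of the field (Theorem \ref{thm:exponential_moving_TOSSF}) and the finiteness of its variogram. The remaining effort is careful bookkeeping of the matrix constant $C_{H,\lambda}$, the Hermitian-adjoint operation and the numerical normalizations, which must combine to reproduce the exact prefactors in \eqref{tmp}.
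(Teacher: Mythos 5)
Your proposal follows essentially the same route as the paper's proof: both start from the harmonizable representation \eqref{TFGFI harmo}, expand the kernel product into the four exponential terms, reduce each nonconstant term to a radial Bessel integral via polar coordinates and the spherical Fourier identity \eqref{e:Bessel_shows_up}, and evaluate the constant term separately, yielding the parity-dependent double-factorial constants. The only addition is your sketch of the Fubini/convergence justification via hypergeometric asymptotics, which the paper leaves implicit.
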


%\subsection{Tempered fractional Gaussian field of the second kind}

Expressions \eqref{TFGFI harmo} and \eqref{tmp} show that the harmonizable representation and covariance function of ITOFBF have complicated forms, involving matrix-valued hypergeometric-based functions. In the next definition, we consider the Gaussian, isotropic version of MA-B-TRF (cf.\ expression \eqref{e:MA-B-TOSSRF}). As it turns out, expressions for the latter are more mathematically manageable.
\begin{defn}
Fix $\lambda>0$. Let $H$ be a Hurst matrix as in \eqref{e:H=PJHP^(-1)} and \eqref{e:0<minReeig(H)=<maxReeig(H)<a1}, and let $\bm{Z}(d\bm{y})$ be a $\bbR^n$-valued Gaussian ISRM with Lebesgue control measure on $\bbR^d$. The random field
\begin{equation} \label{I-B-TOFBFdefn}
 B^{Bes}_{H, \lambda} (\bm{x})
= C^{Bes}_{H,\lambda} \int_{\mathbb{R}^{d}} \left[ \| \bm{x}-\bm{y} \| ^ { H - \frac{d}{2} I } K_{ H - \frac{d}{2} I}
\left( \lambda \| \bm{x}-\bm{y} \| \right) -
\| -\bm{y} \| ^ { H - \frac{d}{2} I } K_{ H - \frac{d}{2} I } \left( \lambda \| -\bm{y} \| \right)
\right] {\bm Z}(d\bm{y})
\end{equation}
is called an \textit{isotropic-Bessel-tempered operator fractional Gaussian field} (I-B-TOFBF). In \eqref{I-B-TOFBFdefn}, $C^{Bes}_{H,\lambda} := \lambda^{(d/2) I-H} \Gamma(H)^{-1}2^{I-H}$, where the terms $\Gamma(H), K_{ H - \frac{d}{2} I }(x) \in {\mathcal M}(n,\bbR)$ are defined by
$$
\Gamma(H) := P \textnormal{diag}(\Gamma(h_1), \hdots,\Gamma(h_n))P^{-1}, \quad K_{ H - \frac{d}{2} I }(x) :=
P \textnormal{diag}\Big( K_{ h_1 - \frac{d}{2}, \hdots, h_n - \frac{d}{2}}(x) \Big)P^{-1}.
$$
\end{defn}

%\GDcomment{Why reprove the stationarity of increments? There used to be a proposition here on this. I replaced it with an analogous result to that for ITOFBF.}

Because $B^{Bes}_{H, \lambda}(\bm{x})$ is an instance of the random field $X^{Bes}_{\lambda}$ with $\varphi(\bm x)= \|\bm x\|$ and $M(d\bm x) = Z(d\bm x)$, all the properties stated in Theorem \ref{thm:Bessel_moving_TOSSF} and Corollary \ref{cor:stationary_increments_scaling_property_MA-B} hold with $B = \frac{1}{2}I$. In addition, \textit{mutatis mutandis}, the second order properties provided in Corollary \ref{cor100} for ITOFBF also hold for I-B-TOFBF and can be shown by similar arguments. For this reason, we state the following result without proof. As with ITOFBF, note that the covariance structure of I-B-TOFBF does have the form \eqref{e:cov_function_under_isotropy}.
\begin{cor}
\label{cor100_Bes}
Let $B^{Bes}_{H, \lambda} = \{B^{Bes}_{H, \lambda}({\bm x})\}_{{\bm x} \in \bbR^d}$ be a I-B-TOFBF. Then,
\begin{itemize}
\item [$(i)$] $B^{Bes}_{H, \lambda}$ is, indeed, isotropic;
\item [$(ii)$] $B^{Bes}_{H, \lambda}$ has covariance function
$$
\textnormal{Cov} [ B^{Bes}_{H, \lambda}(\bm{x}), B^{Bes}_{H, \lambda}(\bm{x^{\prime}}) ]
$$
\begin{equation*} \label{covariance I-B-TOFBF}
= \frac{1}{2} [  \| \bm{x} \|^{H} C^2_{\bm{x},Bes} | \bm{x} \|^{H^t} + \| \bm{x^{\prime}} \|^{H}C^2_{\bm{x^{\prime}},Bes} \|\bm{x^{\prime}} \|^{H^t}
- \| \bm{x-x^{\prime}} \|^{H}C^2_{\bm{x-x^{\prime}},Bes} \|\bm{x-x^{\prime}} \|^{H^t} ]
\end{equation*}
for all $\bm{x}, \bm{x^{\prime}} \in \mathbb{R}^{d}$, where
\begin{equation*}\label{e:Cx_I-B-TOFBF}
\displaystyle{ {\mathcal S}_{\geq 0}(n,\bbR) \ni C^2_{\bm{x},Bes} = \mathbb{E} \left[ B^{Bes}_{H, \| \bm{x} \| \lambda} (\bm{e_{1}})B^{Bes}_{H, \| \bm{x} \| \lambda}(\bm{e_{1}})^{t} \right] } \ , \quad \bm{e^t_{1}} = (1, 0, \hdots, 0).
\end{equation*}
\end{itemize}
\end{cor}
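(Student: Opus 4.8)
The plan is to follow the proof of Corollary~\ref{cor100} almost verbatim, exploiting that $B^{Bes}_{H,\lambda}$ is, up to the deterministic invertible matrix $C^{Bes}_{H,\lambda}$, the Gaussian instance of the MA-B-TRF \eqref{e:MA-B-TOSSRF} obtained by taking $\varphi(\bm{x}) = \|\bm{x}\|$ (so that $E = I$ and $q = d$), $M = Z$ and $B = \tfrac{1}{2}I$. Writing $B^{Bes}_{H,\lambda}(\bm{x}) = C^{Bes}_{H,\lambda}\,X^{Bes}_{\lambda}(\bm{x})$, the mean-zero Gaussian field $B^{Bes}_{H,\lambda}$ inherits stochastic continuity, stationarity of increments, and fullness directly from Theorem~\ref{thm:Bessel_moving_TOSSF} and Corollary~\ref{cor:stationary_increments_scaling_property_MA-B}, since these properties are preserved under a fixed invertible linear map of the range. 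First I would record that the field is square integrable: because $Z$ has Lebesgue control measure, finite second moments reduce to the $L^{2}(\bbR^{d})$-integrability of the matrix kernel in \eqref{I-B-TOFBFdefn}, which follows from the Bessel asymptotics \eqref{e:mod_Bessel_second_kind_asymptotics_large_u} and \eqref{e:mod_Bessel_second_kind_asymptotics} together with the eigenvalue condition \eqref{e:MA-B_condition_eig_exponent} specialized to $B = \tfrac{1}{2}I$, exactly as in the proof of Theorem~\ref{thm:Bessel_moving_TOSSF}.

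For part $(i)$ I would substitute $\bm{y} \mapsto O\bm{y}$, $O \in O(d)$, in \eqref{I-B-TOFBFdefn}. Rotation invariance of the Euclidean norm gives $\|O\bm{x} - O\bm{y}\| = \|\bm{x} - \bm{y}\|$ and $\|-O\bm{y}\| = \|-\bm{y}\|$, so each entry of the matrix kernel is left unchanged; rotation invariance of $\textnormal{Leb}_{d}$ gives $\{Z(O\,\cdot)\} \stackrel{f.d.}{=} \{Z(\cdot)\}$; and the $\bm{x}$-free factor $C^{Bes}_{H,\lambda}$ passes through. Since the field is Gaussian and mean zero, matching covariances then yields $\{B^{Bes}_{H,\lambda}(O\bm{x})\} \stackrel{f.d.}{=} \{B^{Bes}_{H,\lambda}(\bm{x})\}$.

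For part $(ii)$ I would apply the matrix polarization identity $\bm{a}\bm{b}^{t} = \tfrac{1}{2}\big(\bm{a}\bm{a}^{t} + \bm{b}\bm{b}^{t} - (\bm{a} - \bm{b})(\bm{a} - \bm{b})^{t}\big)$ with $\bm{a} = B^{Bes}_{H,\lambda}(\bm{x})$ and $\bm{b} = B^{Bes}_{H,\lambda}(\bm{x'})$, using $B^{Bes}_{H,\lambda}(\bm{0}) = 0$ together with the stationarity of increments to replace $\mathbb{E}[(\bm{a} - \bm{b})(\bm{a} - \bm{b})^{t}]$ by $\mathbb{E}[B^{Bes}_{H,\lambda}(\bm{x} - \bm{x'})B^{Bes}_{H,\lambda}(\bm{x} - \bm{x'})^{t}]$. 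It then suffices to evaluate the one-point second-moment matrix. By isotropy I may rotate an arbitrary $\bm{z}$ to $\|\bm{z}\|\bm{e}_{1}$, and by the operator scaling \eqref{e:Bes_scaling} (with $c = \|\bm{z}\|$, $c^{E} = \|\bm{z}\|I$) extract $\|\bm{z}\|^{H}$ on the left and $\|\bm{z}\|^{H^{t}}$ on the right, the residual matrix being identified with $C^{2}_{\bm{z},Bes} = \mathbb{E}[B^{Bes}_{H,\|\bm{z}\|\lambda}(\bm{e}_{1})B^{Bes}_{H,\|\bm{z}\|\lambda}(\bm{e}_{1})^{t}] \in {\mathcal S}_{\geq 0}(n,\bbR)$, which is symmetric positive semidefinite as a genuine covariance matrix. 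Assembling the three terms then produces the claimed covariance, identical in structure to \eqref{e:cov_function_under_isotropy} and \eqref{covariance TFGFI}.

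The step I expect to be the main obstacle is the scaling reduction in part $(ii)$, namely justifying that $B^{Bes}_{H,\lambda}$ inherits a clean operator scaling of the form \eqref{e:Bes_scaling} in spite of the normalization $C^{Bes}_{H,\lambda} = \lambda^{(d/2)I - H}\Gamma(H)^{-1}2^{I-H}$ being itself $\lambda$-dependent: applying the scaling to the underlying $X^{Bes}_{\lambda}$ turns $\lambda$ into $c\lambda$ and forces one to carry the factor $C^{Bes}_{H,\lambda}(C^{Bes}_{H,c\lambda})^{-1}$ through the computation. The crucial simplification is that $\lambda^{-H}$, $2^{-H}$, $\Gamma(H)^{-1}$, $K_{H - \frac{d}{2}I}(\cdot)$ and $\|\bm{z}\|^{H}$ are all primary matrix functions of $H$ in the sense of Definition~\ref{d:matrix_function}, hence mutually commute; this allows the scalar powers of $\|\bm{z}\|$ to be collected and reconciled with the definition of $C^{2}_{\bm{z},Bes}$, so that the covariance retains the structural form \eqref{e:cov_function_under_isotropy}. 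Carrying out this bookkeeping and confirming that the residual one-point matrix depends on $\bm{z}$ only through $\|\bm{z}\|$ is precisely where the argument must be run \textit{mutatis mutandis}, rather than copied verbatim, from the ITOFBF computation of Corollary~\ref{cor100}, where no $\lambda$-dependent prefactor appears.
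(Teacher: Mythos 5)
Part $(i)$ of your proposal is fine and matches the paper. The decisive gap is in part $(ii)$: the ``matrix polarization identity'' you invoke is false for $n \geq 2$. Expanding $(\bm{a}-\bm{b})(\bm{a}-\bm{b})^{t}$ gives
$$
\tfrac{1}{2}\big(\bm{a}\bm{a}^{t} + \bm{b}\bm{b}^{t} - (\bm{a} - \bm{b})(\bm{a} - \bm{b})^{t}\big) \;=\; \tfrac{1}{2}\big(\bm{a}\bm{b}^{t} + \bm{b}\bm{a}^{t}\big),
$$
the \emph{symmetrized} product, not $\bm{a}\bm{b}^{t}$. Your argument therefore only identifies $\frac{1}{2}\{\bbE [B^{Bes}_{H,\lambda}(\bm{x})B^{Bes}_{H,\lambda}(\bm{x}')^{t}] + \bbE [B^{Bes}_{H,\lambda}(\bm{x}')B^{Bes}_{H,\lambda}(\bm{x})^{t}]\}$ with the claimed right-hand side, and a matrix-valued cross-covariance of a vector-valued field is not symmetric in general. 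This is precisely where the paper's proof of Corollary \ref{cor100} (to which Corollary \ref{cor100_Bes} defers, \textit{mutatis mutandis}) does work that your proposal omits: from part $(i)$ with $O = -I$ it gets $\{B(-\bm{x})\}_{\bm{x}} \stackrel{f.d.}{=} \{B(\bm{x})\}_{\bm{x}}$, and then, by an adaptation of Didier and Pipiras \cite{didier:pipiras:2011}, Proposition 5.1, combined with stationary increments, it proves the symmetry $\bbE B(\bm{x})B(\bm{y})^{t} = \bbE B(\bm{y})B(\bm{x})^{t}$; only after that does the stationary-increments expansion of $\bbE[(B(\bm{x})-B(\bm{x}'))(B(\bm{x})-B(\bm{x}'))^{t}]$ yield the covariance itself. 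Without this step your argument proves a strictly weaker statement, which coincides with the corollary only when $n=1$.

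There is a second, quantitative error in the step you yourself flagged as the main obstacle: commutativity of the primary matrix functions of $H$ does \emph{not} make the normalization drop out. A direct computation gives $C^{Bes}_{H,\lambda}\big(C^{Bes}_{H,c\lambda}\big)^{-1} = c^{H - \frac{d}{2}I}$, so the normalized field satisfies $\{B^{Bes}_{H,\lambda}(c\bm{x})\}_{\bm{x}} \stackrel{f.d.}{=} \{c^{2H - \frac{d}{2}I}\hspace{0.5mm}B^{Bes}_{H,c\lambda}(\bm{x})\}_{\bm{x}}$, not the clean scaling \eqref{e:Bes_scaling} with exponent $H$; this is consistent with the paper's own identification, after Proposition \ref{p:I-B-TOFBF harmo}, of this field as an H-TRF with $\widetilde{E}=\frac{1}{2}I$ and $\widetilde{H} = H - \frac{d}{4}I$, i.e., Euclidean scaling exponent $2\widetilde{H} = 2H-\frac{d}{2}I$. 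Consequently, with $C^{2}_{\bm{x},Bes}$ defined as in the statement (prefactor $C^{Bes}_{H,\|\bm{x}\|\lambda}$ included), the one-point second moment factorizes with $\|\bm{x}\|^{2H-\frac{d}{2}I}$ and $\|\bm{x}\|^{2H^{t}-\frac{d}{2}I}$ on the two sides: the residual factor $\|\bm{x}\|^{H-\frac{d}{2}I}$ is not ``reconciled'' by commutativity, contrary to what you assert. The clean way to carry out the \textit{mutatis mutandis} is to run the proof of Corollary \ref{cor100} on the unnormalized MA-B-TRF $X^{Bes}_{\lambda}$, which genuinely satisfies \eqref{e:Bes_scaling} with $E=I$, $B=\frac{1}{2}I$ by Corollary \ref{cor:stationary_increments_scaling_property_MA-B}, and only at the end conjugate by the constant matrices, tracking explicitly the exponent and normalization shift this induces.
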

%\begin{prop}
%The increments of I-B-TOFBF given by \eqref{I-B-TOFBFdefn} are stationary. That is
%\begin{equation*}
%\displaystyle{ B^{Bes}_{H, \lambda} (\bm{x^{\prime}} + \bm{x}) - B^{Bes}_{H, \lambda} (\bm{x^{\prime}}) }
%\stackrel{d}{=} B^{Bes}_{H, \lambda} (\bm{x})
%\end{equation*}
%\end{prop}
%
%\begin{proof}
%Let $h_{H, \lambda, \bm{x}} (\bm{y})$ be the kernel function in \eqref{I-B-TOFBFdefn}. That is
%\begin{equation}\label{kernelI-B-TOFBF}
%h_{H, \lambda, \bm{x}} (\bm{y})
%= \| \bm{x}-\bm{y} \| ^ { H - \frac{d}{2} } K_{ H - \frac{d}{2} }
%\left( \lambda \| \bm{x}-\bm{y} \| \right) -
%\| -\bm{y} \| ^ { H - \frac{d}{2} } K_{ H - \frac{d}{2} } \left( \lambda \| -\bm{y} \| \right).
%\end{equation}
%Now, change variables $\bm{y}^{\prime} = \bm{y} - \bm{x^{\prime}}$ to see
%\begin{equation*}
%\begin{split}
% B^{Bes}_{H, \lambda} (\bm{x^{\prime}} + \bm{x}) - B^{Bes}_{H, \lambda} (\bm{x^{\prime}})
%&= \int_{\rr^d} \Big[ h_{H, \lambda, \bm{x^{\prime} + \bm{x}}} (\bm{y}) - h_{H, \lambda, \bm{x^{\prime}}} (\bm{y}) \Big] Z(d\bm{y}) \\
%&\stackrel{d}{=} \int_{\rr^d} \Big[ h_{H, \lambda, \bm{x}} (\bm{y}^{\prime}) \Big]  Z(d\bm{y}^{\prime}) \\
%&= B^{Bes}_{H, \lambda}(\bm{x})
%\end{split}
%\end{equation*}
%which completes the proof.
%\end{proof}
In the next proposition, we obtain a harmonizable representation of I-B-TOFBF.
\begin{prop}\label{p:I-B-TOFBF harmo}
Let $B^{Bes}_{H, \lambda} = \{B^{Bes}_{H, \lambda} (\bm{x})\}_{{\bm x} \in \bbR^d}$ be a I-B-TOFBF as in \eqref{I-B-TOFBFdefn} with parameters $\lambda > 0$ and $H = PJ_HP^{-1}$, $P \in GL(n,\bbC)$, where the eigenvalues of the Hurst matrix are real and simple. Then, $B^{Bes}_{H, \lambda}$ admits the harmonizable representation
\begin{equation}\label{I-B-TOFBF harmo}
\{ B^{Bes}_{H, \lambda} (\bm{x})\}_{{\bm x }\in \bbR^d}  \stackrel{f.d.}= \Big\{C^*_{H,\lambda}  \int_{\rr^d} (e^{- i \langle\bm{\xi},\bm{x}\rangle} - 1)  ( \lambda^2 + \|{\bm{\xi}}\|^2 )^{-H} \ W(d\bm{\xi}) \Big\}_{{\bm x}\in \bbR^d}
\end{equation}
for some matrix constant $C^*_{H,\lambda}$, where $W(d\bm{\xi})$ is a $\bbC^n$-valued Gaussian random measure with Lebesgue control measure on $\bbR^d$ such that $W(-d\bm{\xi}) = \overline{W(d\bm{\xi})}$ a.s.
\end{prop}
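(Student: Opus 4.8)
The plan is to exploit the fact that both the moving average field $B^{Bes}_{H,\lambda}$ and the candidate harmonizable field on the right-hand side of \eqref{I-B-TOFBF harmo} are centered, real-valued Gaussian random fields that vanish at the origin and possess stationary increments (for the moving average side this follows from Corollary \ref{cor:stationary_increments_scaling_property_MA-B}, specialized to $\varphi(\bm x)=\|\bm x\|$, $E=I$ and $B=\frac{1}{2}I$, so that $qB=\frac{d}{2}I$; on the harmonizable side the Hermitian symmetry $W(-d\bm\xi)=\overline{W(d\bm\xi)}$ together with the evenness of the radial spectral kernel forces realness). Consequently, to prove the equivalence of their finite-dimensional distributions it suffices to verify that the two fields share the same matrix-valued covariance function $\bbE[X(\bm x)X(\bm x')^t]$. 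The strategy is to rewrite the time-domain covariance of $B^{Bes}_{H,\lambda}$ as a Fourier-domain integral by means of a matrix-valued Plancherel identity, and to recognize the resulting spectral kernel as that of the harmonizable field.

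First I would isolate the moving average kernel $\Phi(\bm z):=\|\bm z\|^{H-\frac{d}{2}I}K_{H-\frac{d}{2}I}(\lambda\|\bm z\|)$ and compute its Fourier transform. Because $H=PJ_HP^{-1}$ has simple real eigenvalues $h_1,\ldots,h_n$, the primary matrix function apparatus of Definition \ref{d:matrix_function} diagonalizes $\Phi$, so that $\Phi(\bm z)=P\,\mathrm{diag}(\phi_1(\bm z),\ldots,\phi_n(\bm z))\,P^{-1}$ with real radial scalar entries $\phi_\ell(\bm z)=\|\bm z\|^{h_\ell-d/2}K_{h_\ell-d/2}(\lambda\|\bm z\|)$. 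The Fourier transform then acts entrywise, reducing the matrix computation to the classical Mat\'{e}rn-type identity
\begin{equation*}
\int_{\bbR^d} e^{-\imag\langle\bm\xi,\bm z\rangle}\,\|\bm z\|^{\nu}K_{\nu}(\lambda\|\bm z\|)\,d\bm z = c(\nu,d,\lambda)\,(\lambda^2+\|\bm\xi\|^2)^{-(\nu+d/2)},
\end{equation*}
valid for $\nu+d/2>0$. Taking $\nu=h_\ell-\frac{d}{2}$ yields the power $-(h_\ell-\frac{d}{2}+\frac{d}{2})=-h_\ell$, so reassembling via $P\,\mathrm{diag}(\cdots)\,P^{-1}$ gives $\widehat\Phi(\bm\xi)=c_{H,\lambda}(\lambda^2+\|\bm\xi\|^2)^{-H}$ for a matrix constant $c_{H,\lambda}$ that commutes with $C^{Bes}_{H,\lambda}$, both being primary matrix functions of $H$. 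The integrability and convergence needed here are supplied by the exponential decay \eqref{e:mod_Bessel_second_kind_asymptotics_large_u} of $K_\nu$ at infinity and, near the origin, by the singularity estimate \eqref{e:mod_Bessel_second_kind_asymptotics} together with condition \eqref{e:MA-B_condition_eig_exponent}, which forces $h_\ell>0$.

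Next I would transport this to the covariance level. Writing $G_{\bm x}(\bm y):=\Phi(\bm x-\bm y)-\Phi(-\bm y)$, the Gaussian isometry expresses $\bbE[B^{Bes}_{H,\lambda}(\bm x)B^{Bes}_{H,\lambda}(\bm x')^t]$ as a time-domain integral of $C^{Bes}_{H,\lambda}\,G_{\bm x}(\bm y)\,Q\,G_{\bm x'}(\bm y)^t\,(C^{Bes}_{H,\lambda})^t$ over $\bm y$, where $Q$ is the internal covariance of the driving Gaussian measure and is common to both representations. A change of variables, using that $\Phi$ is radial so $\widehat\Phi$ is even, shows that the $\bm y$-Fourier transform of $G_{\bm x}$ equals $\widehat{G}_{\bm x}(\bm\xi)=(e^{-\imag\langle\bm\xi,\bm x\rangle}-1)\widehat\Phi(\bm\xi)$, which is precisely the spectral kernel of \eqref{I-B-TOFBF harmo} once $\widehat\Phi(\bm\xi)=c_{H,\lambda}(\lambda^2+\|\bm\xi\|^2)^{-H}$ is inserted and $C^*_{H,\lambda}$ is chosen to absorb $C^{Bes}_{H,\lambda}$, $c_{H,\lambda}$, and the normalizing constants of the Fourier and Plancherel conventions. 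Applying the matrix-valued Plancherel theorem converts the time-domain covariance into the Fourier-domain integral of $C^*_{H,\lambda}(e^{-\imag\langle\bm\xi,\bm x\rangle}-1)(\lambda^2+\|\bm\xi\|^2)^{-H}$ against its conjugate transpose, weighted by the same $Q$, which is exactly the covariance of the harmonizable field read off from the isometry of the Hermitian-symmetric complex Gaussian measure $W$. Matching the two covariances finishes the argument.

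The main obstacle is the rigorous evaluation of the matrix-valued Fourier transform of $\Phi$: one must justify interchanging the Fourier integral with the primary matrix function construction (legitimate here because simple real eigenvalues make $\Phi$ genuinely diagonalizable with real, radial scalar entries), establish the scalar Mat\'{e}rn identity in the stated parameter range, and verify that condition \eqref{e:MA-B_condition_eig_exponent} secures both the $L^1$-integrability of each $\phi_\ell$ and the evenness and regularity required for the Plancherel step. By contrast, the reduction to matching covariances and the resulting equivalence of finite-dimensional distributions are routine consequences of joint Gaussianity and stationarity of increments.
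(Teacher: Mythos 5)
Your proposal is correct and follows essentially the same route as the paper's proof: diagonalize the kernel via $P$ (exploiting the simple real eigenvalues), compute the Fourier transform entrywise through the scalar Mat\'{e}rn--Bessel identity $\mathcal{F}\big[\|\bm{y}\|^{h-d/2}K_{h-d/2}(\lambda\|\bm{y}\|)\big](\bm{\xi}) = \lambda^{h-d/2}2^{h-1}\Gamma(h)(\lambda^2+\|\bm{\xi}\|^2)^{-h}$, reassemble to get $(e^{-\imag\langle\bm{\xi},\bm{x}\rangle}-1)(\lambda^2+\|\bm{\xi}\|^2)^{-H}$ up to a matrix constant, and conclude by a Parseval-type argument. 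The only difference is presentational: where the paper invokes Parseval's identity directly at the level of the Gaussian stochastic integrals, you unpack that same step into an explicit matching of the matrix-valued covariance functions, which for centered jointly Gaussian fields is an equivalent formulation.
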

Let $Y = \{Y({\bm x})\}_{{\bm x} \in \bbR^d}$ be the random field appearing on the right-hand side of \eqref{I-B-TOFBF harmo}. Then, $Y$ is, indeed, a Gaussian H-TRF. In fact, in the parametrization \eqref{e:H-TRF},
\begin{equation}\label{I-B-TOFBF harmo_regular-param}
\{Y({\bm x})\}_{{\bm x }\in \bbR^d}  = \Big\{C^*_{\widetilde{H},\eta}  \int_{\rr^d} (e^{- i \langle\bm{\xi},\bm{x}\rangle} - 1)  ( \widetilde{\lambda} + \widetilde{\varphi}({\bm \xi}) )^{-(\widetilde{H}+\frac{\widetilde{q}}{2}I)} \ W(d\bm{\xi}) \Big\}_{{\bm x}\in \bbR^d}
\end{equation}
for some matrix $C^*_{\widetilde{H},\widetilde{\lambda}}$, where $\widetilde{\lambda} = \lambda^2$, $\widetilde{\varphi}({\bm \xi}) = \|{\bm \xi}\|^2$, $\widetilde{H}+\frac{\widetilde{q}}{2}I = H$. In particular, $Y$ satisfies relation \eqref{e:scaling_H-TRF} with domain and range scaling matrices $\widetilde{E} = \frac{1}{2}I$ and $\widetilde{H}$, respectively, where $\tr(\widetilde{E}) = \widetilde{q}$. Consequently, in this specific sense, I-B-TOFBF appears as a natural moving average-type counterpart to a subclass of the Gaussian, isotropic instances of \eqref{e:H-TRF}.

In the following proposition, we provide an explicit expression for the covariance function of I-B-TOFBF in low dimension $d$, based on the modified Bessel function of the second kind.
\begin{prop}\label{p:cov_low d}
Let $B^{Bes}_{H, \lambda} = \{B^{Bes}_{H, \lambda} (\bm{x})\}_{{\bm x} \in \bbR^d}$ be a I-B-TOFBF as in \eqref{I-B-TOFBFdefn} with parameters $\lambda > 0$ and $H = PJ_HP^{-1}$, $P \in GL(n,\bbC)$, where the eigenvalues of the Hurst matrix are real and simple. Let
\begin{equation}\label{e:(P*P)^(-1)=q_ell,ell'}
(q_{\ell \ell'})_{\ell.\ell'=1,\hdots,n} := (P^*P)^{-1} \in {\mathcal S}_{> 0}(n,\bbC)
\end{equation}
and suppose
\begin{equation}\label{e:h1>d/4}
\varpi_H = h_1 > \frac{d}{4}.
\end{equation}
Then, for $\bm{x},\bm{x^{\prime}} \in \bbR^d$, the covariance function of $B^{Bes}_{H, \lambda}$ can be expressed as
\begin{eqnarray}
\label{e:cov_low d}
& & \displaystyle{ \textnormal{Cov}[ B^{Bes}_{H, \lambda} (\bm{x}), B^{Bes}_{H, \lambda} (\bm{x^{\prime}}) ] } \\
&=&
 \left( 2\pi \right)^{ \frac{d} {2}}  \lambda^{-\frac{d}{2}}
P  \Big( q_{\ell \ell'}\hspace{1mm}\frac { {\|{\bm x} - {\bm x'}\|}^{h_\ell + h_{\ell'} - \frac{d} {2}} } { 2^{h_\ell + h_{\ell'}-1} \Gamma (h_\ell + h_{\ell'}) } K_{\frac{d}{2} - (h_\ell + h_{\ell'})} (\lambda \|{\bm x - x'}\|) \Big)_{\ell,\ell'=1,\hdots,n} P^* \nonumber \\
&& \qquad
- \left( 2\pi \right)^{ \frac{d} {2}}  \lambda^{-\frac{d}{2}}
P  \Big( q_{\ell \ell'}\hspace{1mm}\frac { {\|{\bm x}\|}^{h_\ell + h_{\ell'} - \frac{d} {2}} } { 2^{h_\ell + h_{\ell'}-1} \Gamma (h_\ell + h_{\ell'}) } K_{\frac{d}{2} - (h_\ell + h_{\ell'})} (\lambda \|{\bm x}\|) \Big)_{\ell,\ell'=1,\hdots,n} P^* \nonumber \\
&& \qquad
- \left( 2\pi \right)^{ \frac{d} {2}}  \lambda^{-\frac{d}{2}}
P  \Big( q_{\ell \ell'}\hspace{1mm}\frac { {\|-{\bm x'}\|}^{h_\ell + h_{\ell'} - \frac{d} {2}} } { 2^{h_\ell + h_{\ell'}-1} \Gamma (h_\ell + h_{\ell'}) } K_{\frac{d}{2} - (h_\ell + h_{\ell'})} (\lambda \|-{\bm x'}\|) \Big)_{\ell,\ell'=1,\hdots,n} P^* \nonumber \\
&& \qquad +
(2\pi)^{m}\lambda^{-2H} P \left( q_{\ell \ell'}\hspace{1mm}B \Big( \frac{d}{2}, h_\ell + h_{\ell'} - \frac{d}{2} \Big) \right)_{\ell,\ell'=1,\hdots,n}P^*\begin{cases}
\displaystyle{ \frac{1} {(2m+1)!! } }, & \textnormal{ if }\ \ d = 2m+1; \\
\displaystyle{ \frac{1} {2 (2m)!!}}, & \textnormal{ if }\ \ d = 2m.
\end{cases}\\
&=:&  {\mathcal C}^2_{H,\lambda}(\|{\bm x} - \bm{x^{\prime}}\|) - {\mathcal C}^2_{H,\lambda}(\|{\bm{x}}\|) - {\mathcal C}^2_{H,\lambda}(\|-\bm{x^{\prime}\|} ) + {\mathcal D}^2_{H,\lambda}, \nonumber
\end{eqnarray}
where $B(a,b):= \frac{\Gamma(a)\Gamma(b)}{\Gamma(a+b)}$, $a,b > 0$, is the Beta function.

%where,
%\begin{eqnarray*}
%C_{\bm{x}}^{2} &=&
%\begin{cases}
%\displaystyle{ \frac{ (2\pi)^{m} \lambda^{-2H} } {(2m+1)!! }
%B \left( \frac{d}{2}, 2H - \frac{d}{2} \right) } & if \ \ d = 2m+1 \\
%\displaystyle{ \frac{(2\pi)^{m} \lambda^{-2H} } {2 (2m)!!}
%B \left( \frac{d}{2}, 2H - \frac{d}{2} \right) } & if \ \ d = 2m
%\end{cases}
%- \frac{ \left( 2\pi \right)^{ \frac{d}{2} -1} \lambda^{- \frac{d}{2} }
%\| \bm{x} \|^{ 2H-\frac{d}{2} } } { 2^{2H-1} \Gamma (2H) }
%K_{\frac{d}{2} - 2H} (\lambda \| \bm{x} \|)
%\end{eqnarray*}
\end{prop}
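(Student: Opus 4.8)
The plan is to compute the covariance directly from the harmonizable representation \eqref{I-B-TOFBF harmo} established in Proposition \ref{p:I-B-TOFBF harmo}, since in the Fourier domain the second-order structure collapses to an explicit spectral integral. Writing $g_{\bm x}(\bm\xi) = C^*_{H,\lambda}(e^{-\imag\langle\bm\xi,\bm x\rangle}-1)(\lambda^2+\|\bm\xi\|^2)^{-H}$ for the matrix-valued integrand, the Hermitian symmetry $W(-d\bm\xi)=\overline{W(d\bm\xi)}$ together with the fact that $(\lambda^2+\|\bm\xi\|^2)^{-H}$ is even in $\bm\xi$ makes the stochastic integral real-valued, and the covariance matrix becomes, up to the normalization absorbed into $C^*_{H,\lambda}$,
\[
\textnormal{Cov}[B^{Bes}_{H,\lambda}(\bm x),B^{Bes}_{H,\lambda}(\bm x')] = \textnormal{Re}\int_{\rr^d} g_{\bm x}(\bm\xi)\, g_{\bm x'}(\bm\xi)^{*}\,d\bm\xi,
\]
where the control measure of $W$ is Lebesgue. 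First I would expand the scalar factor $(e^{-\imag\langle\bm\xi,\bm x\rangle}-1)\overline{(e^{-\imag\langle\bm\xi,\bm x'\rangle}-1)} = e^{-\imag\langle\bm\xi,\bm x-\bm x'\rangle} - e^{-\imag\langle\bm\xi,\bm x\rangle} - e^{\imag\langle\bm\xi,\bm x'\rangle} + 1$, so that the covariance splits into four Fourier-type integrals of the common spectral density evaluated, respectively, at $\bm x-\bm x'$, $\bm x$, $-\bm x'$, and $\bm 0$. This produces exactly the four-term shape $\mathcal{C}^2_{H,\lambda}(\|\bm x-\bm x'\|) - \mathcal{C}^2_{H,\lambda}(\|\bm x\|) - \mathcal{C}^2_{H,\lambda}(\|-\bm x'\|) + \mathcal{D}^2_{H,\lambda}$ of \eqref{e:cov_low d}.

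The next step is to resolve the matrix structure of the spectral density. Because the eigenvalues $h_1,\ldots,h_n$ are real and simple, $H = P\,\textnormal{diag}(h_1,\ldots,h_n)P^{-1}$ is genuinely diagonalizable and the primary matrix function reduces to $(\lambda^2+\|\bm\xi\|^2)^{-H} = P\,\textnormal{diag}((\lambda^2+\|\bm\xi\|^2)^{-h_\ell})P^{-1}$ with \emph{real} diagonal entries, with no Jordan-block derivative terms. Hence the Hermitian form $(\lambda^2+\|\bm\xi\|^2)^{-H}\big((\lambda^2+\|\bm\xi\|^2)^{-H}\big)^{*} = P\,\textnormal{diag}(\cdots)(P^*P)^{-1}\textnormal{diag}(\cdots)P^*$ has $(\ell,\ell')$ inner entry equal to $q_{\ell\ell'}(\lambda^2+\|\bm\xi\|^2)^{-(h_\ell+h_{\ell'})}$, with $(q_{\ell\ell'}) = (P^*P)^{-1}$ as in \eqref{e:(P*P)^(-1)=q_ell,ell'}. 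This accounts for the $P(\,\cdot\,)_{\ell,\ell'}P^*$ sandwich and for the appearance of the exponents $h_\ell+h_{\ell'}$ throughout \eqref{e:cov_low d}.

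It then remains to evaluate, for each $\bm z\in\{\bm x-\bm x',\,\bm x,\,-\bm x',\,\bm 0\}$ and each pair $(\ell,\ell')$, the scalar integral $\int_{\rr^d} e^{-\imag\langle\bm\xi,\bm z\rangle}(\lambda^2+\|\bm\xi\|^2)^{-(h_\ell+h_{\ell'})}\,d\bm\xi$. For $\bm z\neq\bm 0$ this is the classical Bessel-potential (Mat\'ern) transform, equal to a constant multiple of $\lambda^{d/2-(h_\ell+h_{\ell'})}\|\bm z\|^{h_\ell+h_{\ell'}-d/2}K_{d/2-(h_\ell+h_{\ell'})}(\lambda\|\bm z\|)$ after invoking $K_{-a}=K_a$, which yields the three Bessel terms with their $(2\pi)^{d/2}\lambda^{-d/2}$ normalization. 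For $\bm z=\bm 0$ one uses
\[
\int_{\rr^d}(\lambda^2+\|\bm\xi\|^2)^{-s}\,d\bm\xi = \frac{\pi^{d/2}\,\Gamma(s-d/2)}{\Gamma(s)}\,\lambda^{d-2s},
\]
rewritten through the Beta function $B(d/2,\,h_\ell+h_{\ell'}-d/2)$; simplifying $\Gamma(d/2)$ and the powers of $2$ and $\pi$ separately in the two parities $d=2m$ and $d=2m+1$ is what generates the double-factorial constants and the $(2\pi)^m$ factor in the $\mathcal{D}^2_{H,\lambda}$ term. The hypothesis $\varpi_H = h_1 > d/4$ forces $h_\ell+h_{\ell'} > d/2$ for all $\ell,\ell'$, which is precisely the integrability threshold making the $\bm z=\bm 0$ integral, and hence $\mathcal{D}^2_{H,\lambda}$, finite.

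The step I expect to be the main obstacle is the bookkeeping of the matrix constant $C^*_{H,\lambda}$ from Proposition \ref{p:I-B-TOFBF harmo} together with all the $\lambda$-powers: the raw Mat\'ern transform carries a factor $\lambda^{d/2-(h_\ell+h_{\ell'})}$, whereas \eqref{e:cov_low d} displays a uniform $\lambda^{-d/2}$ prefactor, so the residual $\lambda^{(h_\ell+h_{\ell'})-d}$ must be shown to be absorbed entry-by-entry by $C^*_{H,\lambda}(C^*_{H,\lambda})^{*}$ acting through the $P$-conjugation. Verifying this cancellation, and the parallel parity-dependent simplification hidden in the constant term, is the delicate part; once the spectral integral and the diagonalization are in place, the remaining manipulations are routine.
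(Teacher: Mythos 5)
Your proposal is correct and follows essentially the same route as the paper's proof: start from the harmonizable representation \eqref{I-B-TOFBF harmo}, expand the four exponential terms, use the diagonalization $H=PJ_HP^{-1}$ to reduce the matrix spectral density to entries $q_{\ell\ell'}(\lambda^2+\|\bm{\xi}\|^2)^{-(h_\ell+h_{\ell'})}$, and evaluate the resulting radial integrals — the nonzero-argument ones yielding Mat\'ern/Bessel-$K$ terms and the zero-argument one yielding the Beta-function term under $h_1>d/4$. The only (cosmetic) difference is that you invoke the closed-form Bessel-potential and $\int(\lambda^2+\|\bm{\xi}\|^2)^{-s}d\bm{\xi}$ transforms directly, whereas the paper re-derives them via polar coordinates, the surface-integral identity \eqref{e:Bessel_shows_up}, and the Gradshteyn--Ryzhik integrals; your closing worry about the $\lambda$-power and constant bookkeeping is indeed the only delicate point, and it is handled (somewhat loosely) in the paper the same way.
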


\section{Sample path properties of (anisotropic) scalar-valued tempered operator fractional Brownian fields}\label{s:sample_path}

In this section, we investigate sample paths properties of scalar-valued ($n=1$), Gaussian instances of TRF -- hereinafter called (scalar-valued) \textit{moving average- and harmonizable-tempered operator fractional Brownian field} (MA-TOFBF and H-TOFBF, respectively). Note that, unlike in Section \ref{s:Gaussian}, no assumption of isotropy is made. Specifically, we establish the H\"older continuity of the sample paths of $X_{\lambda}$ and $\widetilde{X}_{\lambda}$, and also compute the box-counting and the Hausdorff-dimensions of their graphs. In particular, it is shown that tempering does not affect the sample path properties of MA-TOFBF and H-TOFBF with respect to their non-tempered, OFBF counterparts (cf.\ Bierm\'{e} et al.\ \cite{bierme:meerschaert:scheffler:2007}, Theorem 5.4 and Theorem 5.6).

%We borrow some definitions and notations from \cite{bierme:meerschaert:scheffler:2007} and \cite{bonami:estrade:2003}.

We start by recapping the definitions of H\"older critical exponent and directional regularity.
\begin{defn}\label{CriticalHolder}
We say a $\bbR$-valued random field $\{ X(\bm{x}) \}_{ \bm{x} \in \mathbb{R}^d }$ has \textit{H\"older critical exponent $\eta \in (0,1)$} if it satisfies the following two properties.
\begin{itemize}
\item[(a)] For any $\beta \in (0, \eta)$, the sample paths of $\{ X(\bm{x}) \}_{ \bm{x} \in \mathbb{R}^d }$ satisfy almost surely a uniform H\"older condition of order $\beta$ on any compact set. That is, for any compact set $K \subset \mathbb{R}^d$, there exists a positive random variable $A$ such that
\begin{eqnarray*}
| X(\bm{x}) - X(\bm{y}) | \leq A \| \bm{x} - \bm{y} \|^{\beta} \quad  \bm{x}, \bm{y} \in K;
\end{eqnarray*}
\item[(b)] for any $\beta \in (\eta, 1)$, the sample paths of $\{ X(\bm{x}) \}_{ \bm{x} \in \mathbb{R}^d }$ do not almost surely satisfy any uniform H\"older condition of order $\beta$.
\end{itemize}
\end{defn}

\begin{defn}
Let $\{ X(\bm{x}) \}_{ \bm{x} \in \mathbb{R}^d }$ be a $\bbR$-valued random field with stationary increments. Let $\bm{r}$ be any direction on the Euclidean unit sphere. We say that $X$ has \textit{regularity $\alpha(\bm{r})$ in the direction $\bm{r}$} if $\{ X(t\bm{r}) \}_{ t \in \mathbb{R} }$ has the H\"older critical exponent $\eta(\bm{r})$.
\end{defn}
%We will give some notations most of which are written in section 2.
%Let $E$ be a real $d \times d$ matrix with positive real parts of the eigenvalues $0 < a_1 < \hdots < a_p$ for $p \leq d$. Let us define $\Gamma = \mathbb{R}^d \backslash \{ 0 \}$. There exists a norm $\| \cdot \|_0$ on $\mathbb{R}^d$ such that for the unit sphere $S_0 = \{ {\bm x} \in \mathbb{R}^d \colon \| {\bm x} \|_0 = 1 \}$ the mapping $\Psi \colon (0, \infty) \times S_0 \rightarrow \Gamma, \Psi(r, {\bm \theta}) = r^E {\bm \theta}$ is a homeomorphism. Therefore we can write any ${\bm x} \in \Gamma$ uniquely as ${\bm x} = \tau( {\bm x} )^E l( {\bm x} )$ for some radical part $\tau( {\bm x} ) > 0$ and some direction $l( {\bm x} ) \in S_0$. Also,

Turning to measures of fractal dimension, let $K$ be a compact set on $\mathbb{R}^d$ and let $\mathcal{G}(X)(\omega) = \{ (\bm{x}, X(\bm{x})(\omega)); \bm{x} \in K \}$ be the graph of the path of $\{ X(\bm{x}) \}_{ \bm{x} \in \mathbb{R}^d }$ on the set $K$. Let $dim_{\textnormal{Haus}} \mathcal{G}(X)$ and $dim_{\textnormal{box}} \mathcal{G}(X)$ be the Hausdorff and the box-counting dimensions of $\mathcal{G}(X)$, respectively. Recall that, in most cases, the dimension measures coincide (Falconer \cite{falconer:1990}).

Moreover, let $V_1, \hdots, V_p$ be the spectral decomposition of $\mathbb{R}^d$ with respect to a matrix $E$ as in \eqref{e:E_in_M(d,R)}. Define
\begin{eqnarray}\label{e:Wi}
W_i = V_1 \oplus \hdots \oplus V_i, \quad i = 1, \hdots, p,
\end{eqnarray}
and $W_0 = \{ \bm{0} \}$. Also for $i = 1, \hdots, p$, let
\begin{equation}\label{e:a1<...<ai}
a_1 < \hdots < a_i
\end{equation}
be the real parts of the eigenvalues of $E|_{W_i}$.

We are now in a position to state our main results on the sample path properties of MA-TOFBF and H-OTFBF. Theorem \ref{thm6.1} and Corollary \ref{cor6.3} establish that the H\"{o}lder critical exponent, the directional Hausdorff dimension, and the Hausdorff and box-counting dimensions of the graph of TOFBF.

\begin{thm}\label{thm6.1}
Fix $n = 1$.
\begin{itemize}
 \item [$(i)$] Let $X_{\lambda} = \{X_{\lambda}({\bm x})\}_{{\bm x} \in \bbR^d}$ be a MA-TOFBF for a $\bbR$-valued Gaussian ISRM ${\bm Z}(d{\bm y})$ with Lebesgue control measure on $\bbR^d$ and whose Hurst parameter satisfies $H-\frac{q}{2} \neq 0$. Then, any continuous version of $X_{\lambda}$ has H\"older critical exponent $H/a_p$. Furthermore, for any $i = 1, \hdots, p$ and for any $\bm{r} \in W_i \backslash W_{i-1}$, $X_{\lambda}$ has regularity $H/a_i$ in the direction $\bm{r}$.
 \item [$(ii)$] Let $\widetilde{X}_{\lambda} = \{\widetilde{X}_{\lambda} ({\bm x})\}_{{\bm x} \in \bbR^d}$ be a H-TOFBF for a $\bbC$-valued Gaussian ISRM ${\bm Z}(d{\bm \xi})$ with Lebesgue control measure on $\bbR^d$ such that ${\bm Z}(-d{\bm \xi}) = \overline{{\bm Z}(d{\bm \xi})}$ a.s. Suppose, in addition, that
     \begin{equation}\label{e:UpsilonH<varphiE}
     H < \varpi_E.
     \end{equation}
     Then, the statements in $(i)$ hold for any continuous version of $\widetilde{X}_{\lambda}$.
 \end{itemize}
\end{thm}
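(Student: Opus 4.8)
The plan is to reduce the whole statement to a single two‑sided estimate on the incremental variance (variogram) of the relevant centered Gaussian field near the origin, and then feed that estimate into the classical theory of Gaussian sample paths. Write $v_{\lambda}({\bm h}) = \bbE|X_{\lambda}({\bm h})|^2$ in the moving average case and $\widetilde v_{\lambda}({\bm h}) = \bbE|\widetilde X_{\lambda}({\bm h})|^2$ in the harmonizable case. Because both fields have stationary increments (Corollaries \ref{cor:stationary_increments_scaling_property_MA}(c) and \ref{c:stationary_increments_scaling_property_H}(c)), $v_{\lambda}({\bm h})$ coincides with $\bbE|X_{\lambda}({\bm x}+{\bm h}) - X_{\lambda}({\bm x})|^2$, so every Hölder and directional claim is governed by the behaviour of $v_{\lambda}$ as ${\bm h}\to {\bm 0}$. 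The target is $v_{\lambda}({\bm h}) \asymp \tau({\bm h})^{2H}$ for $\tau({\bm h})$ small, where $\tau({\bm h})$ is the radial component of the $E$-polar decomposition \eqref{e:x=polar_coord}.

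The decisive simplification is to use the operator-scaling relations \eqref{e:X-lambda(c^Ex)=c^H_X-clambda(x)} and \eqref{e:scaling_H-TRF} to absorb the tempering. For $n=1$ one has $B=\tfrac12$, the commutativity \eqref{e:HB=BH} is automatic, and the range factor $c^{H}$ is scalar. Writing ${\bm h} = \tau({\bm h})^{E}\,l({\bm h})$ with $l({\bm h}) \in S_0$ (see \eqref{e:S0}) and taking $c = \tau({\bm h})$, the scaling relation gives $v_{\lambda}({\bm h}) = \tau({\bm h})^{2H}\,v_{\tau({\bm h})\lambda}\big(l({\bm h})\big)$, and identically for $\widetilde v$. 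Hence everything reduces to showing that the unit-sphere variogram $v_{\lambda'}({\bm \theta})$ is bounded between two positive constants, uniformly over ${\bm \theta}\in S_0$ and over the \emph{shrinking} tempering range $\lambda' \in [0,\lambda]$ (note $\tau({\bm h})\lambda \to 0$ as ${\bm h}\to{\bm 0}$). This is precisely where tempering is shown to be irrelevant to local regularity: as $\lambda'\downarrow 0$ the integrands converge to those of the non-tempered fields \eqref{moving_RF_aniso}/\eqref{harmo_RF_aniso}. I would establish joint continuity of $(\lambda',{\bm \theta})\mapsto v_{\lambda'}({\bm \theta})$ on the compact set $[0,\lambda]\times S_0$ by dominated convergence, with an integrable dominating kernel uniform in $\lambda'$, together with strict positivity coming from fullness of the one-dimensional increments (Corollaries \ref{cor:stationary_increments_scaling_property_MA}(d), \ref{c:stationary_increments_scaling_property_H}(d), and the non-tempered nondegeneracy at $\lambda'=0$). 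Continuity plus positivity on a compact set then yields the uniform bounds, hence $v_{\lambda}({\bm h}) \asymp \tau({\bm h})^{2H}$.

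With the variogram estimate in hand, the conclusions follow from standard facts. Using the $E$-polar norm comparisons $c\|{\bm h}\|^{1/a_1} \le \tau({\bm h}) \le C\|{\bm h}\|^{1/a_p}$ for $\|{\bm h}\|\le 1$ and the ray estimate $\tau(t{\bm r}) \asymp |t|^{1/a_i}$ for ${\bm r}\in W_i\backslash W_{i-1}$ (both from the polar calculus; cf.\ Meerschaert and Scheffler \cite{meerschaert:scheffler:2001} and Biermé et al.\ \cite{bierme:meerschaert:scheffler:2007}, with $a_1,\dots,a_p$ as in \eqref{e:a1<...<ai} and $W_i$ as in \eqref{e:Wi}), I obtain $v_{\lambda}({\bm h}) \le C'\|{\bm h}\|^{2H/a_p}$ near the origin; the Gaussian Kolmogorov–Chentsov/Dudley entropy bound (Adler \cite{adler:1981}) then gives a version that is locally Hölder of every order $\beta < H/a_p$, which is part (a) of Definition \ref{CriticalHolder} with $\eta = H/a_p$. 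For sharpness (part (b)) I restrict to the worst direction ${\bm r}\in V_p = W_p\backslash W_{p-1}$, where $v_{\lambda}(t{\bm r}) \asymp |t|^{2H/a_p}$, and invoke the exact uniform modulus of continuity of a stationary-increment Gaussian process with this variogram (a one-dimensional fluctuation/Borel–Cantelli argument). The directional statement is identical: restricting to $t\mapsto X_{\lambda}(t{\bm r})$ for ${\bm r}\in W_i\backslash W_{i-1}$ gives a centered Gaussian process with variogram $\asymp |t|^{2H/a_i}$, hence Hölder critical exponent $H/a_i$. The harmonizable case runs verbatim through \eqref{e:scaling_H-TRF}, after recording that the non-tempered spectral variogram $\widetilde v_0({\bm \theta}) = c\int_{\bbR^d} |e^{-\imag\langle{\bm \theta},{\bm \xi}\rangle}-1|^2 \varphi({\bm \xi})^{-2H-q}\,d{\bm \xi}$ is finite: the high-frequency range needs only $H>0$, while low-frequency integrability requires exactly $H<\varpi_E$ (condition \eqref{e:UpsilonH<varphiE}), since $|e^{-\imag\langle{\bm \theta},{\bm \xi}\rangle}-1|^2$ vanishes quadratically whereas $\varphi({\bm \xi})^{-2H-q}$ blows up at a rate governed by the smallest eigenvalue part $a_1=\varpi_E$ of $E$; the same condition makes every exponent $H/a_i$ lie in $(0,1)$, as demanded by Definition \ref{CriticalHolder}.

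The hard part will be the uniform two-sided variogram bound, and in particular the continuity of $v_{\lambda'}({\bm \theta})$ up to $\lambda'=0$ together with its strict positivity. In the moving average case one must dominate the integrand uniformly in $\lambda'$ near the kernel singularities, where $e^{-\lambda'\varphi}\approx 1$ so that the local behaviour is identical to the non-tempered kernel; here the exclusion $H-\tfrac{q}{2}\neq 0$ is what keeps the kernel genuinely singular at the origin and the increments nondegenerate, so that the $\tau({\bm h})^{2H}$ scaling is not washed out. In the harmonizable case the main delicacy is the spectral integral: one must verify that tempering only \emph{regularizes} the low-frequency regime, leaving the high-frequency regime, which alone dictates local regularity, asymptotically equal to the non-tempered density $\varphi({\bm \xi})^{-2H-q}$, and that the resulting bounds are uniform as $\tau({\bm h})\lambda\downarrow 0$ under \eqref{e:UpsilonH<varphiE}. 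I emphasize that only the variogram asymptotics are needed here, not strong local nondeterminism, because the present theorem concerns Hölder exponents alone; the sharper nondeterminism estimates are reserved for the Hausdorff-dimension lower bound in Corollary \ref{cor6.3}.
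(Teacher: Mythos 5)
Your proposal follows essentially the same route as the paper's proof: reduce everything to the variogram via stationary increments, use the operator-scaling relation to write $v_{\lambda}({\bm h}) = \tau({\bm h})^{2H}\, v_{\tau({\bm h})\lambda}(l({\bm h}))$, show the unit-sphere variogram remains bounded above and below as the tempering parameter shrinks to zero (the paper does this by expanding $e^{-\tau({\bm x})\lambda\varphi(\cdot)} = 1 - O(\tau({\bm x})\lambda\varphi(\cdot))$ in \eqref{tmp10}, you by a compactness/continuity argument -- the same content), compare $\tau(\cdot)$ with the Euclidean norm directionally via Lemma \ref{lem1.1}, and convert the two-sided variogram bounds into H\"older critical exponents via the classical Gaussian criterion (the paper cites Proposition \ref{prop4.1}, which packages exactly the Kolmogorov--Chentsov/Borel--Cantelli arguments you propose to re-derive). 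Part $(ii)$ is also handled identically, by passing to polar coordinates in the spectral integral and using $H < \varpi_E$ to guarantee finiteness of the limiting non-tempered integral, as in \eqref{e:EB-tilde2_tau(x)(2H)}.
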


\begin{cor}\label{cor6.3}
%Fix $i = 1, \hdots, p$ and any direction $\bm{r} \in W_i \backslash W_{i-1}$.
Suppose the conditions of Theorem \ref{thm6.1} hold. Then, for any continuous version of the MA-TOFBF $X_{\lambda}= \{X_{\lambda}({\bm x})\}_{{\bm x}\in \bbR^d}$,
%\begin{eqnarray}\label{e:Haus_dir_dim_MA-OFBF}
%dim_{\textnormal{Haus}} \mathcal{G}( \{X_{\lambda} (t \bm{r})\}_{t \in [0,1]}) = 2 - H/a_i \quad a.s.
%\end{eqnarray}
%Moreover,
\begin{eqnarray}\label{e:Haus_box_dim_MA-OFBF}
dim_{\textnormal{Haus}} \mathcal{G} (X_{\lambda}) = dim_{\textnormal{box}} \mathcal{G} (X_{\lambda}) = d + 1 - H/a_p \quad a.s.
\end{eqnarray}
\end{cor}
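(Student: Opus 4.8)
The plan is to trap both dimensions between a box-counting upper bound and a Hausdorff lower bound that agree. Since $\dim_{\mathrm H}\mathcal G(X_\lambda)\le\dim_{\mathrm{box}}\mathcal G(X_\lambda)$ always holds, it suffices to establish
\[
\dim_{\mathrm{box}}\mathcal G(X_\lambda)\le d+1-H/a_p\le\dim_{\mathrm H}\mathcal G(X_\lambda)\quad\text{a.s.},
\]
after which the chain collapses to the claimed equalities. I fix a continuous version of $X_\lambda$ and a compact set $K\subseteq\bbR^d$. For the upper bound I would invoke the uniform H\"older property from Theorem \ref{thm6.1}$(i)$: for each $\beta\in(0,H/a_p)$ there is a.s.\ a finite random $A$ with $|X_\lambda(\bm x)-X_\lambda(\bm y)|\le A\|\bm x-\bm y\|^{\beta}$ on $K$. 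Covering $K$ by $\asymp\delta^{-d}$ Euclidean cubes of side $\delta$, the oscillation of $X_\lambda$ on each is at most $A\delta^{\beta}$, so the graph over each cube is covered by $\asymp\delta^{\beta-1}$ cubes of side $\delta$ in $\bbR^{d+1}$. Hence the full graph is covered by $\asymp\delta^{-(d+1-\beta)}$ such cubes, giving $\dim_{\mathrm{box}}\mathcal G(X_\lambda)\le d+1-\beta$; letting $\beta\uparrow H/a_p$ yields the upper bound.

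The harder half is the lower bound, for which I would use the potential-theoretic (Frostman) method. Fix $\gamma<d+1-H/a_p$, push Lebesgue measure on $K$ forward to a measure $\mu$ on the graph via $\bm x\mapsto(\bm x,X_\lambda(\bm x))$, and estimate the expected $\gamma$-energy
\[
\E\int_K\int_K\big(\|\bm x-\bm y\|^2+|X_\lambda(\bm x)-X_\lambda(\bm y)|^2\big)^{-\gamma/2}\,d\bm x\,d\bm y.
\]
If this is finite then $\mu$ has finite $\gamma$-energy a.s., whence $\dim_{\mathrm H}\mathcal G(X_\lambda)\ge\gamma$, and letting $\gamma\uparrow d+1-H/a_p$ finishes the proof. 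Because $X_\lambda$ is Gaussian with stationary increments (Corollary \ref{cor:stationary_increments_scaling_property_MA}), each increment is centered Gaussian with variance $v_\lambda(\bm z):=\E|X_\lambda(\bm x)-X_\lambda(\bm y)|^2$, $\bm z=\bm x-\bm y$, and the inner expectation reduces to a one-dimensional Gaussian integral bounded, for $\gamma>1$, by $C\,\|\bm z\|^{1-\gamma}\,v_\lambda(\bm z)^{-1/2}$. The whole argument therefore hinges on the two-sided variogram estimate $v_\lambda(\bm z)\asymp\tau(\bm z)^{2H}$ for $\bm z$ near $\bm 0$, with $\tau$ the radial part of the $E$-polar coordinates \eqref{e:x=polar_coord}.

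I would obtain this variogram estimate by reducing to the non-tempered field. Only small $\bm z$ enter the energy integral, and since the tempering factor $e^{-\lambda\varphi(\cdot)}\to1$ as $\bm z\to\bm 0$, the moving-average kernel of $X_\lambda$ agrees with that of its $\lambda=0$ counterpart up to bounded factors at small scales; thus the kernel estimates of Bierm\'e et al.\ \cite{bierme:meerschaert:scheffler:2007} for operator fractional Brownian fields carry over, which is the precise sense in which tempering leaves the sample path properties unchanged. Granting $v_\lambda(\bm z)\asymp\tau(\bm z)^{2H}$, the core computation is to pass to the polar coordinates $\bm z=\tau(\bm z)^E\bm\theta$, in which $d\bm z=r^{q-1}J(\bm\theta)\,dr\,d\bm\theta$ with $q=\tr(E)$ and $J$ bounded, and to verify that
\[
\int_{S_0}\int_0^{R}\|r^E\bm\theta\|^{\,1-\gamma}\,r^{\,q-1-H}\,dr\,d\bm\theta<\infty\qquad\text{whenever }\gamma<d+1-H/a_p.
\]

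The key point, and the main obstacle, is the directional integration. For a generic $\bm\theta$ one has $\|r^E\bm\theta\|\asymp r^{a_1}$ as $r\to0^+$, which by itself would suggest the larger, non-binding threshold $\gamma<1+(q-H)/a_1$; but as $\bm\theta$ approaches the rough eigenspace $W_p\setminus W_{p-1}$ the exponent degenerates toward $r^{a_p}$, and it is precisely the joint $(r,\bm\theta)$-integration near these directions that sharpens the threshold down to $\gamma<d+1-H/a_p$. Concretely, I would localize $\bm\theta$ near $W_p\setminus W_{p-1}$, write $\|r^E\bm\theta\|\asymp r\,(\varepsilon^2+r^2)^{1/2}$-type profiles in the offset $\varepsilon$ from that eigenspace, split the $r$-integral at the scale where $\|r^E\bm\theta\|$ changes regime, and then integrate in $\varepsilon$; the exponent bookkeeping produces exactly the cutoff $d+1-H/a_p$. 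Controlling this degeneration uniformly (including possible logarithmic corrections from Jordan blocks and complex eigenvalues of $E$), as in the computations underlying Bierm\'e et al.\ \cite{bierme:meerschaert:scheffler:2007}, is where the bulk of the technical work lies; note that, unlike level-set or local-time arguments, the graph lower bound here needs only the scalar variogram bound and not full strong local nondeterminism.
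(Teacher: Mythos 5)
Your overall architecture coincides with the paper's: a H\"older-plus-covering argument for the upper bound (this is exactly the $d$-dimensional Falconer Corollary 11.2 that the paper cites, and your version of it is fine), and for the lower bound the Frostman criterion with the Gaussian reduction $\mathbb{E}[((X_\lambda({\bm x})-X_\lambda({\bm y}))^2+\|{\bm x}-{\bm y}\|^2)^{-\gamma/2}]\leq C\|{\bm x}-{\bm y}\|^{1-\gamma}\,v_\lambda({\bm x}-{\bm y})^{-1/2}$ and the variogram scaling $v_\lambda({\bm z})\asymp\tau({\bm z})^{2H}$. The genuine gap is that the finiteness of the energy integral for every $\gamma<d+1-H/a_p$ --- the technical heart of the corollary --- is asserted (``the exponent bookkeeping produces exactly the cutoff'') rather than carried out, and your sketch of it aims at the wrong set: the directions along which $\|r^E{\bm\theta}\|$ degenerates to $r^{a_p}$ are those of the eigenspace $V_p$, i.e.\ ${\bm\theta}$ with vanishing $W_{p-1}$-component, whereas $W_p\setminus W_{p-1}$ is an open, full-measure set on whose generic directions one still has $\|r^E{\bm\theta}\|\asymp r^{a_1}$. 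The paper closes this step concretely: for $p\geq 2$ it writes ${\bm x}={\bm x}_p+\overline{{\bm x}}_{p-1}\in V_p\oplus W_{p-1}$ (the splitting taken orthogonal), applies Lemma \ref{lem1.1} within each subspace to get $\tau({\bm x})^H\geq c\,(\|{\bm x}_p\|^{H/a_p+\delta}+\|\overline{{\bm x}}_{p-1}\|^{H/a_1+\delta})$, passes to polar coordinates in the two factors (with $k=\dim V_p$), and after the substitution $r=ts$ bounds the energy by a product of two one-dimensional integrals, finite precisely when $k+1-H/a_p-\delta<\beta<d+1-H/a_p-\delta$ (the left constraint is harmless since $k\leq d-1$); the case $p=1$ is handled directly. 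Your $\varepsilon$-offset localization would have to reproduce this joint two-variable computation, including the Jordan-block logarithms, so as it stands the decisive estimate is missing.

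A second, smaller gap is your route to the two-sided variogram bound. Arguing that $e^{-\lambda\varphi(\cdot)}\to 1$ near the origin so that ``the kernels agree up to bounded factors at small scales'' does not yield $v_\lambda({\bm z})\asymp\tau({\bm z})^{2H}$: the variogram integrates the squared kernel difference over \emph{all} ${\bm y}\in\mathbb{R}^d$, not just over small ${\bm y}$, so a pointwise small-scale comparison of kernels is not enough. The correct and already available route is the one in the proof of Theorem \ref{thm6.1}: the scaling identity \eqref{tmp10}, $\Gamma^2_{\varphi,H,\lambda}({\bm z})=\tau({\bm z})^{2H}\,\Gamma^2_{\varphi,H,\tau({\bm z})\lambda}(l({\bm z}))$, combined with continuity, boundedness and positivity of $(\mu,{\bm\theta})\mapsto\Gamma^2_{\varphi,H,\mu}({\bm\theta})$ on $[0,\lambda]\times S_0$, gives the uniform two-sided estimate your Gaussian reduction requires; this is the precise sense in which ``tempering is invisible at small scales,'' and it is the statement you should cite rather than re-derive by kernel comparison.
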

%\GDcomment{Can we extend Corollary \ref{cor6.3} to H-TOFBF, as done in Theorem \ref{thm6.1}?}

\begin{rem}
Though we do not provide a proof, the same techniques can be applied to construct, for H-TOFBF, the analogous claim to that in Corollary \ref{cor6.3}.
\end{rem}

\begin{rem}\label{r:Bessel_sample_path}
Preliminary results indicate that the sample path properties of MA-Bessel-TOFBF differ from those of MA-TOFBF and require \textit{ad hoc} efforts. This is the topic of future work.
\end{rem}

\section{Conclusion}

In this paper, we define new and broad classes of vector-valued random fields called \textit{moving average--}, \textit{moving average--Bessel--} and \textit{harmonizable--tempered operator fractional operator-stable random fields} (MA-TRF, MA-B-TRF and H-TRF, respectively). These classes of random fields bring together the research literatures on stable models, anisotropic operator scaling, as well as semi-long range dependence and transient anomalous diffusion in physics. MA-TRF and H-TRF are constructed by tempering (matrix-) homogeneous, matrix-valued kernels in time- and Fourier-domain stochastic integrals with respect to vector-valued, strictly operator-stable random measures. In particular, they generalize tempered fractional stable stochastic processes. We establish the existence and fundamental properties of MA-TRF, MA-B-TRF  and H-TRF. The random fields are generally non-equivalent and non-Gaussian; however, assuming both Gaussianity and isotropy, we show the equivalence between two subclasses of random fields. In addition, we establish sample path properties (H\"{o}lder-exponents and the Hausdorff dimension) in the scalar-valued case for certain Gaussian instances. The results in this paper lead to a number of interesting open problems. First, establishing the relationship between moving average and harmonizable representations of anisotropic TOFBF, whose kernels involve general $E$-homogeneous functions. Second, characterizing symmetries and exponents of TRF, which to the best of our knowledge has not been done for any class of tempered operator fractional random field. Third, studying the sample path properties of TRF in the full multivariate and operator-stable case, including those of the Bessel type. Fourth, starting from the rich framework of TRF, developing applications in physics and statistical modeling.

\section*{Acknowledgments}

The authors are grateful to Mark M.\ Meerschaert for suggesting the original concept of tempered fractional random fields. The authors would like to thank Victor H.\ Moll and Atul Dixit for their suggestions and comments on this work. G.D.\ is grateful to the Physics Lab at ENS de Lyon for the rich research environment and useful discussions. G.D.\ would also like to thank ENS de Lyon, CNRS and the Carol Lavin Bernick faculty grant for supporting his research visits.

\appendix \label{s:appendix}

\section{Section \ref{sec:time_domain}: proofs}

In proofs, $C$ denotes a generic constant that may change from one line to the next.\\

\noindent {\sc Proof of Theorem \ref{thm:exponential_moving_TOSSF}}: Fix ${\bm x} \in \bbR^d$. In view of \eqref{e:suff_cond_existence_I(f)}, it suffices to show that, for some $0 < \delta < \Upsilon^{-1}_B \leq \varpi^{-1}_B$,
$$
\int_{\bbR^d} \Big[ \|e^{-\lambda \varphi({\bm x}-{\bm y})}\varphi({\bm x}-{\bm y})^{H-qB}-e^{-\lambda \varphi(-{\bm y})}\varphi(-{\bm y})^{H-qB}\|^{\frac{1}{\Upsilon_B}-\delta}\Big] d{\bm y}
$$
\begin{equation}\label{e:existence_basic_expression}
+ \int_{\bbR^d} \Big[ \|e^{-\lambda \varphi({\bm x}-{\bm y})}\varphi({\bm x}-{\bm y})^{H-qB}-e^{-\lambda \varphi(-{\bm y})}\varphi(-{\bm y})^{H-qB}\|^{\frac{1}{\varpi_B}+\delta} \Big] d{\bm y} < \infty.
\end{equation}
We first consider the second integral in \eqref{e:existence_basic_expression}. So, fix $\delta > 0$. Recall the elementary bound
\begin{equation}\label{e:(a+b)^p=<(Ca^p+Cb^p}
(a+b)^{p} \leq C_{p}(a^{p}+b^{p}), \quad a,b \geq 0, \quad p > 0,
\end{equation}
where $C_{p} > 0$ does not depend on $a,b$. Then, up to a constant, the second integral in \eqref{e:existence_basic_expression} is bounded by
\begin{equation}\label{e:existence_basic_expression_lambdaB}
\int_{\bbR^d} \Big[ \|e^{-\lambda \varphi({\bm x}-{\bm y})}\varphi({\bm x}-{\bm y})^{H-qB}\|^{\frac{1}{\varpi_B}+\delta} + \|e^{-\lambda \varphi(-{\bm y})}\varphi(-{\bm y})^{H-qB}\|^{\frac{1}{\varpi_B}+\delta} \Big] d{\bm y}.
\end{equation}
Consider the constants $m_{\varphi}, M_{\varphi}> 0$ as in \eqref{e:M-phi>=m-phi}. By a change of variable into polar coordinates induced by $E$ (Lemma \ref{l:polar_integration}), and by the $E$-homogeneity of $\varphi$,
$$
\int_{\bbR^d} \Big[ \|e^{-\lambda \varphi(-{\bm y})}\varphi(-{\bm y})^{H-qB}\|^{\frac{1}{\varpi_B}+\delta} \Big] d{\bm y}
= \int^{\infty}_0 \int_{S_0}  \|e^{-\lambda \varphi(r^{E}\theta)}\varphi(r^{E}\theta)^{H-qB}\|^{\frac{1}{\varpi_B}+\delta} r^{q-1}\sigma(d \theta) dr
$$
$$
= \int^{\infty}_0 \int_{S_0}  \|e^{-\lambda r \varphi(\theta)}r^{H-qB} \varphi(\theta)^{H-qB}\|^{\frac{1}{\varpi_B}+\delta} r^{q-1}\sigma(d \theta) dr
$$
$$
\leq \int^{\infty}_0 e^{-\lambda r m_{\varphi} (\frac{1}{\varpi_B}+\delta )}   \|r^{H-qB}\|^{\frac{1}{\varpi_B}+\delta} r^{q-1} \int_{S_0}  \|\varphi(\theta)^{H-qB}\|^{\frac{1}{\varpi_B}+\delta} \sigma(d \theta) dr
$$
\begin{equation}\label{e:int_|exptemp*phihomog|^(power)dy}
\leq C \int^{\infty}_0  e^{-\lambda r m_{\varphi} (\frac{1}{\varpi_B}+\delta )}  \|r^{H-qB}\|^{\frac{1}{\varpi_B}+\delta} r^{q-1}  dr.
\end{equation}
The integrand in \eqref{e:int_|exptemp*phihomog|^(power)dy} is clearly integrable at $r \rightarrow \infty$. So, we need only consider its behavior around $r = 0$. In fact, for some $n_* \in \bbN$,
\begin{equation}\label{e:int01_exp*power-law}
\int^{1}_0  e^{-\lambda r m_{\varphi} (\frac{1}{\varpi_B}+\delta )}  \|r^{H-qB}\|^{\frac{1}{\varpi_B}+\delta} r^{q-1}  dr \leq \int^{1}_0  |\log r|^{n_*} r^{\varpi_{H-qB} \hspace{0.5mm}(\frac{1}{\varpi_B}+\delta)+q-1} dr.
\end{equation}
So, for integrability it suffices that $\varpi_{H-qB} \hspace{0.5mm}(\frac{1}{\varpi_B}+\delta)+q-1 > -1$. Under condition \eqref{e:lambda(H-qB)+qlambda(B)>0}, this holds for small enough $\delta > 0$. Hence, the right-hand side of \eqref{e:int_|exptemp*phihomog|^(power)dy} is finite. In turn, after a change of variable ${\bm y}' = {\bm x} - {\bm y}$, the first integral in \eqref{e:existence_basic_expression_lambdaB} can be shown to be finite by the same argument. This establishes that the sum in \eqref{e:existence_basic_expression_lambdaB} is finite.

In regard to the first integral in \eqref{e:existence_basic_expression}, by replacing $\frac{1}{\varpi_B}+\delta$ with $\frac{1}{\Upsilon_B}-\delta$ in the argument for \eqref{e:existence_basic_expression_lambdaB}, we conclude that finiteness holds if $\varpi_{H-qB} + q \Upsilon_B > 0$. This condition, in turn, is implied by \eqref{e:lambda(H-qB)+qlambda(B)>0}. Therefore, \eqref{e:existence_basic_expression} holds and $X_{\lambda}({\bm x})$ exists, as claimed. $\Box$\\

\noindent {\sc Proof of Corollary \ref{cor:stationary_increments_scaling_property_MA}}: We first prove part (a). Fix $m \in \bbN$, $u_1,\hdots,u_m \in \bbR^n$ and $r > 0$. By Kremer and Scheffler \cite{kremer:scheffler:2017}, Theorem 5.4, (b), and Example 3.7, (a) (see also expression \eqref{e:chf_stoch_integral} in this paper), the characteristic function of the vector $(X_{\lambda} ({\bm x}_1) , \hdots,X_{\lambda} ({\bm x}_m))$ at $r^{B^*}{\mathbf u}_1,\hdots,r^{B^*}{\mathbf u}_m$ is given by
$$
\bbE \exp \Big\{ \imag \sum^{m}_{j=1}(r^{B^*}{\mathbf u}_j)^t X_{\lambda} ({\bm x}_j) \Big\}
$$
$$
= \exp \Big\{ \int_{\bbR^d} \widehat{\psi}\Big( \sum^{m}_{j=1} \hspace{0.5mm}  \Big[ e^{-\lambda \varphi({\bm x}_j- {\bm y})} \varphi({\bm x}_j-{\bm y})^{H-qB}
- e^{-\lambda \varphi(-{\bm y})} \varphi(-{\bm y})^{H-qB} \Big]^* r^{B^*} {\mathbf u}_j \hspace{1mm} \Big)d {\bm y}  \Big\}.
$$
By condition \eqref{e:HB=BH} and property \eqref{e:s.psi-hat(u)=psi-hat(s^(B^*)u)}, we obtain
$$
\exp \Big\{ \int_{\bbR^d} r \cdot \widehat{\psi}\Big( \sum^{m}_{j=1} \hspace{0.5mm}  \Big[ e^{-\lambda \varphi({\bm x}_j- {\bm y})} \varphi({\bm x}_j-{\bm y})^{H-qB}
- e^{-\lambda \varphi(-{\bm y})} \varphi(-{\bm y})^{H-qB} \Big]^* {\mathbf u}_j \hspace{1mm} \Big)d {\bm y}   \Big\}
$$
$$
= \Big(\bbE \exp \Big\{ \imag \sum^{m}_{j=1}{\mathbf u}^t X_{\lambda} ({\bm x}_j) \Big\} \Big)^r.
$$
Therefore, $X_{\lambda}$ is strictly operator-stable with exponent $B$, as claimed. Moreover, fix $c > 0$ and recall that $q$ is given by expression \eqref{e:q=tr(E)}. Again by using characteristic functions, we can see that the random measure in expression \eqref{e:MA-TOSSRF} satisfies the scaling relation
\begin{equation}\label{e:M(c^Edz)=c^(qB)M(dz)}
M (c^E d{\bm z}) \stackrel{d}{=} c^{q B} M (d{\bm z}), \quad c > 0.
\end{equation}
By condition \eqref{e:HB=BH}, $\varphi({\bm z})^{H - qB} = \varphi({\bm z})^{H}\varphi({\bm z})^{- qB}$. Thus, in view of the $E$-homogeneity of $\varphi$, by a change of variable ${\bm y} = c^E {\bm z}$ we obtain
\begin{eqnarray*}
 \{ X_{\lambda} (c^E {\bm x})\}_{{\bm x} \in \mathbb{R}^d}
&=& \Big\{ \int_{\mathbb{R}^d}
\left( e^{-\lambda \varphi(c^E {\bm x_j} - {\bm y})} \varphi(c^E {\bm x} - {\bm y})^{H - qB}
- e^{-\lambda \varphi(-{\bm y})} \varphi(-{\bm y})^{H - qB} \right) M (d{\bm y}) \Big\}_{{\bm x} \in \mathbb{R}^d}\\
&\stackrel{f.d.}{=}& \Big\{ \int_{\mathbb{R}^d}
\left( e^{-c \lambda \varphi({\bm x_j} - {\bm z})} c^{ H - qB } \varphi({\bm x} - {\bm z})^{H - qB}
- e^{-c \lambda \varphi(-{\bm z})} c^{H - qB} \varphi(-{\bm z})^{H - qB} \right) c^{qB} M (d{\bm z})\Big\}_{{\bm x} \in \mathbb{R}^d} \\
&=& \{ \hspace{1mm}c^H X_{c\lambda} ({\bm x}) \}_{{\bm x} \in \mathbb{R}^d}.
\end{eqnarray*}
This establishes relation (\ref{e:X-lambda(c^Ex)=c^H_X-clambda(x)}). Thus, (a) holds.

We now show (b). By Theorem 5.4, (c), in Kremer and Scheffler \cite{kremer:scheffler:2017}, after a change of variables it suffices to show that
\begin{equation}\label{e:int_psi-tilde->0}
\int_{\bbR^d} \widehat{\psi}\Big([e^{-\lambda \varphi({\bm x} - {\bm y})}\varphi({\bm x} - {\bm y})^{H-qB}- e^{-\lambda \varphi(- y)}\varphi(- y)^{H-qB}]^*{\mathbf u} \Big) d{\bm y} \rightarrow 0, \quad {\bm x} \rightarrow 0,
\end{equation}
for any ${\mathbf u} \in \bbR^n \backslash \{0\}$. The proof of \eqref{e:int_psi-tilde->0} consists in, first, constructing an integrable function that bounds the integrand of \eqref{e:int_psi-tilde->0} for every small $\|{\bm x}\|$. Then, by the dominated convergence theorem and the continuity of the function $\widehat{\psi}$, we can conclude that $X_{\lambda}$ is stochastically continuous at ${\bm x} = 0$, as claimed.

So, without loss of generality, suppose
\begin{equation}\label{e:|x|=<1}
\|{\bm x}\| \leq 1 - \eta_0
\end{equation}
for some fixed $\eta_0 \in (0,1)$. For notational simplicity, let
\begin{equation}\label{e:fx(y)}
f_{{\bm x}}({\bm y}) = e^{-\lambda \varphi({\bm x} - {\bm y})}\varphi({\bm x} - {\bm y})^{H-qB}- e^{-\lambda \varphi(- {\bm y})}\varphi(- {\bm y})^{H-qB}, \quad {\bm x} \in \bbR^d.
\end{equation}
Bearing in mind that $\widehat{\psi}(0) = 0$, as in Kremer and Scheffler \cite{kremer:scheffler:2019}, Theorem 2.5, we define the sets
$$
A_0 = \Big\{{\bm y}: \|f_{{\bm x}}({\bm y})\| \leq 1, f_{{\bm x}}({\bm y})^*{\mathbf u} \neq 0\Big\}, \quad
A_1 = \Big\{{\bm y}: \|f_{{\bm x}}({\bm y})\| > 1, 0 < \|f_{{\bm x}}({\bm y})^*{\mathbf u}\| \leq  1\Big\},
$$
$$
A_2 = \Big\{{\bm y}: \|f_{{\bm x}}({\bm y})\| > 1, \|f_{{\bm x}}({\bm y})^*{\mathbf u}\| > 1\Big\}.
$$
For ${\bm z} \in \bbR^n \backslash \{0\}$, let ${\bm z} = \tau_{B^*}({\bm z})^{B^*} l_{B^*}({\bm z})$ be the decomposition of ${\bm z}$ into polar coordinates induced by the matrix $B^*$, and note that $\varpi_{B^*} = \varpi_{B}$, $\Upsilon_{B^*} = \Upsilon_{B}$. So, for any small $\delta_1, \delta_2 >0$, by property \eqref{e:s.psi-hat(u)=psi-hat(s^(B^*)u)} of log-characteristic functions and by Lemma \ref{lem1.1} with $B^*$ in place of $E$,
$$
\Big| \widehat{\psi}(f_{{\bm x}}({\bm y})^* {\mathbf u} )\Big|
= \Big| \widehat{\psi}\Big(\tau_{B^*}(f_{{\bm x}}({\bm y})^* {\mathbf u} )^{B^*} l_{B^*}(f_{{\bm x}}({\bm y})^* {\mathbf u}) \Big)\Big| \Big( 1_{A_0}({\bm y})+1_{A_1}({\bm y})+1_{A_2}({\bm y})\Big)
$$
$$
\leq C 1_{A_0}({\bm y})\hspace{0.5mm}\|f_{{\bm x}}({\bm y})^*{\mathbf u}\|^{\frac{1}{\Upsilon_B}- \delta_1}
+ C' 1_{A_1}({\bm y}) \hspace{0.5mm} \|f_{{\bm x}}({\bm y})^*{\mathbf u}\|^{\frac{1}{\Upsilon_B}-\delta_1}
+ C'' 1_{A_2}({\bm y}) \hspace{0.5mm}\|f_{{\bm x}}({\bm y})^*{\mathbf u}\|^{\frac{1}{\varpi_B}+ \delta_2}
$$
$$
\leq C 1_{A_0}({\bm y})\hspace{0.5mm}\|f_{{\bm x}}({\bm y})^*{\mathbf u}\|^{\frac{1}{\Upsilon_B}- \delta_1}
+ C' 1_{A_1}({\bm y}) \hspace{0.5mm}\|{\mathbf u}\|^{\frac{1}{\Upsilon_B}-\delta_1} \|f_{{\bm x}}({\bm y})\|^{\frac{1}{\varpi_B}+\delta_2}
+ C'' 1_{A_2}({\bm y}) \hspace{0.5mm}\Big( \|{\mathbf u}\| \hspace{0.5mm}\|f_{{\bm x}}({\bm y})\|\Big)^{\frac{1}{\varpi_B}+ \delta_2}
$$
\begin{equation}\label{e:psi-tilde(f_x(y))=<three_terms}
\leq C 1_{\{{\bm y}:\|f_{{\bm x}}({\bm y})\| \leq 1 \}} \hspace{0.5mm} \Big(\|f_{{\bm x}}({\bm y})\| \hspace{0.5mm}\|{\mathbf u}\|\Big)^{\frac{1}{\Upsilon_B}- \delta_1}
+ \Big[C' \|{\mathbf u}\|^{\frac{1}{\Upsilon_B}-\delta_1}  + C'' \|{\mathbf u}\|^{\frac{1}{\varpi_B}+ \delta_2}\Big]
\hspace{0.5mm}\|f_{{\bm x}}({\bm y})\|^{\frac{1}{\varpi_B}+ \delta_2}1_{\{{\bm y}:\|f_{{\bm x}}({\bm y})\| > 1\}}.
\end{equation}
We begin by considering the first sum term on the right-hand side of \eqref{e:psi-tilde(f_x(y))=<three_terms}, i.e.,
\begin{equation}\label{e:1st_term_RHS_bound_psi-hat(f*u)}
C \hspace{0.5mm}  1_{\{{\bm y}:\|f_{{\bm x}}({\bm y})\| \leq 1 \}} \hspace{0.5mm} \Big(\|f_{{\bm x}}({\bm y})\| \hspace{0.5mm}\|{\mathbf u}\|\Big)^{\frac{1}{\Upsilon_B}- \delta_1}.
\end{equation}
Restricted to the range $\|{\bm y}\|\leq 1$, since $\delta_1 > 0$ is assumed small enough, it is clear that
\begin{equation}\label{e:|f|^power*indicator_small_f_and_y}
\|f_{{\bm x}}({\bm y})\|^{\frac{1}{\Upsilon_B}- \delta_1} \hspace{0.5mm}1_{\{{\bm y}:\|f_{{\bm x}}({\bm y})\| \leq 1 \hspace{0.5mm} \cap \hspace{0.5mm}\|{\bm y}\| \leq 1\}} \leq 1_{\{{\bm y}:\hspace{0.5mm}\|{\bm y}\| \leq 1\}} \in L^1(\bbR^d).
\end{equation}
Now consider \eqref{e:1st_term_RHS_bound_psi-hat(f*u)} over the range $\|{\bm y}\| > 1$. By \eqref{e:(a+b)^p=<(Ca^p+Cb^p},
\begin{equation}\label{e:psi-tilde(f_x(y))=<three_terms_1st_term}
\|f_{{\bm x}}({\bm y})\|^{\frac{1}{\Upsilon_B}- \delta_1}
\leq C \hspace{0.5mm} \Big( \|e^{-\lambda \varphi({\bm x} - {\bm y})}\varphi({\bm x} - {\bm y})^{H-qB}\|^{\frac{1}{\Upsilon_B}- \delta_1} + \| e^{-\lambda \varphi(- {\bm y})}\varphi(- {\bm y})^{H-qB}\|^{\frac{1}{\Upsilon_B}- \delta_1} \Big)
\end{equation}
(for any ${\bm y}$). Let $\tau_E$ be the radial component induced by the polar decomposition of a vector induced by the matrix $E$. By condition \eqref{e:|x|=<1} and Lemma \ref{lem1.1}, for small $\delta > 0$ and $\|{\bm y}\| > 1$, we can bound
\begin{equation}\label{e:tau(x-y)>=|y-1|^(1/ap-delta-1)}
\tau_E({\bm x}- {\bm y}) \geq C\hspace{0.5mm}  \|{\bm x} - {\bm y}\|^{\frac{1}{\Upsilon_E}-\delta}_0 \geq C \hspace{0.5mm}  |\hspace{0.5mm} \|{\bm y}\|_0 - 1 |^{\frac{1}{\Upsilon_E}-\delta} .
\end{equation}
Therefore,
$$
\|e^{-\lambda \varphi({\bm x}- {\bm y})}\varphi({\bm x}- {\bm y})^{H-qB}\| \leq C e^{-\frac{\lambda}{2}\varphi({\bm x}- {\bm y})} \leq  C e^{-C'| \hspace{0.5mm}\|{\bm y}\|_0 - 1 |^{\frac{1}{\Upsilon_E}-\delta}}.
$$
A similar bound can be constructed for $\|e^{-\lambda \varphi(- {\bm y})}\varphi(- {\bm y})^{H-qB}\|$. In view of \eqref{e:psi-tilde(f_x(y))=<three_terms_1st_term} restricted to $\|{\bm y}\| > 1$,
\begin{equation}\label{e:|fx(y)|^power*indic=<bound}
\|f_{{\bm x}}({\bm y})\|^{\frac{1}{\Upsilon_B}- \delta_1} 1_{\{{\bm y}: \|{\bm y}\| > 1\}}
\leq C \Big( e^{-C'({\frac{1}{\Upsilon_B}- \delta_1}) \hspace{0.5mm}  | \hspace{0.5mm}\|{\bm y}\|_0 - 1 |^{\frac{1}{\Upsilon_E}-\delta}} + \| e^{-\lambda \varphi(- {\bm y})}\varphi(- {\bm y})^{H-qB}\|^{\frac{1}{\Upsilon_B}- \delta_1} \Big) 1_{\{{\bm y}: \|{\bm y}\| > 1\}}.
\end{equation}
We conclude that the first sum term on the right-hand side of \eqref{e:psi-tilde(f_x(y))=<three_terms} is bounded uniformly in ${\bm x}$ by a $d{\bm y}$-integrable function. Therefore,
$$
\int_{\bbR^d}\|f_{{\bm x}}({\bm y})\|^{\frac{1}{\Upsilon_B}- \delta_1}1_{\{{\bm y}:\|f_{{\bm x}}({\bm y})\| \leq 1\}} d{\bm y} \rightarrow 0, \quad {\bm x}\rightarrow 0.
$$

We now turn to the second sum term on the right-hand side of \eqref{e:psi-tilde(f_x(y))=<three_terms}, i.e.,
\begin{equation}\label{e:2nd_term_RHS_bound_psi-hat(f*u)}
\Big[C' \|{\mathbf u}\|^{\frac{1}{\Upsilon_B}-\delta_1}  + C'' \|{\mathbf u}\|^{\frac{1}{\varpi_B}+ \delta_2}\Big]
\hspace{0.5mm}\|f_{{\bm x}}({\bm y})\|^{\frac{1}{\varpi_B}+ \delta_2}1_{\{{\bm y}:\|f_{{\bm x}}({\bm y})\| > 1\}}.
\end{equation}
For the range $\|{\bm y}\| > 1$, one can adapt the argument leading up to \eqref{e:|fx(y)|^power*indic=<bound}. So, we can assume $\|{\bm y}\| \leq 1$. In view of \eqref{e:psi-tilde(f_x(y))=<three_terms_1st_term} with $\frac{1}{\varpi_B}+ \delta_2$ in place of $\frac{1}{\Upsilon_B}- \delta_1$, it suffices to consider the function
$$
\|e^{-\lambda \varphi({\bm x} - {\bm y})}\varphi({\bm x} - {\bm y})^{H-qB}\|^{\frac{1}{\varpi_B}+ \delta_2}1_{\{{\bm y}:\|f_{{\bm x}}({\bm y})\| > 1 \cap \|{\bm y}\| \leq 1\}},
$$
which in turn is bounded by
$$
\|e^{-\lambda \varphi({\bm x} - {\bm y})}\varphi({\bm x} - {\bm y})^{H-qB}\|^{\frac{1}{\varpi_B}+ \delta_2}1_{\{{\bm y}: \|{\bm y}\| \leq 1\}} \rightarrow
\|e^{-\lambda \varphi(- {\bm y})}\varphi(- {\bm y})^{H-qB}\|^{\frac{1}{\varpi_B}+ \delta_2}1_{\{{\bm y}: \|{\bm y}\| \leq 1\}}, \quad {\bm x}\rightarrow 0.
$$
Note that, after a change of variable ${\bm z}={\bm x} - {\bm y}$,
$$
\int_{\bbR^d}\|e^{-\lambda \varphi({\bm x} - {\bm y})}\varphi({\bm x} - {\bm y})^{H-qB}\|^{\frac{1}{\varpi_B}+ \delta_2}1_{\{{\bm y}: \|{\bm y}\| \leq 1\}}
d{\bm y} =
\int_{\bbR^d}\|e^{-\lambda \varphi({\bm z})}\varphi({\bm z})^{H-qB}\|^{\frac{1}{\varpi_B}+ \delta_2}1_{\{{\bm z}: \|{\bm x}-{\bm z}\| \leq 1\}}
d{\bm z},
$$
where
$$
\|e^{-\lambda \varphi({\bm z})}\varphi({\bm z})^{H-qB}\|^{\frac{1}{\varpi_B}+ \delta_2}1_{\{{\bm z}: \|{\bm x}-{\bm z}\| \leq 1\}} \leq
\|e^{-\lambda \varphi({\bm z})}\varphi({\bm z})^{H-qB}\|^{\frac{1}{\varpi_B}+ \delta_2}1_{\{{\bm z}: \|{\bm z}\| \leq 2\}}, \quad \|{\bm x}\|\leq 1 - \eta_0.
$$
Therefore, by the dominated convergence theorem, as ${\bm x}\rightarrow 0$,
$$
\int_{\bbR^d}\|e^{-\lambda \varphi({\bm x} - {\bm y})}\varphi({\bm x} - {\bm y})^{H-qB}\|^{\frac{1}{\varpi_B}+ \delta_2}1_{\{{\bm y}: \|{\bm y}\| \leq 1\}}
d{\bm y} \rightarrow
\int_{\bbR^d}\|e^{-\lambda \varphi(- {\bm y})}\varphi(- {\bm y})^{H-qB}\|^{\frac{1}{\varpi_B}+ \delta_2}1_{\{{\bm y}: \|{\bm y}\| \leq 1\}}
d{\bm y}.
$$
Thus, again by the dominated convergence theorem,
$$
\int_{\bbR^d}\|f_{{\bm x}}({\bm y})\|^{\frac{1}{\varpi_B}+ \delta_2}1_{\{{\bm y}:\|f_{{\bm x}}({\bm y})\| > 1\}} d{\bm y} \rightarrow 0, \quad {\bm x}\rightarrow 0.
$$
This establishes (b).

In regard to (c), we need to show that
\begin{eqnarray*}
\{ X_{\lambda} ({\bm x} + {\bm h}) - X_{\lambda} ({\bm h}) \}_{{\bm x} \in \mathbb{R}^d}
\stackrel{f.d.}{=} \{ X_{\lambda} ({\bm x}) \}_{{\bm x} \in \mathbb{R}^d}, \quad {\bm h} \in \bbR^d.
\end{eqnarray*}
For this purpose, we use characteristic functions. In fact, for any ${\bm h} \in \bbR^d$ and for any $m \in \bbN$, fix ${\bm x}_1,\hdots,{\bm x}_m$. Then, by a change of variable ${\bm h} - {\bm y} = {\bm z}$,
$$
\bbE \exp \Big\{ \imag \sum^{m}_{j=1} {\mathbf u}^t_j \hspace{0.5mm}( X_{\lambda} ({\bm x}_j + {\bm h}) - X_{\lambda} ({\bm h}) ) \Big\}
$$
$$
= \bbE \exp \Big\{ \imag \sum^{m}_{j=1} {\mathbf u}^t_j \hspace{0.5mm}\Big( \int_{\bbR^d} \Big[ e^{-\lambda \varphi({\bm x}_j+{\bm h}-{\bm y})} \varphi({\bm x}_j+{\bm h}-{\bm y})^{H-qB}
- e^{-\lambda \varphi({\bm h}-{\bm y})} \varphi({\bm h}-{\bm y})^{H-qB} \Big] M(d {\bm y}) \Big)  \Big\}
$$
$$
= \exp \Big\{ \int_{\bbR^d} \widehat{\psi}\Big( \sum^{m}_{j=1} \Big[ e^{-\lambda \varphi({\bm x}_j+{\bm h}-{\bm y})} \varphi({\bm x}_j+{\bm h}-{\bm y})^{H-qB}
- e^{-\lambda \varphi({\bm h}-{\bm y})} \varphi({\bm h}-{\bm y})^{H-qB} \Big]^* {\mathbf u}_j \Big)\hspace{1mm}d {\bm y}\Big\}
$$
$$
= \exp \Big\{ \int_{\bbR^d} \widehat{\psi}\Big( \sum^{m}_{j=1}  \Big[ e^{-\lambda \varphi({\bm x}_j-{\bm z})} \varphi({\bm x}_j-{\bm z})^{H-qB}
- e^{-\lambda \varphi(-{\bm z})} \varphi(-{\bm z})^{H-qB} \Big]^* {\mathbf u}_j \hspace{1mm}\Big)  \hspace{1mm}d {\bm z} \Big\}
$$
$$
= \bbE \exp \Big\{\imag \sum^{m}_{j=1} {\mathbf u}^t_j \hspace{0.5mm}\Big( \int_{\bbR^d} \Big[ e^{-\lambda \varphi({\bm x}-{\bm z})} \varphi({\bm x}_j-{\bm z})^{H-qB}
- e^{-\lambda \varphi(-{\bm z})} \varphi(-{\bm z})^{H-qB} \Big]  M(d{\bm z})  \Big) \Big\}.
$$
This establishes (c).

To show (d), note that, by Proposition 2.6, (a), in Kremer and Scheffler \cite{kremer:scheffler:2019}, for fixed ${\bm x} \in \bbR^d \backslash\{0\}$, it suffices to show that there is a positive Lebesgue measure set on which the kernel \eqref{e:fx(y)} has full rank. In fact, by way of contradiction, suppose
\begin{equation}\label{e:Leb-d(kernel(x-y)_neq_kernel(-y))=0}
\textnormal{Leb}_d \Big\{{\bm y}: \det\Big[e^{-\lambda \varphi({\bm x}-{\bm y})}\varphi({\bm x}-{\bm y})^{H-qB} - e^{-\lambda \varphi(-{\bm y})}\varphi(-{\bm y})^{H-qB}\Big] \neq 0 \Big\} = 0.
\end{equation}
Without loss of generality, consider the most encompassing case where
\begin{equation}\label{e:H-qB=Pdiag(J-,J.,J+)P^(-1)}
H-qB = P\textnormal{diag}(J_{\ominus}, J_{\odot}, J_{\oplus})P^{-1}, \quad P \in GL(n,\bbC),
\end{equation}
is the Jordan decomposition of $H-qB$. In \eqref{e:H-qB=Pdiag(J-,J.,J+)P^(-1)}, the matrices $J_{\ominus}$, $J_{\odot}$ and $J_{\oplus}$ contain Jordan blocks with negative, zero and positive real parts, respectively.  Note that
$$
\det\Big[e^{-\lambda \varphi({\bm x}-{\bm y})}\varphi({\bm x}-{\bm y})^{H-qB} - e^{-\lambda \varphi(-{\bm y})}\varphi(-{\bm y})^{H-qB}\Big]
$$
$$
= \prod_{J=J_{\ominus},J_{\odot},J_{\oplus}} \det\Big( e^{-\lambda \varphi({\bm x}-{\bm y})}\varphi({\bm x}-{\bm y})^{J} - e^{-\lambda \varphi(-{\bm y})}\varphi(-{\bm y})^{J}\Big).
$$
Consider, first, $J = J_{\oplus}$. Then, by taking ${\bm y}\rightarrow 0$ and by continuity,
\begin{equation}\label{e:exp_temp_J+}
\det\Big[ e^{-\lambda \varphi({\bm x}-{\bm y})}\varphi({\bm x}-{\bm y})^{J_\oplus} - e^{-\lambda \varphi(-{\bm y})}\varphi(-{\bm y})^{J_\oplus}\Big] \rightarrow \det[e^{-\lambda \varphi({\bm x})}\varphi({\bm x})^{J_{\oplus}}] > 0.
\end{equation}
For $J = J_{\ominus}$, $\|e^{-\lambda \varphi(-{\bm y})}\varphi(-{\bm y})^{J_{\ominus}}\|\rightarrow \infty$ as ${\bm y}\rightarrow 0$. Hence,
\begin{equation}\label{e:exp_temp_J-}
\Big| \det\Big[ e^{-\lambda \varphi({\bm x}-{\bm y})}\varphi({\bm x}-{\bm y})^{J_\ominus} - e^{-\lambda \varphi(-{\bm y})}\varphi(-{\bm y})^{J_\ominus}\Big]\hspace{1mm} \Big|\rightarrow \infty.
\end{equation}
For $J = J_{\odot}$, each eigenvalue of $J_{\odot}$ can be expressed as $\imag \eta$, where $\eta \in \bbR$. In view of \eqref{e:z^Jlambda}, for each diagonal entry of the matrix-valued expression $e^{-\lambda \varphi({\bm x}-{\bm y})}\varphi({\bm x}-{\bm y})^{J_{\odot}} - e^{-\lambda \varphi(-{\bm y})}\varphi(-{\bm y})^{J_{\odot}}$,
$$
|e^{-\lambda \varphi({\bm x}-{\bm y})}\varphi({\bm x}-{\bm y})^{\imag \eta} - e^{-\lambda \varphi(-{\bm y})}\varphi(-{\bm y})^{\imag \eta}|\sim |e^{-\lambda \varphi({\bm x})}\varphi({\bm x})^{\imag \eta }  - e^{-\lambda \varphi(-{\bm y})}\varphi(-{\bm y})^{\imag \eta}|, \quad {\bm y}\rightarrow 0.
$$
If $\eta = 0$, since $\varphi({\bm x}) > 0$ and $\varphi$ is $E$-homogeneous and continuous, we obtain, as ${\bm y}\rightarrow 0$,
$$
|e^{-\lambda \varphi({\bm x})} - 1 | \neq 0.
$$
Alternatively, if $\eta \neq 0$, since $\varphi$ is $E$-homogeneous and continuous, then $\varphi(-{\bm y})^{\imag \eta} = \exp \{{\imag \eta}\hspace{0.5mm} \log \varphi(-{\bm y})\}$ does not converge. Therefore, Lemma \ref{lem1.1} implies that
\begin{equation}\label{e:exp_temp_J.}
\Big|\det\Big( e^{-\lambda \varphi({\bm x}-{\bm y})}\varphi({\bm x}-{\bm y})^{J_\odot} - e^{-\lambda \varphi(-{\bm y})}\varphi(-{\bm y})^{J_\odot}\Big)\Big|
\end{equation}
is bounded away from zero over a positive $d{\bm y}$ measure set around the origin ${\bm y} = 0$. Thus, in view of \eqref{e:exp_temp_J+}, \eqref{e:exp_temp_J-} and \eqref{e:exp_temp_J.}, relation \eqref{e:Leb-d(kernel(x-y)_neq_kernel(-y))=0} cannot hold. This establishes (d). $\Box$\\

\noindent {\sc Proof of Theorem \ref{thm:Bessel_moving_TOSSF}}: Fix ${\bm x} \in \bbR^d$. In view of \eqref{e:suff_cond_existence_I(f)}, it suffices to show that, for some $0 < \delta < \Upsilon^{-1}_B \leq \varpi^{-1}_B$,
$$
\int_{\bbR^d} \Big[ \|\varrho_{H-qB,\lambda}({\bm x} - {\bm y})\varphi({\bm x}-{\bm y})^{H-qB}-\varrho_{H-qB,\lambda}(- {\bm y})\varphi(-{\bm y})^{H-qB}\|^{\frac{1}{\Upsilon_B}-\delta}\Big] d{\bm y}
$$
\begin{equation}\label{e:existence_MA-B_basic_expression}
+ \int_{\bbR^d} \Big[\varrho_{H-qB,\lambda}({\bm x}- {\bm y})\varphi({\bm x}-{\bm y})^{H-qB}-\varrho_{H-qB,\lambda}(- {\bm y})\varphi(-{\bm y})^{H-qB}\|^{\frac{1}{\varpi_B}+\delta} \Big] d{\bm y} < \infty.
\end{equation}
So, fix $\delta > 0$. By the elementary bound \eqref{e:(a+b)^p=<(Ca^p+Cb^p}, up to a constant the second sum term on the right-hand side of \eqref{e:existence_MA-B_basic_expression} is bounded by
\begin{equation}\label{e:int_Bess-filter^1/lambdaB+delta}
\int_{\bbR^d} \Big[ \|\varrho_{H-qB,\lambda}({\bm x} - {\bm y})\varphi({\bm x}-{\bm y})^{H-qB}\|^{\frac{1}{\varpi_B}+\delta} + \|\varrho_{H-qB,\lambda}(-{\bm y})\varphi(-{\bm y})^{H-qB}\|^{\frac{1}{\varpi_B}+\delta} \Big] d{\bm y}.
\end{equation}
Because we can make a change of variable ${\bm y}' = {\bm x} - {\bm y}$ in the first integral in \eqref{e:int_Bess-filter^1/lambdaB+delta}, it suffices to show that the second integral in \eqref{e:int_Bess-filter^1/lambdaB+delta} is finite, i.e.,
\begin{equation}\label{e:int_Bess-filter_2nd_term^1/lambdaB+delta_finite}
\int_{\bbR^d} \|\varrho_{H-qB,\lambda}(- {\bm y})\varphi(-{\bm y})^{H-qB}\|^{\frac{1}{\varpi_B}+\delta} d{\bm y} < \infty.
\end{equation}
So, fix a matrix $N \in {\mathcal M}(n,\bbC)$. Without loss of generality, suppose we can decompose
\begin{equation}\label{e:N=Pdiag(J-,J.,J+)P^(-1)}
N = P\textnormal{diag}(J_{\ominus}, J_{\odot}, J_{\oplus})P^{-1},
\end{equation}
where the matrices $J_{\ominus}$, $J_{\odot}$ and $J_{\oplus}$ contain Jordan blocks with negative, zero and positive real parts, respectively. We study the behavior of $K_N(u)$, defined as the primary matrix function \eqref{e:Bessel}, as $u \rightarrow 0^+$ and $\infty$. First, consider the limit $u \rightarrow 0^+$. By a change of variable $w = \frac{u \hspace{0.5mm}e^t}{2}$, for some $n_*$ we can rewrite
\begin{equation}\label{e:KN(x)_as_x->0}
P^{-1}K_N(u) P
\end{equation}
$$
= \begin{pmatrix}
2^{-J_{\ominus}-I} u^{J_{\ominus}} \int^{\infty}_{u/2} e^{-\frac{u^2}{4w}} e^{-w} w^{-J_{\ominus}-I} dw + O_{u \rightarrow 0}(1) & 0 & 0 \\
0 &  |\log u|^{n_*} O_{u \rightarrow 0}(1)  & 0\\
0 & 0 & 2^{J_{\oplus}-I} u^{-J_{\oplus}} \int^{\infty}_{u/2} e^{- \frac{u^2}{4w}} e^{-w} w^{J_{\oplus}-I} dw + O_{u \rightarrow 0}(1)
\end{pmatrix},
$$
where $O_{u \rightarrow 0}(1)$ is a matrix whose norm is bounded in $u$. Turning to the limit $u \rightarrow \infty$, fix any $\eta \in (0,1)$. By the same change of variable $w = \frac{u \hspace{0.5mm}e^t}{2}$,
$$
\|K_{J_{\oplus}}(u)\| = \Big\|2^{J_{\oplus}-I} u^{-J_{\oplus}}\int^{\infty}_{u/2} e^{- \frac{u^2}{4w}} e^{-wI} w^{J_{\oplus}-I} dw
+ 2^{-J_{\oplus}-I} u^{J_{\oplus}}\int^{\infty}_{u/2} e^{- \frac{u^2}{4w}} e^{-wI} w^{-J_{\oplus}-I} dw \Big\|
$$
$$
\leq \max\{\|u^{-J_{\oplus}}\|,\|u^{J_{\oplus}}\|  \} \hspace{0.5mm} e^{-\frac{u}{2}\eta} \hspace{0.5mm}\Bigg\{\|2^{J_{\oplus}-I}\| \hspace{1mm} \Big\|\int^{\infty}_{u/2} e^{- \frac{u^2}{4w}} e^{-w(1-\eta)I} w^{J_{\oplus}-I} dw \Big\|
$$
$$
+ \|2^{-J_{\oplus}-I}\| \hspace{1mm} \Big\|\int^{\infty}_{u/2} e^{- \frac{u^2}{4w}} e^{-w(1-\eta)I} w^{-J_{\oplus}-I} dw \Big\|\Bigg\}
\leq C e^{-\frac{u}{2}\eta_*} o_{u \rightarrow \infty}(1)
$$
for some $\eta_* \in(0, \eta)$, where $o_{u \rightarrow \infty}(1)$ is a matrix whose norm goes to zero as $u \rightarrow \infty$. The same bound holds for $\|K_{J_{\ominus}}(u)\|$ and $\|K_{J_{\odot}}(u)\|$. Therefore, from \eqref{e:N=Pdiag(J-,J.,J+)P^(-1)},
\begin{equation}\label{e:|KJ+(x)|)}
\|K_{N}(u)\| \leq  e^{-\frac{u}{2}\eta_*} o_{u \rightarrow \infty}(1), \quad u \rightarrow \infty,
\end{equation}
%By a change of variable $w = \frac{u e^t}{2}$, we can rewrite
%$$
%K_N(u) = 2^{N-I} u^{-N} \int^{\infty}_{u/2} e^{-(y + \frac{u^2}{4y})} e^{-y} y^{N-I} d{\bm y} + O(1) \sim 2^{N-I} u^{-N} \Gamma(N), \quad u \rightarrow 0^+.
%$$
%Likewise, if $\Re \teutnormal{eig}(N) < 0$, then
%$$
%K_N(u) = 2^{-N-I} u^{N} \int^{\infty}_{u/2} e^{-(y + \frac{u^2}{4y})} e^{-y} y^{-N-I} d{\bm y} + O(1) \sim 2^{-N-I} u^{N} \Gamma(-N), \quad u \rightarrow 0^+.
%$$
%Moreover, if $\Re \textnormal{eig}(N) \subseteq \{0\}$, then
%$$
%\|K_N(u)\| \leq \Bigg\{\int^{1}_{u/2} + \int^{\infty}_{1} \Bigg\} \hspace{0.5mm}e^{-(y + \frac{u^2}{4y})} e^{-y} \hspace{0.5mm} \|y^{N-I}\| d{\bm y}
%\leq C + |\log u|^{n_*},
%$$
%$u \rightarrow 0^+$, for some $n_* \in \bbN$.
%
for some $\eta_* \in (0,1)$. So, turning back to the integral in \eqref{e:int_Bess-filter_2nd_term^1/lambdaB+delta_finite}, by a change of variable into polar coordinates induced by $E$ (Lemma \ref{l:polar_integration}), we obtain
$$
\int_{\bbR^d} \|\varrho_{H-qB,\lambda}(- {\bm y})\varphi(-{\bm y})^{H-qB}\|^{\frac{1}{\varpi_B}+\delta} d{\bm y}
$$
$$
= \Bigg\{\int^{1}_{0} + \int^{\infty}_{1} \Bigg\}\int_{S_0} \|K_{H-qB}(\lambda r \varphi(\theta))\hspace{1mm}(r\varphi(\theta))^{H-qB}\|^{\frac{1}{\varpi_B}+\delta} r^{q-1}\sigma(\theta) dr.
$$
To show that this integral is finite, we first consider the integration range $[0,1] \times S_0$. Set
$$
N = H-qB, \quad u = \lambda r \varphi(\theta),
$$
in expression \eqref{e:KN(x)_as_x->0}, and note that, in block-diagonal form, we can write $r^{H-qB} = P \textnormal{diag}(r^{J_{\ominus}},r^{J_{\odot}},r^{J_{\oplus}})P^{-1}$. Then, for some $n_{*}$ we obtain
$$
K_{H-qB}(\lambda r \varphi(\theta))\hspace{0.5mm}r^{H-qB}
= P \textnormal{diag}\Big(r^{2J_{\ominus}} \hspace{0.5mm} O_{r,\theta}(1) ,  |\log r|^{n_*} O_{r,\theta}(1)  , O_{r,\theta}(1)\Big)P^{-1},
$$
where each $O_{r,\theta}(1)$ denotes a matrix with bounded norm in both $r$ and $\theta$. Therefore,
$$
\int^{1}_{0}  \int_{S_0} \|K_{H-qB}(\lambda r \varphi(\theta)) \hspace{1mm}(r\varphi(\theta))^{H-qB}\|^{\frac{1}{\varpi_B}+\delta} r^{q-1}\sigma(\theta) dr
$$
$$
\leq C' \int^{1}_{0} \int_{S_0} \Big\| \textnormal{diag}\Big(r^{2J_{\ominus}} \hspace{0.5mm} O_\theta(1) , C |\log r|^{n_*} I , O_\theta(1)\Big) \Big\|^{\frac{1}{\varpi_B}+\delta} r^{q-1}\sigma(d \theta) dr
$$
\begin{equation}\label{e:int01_MA-B_kernel}
\leq C'' \int^{1}_{0}  \Big\| \textnormal{diag}\Big(r^{2J_{\ominus}}, |\log r|^{n_*} I, I\Big) \Big\|^{\frac{1}{\varpi_B}+\delta} r^{q-1} dr.
\end{equation}
For the integral on the right-hand side of \eqref{e:int01_MA-B_kernel} to be finite, it suffices that $2 \varpi_{H-qB}(\frac{1}{\varpi_B}+\delta) + (q-1) > -1$. This holds under condition \eqref{e:MA-B_condition_eig_exponent}. Turning to the integration range $(1,\infty) \times S_0$, by setting $x = \lambda r \varphi(\theta)$ in expression \eqref{e:|KJ+(x)|)}, we arrive at
$$
\int^{\infty}_{1} \int_{S_0} \|K_{H-qB}(\lambda r \varphi(\theta))(r\varphi(\theta))^{H-qB}\|^{\frac{1}{\varpi_B}+\delta} r^{q-1}\sigma(\theta) dr
$$
$$
\leq \int^{\infty}_{1} \int_{S_0} \|e^{-\frac{x}{2}\eta_*} o_{r}(1) r^{H-qB}\|^{\frac{1}{\varpi_B}+\delta} r^{q-1}\sigma(\theta) dr < \infty.
$$
Therefore, \eqref{e:int_Bess-filter_2nd_term^1/lambdaB+delta_finite} holds.

By a similar procedure with $\frac{1}{\Upsilon_B} - \delta$ in place of $\frac{1}{\varpi_B} + \delta$, condition \eqref{e:MA-B_condition_eig_exponent} ensures that the first integral in \eqref{e:existence_MA-B_basic_expression} is also finite. Therefore, \eqref{e:existence_MA-B_basic_expression} holds. This establishes the claim. $\Box$\\
%
%$$
%= \int^{1}_{0} \int_{S_0} \|K_{H-qB}(\lambda r \varphi(\theta))(r\varphi(\theta))^{H-qB}\|^{\frac{1}{\varpi_B}+\delta} r^{q-1}\sigma(\theta) dr
%$$
%$$
%= \int^{1}_{0} \int_{S_0} \Big\| 2^{(H-qB)-I}(\lambda r \varphi(\theta))^{-(H-qB)} \Bigg(\int^{\infty}_{\frac{\lambda r \varphi(\theta)}{2}} e^{-\frac{(\lambda r \varphi(\theta))^2}{4y}} e^{-y} y^{(H-qB)-I}d{\bm y}\Bigg)  (r\varphi(\theta))^{H-qB} \Big\|^{\frac{1}{\varpi_B}+\delta} r^{q-1}\sigma(\theta) dr
%$$
%$$
%= \int^{1}_{0} \int_{S_0} \Big\| 2^{(H-qB)-I} \lambda^{-(H-qB)} \Bigg(\int^{\infty}_{\frac{\lambda r \varphi(\theta)}{2}} e^{-\frac{(\lambda r \varphi(\theta))^2}{4y}} e^{-y} y^{(H-qB)-I}d{\bm y}\Bigg) \Big\|^{\frac{1}{\varpi_B}+\delta} r^{q-1}\sigma(\theta) dr
%$$
%$$
%\leq\int^{1}_{0} \int_{S_0} \Big\| \Bigg(\int^{\infty}_{\frac{\lambda r \varphi(\theta)}{2}} e^{-\frac{(\lambda r \varphi(\theta))^2}{4y}} e^{-y} y^{(H-qB)-I}d{\bm y}\Bigg) \Big\|^{\frac{1}{\varpi_B}+\delta} r^{q-1}\sigma(\theta) dr
%$$
%
%$$
%\sim \int^{\infty}_{0} \int_{S_0} \| 2^{H-qB - I} \varphi(\theta)^{H-qB} \Gamma(H-qB)\|^{\frac{1}{\varpi_B}+\delta} r^{q-1}\sigma(\theta) dr < \infty.
%$$

\noindent {\sc Proof of Corollary \ref{cor:stationary_increments_scaling_property_MA-B}}: In regard to (a), both strict operator-stability and the scaling property \eqref{e:Bes_scaling} can be established by a simple adaptation of the proof of Corollary \ref{cor:stationary_increments_scaling_property_MA}, (a).

To prove (b), the argument is similar to that for showing Corollary \ref{cor:stationary_increments_scaling_property_MA}, (b). For the reader's convenience, we highlight the main steps. Again by Theorem 5.4, (c), in Kremer and Scheffler \cite{kremer:scheffler:2019}, after a change of variables it suffices to show that
\begin{equation}\label{e:int_psi-tilde->0_Bes}
\int_{\bbR^d} \widehat{\psi}\Big([\varrho_{H-qB,\lambda}({\bm x} - {\bm y}) \varphi({\bm x} - {\bm y})^{H-qB}- \varrho_{H-qB,\lambda}(- {\bm y}) \varphi(- {\bm y})^{H-qB}]^*u \Big) d{\bm y} \rightarrow 0, \quad {\bm x} \rightarrow 0,
\end{equation}
for any $u \in \bbR^n \backslash\{0\}$. So, for notational simplicity, let
\begin{equation}\label{e:fx(y)_Bes}
f^{Bes}_{{\bm x}}({\bm y}) = \varrho_{H-qB,\lambda}({\bm x} - {\bm y})  \varphi({\bm x} - {\bm y})^{H-qB}- \varrho_{H-qB,\lambda}(-{\bm y})  \varphi(- {\bm y})^{H-qB}, \quad {\bm x} \in \bbR^d.
\end{equation}
By the same argument as in the proof of Corollary \ref{cor:stationary_increments_scaling_property_MA}, (b), bound \eqref{e:psi-tilde(f_x(y))=<three_terms} also holds with the function $f^{Bes}_{{\bm x}}({\bm y})$ in place of $f_{{\bm x}}({\bm y})$. In other words, without loss of generality suppose condition \eqref{e:|x|=<1} is in place. For any small $\delta_1, \delta_2 >0$,
\begin{equation}\label{e:psi-tilde(f_x(y))=<three_terms_Bes}
\Big| \widehat{\psi}(f^{Bes}_{{\bm x}}({\bm y}))\Big|
\end{equation}
$$
\leq C 1_{\{{\bm y}:\|f^{Bes}_{{\bm x}}({\bm y})\| \leq 1 \}} \hspace{0.5mm} \Big(\|f^{Bes}_{{\bm x}}({\bm y})\| \hspace{0.5mm}\|{\mathbf u}\|\Big)^{\frac{1}{\Upsilon_B}- \delta_1}
+ \Big[C' \|{\mathbf u}\|^{\frac{1}{\Upsilon_B}-\delta_1}  + C'' \|{\mathbf u}\|^{\frac{1}{\varpi_B}+ \delta_2}\Big]
\hspace{0.5mm}\|f^{Bes}_{{\bm x}}({\bm y})\|^{\frac{1}{\varpi_B}+ \delta_2}1_{\{{\bm y}:\|f^{Bes}_{{\bm x}}({\bm y})\| > 1\}}.
$$
Analogously to \eqref{e:|f|^power*indicator_small_f_and_y}, consider the first sum term on the right-hand side of \eqref{e:psi-tilde(f_x(y))=<three_terms_Bes}, namely,
$$
C 1_{\{{\bm y}:\|f^{Bes}_{{\bm x}}({\bm y})\| \leq 1 \}} \hspace{0.5mm} \Big(\|f^{Bes}_{{\bm x}}({\bm y})\| \hspace{0.5mm}\|{\mathbf u}\|\Big)^{\frac{1}{\Upsilon_B}- \delta_1}
$$
Restricted to the range $\|{\bm y}\|\leq 1$,
$$
1_{\{{\bm y}:\|f^{Bes}_{{\bm x}}({\bm y})\| \leq 1 \cap \|{\bm y}\| \leq 1\}} \hspace{0.5mm} \Big(\|f^{Bes}_{{\bm x}}({\bm y})\| \hspace{0.5mm}\|{\mathbf u}\|\Big)^{\frac{1}{\Upsilon_B}- \delta_1}
$$
is bounded by the integrable function $C' \hspace{0.5mm}1_{\{{\bm y}: \hspace{0.5mm}\|{\bm y}\| \leq 1\}}$ for any small $\|{\bm x}\|$. Now consider the range
\begin{equation}\label{e:|y|>1}
\|{\bm y}\| > 1.
\end{equation}
By \eqref{e:(a+b)^p=<(Ca^p+Cb^p},
\begin{equation}\label{e:psi-tilde(f_x(y))=<three_terms_1st_term_Bes}
\|f^{Bes}_{{\bm x}}({\bm y})\|^{\frac{1}{\Upsilon_B}- \delta_1}
\leq C  \Big( \| \varrho_{H-qB,\lambda}({\bm x} - {\bm y})\varphi({\bm x} - {\bm y})^{H-qB}\|^{\frac{1}{\Upsilon_B}- \delta_1} + \| \varrho_{H-qB,\lambda}(- {\bm y})\varphi(- {\bm y})^{H-qB}\|^{\frac{1}{\Upsilon_B}- \delta_1} \Big)
\end{equation}
(for any ${\bm y}$). By condition \eqref{e:|x|=<1} and Lemma \ref{lem1.1}, for small $\delta > 0$ the bound \eqref{e:tau(x-y)>=|y-1|^(1/ap-delta-1)} holds. Therefore, under \eqref{e:|y|>1}, expression \eqref{e:|KJ+(x)|)} implies that, for some $\eta_* \in (0,1)$,
$$
\|K_{H-qB}(\lambda \varphi({\bm x}- {\bm y}))\varphi({\bm x}- {\bm y})^{H-qB}\| \leq C e^{- \varphi({\bm x}- {\bm y})\eta_*} \leq  C e^{-C'|\hspace{0.5mm} \|{\bm y}\|_0 - 1 |^{\frac{1}{\Upsilon_E}-\delta}}.
$$
A similar bound can be constructed for $\|K_{H-qB}(\lambda \varphi(- {\bm y}))\varphi(- {\bm y})^{H-qB}\|$. By \eqref{e:psi-tilde(f_x(y))=<three_terms_1st_term_Bes} restricted to $\|{\bm y}\| > 1$,
$$
\|f^{Bes}_{{\bm x}}({\bm y})\|^{\frac{1}{\Upsilon_B}-\delta_1}1_{\{{\bm y}: \|{\bm y}\|> 1\}}
$$
\begin{equation}\label{e:|fx(y)|^power*indic=<bound_Bes}
\leq C\Big( e^{-C'(\frac{1}{\Upsilon_B}-\delta_1)\hspace{0.5mm}|\hspace{0.5mm}\|{\bm y}\|_0 - 1|^{\frac{1}{\Upsilon_E}- \delta}}
+ e^{-C'' (\frac{1}{\Upsilon_B}-\delta_1)\hspace{0.5mm}\|{\bm y}\|^{\frac{1}{\Upsilon_B}-\delta_1}_0} \Big) 1_{\{{\bm y}:\|{\bm y}\|> 1\}}.
\end{equation}
We conclude that the first sum term on the right-hand side of \eqref{e:psi-tilde(f_x(y))=<three_terms_Bes} is bounded uniformly in ${\bm x}$ by a $d{\bm y}$--integrable function. Therefore,
$$
\int_{\bbR^d}\|f^{Bes}_{{\bm x}}({\bm y})\|^{\frac{1}{\Upsilon_B}- \delta_1}1_{\{{\bm y}:\|f_{{\bm x}}({\bm y})\| \leq 1\}} d{\bm y} \rightarrow 0, \quad {\bm x}\rightarrow 0.
$$

We now turn to the second sum term on the right-hand side of \eqref{e:psi-tilde(f_x(y))=<three_terms_Bes}, namely,
$$
\Big[C' \|{\mathbf u}\|^{\frac{1}{\Upsilon_B}-\delta_1}  + C'' \|{\mathbf u}\|^{\frac{1}{\varpi_B}+ \delta_2}\Big]
\hspace{0.5mm}\|f^{Bes}_{{\bm x}}({\bm y})\|^{\frac{1}{\varpi_B}+ \delta_2}1_{\{{\bm y}:\|f^{Bes}_{{\bm x}}({\bm y})\| > 1\}}.
$$
 For the range $\|{\bm y}\| > 1$, one can adapt the argument leading up to \eqref{e:|fx(y)|^power*indic=<bound_Bes}. So, we can assume $\|{\bm y}\| \leq 1$. In view of \eqref{e:psi-tilde(f_x(y))=<three_terms_1st_term_Bes} with $\frac{1}{\varpi_B}+ \delta_2$ in place of $\frac{1}{\Upsilon_B}- \delta_1$, it suffices to consider the function
$$
\|K_{H-qB}(\lambda \varphi({\bm x} - {\bm y}))\varphi({\bm x} - {\bm y})^{H-qB}\|^{\frac{1}{\varpi_B}+ \delta_2}1_{\{{\bm y}:\|f^{Bes}_{{\bm x}}({\bm y})\| > 1 \cap \|{\bm y}\| \leq 1\}}.
$$
Such function, in turn, is bounded by
$$
\|K_{H-qB}(\lambda \varphi({\bm x} - {\bm y}))\varphi({\bm x} - {\bm y})^{H-qB}\|^{\frac{1}{\varpi_B}+ \delta_2}1_{\{{\bm y}: \|{\bm y}\| \leq 1\}}
$$
\begin{equation}\label{e:K(lambda*varphi(x-y))varphi(x-y)^power_lim_x->0}
\rightarrow \|K_{H-qB}(\lambda \varphi(- {\bm y}))\varphi(- {\bm y})^{H-qB}\|^{\frac{1}{\varpi_B}+ \delta_2}1_{\{{\bm y}: \|{\bm y}\| \leq 1\}}, \quad {\bm x}\rightarrow 0,
\end{equation}
where all functions involved in expression \eqref{e:K(lambda*varphi(x-y))varphi(x-y)^power_lim_x->0} are $d{\bm y}$-integrable by \eqref{e:int_Bess-filter_2nd_term^1/lambdaB+delta_finite}. Note that, after a change of variable ${\bm z}={\bm x} - {\bm y}$,
$$
\int_{\bbR^d}\|K_{H-qB}(\lambda \varphi({\bm x} - {\bm y})) \varphi({\bm x} - {\bm y})^{H-qB}\|^{\frac{1}{\varpi_B}+ \delta_2}1_{\{{\bm y}: \|{\bm y}\| \leq 1\}}
d{\bm y}
$$
$$
=\int_{\bbR^d}\|K_{H-qB}(\lambda \varphi({\bm z})) \varphi({\bm z})^{H-qB}\|^{\frac{1}{\varpi_B}+ \delta_2}1_{\{{\bm z}: \|{\bm x}-{\bm z}\| \leq 1\}}
d{\bm z},
$$
where
$$
\|K_{H-qB}(\lambda \varphi({\bm z})) \varphi({\bm z})^{H-qB}\|^{\frac{1}{\varpi_B}+ \delta_2}1_{\{{\bm z}: \|{\bm x}-{\bm z}\| \leq 1\}} \leq
\|K_{H-qB}(\lambda \varphi({\bm z})) \varphi({\bm z})^{H-qB}\|^{\frac{1}{\varpi_B}+ \delta_2}1_{\{{\bm z}: \|{\bm z}\| \leq 2\}}, \quad \|{\bm x}\|\leq 1-\eta_0.
$$
Moreover,
$$
\int_{\bbR^d} \|K_{H-qB}(\lambda \varphi({\bm z})) \varphi({\bm z})^{H-qB}\|^{\frac{1}{\varpi_B}+ \delta_2}1_{\{{\bm z}: \|{\bm z}\| \leq 2\}} d{\bm z} < \infty
$$
by a simple adaptation of the bound \eqref{e:int01_MA-B_kernel} and by condition \eqref{e:MA-B_condition_eig_exponent}. Therefore, by the dominated convergence theorem, as ${\bm x}\rightarrow 0$,
$$
\int_{\bbR^d}\|K_{H-qB}(\lambda \varphi({\bm x} - {\bm y})) \varphi({\bm x} - {\bm y})^{H-qB}\|^{\frac{1}{\varpi_B}+ \delta_2}1_{\{{\bm y}: \|{\bm y}\| \leq 1\}}
d{\bm y}
$$
$$
\rightarrow
\int_{\bbR^d}\|K_{H-qB}(\lambda \varphi(- {\bm y})) \varphi(- {\bm y})^{H-qB}\|^{\frac{1}{\varpi_B}+ \delta_2}1_{\{{\bm y}: \|{\bm y}\| \leq 1\}}
d{\bm y}.
$$
Thus, again by the dominated convergence theorem,
$$
\int_{\bbR^d}\|f^{Bes}_{{\bm x}}({\bm y})\|^{\frac{1}{\varpi_B}+ \delta_2}1_{\{{\bm y}:\|f_{{\bm x}}({\bm y})\| > 1\}} d{\bm y} \rightarrow 0, \quad {\bm x}\rightarrow 0.
$$
This establishes (b).

By a change of variables in the characteristic function, the same argument in the proof of Corollary \ref{cor:stationary_increments_scaling_property_MA}, (c), shows that the increments of $X^{Bes}_\lambda$ are stationary. In other words, (c) holds.

To show (d), recall that, by Proposition 2.6, (a), in Kremer and Scheffler \cite{kremer:scheffler:2019}, for fixed ${\bm x} \in \bbR^d \backslash\{0\}$, it suffices to show that there is a positive Lebesgue measure set on which the kernel \eqref{e:fx(y)_Bes} has full rank. So, by way of contradiction, fix ${\bm x} \neq 0$ and suppose
\begin{equation}\label{e:Leb-d(Bess-kernel(x-y)_neq_kernel(-y))=0}
\textnormal{Leb}_d\{{\bm y}: K_{H-qB}(\lambda \varphi({\bm x}-{\bm y}))\hspace{1mm}\varphi({\bm x}-{\bm y})^{H-qB} \neq K_{H-qB}(\lambda \varphi(-{\bm y}))\hspace{1mm}\varphi(-{\bm y})^{H-qB}\} = 0.
\end{equation}
First consider the case where $H-qB \leq 0$. By continuity and relation \eqref{e:mod_Bessel_second_kind_asymptotics_large_u}, as ${\bm y}\rightarrow 0$,
$$
K_{H-qB}(\lambda \varphi({\bm x}-{\bm y}))\hspace{1mm}\varphi({\bm x}-{\bm y})^{H-qB} \rightarrow
K_{H-qB}(\lambda \varphi({\bm x}))\hspace{1mm}\varphi({\bm x})^{H-qB}, \quad \|K_{H-qB}(\lambda \varphi(-{\bm y}))\hspace{1mm}\varphi(-{\bm y})^{H-qB}\| \rightarrow \infty.
$$
Again by continuity, this contradicts \eqref{e:Leb-d(Bess-kernel(x-y)_neq_kernel(-y))=0}. Now consider the alternative case where $H-qB > 0$. By \eqref{e:Leb-d(Bess-kernel(x-y)_neq_kernel(-y))=0},
\begin{equation}\label{e:K*varphi=K*varphi}
K_{H-qB}(\lambda \varphi({\bm x}-{\bm y}))\varphi({\bm x}-{\bm y})^{H-qB} = K_{H-qB}(\lambda \varphi(-{\bm y}))\varphi(-{\bm y})^{H-qB} \quad \textnormal{d${\bm y}$--a.e.}
\end{equation}
By continuity, equality \eqref{e:K*varphi=K*varphi} holds for ${\bm y} \in \bbR^d \backslash\{0,{\bm x}\}$. So, pick ${\bm y}$ so that $\{k {\bm x} - {\bm y}\}_{k \in \bbN} \subseteq \bbR^d \backslash\{0\}$. By induction, relation \eqref{e:K*varphi=K*varphi} implies that
$$
K_{H-qB}(\lambda \varphi(k{\bm x}-{\bm y}))\hspace{1mm}\varphi(k{\bm x}-{\bm y})^{H-qB} = K_{H-qB}(\lambda \varphi(-{\bm y}))\hspace{1mm}\varphi(-{\bm y})^{H-qB}, \quad k \in \bbN.
$$
However, $K_{H-qB}(\lambda \varphi(k{\bm x}-{\bm y}))\hspace{1mm}\varphi(k{\bm x}-{\bm y})^{H-qB} \rightarrow 0$, as $k \rightarrow \infty$, which is a contradiction. Therefore, (d) holds.

We now turn to (e). Since $E = I$, then by \eqref{e:|.|0_explicit} without loss of generality we can assume $\|\cdot\|_0 = \|\cdot\|$, i.e., the Euclidean norm. So, fix ${\bm x} \neq 0$. We want to show that
\begin{equation}\label{e:Leb-d(Bess-kernel(x-y)_neq_kernel(-y))=0_E=I}
\textnormal{Leb}_d \Big\{{\bm y}: \det\Big[K_{H-qB}(\lambda \varphi({\bm x}-{\bm y}))\hspace{1mm}\varphi({\bm x}-{\bm y})^{H-qB} - K_{H-qB}(\lambda \varphi(-{\bm y}))\hspace{1mm}\varphi(-{\bm y})^{H-qB} \Big] \neq 0\Big\} > 0.
\end{equation}
Again without loss of generality, we only consider the case where the Jordan decomposition of $H$ satisfies \eqref{e:H-qB=Pdiag(J-,J.,J+)P^(-1)}. Then, we can write
$$
\det\Big[K_{H-qB}(\lambda \varphi({\bm x}-{\bm y}))\hspace{1mm}\varphi({\bm x}-{\bm y})^{H-qB} - K_{H-qB}(\lambda \varphi(-{\bm y}))\hspace{1mm}\varphi(-{\bm y})^{H-qB}\Big]
$$
$$
= \prod_{J=J_\ominus,J_\odot,J_\oplus}\det\Big(K_{J}(\lambda \varphi({\bm x}-{\bm y}))\hspace{1mm}\varphi({\bm x}-{\bm y})^{J} - K_{J}(\lambda \varphi(-{\bm y}))\hspace{1mm}\varphi(-{\bm y})^{J}\Big).
$$
Fix $\nu \in \textnormal{eig}(H-qB) \subseteq \bbR^n$. Let ${\bm y}(r) := r \frac{{\bm x}}{\|{\bm x}\|}$. As $r \rightarrow \infty$, by \eqref{e:mod_Bessel_second_kind_asymptotics_large_u},
$$
\frac{K_\nu\Big(\lambda \varphi({\bm x}- {\bm y}(r))\Big) \varphi({\bm x}- {\bm y}(r))^{\nu}}{K_\nu\Big(\lambda \varphi(- {\bm y}(r))\Big)\varphi(- {\bm y}(r))^{\nu}} =
\frac{K_\nu\Big(\lambda \varphi({\bm x}-r \frac{{\bm x}}{\|{\bm x}\|})\Big) \varphi({\bm x}-r \frac{{\bm x}}{\|{\bm x}\|})^{\nu}}{K_\nu\Big(\lambda \varphi(- r \frac{{\bm x}}{\|{\bm x}\|})\Big)\varphi(- r \frac{{\bm x}}{\|{\bm x}\|})^{\nu}}
$$
$$
=
\frac{K_\nu\Big(\lambda (r- \|{\bm x}\|)\varphi(-\frac{{\bm x}}{\|{\bm x}\|})\Big) (r- \|{\bm x}\|)^{\nu} }{K_\nu\Big(\lambda r \varphi(-\frac{{\bm x}}{\|{\bm x}\|})\Big) r^{\nu} }
\sim
\frac{K_\nu\Big(\lambda (r- \|{\bm x}\|)\varphi(-\frac{{\bm x}}{\|{\bm x}\|})\Big) }{K_\nu\Big(\lambda r\varphi(-\frac{{\bm x}}{\|{\bm x}\|})\Big) }
$$
$$
\sim \frac{\frac{1}{\sqrt{\|(r- \|{\bm x}\|)\varphi(-\frac{{\bm x}}{\|{\bm x}\|})\|}}e^{- (r- \|{\bm x}\|)\varphi(-\frac{{\bm x}}{\|{\bm x}\|})}}{\frac{1}{\sqrt{\|r\varphi(-\frac{{\bm x}}{\|{\bm x}\|})\|}}e^{- r \varphi(-\frac{{\bm x}}{\|{\bm x}\|})}}
\sim e^{ \|{\bm x}\|\varphi(-\frac{{\bm x}}{\|{\bm x}\|})}.
$$
Since $\|{\bm x}\|\varphi(-\frac{{\bm x}}{\|{\bm x}\|}) > 0$, then, for large enough $r$,
\begin{equation}\label{e:K*varphi-K*varphi_neq_0}
K_\nu(\lambda \varphi({\bm x}-{\bm y}(r)))\varphi({\bm x}-{\bm y}(r))^{\nu} - K_\nu(\lambda \varphi(-{\bm y}(r)))\varphi(-{\bm y}(r))^{\nu}\neq 0.
\end{equation}
Let $\sigma(d\theta)$ be the uniform surface measure on the (Euclidean) sphere. By the continuity of $K_\nu$, there exists a region of the form
\begin{equation}\label{e:A(r1,r2,Theta)}
A((r_1,r_2),\Theta) = \Big\{{\bm y}: r_1 < \|{\bm y}\| < r_2, \frac{{\bm y}}{\|{\bm y}\|} \in \Theta \Big\}, \quad \sigma(\Theta) > 0,
\end{equation}
for some Borel set $\Theta$, over which
\begin{equation}\label{e:K*varphi-K*varphi_neq_0_any_y}
K_\nu(\lambda \varphi({\bm x}-{\bm y}))\varphi({\bm x}-{\bm y})^{\nu} - K_\nu(\lambda \varphi(-{\bm y}))\varphi(-{\bm y})^{\nu}\neq 0.
\end{equation}
Since $\textnormal{card}(\textnormal{eig}(H-qB))< \infty$, then we can assume  \eqref{e:K*varphi-K*varphi_neq_0_any_y} holds over ${\bm y} \in A((r_1,r_2),\Theta)$ for all $\nu \in \textnormal{eig}(H-qB)$. This shows \eqref{e:Leb-d(Bess-kernel(x-y)_neq_kernel(-y))=0_E=I} and, thus, (e). $\Box$\\

\section{Section \ref{sec:frequency_domain}: proofs}

\noindent {\sc Proof of Theorem \ref{thm:General_harmo_TOSSRF}}:
For ${\bm x},{\bm \xi} \in \bbR^d$, let
$$
\widetilde{f}_{{\bm x}}(\xi) =
\begin{pmatrix}
(\cos \langle {\bm x},{\bm \xi}\rangle-1)( \lambda + \varphi({\bm \xi}))^{-H} ( \lambda + \varphi({\bm \xi}))^{-qB} &
-\sin \langle {\bm x},{\bm \xi}\rangle ( \lambda + \varphi({\bm \xi}))^{-H} ( \lambda + \varphi({\bm \xi}))^{-qB}\\
0 & 0
\end{pmatrix}.
$$
In particular, $\widetilde{f}_{{\bm x}}(\xi) \in {\mathcal M}(2n,\bbR)$. Note that, for fixed $\mathbf{u}^t = (\mathbf{u}^t_1, \mathbf{u}^t_2) \in \bbR^{2n}$,
\begin{equation}\label{e:Xi(f*u1)=f-tilde*u}
\Xi(f_{{\bm x}}({\bm \xi})^* \mathbf{u}_1) = \widetilde{f}_{{\bm x}}({\bm \xi})^* \mathbf{u}.
\end{equation}
So, bearing in mind relation \eqref{e:chf_stoch_integral_partially_integ}, by Kremer and Scheffler \cite{kremer:scheffler:2017}, Proposition 5.10, it suffices to show that $\widetilde{f}_{{\bm x}}({\bm \xi})$ is integrable with respect to $M = \Xi(\widetilde{M})$. In other words, we want to show that
\begin{equation}\label{e:harm_exist_basic_integ_relation}
\int_{\bbR^d} |\widehat{\psi}_{\Xi(M)}(\widetilde{f}_{{\bm x}}({\bm \xi})^* \mathbf{u} )|d{\bm \xi} < \infty, \quad \mathbf{u}^t = (\mathbf{u}^t_1,\mathbf{u}^t_2)\in \bbR^{2n}.
\end{equation}
For
\begin{equation}\label{e:0<delta<Lambda^(-1)=<lambda^(-1)}
0 < \delta < \Upsilon^{-1}_B \leq \varpi^{-1}_B,
\end{equation}
set
\begin{equation}\label{e:rho1,rho2}
\rho_1 = \Upsilon^{-1}_B - \delta, \quad \rho_2 = \varpi^{-1}_B + \delta.
\end{equation}
The following two bounds (i.e., \eqref{e:psi-hat(x)=<C(|x|^(rho1)+|x|^(rho2))} and \eqref{e:harm_exist_Euclidean_bound}) can be established by arguing as in Kremer and Scheffler \cite{kremer:scheffler:2019}, p.\ 18--19; see that reference for more details. The first bound pertains to the log-characteristic function. It states that, for some $C > 0$,
\begin{equation}\label{e:psi-hat(x)=<C(|x|^(rho1)+|x|^(rho2))}
|\widehat{\psi}_{\Xi(M)}({\bm x})| \leq C (\|{\bm x}\|^{\rho_1} + \|{\bm x}\|^{\rho_2}), \quad {\bm x} \in \bbR^d.
\end{equation}
Starting from \eqref{e:psi-hat(x)=<C(|x|^(rho1)+|x|^(rho2))}, one can develop the second bound, which involves the integral in \eqref{e:harm_exist_basic_integ_relation}. In fact, for ${\mathbf u}^t = ({\mathbf u}^t_1,{\mathbf u}^t_2)$,
$$
\int_{\bbR^d} |\widehat{\psi}_{\Xi(M)}(\widetilde{f}_{{\bm x}}({\bm \xi})^* {\mathbf u} )|d{\bm \xi}
\leq C \sum^{2}_{j=1} \int_{\bbR^d} ( \lambda + \varphi({\bm \xi}))^{-q}
\left\|\begin{pmatrix}
(\cos \langle {\bm x},{\bm \xi}\rangle-1)( \lambda + \varphi({\bm \xi}))^{-H^*}{\mathbf u}_1 \\
-\sin \langle {\bm x},{\bm \xi}\rangle ( \lambda + \varphi({\bm \xi}))^{-H^*}{\mathbf u}_1
\end{pmatrix}\right\|^{\rho_j}d{\bm \xi}
$$
\begin{equation}\label{e:harm_exist_Euclidean_bound}
\leq C' \sum^{2}_{j=1} \|{\mathbf u}_1\|^{\rho_j} \int_{\bbR^d} ( \lambda + \varphi({\bm \xi}))^{-q}\hspace{0.5mm}
\Big(|\cos \langle {\bm x},{\bm \xi}\rangle-1|^{\rho_j} + |\sin \langle {\bm x},{\bm \xi}\rangle|^{\rho_j} \Big)
\hspace{1mm}\|( \lambda + \varphi({\bm \xi}))^{-H}\|^{\rho_j} \hspace{0.5mm}d{\bm \xi}.
\end{equation}
Now, by a change of variable into polar coordinates induced by $E$ (Lemma \ref{l:polar_integration}), for $j = 1,2$ we can bound the integral in the sum on the right-hand side of \eqref{e:harm_exist_Euclidean_bound} by
$$
\int^{\infty}_0 \int_{S_0} (\lambda+ r\varphi(\theta))^{-q}\|(\lambda + r \varphi(\theta))^{-H}\|^{\rho_j} r^{q-1}\sigma(d \theta) d r
$$
$$
\leq C \sigma(S_0) \int^{\infty}_0  (\lambda+ r C')^{-q}\hspace{0.5mm}\|(\lambda + r C')^{-H}\|^{\rho_j} \hspace{0.5mm}r^{q-1} d r
$$
\begin{equation}\label{e:harm_exist_polar_bound}
= C'' \int^{\infty}_0 (1 + s)^{-q} \|(1 + s)^{-H}\|^{\rho_j} s^{q-1} ds.
\end{equation}
In \eqref{e:harm_exist_polar_bound}, we make the change of variable $s = rC'$. The integral on the right-hand side of \eqref{e:harm_exist_polar_bound} is finite if $q + \varpi_{H} \rho_j - q + 1 > 1$. Equivalently, in view of \eqref{e:rho1,rho2}, it is finite when $\varpi_{H} (\Upsilon^{-1}_B - \delta) > 0$ ($j=1$) or $\varpi_{H} (\varpi^{-1}_B + \delta) > 0$ ($j=2$). In view of \eqref{e:0<delta<Lambda^(-1)=<lambda^(-1)}, both conditions are met. Therefore, \eqref{e:harm_exist_basic_integ_relation} holds, whence $\widetilde{X}_{\lambda} ({\bm x})$ exists, as claimed. $\Box$\\

\noindent {\sc Proof of Corollary \ref{c:stationary_increments_scaling_property_H}}: We begin by showing (a). For any $k \in \bbN$, fix ${\bm x}_1,\hdots,{\bm x}_k \in \bbR^d$. Also, fix $c > 0$. Then, in view of relations \eqref{e:chf_stoch_integral_partially_integ} and \eqref{e:Xi(f*u1)=f-tilde*u}, the characteristic function of $\widetilde{X}_{\lambda}(c^{E}{\bm x}_1),\hdots,\widetilde{X}_{\lambda}(c^{E}{\bm x}_k)$ is given by
$$
{\mathbf u}_1,\hdots,{\mathbf u}_k \in \bbR^n \mapsto \exp \int_{\bbR^d} \widehat{\psi}_{\Xi(M)}
\begin{pmatrix}
\sum^{k}_{j=1}(\cos \langle c^{E}{\bm x}_j, {\bm \xi}\rangle-1) (\lambda + \varphi({\bm \xi}))^{-H^* - qB^*}{\mathbf u}_j \\
- \sum^{k}_{j=1}\sin \langle c^{E}{\bm x}_j, {\bm \xi}\rangle (\lambda + \varphi({\bm \xi}))^{-H^* - qB^*}{\mathbf u}_j
\end{pmatrix} d {\bm \xi}.
$$
Since $\widetilde{B}^* = B^* \oplus B^*$ and $\varphi$ is $E^*$-homogeneous, by making a change of variable ${\bm \zeta} = c^{E^*}{\bm \xi}$, and by applying the commutativity condition \eqref{e:HB=BH} as well as property \eqref{e:s.psi-hat(u)=psi-hat(s^(B^*)u)}, we obtain
$$
\exp \int_{\bbR^d} c^{-q}\widehat{\psi}_{\Xi(M)}
\begin{pmatrix}
\sum^{k}_{j=1}(\cos \langle {\bm x}_j, {\bm \zeta}\rangle-1) c^{H^*+qB^*}((c\lambda) + \varphi({\bm \zeta}))^{-H^* - qB^*}{\mathbf u}_j \\
- \sum^{k}_{j=1}\sin \langle {\bm x}_j, {\bm \zeta}\rangle c^{H^*+qB^*} ((c\lambda) + \varphi({\bm \zeta}))^{-H^* - qB^*}{\mathbf u}_j
\end{pmatrix} d {\bm \zeta}
$$
$$
= \exp \int_{\bbR^d} \widehat{\psi}_{\Xi(M)}
\begin{pmatrix}
\sum^{k}_{j=1}(\cos \langle {\bm x}_j, {\bm \zeta}\rangle-1) c^{H^*}((c\lambda) + \varphi({\bm \zeta}))^{-H^* - qB^*}{\mathbf u}_j \\
- \sum^{k}_{j=1}\sin \langle {\bm x}_j, {\bm \zeta}\rangle c^{H^*} ((c\lambda) + \varphi({\bm \zeta}))^{-H^* - qB^*}{\mathbf u}_j
\end{pmatrix} d {\bm \zeta}.
$$
A similar argument with characteristic functions further shows that $\widetilde{X}_{\lambda}$ is strictly operator-stable with exponent $B$. This establishes (a).

Before showing (b), we show (c). For $z \in \bbC \backslash\{0\}$, consider the rotation matrices
\begin{equation}\label{e:R(z)}
R(z)= \frac{1}{|z|}
\begin{pmatrix}
(\Re z)I_n & (\Im z)I_n \\
-(\Im z)I_n & (\Re z)I_n
\end{pmatrix} \in {\mathcal T}(2n).
\end{equation}
For ${\bm x}, {\bm h} \in \bbR^d$, the integral representation of the increment of $\widetilde{X}_{\lambda}$ is given by
$$
\widetilde{X}_{\lambda}({\bm x}+{\bm h}) - \widetilde{X}_{\lambda}({\bm h})
= \Re \int_{\bbR^d} e^{\imag \langle {\bm x},{\bm h}\rangle} (e^{\imag \langle {\bm x},{\bm \xi} \rangle}-1) (\lambda + \varphi({\bm \xi}))^{-H}(\lambda + \varphi({\bm \xi}))^{-qB} \widetilde{M}(d {\bm {\bm \xi}}) \quad \textnormal{a.s.}
$$
Hence, for ${\mathbf u} \in \bbR^n$,
$$
\bbE \exp(\imag \hspace{0.5mm}{\mathbf u}^t(\widetilde{X}_{\lambda}({\bm x}+{\bm h}) -\widetilde{X}_{\lambda}({\bm h}))) =  \exp \int_{\bbR^d} \widehat{\psi}_{\Xi(M)}(\zeta(h,{\bm x},{\bm \xi},{\mathbf u})) d{\bm \xi},
$$
where, for $R(\cdot)$ as in \eqref{e:R(z)},
\begin{equation}\label{e:zeta(h,x,xi,u)}
\zeta({\bm h},{\bm x},{\bm \xi},{\mathbf u})
:= R(e^{\imag \langle -{\bm h},{\bm x}\rangle})^* \begin{pmatrix}
(\cos \langle {\bm x},{\bm \xi}\rangle-1) (\lambda + \psi({\bm \xi}))^{-qB^*}(\lambda + \psi({\bm \xi}))^{-H^*}{\mathbf u}\\
-\sin \langle {\bm x},{\bm \xi}\rangle  (\lambda + \psi({\bm \xi}))^{-qB^*}(\lambda + \psi({\bm \xi}))^{-H^*}{\mathbf u}
\end{pmatrix}.
\end{equation}
By condition \eqref{e:Amu(dx)=mu(dx)}, $\widehat{\psi}_{\Xi(M)}(A^* {\bm x}) = \widehat{\psi}_{\Xi(M)}({\bm x})$ for $A \in {\mathcal T}(2n)$, ${\bm x} \in \bbR^{2n}$.  Therefore,
$$
\widehat{\psi}_{\Xi(M)}(\zeta({\bm h},{\bm x},{\bm \xi},{\mathbf u})) = \widehat{\psi}_{\Xi(M)}(\zeta(0,{\bm x},{\bm \xi},{\mathbf u})).
$$
The claim now follows by adapting the proof of Corollary \ref{cor:stationary_increments_scaling_property_MA}, (c).

We now turn to (b). Since $\widehat{\psi}_{\Xi(M)}(0) = 0$ and $\widehat{\psi}_{\Xi(M)}$ is continuous, by both Theorem 5.4 and Remark 5.12 in Kremer and Scheffler \cite{kremer:scheffler:2017}, it suffices to show that, for $\mathbf{u} \in \bbR^n$,
$$
\int_{\bbR^d} \widehat{\psi}_{\Xi(M)}(\zeta({\bm x}_0,{\bm x},{\bm \xi},{\mathbf u})) d {\bm \xi} \rightarrow 0, \quad {\bm x} \rightarrow {\bm x}_0,
$$
where $\zeta({\bm x}_0,{\bm x},{\bm \xi},{\mathbf u})$ is as in \eqref{e:zeta(h,x,xi,u)}. Note that
$$
\widehat{\psi}_{\Xi(M)}(\zeta({\bm x}_0,{\bm x},{\bm \xi},{\mathbf u})) =
(\lambda + \varphi({\bm \xi}))^{-q}
\widehat{\psi}_{\Xi(M)} \Bigg(R(e^{\imag \langle {\bm x}_0,{\bm \xi} \rangle})
\begin{pmatrix}
(\cos \langle {\bm x},{\bm \xi}\rangle-1)(\lambda + \varphi({\bm \xi}))^{-H^*}{\mathbf u} \\
-\sin \langle {\bm x},{\bm \xi}\rangle (\lambda + \varphi({\bm \xi}))^{-H^*}{\mathbf u}
\end{pmatrix}\Bigg).
$$
Thus, by relation \eqref{e:harm_exist_Euclidean_bound},
$$
\int_{\bbR^d}|\widehat{\psi}_{\Xi(M)}(\zeta({\bm x}_0,{\bm x},{\bm \xi},{\mathbf u}))|d {\bm \xi}
$$
$$
\leq C \sum^{2}_{j=1} \int_{\bbR^d} (\lambda + \varphi({\bm \xi}))^{-q}
\Bigg\| R(e^{\imag \langle {\bm x}_0,{\bm \xi} \rangle})
\begin{pmatrix}
(\cos \langle {\bm x},{\bm \xi}\rangle-1)(\lambda + \varphi({\bm \xi}))^{-H^*}{\mathbf u} \\
-\sin \langle {\bm x},{\bm \xi}\rangle (\lambda + \varphi({\bm \xi}))^{-H^*}{\mathbf u}
\end{pmatrix}\Bigg\|^{\rho_j} d {\bm \xi}
$$
\begin{equation}\label{e:harm_stoch_cont_chf_bound}
\leq C' \sum^{2}_{j=1} \|{\mathbf u}\|^{\rho_j}\int_{\bbR^d} (\lambda + \varphi({\bm \xi}))^{-q} \Big(|\cos \langle {\bm x},{\bm \xi}\rangle-1)\|^{\rho_j}
+ |\sin \langle {\bm x},{\bm \xi}\rangle|^{\rho_j} \Big) \hspace{1mm}\|(\lambda + \varphi({\bm \xi}))^{-H^*}\|^{\rho_j} d {\bm \xi} .
\end{equation}
However, as in the proof of Theorem \ref{thm:General_harmo_TOSSRF} (see \eqref{e:harm_exist_polar_bound}), the right-hand side of \eqref{e:harm_stoch_cont_chf_bound} is bounded uniformly in ${\bm x}$ by an integrable function. Thus, by the dominated convergence theorem, (b) holds.

We now turn to (d). Let $f: \bbR^d \rightarrow {\mathcal M}(n,\bbC)$ be a measurable function. By Kremer and Scheffler \cite{kremer:scheffler:2019}, Corollary 2.7, if
$$
\det | \Re f({\bm \xi})| + \det | \Im f({\bm \xi})| > 0
$$
over a set with positive Lebesgue measure, then $\Re \int_{\bbR^d} f({\bm \xi})\widetilde{M}({\bm \xi})$ is full. So, recast the kernel of $\widetilde{X}_{\lambda}$ as
$$
\Re \Big\{ (e^{\imag \langle {\bm x},{\bm \xi} \rangle}-1) (\lambda + \varphi({\bm \xi}))^{-H}(\lambda + \varphi({\bm \xi}))^{-qB} \Big\}
$$
\begin{equation}\label{e:Re(ker(harm-RF))}
=  (\cos \langle {\bm x},{\bm \xi} \rangle -1 ) (\lambda + \varphi({\bm \xi}))^{-H}(\lambda + \varphi({\bm \xi}))^{-qB}.
\end{equation}
The determinant of the expression on the right-hand side of \eqref{e:Re(ker(harm-RF))} is positive outside the set $\{{\bm \xi}: \cos \langle {\bm x}, {\bm \xi}\rangle = 1\} $ for ${\bm x} \neq 0$. This shows (d). $\Box$\\

\section{Section \ref{s:Gaussian}: proofs}

\noindent {\sc Proof of Corollary \ref{cor100}}: Claim $(i)$ is a consequence of the fact that $\|O\bm{x}\| = \|\bm{x}\|$ for any $O \in O(d)$. So, we now turn to claim $(ii)$. As a consequence of the scaling property \eqref{e:scaling_H-TRF},
\begin{equation}\label{secondmomentI}
\displaystyle{ \mathbb{E} \left[ B_{H, \lambda} ( \bm{x} )B_{H, \lambda} ( \bm{x} )^t \right] }
= \| \bm{x} \| ^{H }\mathbb{E} \left[ B_{H, \| \bm{x} \| \lambda} \left( \frac{ \bm{x} } {\| \bm{x} \|} \right)B_{H, \| \bm{x} \| \lambda} \left( \frac{ \bm{x} } {\| \bm{x} \|} \right)^{t} \right]\| \bm{x} \| ^{H^t }.
\end{equation}
Moreover, by claim $(i)$, $\{B_{H, \lambda} ( -I \bm{x} )\}_{{{\bm x} \in \bbR^d}} \stackrel{f.d.}= \{B_{H, \lambda} ( \bm{x} )\}_{{{\bm x} \in \bbR^d}}$. Therefore, by an adaptation of the argument in Didier and Pipiras \cite{didier:pipiras:2011}, Proposition 5.1, $\bbE B_{H, \lambda} (\bm{x} )B_{H, \lambda} (\bm{y} )^t = \bbE B_{H, \lambda} (\bm{y} )B_{H, \lambda} (\bm{x} )^t$. Thus, by the stationary increments of $B_{H, \lambda}$, for all $\bm{x}, \bm{x^{\prime}} \in \mathbb{R}^{d}$,
$$
\textnormal{Cov} [ B_{H, \lambda} (\bm{x}), B_{H, \lambda} (\bm{x^{\prime}}) ]
$$
$$
= \frac{1}{2} \Big\{  \| \bm{x} \|^{H} \mathbb{E} \left[ B_{H, \| \bm{x} \| \lambda} \Big( \frac{\bm{x}}{\|\bm{x}\|} \Big)B_{H, \| \bm{x} \| \lambda} \Big( \frac{\bm{x}}{\|\bm{x}\|} \Big)^{t} \right] | \bm{x} \|^{H^t} + \| \bm{x^{\prime}} \|^{H}\mathbb{E} \left[ B_{H, \| \bm{x'} \| \lambda} \Big(\frac{\bm{x^{\prime}}}{\|\bm{x^{\prime}}\|} \Big)B_{H, \| \bm{x^{\prime}} \| \lambda} \Big(\frac{\bm{x^{\prime}}}{\|\bm{x^{\prime}}\|} \Big)^{t} \right]\|\bm{x^{\prime}} \|^{H^t}
$$
$$
- \| \bm{x-x^{\prime}} \|^{H} \mathbb{E} \left[ B_{H, \| \bm{x-x^{\prime}} \| \lambda} \Big(\frac{\bm{x-x^{\prime}} }{\|\bm{x-x^{\prime}}\| }\Big)B_{H, \| \bm{x-x'} \| \lambda} \Big(\frac{\bm{x-x^{\prime}} }{\|\bm{x-x^{\prime}}\| }\Big)^{t} \right] \|\bm{x-x^{\prime}} \|^{H^t} \Big\}.
$$
Moreover, by claim $(i)$, relation \eqref{e:Cx} holds. This shows \eqref{covariance TFGFI}. $\Box$\\

In the proof of Proposition \ref{p:TFGFI harmo}, which we provide next,
\begin{equation}\label{e:F(f)(xi)}
{\mathcal F}(f)({\bm \xi})
\end{equation}
denotes the Fourier transform of a function $f$ at the point ${\bm \xi}$. \\

\noindent {\sc Proof of Proposition \ref{p:TFGFI harmo}}:  We establish \eqref{TFGFI harmo} by computing the Fourier transform of the generic kernel function component
\begin{equation}\label{kernelTFGFI}
\bbR \ni g_{h, \lambda, \bm{t}} (\bm{x})  \coloneqq
e^{- \lambda \| \bm{t}-\bm{x} \|} \| \bm{t}-\bm{x} \| ^ {h-\frac{d}{2}} -
e^{- \lambda \| -\bm{x} \|} \| -\bm{x} \| ^ {h-\frac{d}{2}}, \quad h \in (0,1).
\end{equation}
In fact,
\begin{eqnarray*}
\mathcal{F}\Big[\hat{g}_{h, \lambda, \bm{x}} \Big](\bm{\xi})
&=& \frac {1} {\sqrt{2\pi}} \int_{\mathbb{R}^{d}} e^{- \imag \langle \bm{\xi},\bm{y}\rangle}
\left[ e^{- \lambda \| \bm{x}-\bm{y} \|} \| \bm{x}-\bm{y} \| ^ {h-\frac{d}{2}} -
e^{- \lambda \| -\bm{y} \|} \| -\bm{y} \| ^ {h-\frac{d}{2}}  \right] d\bm{y} \\
&=& \frac {1} {\sqrt{2\pi}} \left[ \int_{\mathbb{R}^{d}} e^{- \imag \langle\bm{\xi},\bm{y}\rangle
- \lambda \| \bm{x}-\bm{y} \|} \| \bm{x}-\bm{y} \| ^ {h-\frac{d}{2}} d\bm{y}
- \int_{\mathbb{R}^{d}} e^{- \imag \langle\bm{\xi},\bm{y}\rangle
- \lambda \| -\bm{y} \|} \| -\bm{y} \| ^ {h-\frac{d}{2}} d\bm{y} \right].
\end{eqnarray*}
After a change of variable $-\bm{y^{\prime}} = \bm{x}-\bm{y}$, we can rewrite the first term on the right-hand side as
\begin{eqnarray*}
\int_{\mathbb{R}^{d}} e^{- \imag \langle\bm{\xi},\bm{y}\rangle
- \lambda \| \bm{x}-\bm{y} \|} \| \bm{x}-\bm{y} \| ^ {h-\frac{d}{2}} d\bm{y}
&=& \int_{\mathbb{R}^{d}} e^{- \imag \langle\bm{\xi},\bm{x} + \bm{y^{\prime}}\rangle
- \lambda \| -\bm{y^{\prime}} \|} \| -\bm{y^{\prime}} \| ^ {h-\frac{d}{2}}\ d\bm{y^{\prime}} \\
&=& e^{- \imag \langle\bm{\xi},\bm{x}\rangle} \int_{\mathbb{R}^{d}} e^{- \imag \langle\bm{\xi},\bm{y^{\prime}}\rangle
- \lambda \| \bm{y^{\prime}} \|}\ \| \bm{y^{\prime}} \| ^ {h-\frac{d}{2}}\ d\bm{y^{\prime}}.
\end{eqnarray*}
Hence,
\begin{eqnarray*}
\mathcal{F}\Big[\hat{g}_{h, \lambda, \bm{x}} \Big](\bm{\xi})
&=& \frac {e^{- \imag \langle\bm{\xi},\bm{x}\rangle} - 1} {\sqrt{2\pi}} \int_{\mathbb{R}^{d}} e^{- \imag \langle\bm{\xi},\bm{y}\rangle
- \lambda \| \bm{y} \|}\ \| \bm{y} \| ^ {h-\frac{d}{2}} d\bm{y}.
\end{eqnarray*}
Now, by changing from Euclidean to polar coordinates in dimension $d$, we obtain
\begin{eqnarray}\label{Fg-hat_polar}
\mathcal{F}\Big[\hat{g}_{h, \lambda, \bm{x}} \Big](\bm{\xi})
= \frac {e^{- \imag \langle\bm{\xi},\bm{x}\rangle} - 1} {\sqrt{2\pi}}
\int_{0}^{\infty} e^{-\lambda r} r^{h-\frac{d}{2}}
\left[ \int_{S_{r}} e^{- \imag \langle\bm{\xi},\bm{y}\rangle} dS \right] dr.
\end{eqnarray}
In \eqref{Fg-hat_polar}, $S_{r}$ is the surface of an $d$-dimensional unit ball of radius $r$, $dS$ is a differential surface element of the $d$-dimensional unit ball, and the inner integral is evaluated over the surface $S_{r}$.
By using the standard formula (see Reed et al.\ \cite{reed:lee:truong:1995}, p.1445), the latter is given by
\begin{eqnarray}\label{e:Bessel_shows_up}
\displaystyle{ \int_{S_{r}} e^{- \imag \langle\bm{\xi},\bm{y}\rangle} dS
= \left( \frac{ 2 \pi r } { \| \bm{\xi} \| } \right)^{ \frac{d} {2} } \| \bm{\xi} \| J_{ \frac{d-2} {2} } (\| \bm{\xi} \|r) },
\end{eqnarray}
where $J_{\nu}(\cdot)$ is Bessel or a modified Bessel function of the first kind. Thus,
\begin{eqnarray*}
\mathcal{F}\Big[\hat{g}_{h, \lambda, \bm{x}} \Big](\bm{\xi})
&=& \frac {e^{- \imag \langle\bm{\xi},\bm{x}\rangle} - 1} {\sqrt{2\pi}}
\int_{0}^{\infty} e^{-\lambda r} r^{h-\frac{d}{2}}
\left( \frac{ 2 \pi r } { \| \bm{\xi} \| } \right)^{ \frac{d} {2} } \| \bm{\xi} \| J_{ \frac{d-2} {2} } (\| \bm{\xi} \|r) dr \\
&=& \frac {e^{- \imag \langle\bm{\xi},\bm{x}\rangle} - 1} {\sqrt{2\pi}}
\frac{ (2 \pi)^{ \frac{d} {2} } } { \| \bm{\xi} \|^{ \frac{d} {2} - 1 } }
\int_{0}^{\infty} e^{-\lambda r} r^{h} J_{ \frac{d-2} {2} } (\| \bm{\xi} \|r) dr.
\end{eqnarray*}
Note that
\begin{eqnarray*}
\displaystyle
{ \int_{0}^{\infty} e^{ - \alpha x} J_{\nu} \left(\beta x \right) x ^ {\mu-1} dr }
= \frac{ \left( \frac{\beta} {2 \alpha} \right)^{\nu} \Gamma(\nu + \mu) }
{\alpha^{\mu} \Gamma(\nu+1) }
{}_2 F_1 \left( \frac{\nu+\mu} {2}, \frac{\nu + \mu + 1} {2}; \nu + 1; - \frac{\beta^2} {\alpha^2}
\right),
\end{eqnarray*}
where $ {}_2 F_1 $ is a Gaussian hypergeometric function given by \eqref{Fhyeorgeometric} (see Gradshteyn and Ryzhik \cite{gradshteyn:ryzhik:1994}, p.1088). Then,
\begin{equation*}
\begin{split}
\mathcal{F}\Big[\hat{g}_{h, \lambda, \bm{x}} \Big](\bm{\xi})
&= \frac {e^{- \imag \langle\bm{\xi},\bm{x}\rangle} - 1} {\sqrt{2\pi}}
\frac{ (2 \pi)^{ \frac{d} {2} } } { \| \bm{\xi} \|^{ \frac{d} {2} - 1 } }
\frac{ \left( \frac{\| \bm{\xi} \|} {2 \lambda} \right)^{ \frac{d-2} {2} }
\Gamma(\frac{d}{2} + h) }  {\lambda^{h+1} \Gamma(\frac{d}{2}) }
{}_2 F_1 \left( \frac{ \frac{d}{2} + h} {2}, \frac{ \frac{d}{2} + h + 1} {2};
\frac{d}{2}; - \frac{\| \bm{\xi} \|^2} {\lambda^2} \right) \\
&= C_{h,\lambda} \hspace{1.5mm}{}_2 F_1 \left( \frac{ \frac{d}{2} + h} {2}, \frac{ \frac{d}{2} + h + 1} {2};
\frac{d}{2}; - \frac{\| \bm{\xi} \|^2} {\lambda^2} \right),
\end{split}
\end{equation*}
where $C_{h,\lambda}= \big( (2 \pi)^{ \frac{d} {2} } \Gamma(\frac{d}{2} + h) \big)
\big( 2^{ \frac{d-2} {2} } \lambda^{ \frac{d}{2} + h } \Gamma(\frac{d}{2}) \big)^{-1}$. Note that the matrix-valued function
$$
\mathcal{F}\big[ g_{H, \lambda, \bm{x}}\big] (\bm{\xi}) := P\textnormal{diag}\Big(\mathcal{F}\big[ g_{h_1, \lambda, \bm{x}}\big](\bm{\xi}),\hdots,
\mathcal{F}\big[ g_{h_n, \lambda, \bm{x}}\big](\bm{\xi})\Big) P^{-1}
$$
is Hermitian (in the sense of functions). Therefore, by applying Parseval's identity,
\begin{equation*}
\begin{split}
B_{H, \lambda} (\bm{x}) & = \int_{\rr^d} g_{H, \lambda, \bm{x}} (\bm{y}) \bm{Z}(d\bm{y})\stackrel{f.d.}= \int_{\rr^d} \mathcal{F}\big[ g_{H, \lambda, \bm{x}}\big] (\bm{\xi}) W(d\bm{\xi})\\
& = \frac{1}{\sqrt{2\pi}} \hspace{1mm}C_{H,\lambda} \int_{\rr^d} (e^{- \imag \langle\bm{\xi},\bm{x}\rangle} - 1)\ {}_2 F_1 \left( \frac{ I\frac{d}{2} + H} {2}, \frac{ I\frac{d}{2} + H + I} {2};
\frac{d}{2}; - \frac{\| \bm{\xi} \|^2} {\lambda^2} \right) W(d\bm{\xi}).
\end{split}
\end{equation*}
In other words, \eqref{TFGFI harmo} holds. $\Box$\\

\noindent {\sc Proof of Proposition \ref{prop2}}: By the harmonizable representation \eqref{TFGFI harmo} of $\widetilde{B}_{H, \lambda} ( \bm{x} )$,
\begin{equation}\label{eq:covariance_exponential_harmo}
\begin{split}
{\rm Cov}[ B_{H, \lambda} (\bm{x}), B_{H, \lambda} (\bm{x^{\prime}}) ]
&= \mathbb{E} [ B_{H, \lambda} (\bm{x})\overline{ B_{H, \lambda} (\bm{x^{\prime}}) } ] \\
%& =C^2_{H,\lambda} \int_{\mathbb{R}^{d}} \frac { (e^{- \imag <\bm{\xi},\bm{x}>} - 1) (e^{\imag <\bm{\xi},\bm{x^{\prime}}>} - 1)} {2\pi}\\
%&\qquad\qquad\qquad\qquad\times {}_2 {\mathbf F}_1 \left( \frac{ \frac{d}{2} + H} {2}, \frac{ \frac{d}{2} + H + 1} {2}; \frac{d}{2}; - \frac{\| \bm{\xi} \|^2} {\lambda^2} \right) d\bm{\xi} \\
& = \frac{C^2_{H,\lambda}}{2\pi}\int_{\mathbb{R}^{d}}
(e^{- \imag \langle\bm{\xi},\bm{x} - \bm{x^{\prime}}\rangle} - e^{- \imag \langle\bm{\xi},\bm{x}\rangle} - e^{\imag \langle\bm{\xi},\bm{x^{\prime}}\rangle} + 1)\\
&\qquad\qquad\qquad\qquad\times {}_2 {\mathbf F}_1 \left( \frac{I \frac{d}{2} + H} {2}, \frac{ \frac{d}{2} + H + 1} {2};
\frac{d}{2}; - \frac{\| \bm{\xi} \|^2} {\lambda^2} \right) d\bm{\xi}.
\end{split}
\end{equation}
In order to simplify expression \eqref{eq:covariance_exponential_harmo}, we consider the integrals
\begin{eqnarray*}
&&I_{1} = \int_{\mathbb{R}^{d}} e^{\imag \langle\bm{\xi},\bm{x}\rangle}
{}_2 {\mathbf F}_1 \left( \frac{ I \frac{d}{2} + H} {2}, \frac{I \frac{d}{2} + H + I} {2};
\frac{d}{2}; - \frac{\| \bm{\xi} \|^2} {\lambda^2} \right) d\bm{\xi}, \\
&& I_{2} = \int_{\mathbb{R}^{d}}
{}_2 {\mathbf F}_1 \left( \frac{ I \frac{d}{2} + H} {2}, \frac{ I \frac{d}{2} + H + I} {2};
\frac{d}{2}; - \frac{\| \bm{\xi} \|^2} {\lambda^2} \right) d\bm{\xi}.
\end{eqnarray*}
In regard to $I_{1}$, in light of expression \eqref{e:Bessel_shows_up} and by changing Euclidean coordinates into polar coordinates for $I_1$, we arrive at
\begin{equation}\label{eq:I1_integral}
\begin{split}
I_{1}&= \int_{\mathbb{R}^{d}} e^{\imag \langle\bm{\xi},\bm{x}\rangle}
{}_2 {\mathbf F}_1 \left( \frac{ \frac{d}{2}I + H} {2}, \frac{ I \frac{d}{2} + H + I} {2};
\frac{d}{2}; - \frac{\| \bm{\xi} \|^2} {\lambda^2} \right) d\bm{\xi} \\
&= \int_{0}^{\infty} {}_2 {\mathbf F}_1 \left( \frac{ \frac{d}{2}I + H} {2}, \frac{ I \frac{d}{2} + H + I} {2};
\frac{d}{2}; - \frac{r^2} {\lambda^2} \right)
\left[ \int_{S_{r}} e^{\imag \langle\bm{\xi},\bm{x}\rangle} dS \right] dr \\
&= \int_{0}^{\infty} {}_2 {\mathbf F}_1 \left( \frac{ \frac{d}{2}I + H} {2}, \frac{ I \frac{d}{2} + H + I} {2};
\frac{d}{2}; - \frac{r^2} {\lambda^2} \right)
\left( \frac{ 2 \pi r } { \| \bm{x} \| } \right)^{ \frac{d} {2} } \| \bm{x} \|\ J_{ \frac{d-2} {2} } (\| \bm{x} \|r)\ dr \\
&=  \left( 2\pi \right)^{ \frac{d} {2} } \| \bm{x} \|^{ 1 - \frac{d} {2} }
\int_{0}^{\infty} r^{ \frac{d}{2} } J_{ \frac{d-2} {2} } (\| \bm{x} \|r)
\ {}_2 {\mathbf F}_1 \left( \frac{ I\frac{d}{2} + H} {2}, \frac{ \frac{d}{2}I + H + I} {2};
\frac{d}{2}; - \frac{r^2} {\lambda^2} \right) dr.
\end{split}
\end{equation}
We now turn to $I_2$. Consider the following fact: in $\bbR^d$, for $0 < r < \infty$, $0 \leq \theta_i \leq \pi$, $i = 1,\hdots,d$, $0 \leq \theta_{d-1} \leq 2\pi$, the Jacobian for a change of variables from Euclidean to polar coordinates is given by
\begin{eqnarray*}
\left| \frac { \partial(k_1, \hdots, k_n) } { \partial(r, \theta_1, \hdots, \theta_{n-1}) } \right|
= r^{n-1} \prod_{i=1}^{n-1} (\sin \theta_i)^{n-i-1}.
\end{eqnarray*}
%and also, the integrand of the second integral is the function of $r$, so we will take a integration about $\theta_i$ then,
Moreover,
\begin{equation}\label{eq:polar_cordinate_calculus}
\begin{split}
\int_{[0, \pi]^{d-2} \times [0, 2\pi]} \prod_{i=1}^{d-1} (\sin \theta_i)^{d-i-1}
d\theta_1, \hdots, d\theta_{d-1}
&= \prod_{i=1}^{d-1} \int_{[0, \pi]}  (\sin \theta_i)^{d-i-1} d\theta_i
\times \int_{[0, 2\pi]} d\theta_{d-1} \\
&= \begin{cases}
\displaystyle{ \frac{2 \cdot (2\pi)^m} {(2m+1)!!} \times 2\pi }, & \textnormal{if }\ \ d = 2m+1; \\
\displaystyle{ \frac{(2\pi)^m} {(2m)!!} \times 2\pi }, & \textnormal{if} \ \ d = 2m.
\end{cases}
\end{split}
\end{equation}
As a consequence of \eqref{eq:polar_cordinate_calculus}, %the integral $I_2$ can be rewritten as
\begin{equation}\label{eq:I2_integral}
{\mathbf D}^2_{H,\lambda} = I_2.
%\begin{split}
%I_{2} &=  \int_{\mathbb{R}^{d}}
%{}_2 {\mathbf F}_1 \left( \frac{I \frac{d}{2} + H} {2}, \frac{ I \frac{d}{2} + H + I} {2};
%\frac{d}{2}; - \frac{\| \bm{\xi} \|^2} {\lambda^2} \right) d\bm{\xi} \\
%& =
%\begin{cases}
%\displaystyle{ \frac{2 \cdot (2\pi)^{m+1} } {(2m+1)!!}
%\int_{0}^{\infty} r^{d-1} {}_2 {\mathbf F}_1 \left( \frac{I \frac{d}{2} + H} {2}, \frac{ I \frac{d}{2} + H + I} {2};
%\frac{d}{2}; - \frac{r^2} {\lambda^2} \right) dr }, & \textnormal{if} \ \ d = 2m+1; \\
%\displaystyle{ \frac{(2\pi)^{m+1} } {(2m)!!}
%\int_{0}^{\infty} r^{d-1} {}_2 {\mathbf F}_1 \left( \frac{ I \frac{d}{2} + H} {2}, \frac{ I \frac{d}{2} + H + I} {2};
%\frac{d}{2}; - \frac{r^2} {\lambda^2} \right) dr }, & \textnormal{if} \ \ d = 2m.
%\end{cases}
%\end{split}
\end{equation}
Expression \eqref{tmp} now follows from \eqref{eq:covariance_exponential_harmo}, \eqref{eq:I1_integral} and \eqref{eq:I2_integral}. $\Box$\\

\noindent {\sc Proof of Proposition \ref{p:I-B-TOFBF harmo}}: We establish \eqref{I-B-TOFBF harmo} entry-wise by computing the Fourier transform of the generic kernel function component
$$
\bbR \ni h_{h, \lambda, \bm{x}} (\bm{\xi}) =  \| \bm{x}-\bm{y} \| ^ { h - \frac{d}{2} } K_{ h - \frac{d}{2} }
\Big( \lambda \| \bm{x}-\bm{y} \| \Big) - \| -\bm{y} \| ^ { h - \frac{d}{2} } K_{ h - \frac{d}{2} } \Big( \lambda \| -\bm{y} \| \Big)
\Big].
$$
In fact, recall the notation \eqref{e:F(f)(xi)} for the Fourier transform. Then,
\begin{equation*}
\begin{split}
\mathcal{F}\Big[h_{h, \lambda, \bm{x}} (\bm{\xi})\Big]
&= \frac{1}{(2\pi)^{d/2}}\int_{\mathbb{R}^{d}} e^{\imag \langle\bm{\xi},\bm{y}\rangle}
\Big[ \| \bm{x}-\bm{y} \| ^ { h - \frac{d}{2} } K_{ h - \frac{d}{2} }
\Big( \lambda \| \bm{x}-\bm{y} \| \Big) - \| -\bm{y} \| ^ { h - \frac{d}{2} } K_{ h - \frac{d}{2} } \Big( \lambda \| -\bm{y} \| \Big)
\Big] d\bm{y} \\
=& \frac{1}{(2\pi)^{d/2}} \int_{\mathbb{R}^{d}} e^{\imag \langle\bm{\xi},\bm{y}\rangle}
\| \bm{x}-\bm{y} \| ^ { h - \frac{d}{2} } K_{ h - \frac{d}{2} }
\left( \lambda \| \bm{x}-\bm{y} \| \right) d\bm{y}\\
&- \int_{\mathbb{R}^{d}} e^{\imag \langle\bm{\xi},\bm{y}\rangle} \| -\bm{y} \| ^ { h - \frac{d}{2} }
K_{ h - \frac{d}{2} } \left( \lambda \| -\bm{y} \| \right) d\bm{y}=: I_{1} - I_{2}.
\end{split}
\end{equation*}
The change of variable $-\bm{y^{\prime}} = \bm{x}-\bm{y}$ yields
\begin{equation*}
\begin{split}
I_{1}&= \int_{\mathbb{R}^{d}} e^{\imag \langle\bm{\xi},\bm{x} + \bm{y^{\prime}}\rangle}
\| -\bm{y^{\prime}} \| ^ { h - \frac{d}{2} } K_{ h - \frac{d}{2} }
\left( \lambda \| -\bm{y^{\prime}} \| \right) d\bm{y^{\prime}} \\
&= e^{\imag \langle\bm{\xi},\bm{x}\rangle} \int_{\mathbb{R}^{d}} e^{\imag \langle\bm{\xi},\bm{y^{\prime}}\rangle}
\| -\bm{y^{\prime}} \| ^ { h - \frac{d}{2} } K_{ h - \frac{d}{2} }
\left( \lambda \| -\bm{y^{\prime}} \| \right) d\bm{y^{\prime}}
\end{split}
\end{equation*}
Therefore,
\begin{eqnarray}
 \mathcal{F}\Big[h_{h, \lambda, \bm{x}} (\bm{\xi})\Big]
&=& \frac {e^{\imag \langle\bm{\xi},\bm{x}\rangle} - 1} { (2\pi)^{d/2} } \int_{\mathbb{R}^{d}} e^{\imag \langle\bm{\xi},\bm{y}\rangle}
\| -\bm{y} \| ^ { h - \frac{d}{2} } K_{ h - \frac{d}{2} }
\left( \lambda \| -\bm{y} \| \right) d\bm{y} \nonumber \\
&=& \frac {e^{\imag \langle\bm{\xi},\bm{x}\rangle} - 1} {  (2\pi)^{d/2} } \int_{\mathbb{R}^{d}} e^{\imag \langle\bm{\xi},\bm{y}\rangle}
\| \bm{y} \| ^ { h - \frac{d}{2} } K_{ h - \frac{d}{2} }
\left( \lambda \| \bm{y} \| \right) d\bm{y} \nonumber \\
&=& (e^{\imag \langle\bm{\xi},\bm{x}\rangle} - 1)\ \mathcal{F} \Big[ \| \bm{y} \| ^ { h - \frac{d}{2} } K_{ h - \frac{d}{2} }
\left( \lambda \| \bm{y} \| \right) \Big] ( \bm{\xi} ). \label{eq7}
\end{eqnarray}
On the other hand,
\begin{equation*}
\begin{split}
\mathcal{F} \Big[ \| \bm{y} \| ^ { h - \frac{d}{2} } K_{ h - \frac{d}{2} } \left( \lambda \| \bm{y} \| \right) \Big](\bm{\xi})
&= \frac {1} { (2\pi) ^ { \frac{d}{2}} } \int_{\rr^{d}}
e^{\imag \langle\bm{\xi},\bm{y}\rangle} \Big[ \| \bm{y} \| ^ { h - \frac{d}{2} } K_{ h - \frac{d}{2} }
\left( \lambda \| \bm{y} \| \right) \Big] d\bm{y}\\
&= \frac{\lambda^{-d/2-h}}{(2\pi)^{d/2}} \int_{\rr^{d}}
e^{ \imag \langle\bm{\xi}/\lambda,\bm{y^\prime}\rangle } \Big[ \| \bm{ y^\prime } \| ^ { h - \frac{d}{2} } K_{ h - \frac{d}{2} }
\left( \| \bm{y^\prime} \| \right) \Big] d\bm{y^\prime}\\
&=\lambda^{h-d/2}2^{h-1}\Gamma(h) (\lambda^2 + \|{\bm{\xi}}\|^2 )^{-h},
\end{split}
\end{equation*}
where we make the change of variable $\lambda\bm{y} = \bm{y'}$ and apply the formula
\begin{equation*}
\mathcal{F} \Big[ \| \bm{y} \| ^ { h - \frac{d}{2} } K_{ h - \frac{d}{2} } \left( \| \bm{y} \| \right) \Big]({\bm \xi})
=2^{h-1} \Gamma(h) (1+ {\|\bm{\xi}\|}^2 )^{-h}, \quad h > 0
\end{equation*}
(see Lim and Teo \cite{Lim:Teo:2008}, p.\ 013509-3). Note that the matrix-valued function
$$
\mathcal{F}\big[ h_{H, \lambda, \bm{x}}\big]({\bm \xi})  = P \textnormal{diag}\Big(\mathcal{F}\big[ h_{h_1, \lambda, \bm{x}}\big]({\bm \xi}),\hdots,
\mathcal{F}\big[ h_{h_n, \lambda, \bm{x}}\big]({\bm \xi})\Big)P^{-1}
$$
is Hermitian (in the sense of functions). Therefore, by applying Parseval's identity, we arrive at
\begin{equation*}
\begin{split}
B^{Bes}_{H, \lambda}(\bm{x}) & =  \int_{\rr^d} h_{H, \lambda, \bm{x}} (\bm{y}) Z(d\bm{y}) \stackrel{f.d.}=
\int_{\rr^d} \mathcal{F}\big[ h_{H, \lambda, \bm{x}}\big]  W(d\bm{\xi})\\
& = C^*_{H,\lambda} \int_{\rr^d} (e^{- \imag \langle\bm{\xi},\bm{x}\rangle} - 1)  ( \lambda^2 + \|{\bm{\xi}}\|^2 )^{-H}  W(d\bm{\xi})
\end{split}
\end{equation*}
for a matrix constant $C^*_{H,\lambda}$. In other words, \eqref{I-B-TOFBF harmo} holds. $\Box$\\

\noindent {\sc Proof of Proposition \ref{p:cov_low d}}: The proof is similar to that of Proposition \ref{prop2}. By applying expression \eqref{I-B-TOFBF harmo}, we can write
\begin{eqnarray*}
{\rm Cov}[ B^{Bes}_{H, \lambda} (\bm{x}), B^{Bes}_{H, \lambda} (\bm{x^{\prime}}) ]
&=& \mathbb{E} [ B^{Bes}_{H, \lambda} (\bm{x}) B^{Bes}_{H, \lambda}(\bm{x^{\prime}})^t  ]
%&=& \frac { \lambda^{2H - d} } {2\pi} \int_{\mathbb{R}^{d}}
%\frac { (e^{- \imag <\bm{\xi},\bm{x}>} - 1) (e^{\imag <\bm{\xi},\bm{x^{\prime}}>} - 1) }
%{ (\lambda^2 + \| \bm{\xi} \|^2) ^ { 2H } } d\bm{\xi} \\
\end{eqnarray*}
$$
= \frac { \lambda^{2H - d I} } {2\pi} \int_{\mathbb{R}^{d}}
(e^{- \imag \langle\bm{\xi},\bm{x} - \bm{x^{\prime}}\rangle} - e^{- \imag \langle\bm{\xi},\bm{x}\rangle} - e^{\imag \langle\bm{\xi},\bm{x^{\prime}}\rangle} + 1)
 (\lambda^2 + \| \bm{\xi} \|^2)^{-H}(\lambda^2 + \| \bm{\xi} \|^2)^{-H^*}  d\bm{\xi}
$$
To establish \eqref{tmp}, we consider the integrals
\begin{eqnarray*}
&& I_{1}:=\int_{\mathbb{R}^{d}}
e^{\imag \langle\bm{\xi},\bm{x}\rangle} (\lambda^2 + \| \bm{\xi} \|^2)^{-H}(\lambda^2 + \| \bm{\xi} \|^2)^{-H^*}d\bm{\xi}, \\
&& I_{2}:=\int_{\mathbb{R}^{d}} (\lambda^2 + \| \bm{\xi} \|^2)^{-H}(\lambda^2 + \| \bm{\xi} \|^2)^{-H^*} d\bm{\xi}
\end{eqnarray*}
In regard to $I_1$, by switching from Euclidean to polar coordinates, we obtain
\begin{eqnarray}
I_1 &=& \int_{\mathbb{R}^{d}}
e^{\imag \langle\bm{\xi},\bm{x}\rangle}   (\lambda^2 + \| \bm{\xi} \|^2)^{-H}(\lambda^2 + \| \bm{\xi} \|^2)^{-H^*}d\bm{\xi} \nonumber\\
&=& \int_{0}^{\infty} (\lambda^2 + r^2)^{H}(\lambda^2 + r^2)^{-H^*}
\left[ \int_{S_{r}} e^{\imag \langle\bm{\xi},\bm{x}\rangle} dS \right] dr \nonumber \\
&=& \int_{0}^{\infty} (\lambda^2 + r^2)^{H}(\lambda^2 + r^2)^{-H^*}\left( \frac{ 2\pi r } { \| \bm{x} \| } \right)^{ \frac{d}{2} } \| \bm{x} \|\ J_{ \frac{d-2}{2} } (\| \bm{x} \|r)\ dr \nonumber\\
&=& \left( 2\pi \right)^{ \frac{d} {2} } \| \bm{x} \|^{ 1 - \frac{d} {2} }
\int_{0}^{\infty} (\lambda^2 + r^2)^{H}(\lambda^2 + r^2)^{-H^*} r^{ \frac{d}{2} } J_{ \frac{d-2}{2} } (\| \bm{x} \|r) dr. \label{e:typeII_integral_I1}
\end{eqnarray}
However, for $\displaystyle{ -1 < \Re \nu < \Re \left( 2\mu + \frac{3}{2} \right), a > 0, b > 0}$,
\begin{eqnarray*}
\int_{0}^{\infty} \frac {x^{ \nu+1}\ J_{ \nu } (bx)  } { (x^2 + a^2)^{\mu + 1} }\ dx
= \frac { a^{\nu - \mu} b^{ \mu } } { 2^{\mu} \Gamma (\mu + 1) } K_{\nu - \mu} (ab)
\end{eqnarray*}
(Gradshteyn and Ryzhik \cite{gradshteyn:ryzhik:1994}, p.686). So, recall that we can write $H = PJ_HP^{-1}$, where $J_H = \textnormal{diag}(h_1,\hdots,h_n)$. By setting $\displaystyle{ a = \lambda, b = \| \bm{x} \|,  \mu = h_\ell + h_{\ell'} - 1, \nu = \frac{d}{2} - 1}$, $\ell,\ell'= 1,\hdots,n$, we can rewrite the integral on the right-hand side of \eqref{e:typeII_integral_I1} as
$$
P \Big(\int^{\infty}_{0} (\lambda^2 + r^2)^{-J_H}(P^*P)^{-1}(\lambda^2 + r^2)^{-J_H}  r^{\frac{d}{2}}J_{ \frac{d-2}{2} } (\| \bm{x} \|r) dr \Big)P^*
$$
$$
= P  \Big( q_{\ell \ell'}\hspace{1mm}\frac { \lambda^{\frac{d}{2} - (h_\ell + h_{\ell'}-1)} {\|{\bm x}\|}^{h_\ell + h_{\ell'} - 1} } { 2^{h_\ell + h_{\ell'}-1} \Gamma (h_\ell + h_{\ell'}) } K_{\frac{d}{2} - (h_\ell + h_{\ell'})} (\lambda \|{\bm x}\|) \Big)_{\ell,\ell'=1,\hdots,n} P^*.
$$
Therefore, assuming $\displaystyle{ h_\ell + h_{\ell'} > \frac{d-1}{4} }$, $\ell, \ell'= 1,\hdots,n$,
$$
\int_{\mathbb{R}^{d}}
e^{\imag <\bm{\xi},\bm{x}>}   (\lambda^2 + \| \bm{\xi} \|^2)^{-H}(\lambda^2 + \| \bm{\xi} \|^2)^{-H^*} d\bm{\xi}
$$
$$
= \left( 2\pi \right)^{ \frac{d} {2} }
P  \Big( q_{\ell \ell'}\hspace{1mm}\frac { \lambda^{\frac{d}{2} - (h_\ell + h_{\ell'}-1)} {\|{\bm x}\|}^{h_\ell + h_{\ell'} - \frac{d} {2}} } { 2^{h_\ell + h_{\ell'}-1} \Gamma (h_\ell + h_{\ell'}) } K_{\frac{d}{2} - (h_\ell + h_{\ell'})} (\lambda \|{\bm x}\|) \Big)_{\ell,\ell'=1,\hdots,n} P^*.
$$
In regard to $I_2$, by following the same argument in the proof of Proposition \ref{prop2}, we arrive at
\begin{eqnarray*}
& & \int_{\mathbb{R}^{d}}  (\lambda^2 + \| \bm{\xi} \|^2)^{-H}(\lambda^2 + \| \bm{\xi} \|^2) ^ {-H^*}  d\bm{\xi} \\
& = &\int_{0}^{\infty} (\lambda^2 + r^2)^{-H}(\lambda^2 + r^2)^{-H^*} r^{d-1}   dr \times
\begin{cases}
\displaystyle{ \frac{2 \dot (2\pi)^{m+1} } {(2m+1)!!}}, & \textnormal{if}\ \ d = 2m+1; \\
\displaystyle{ \frac{(2\pi)^{m+1} } {(2m)!!} }, & \textnormal{if} \ \ d = 2m.
\end{cases}
\end{eqnarray*}
However, from Gradshteyn and Ryzhik \cite{gradshteyn:ryzhik:1994}, p.327, assuming $\displaystyle{ \Re \mu > 0, \Re \left( \nu + \frac{\mu}{2} \right) < 1 }$,
\begin{eqnarray*}
\displaystyle{ \int_{0}^{\infty} x^{\mu-1} (1 + x^2) ^ { \nu-1 } dx }
= \frac{1}{2} B \left( \frac{\mu}{2}, 1 - \nu - \frac{\mu}{2} \right).
\end{eqnarray*}
Therefore, by making the change of variable $r = \lambda r^{\prime}$ and $(q_{\ell \ell'})_{\ell,\ell'=1,\hdots,n}$ as in \eqref{e:(P*P)^(-1)=q_ell,ell'},
\begin{eqnarray*}
\displaystyle{ \int_{0}^{\infty}  (\lambda^2 + r^2)^{-H}(\lambda^2 + r^2)^{-H^*} r^{d-1} dr }
&=& \lambda^{d I -4H} \int_{0}^{\infty}  (1 + r^{\prime 2})^{-H}(1 + r^{\prime 2})^{-H^*} r^{\prime d-1} dr^{\prime} \\
&=& \frac{ \lambda^{d I-4H} }{2} P \left( q_{\ell \ell'}\hspace{1mm}B \Big( \frac{d}{2}, h_\ell + h_{\ell'} - \frac{d}{2} \Big) \right)_{\ell,\ell'=1,\hdots,n}P^*
\end{eqnarray*}
whenever $\displaystyle{ h_{\ell} + h_{\ell'} > \frac{d}{2} }$, $\ell, \ell'= 1,\hdots,n$. Thus,
\begin{eqnarray*}
& & \int_{\mathbb{R}^{d}}   (\lambda^2 + \| \bm{\xi} \|^2)^{-H}(\lambda^2 + \| \bm{\xi} \|^2)^{-H^*}  d\bm{\xi}\\
&=& \lambda^{d I-4H} P \left( q_{\ell \ell'}\hspace{1mm}B \Big( \frac{d}{2}, h_\ell + h_{\ell'} - \frac{d}{2} \Big) \right)_{\ell,\ell'=1,\hdots,n}P^*\begin{cases}
\displaystyle{ \frac{ (2\pi)^{m+1}  } {(2m+1)!! } }, & \textnormal{ if }\ \ d = 2m+1; \\
\displaystyle{ \frac{(2\pi)^{m+1}  } {2 (2m)!!}}, & \textnormal{ if }\ \ d = 2m.
\end{cases}
\end{eqnarray*}
Therefore, expression \eqref{e:cov_low d} holds under condition \eqref{e:h1>d/4}. $\Box$\\

\section{Section \ref{s:sample_path}: proofs}

The following classical proposition is used in proofs. We recap it here for the reader's convenience.
\begin{prop}(Adler \cite{adler:1981}, Theorems 3.3.2 and 8.3.2; Bierm\'{e} et al.\ \cite{bierme:meerschaert:scheffler:2007}, Proposition 5.2)\label{prop4.1}
Let $\{ X(\bm{x}) \}_{ \bm{x} \in \mathbb{R}^d }$ be a $\bbR$-valued Gaussian random field with stationary increments. Let $\eta \in (0, 1)$. If \begin{eqnarray*}
\eta = \sup \{ \beta > 0 : \mathbb{E} [ (X(\bm{x}) - X(\bm{0}))^2 ] = o_{ \| \bm{x} \| \rightarrow 0 } ( \| \bm{x} \|^{2\beta} ) \},
\end{eqnarray*}
then, for any $\beta \in (0, \eta)$, any continuous version of $\{ X(\bm{x}) \}_{ \bm{x} \in \mathbb{R}^d }$ satisfies property (a) in Definition \ref{CriticalHolder}. If, in addition,
\begin{eqnarray*}
\eta = \inf \{ \beta > 0 : \| \bm{x} \|^{2\beta} = o_{ \| \bm{x} \| \rightarrow 0 } ( \mathbb{E} [ (X(\bm{x}) - X(\bm{0}))^2 ] ) \},
\end{eqnarray*}
then any continuous version of $\{ X(\bm{x}) \}_{ \bm{x} \in \mathbb{R}^d }$ has the H\"older critical exponent $\eta$.
\end{prop}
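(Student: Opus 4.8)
The plan is to split the statement into its two halves: the first displayed hypothesis will be used to establish the upper regularity claim (property (a) of Definition \ref{CriticalHolder}) for every $\beta \in (0,\eta)$, and the second hypothesis, combined with the first, will upgrade this to the assertion that $\eta$ is exactly the H\"older critical exponent (so that property (b) holds as well). Throughout I write $v(\bm{h}) := \mathbb{E}[(X(\bm{h})-X(\bm{0}))^2]$ for the variogram and exploit stationarity of increments, which guarantees that the canonical pseudometric $d_X(\bm{x},\bm{y}) := (\mathbb{E}[(X(\bm{x})-X(\bm{y}))^2])^{1/2}$ depends only on $\bm{x}-\bm{y}$ and equals $v(\bm{x}-\bm{y})^{1/2}$.

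\textbf{Smoothness from above (property (a)).} Fix $\beta \in (0,\eta)$ and choose $\beta' \in (\beta,\eta)$. Since the set appearing in the first hypothesis is downward closed, $\beta' < \eta$ forces $v(\bm{h}) = o_{\|\bm{h}\| \to 0}(\|\bm{h}\|^{2\beta'})$, so there exist $\delta_0 > 0$ and $C > 0$ with $d_X(\bm{x},\bm{y}) \le C \|\bm{x}-\bm{y}\|^{\beta'}$ whenever $\|\bm{x}-\bm{y}\| \le \delta_0$. On a fixed cube the Dudley entropy integral associated with this power-type metric is finite, so the Garsia--Rodemich--Rumsey inequality (equivalently, Dudley's theorem for Gaussian fields; cf.\ Adler \cite{adler:1981}, Theorem 3.3.2) yields the a.s.\ uniform modulus $\sup_{\|\bm{x}-\bm{y}\| \le \delta} |X(\bm{x})-X(\bm{y})| \le A\, \delta^{\beta'} \sqrt{\log(1/\delta)}$ on that cube, for a finite random constant $A$. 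Because $\delta^{\beta'}\sqrt{\log(1/\delta)} \le \delta^{\beta}$ for all small $\delta$, any continuous version is a.s.\ H\"older of order $\beta$ on the cube; covering an arbitrary compact set $K$ by finitely many cubes and invoking stationary increments to make the estimate uniform establishes property (a).

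\textbf{Roughness from below (property (b)).} It remains to show that for every $\beta \in (\eta,1)$ no continuous version is a.s.\ uniformly H\"older of order $\beta$; with part (a) this pins down $\eta$ as the critical exponent. Fix such a $\beta$. Since the set in the second hypothesis is upward closed, $\|\bm{h}\|^{2\beta} = o_{\|\bm{h}\|\to 0}(v(\bm{h}))$, i.e.\ $c(\bm{h}) := v(\bm{h})/\|\bm{h}\|^{2\beta} \to \infty$ as $\|\bm{h}\| \to \bm{0}$. Pick any sequence $\bm{h}_n \to \bm{0}$ and write $X(\bm{h}_n)-X(\bm{0}) = v(\bm{h}_n)^{1/2} G_n$ with $G_n \sim N(0,1)$; then $|X(\bm{h}_n)-X(\bm{0})|/\|\bm{h}_n\|^{\beta} = c(\bm{h}_n)^{1/2}|G_n|$. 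As $\mathbb{P}(c(\bm{h}_n)^{1/2}|G_n| \le M) = \mathbb{P}(|G_n| \le M c(\bm{h}_n)^{-1/2}) \to 0$ for every $M$, this ratio tends to $+\infty$ in probability, so one may extract (via $\sum_k \mathbb{P}(c(\bm{h}_{n_k})^{1/2}|G_{n_k}| \le k) < \infty$ and Borel--Cantelli) a subsequence converging to $+\infty$ almost surely, whence $\limsup_{\bm{h}\to\bm{0}} |X(\bm{h})-X(\bm{0})|/\|\bm{h}\|^{\beta} = \infty$ a.s. Taking $\bm{y} = \bm{0}$ and $\bm{x} = \bm{h}_{n_k}$ inside any cube then rules out any finite random H\"older-$\beta$ constant, giving property (b).

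\textbf{Main obstacle.} The two variogram power bounds are routine; the genuine content lies in the chaining step of the first part. The delicate point is producing a modulus of continuity that is uniform over compacts starting only from the \emph{local} bound $v(\bm{h}) = o(\|\bm{h}\|^{2\beta'})$, which is precisely what forces the appeal to the Garsia--Rodemich--Rumsey / Dudley machinery and the cube-patching argument via stationary increments. The lower bound, by contrast, is comparatively soft: it rests only on the fact that convergence in probability to $+\infty$ furnishes an almost surely divergent subsequence, so no independence of the Gaussian increments $G_n$ is required.
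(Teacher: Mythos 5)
Your proof is correct, but note that the paper itself offers no proof of this proposition: it is stated as a classical recap, with the argument delegated to Adler (1981), Theorems 3.3.2 and 8.3.2, and Bierm\'{e} et al.\ (2007), Proposition 5.2. Measured against those sources, your first half is exactly the standard route: the downward-closedness of the set in the first hypothesis to upgrade $\beta<\eta$ to a power bound $d_X(\bm{x},\bm{y})\leq C\|\bm{x}-\bm{y}\|^{\beta'}$ with $\beta<\beta'<\eta$, then Dudley/entropy chaining (Adler's Theorem 3.3.2) for the modulus $\delta^{\beta'}\sqrt{\log(1/\delta)}$, absorbed into $\delta^{\beta}$; the only cosmetic gap is that patching across cubes for $\|\bm{x}-\bm{y}\|\geq\delta_0$ uses boundedness of the continuous version on compacts rather than stationary increments, which is trivial to supply. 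Your second half genuinely deviates from the cited machinery, and profitably so: instead of invoking Adler's Theorem 8.3.2, you observe that $\|\bm{h}\|^{2\beta}=o(v(\bm{h}))$ makes the normalized increment $c(\bm{h}_n)^{1/2}|G_n|$ diverge in probability, and a Borel--Cantelli subsequence extraction (which, as you correctly stress, needs only the marginal Gaussian laws, not independence) yields a.s.\ divergence of $\limsup_{\bm{h}\to\bm{0}}|X(\bm{h})-X(\bm{0})|/\|\bm{h}\|^{\beta}$, defeating any finite random H\"older constant on a compact neighborhood of the origin. One small point worth making explicit: the failure must be transferred to an arbitrary continuous version, which follows since any version agrees with $X$ almost surely on the countable set $\{\bm{0}\}\cup\{\bm{h}_{n_k}\}$. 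With that remark added, your argument is a complete and somewhat more elementary substitute for the literature citations the paper relies on.
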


In the following proofs, without loss of generality we assume $d \geq 2$.\\

\noindent {\sc Proof of Theorem \ref{thm6.1}}: We first show $(i)$. The proof follows from Proposition \ref{prop4.1} and the asymptotic behavior of second moment of $X_{\lambda} (\bm{x})$ around $\bm{x} = \bm{0}$. Let $\bm{x} \in \mathbb{R}^d$. We know that $X_{\lambda} (\bm{0}) = 0$. So, define
\begin{eqnarray*}
\Gamma_{\varphi, H, \lambda}^2 (\bm{x}) = \mathbb{E} [ X_{\lambda} (\bm{x})^2 ] = \int_{\mathbb{R}^d} \left| e^{-\lambda \varphi({\bm x} - {\bm y})} \varphi({\bm x} - {\bm y})^{H - \frac{q}{2}} - e^{-\lambda \varphi(-{\bm y})} \varphi(-{\bm y})^{H - \frac{q}{2}} \right|^2 d{\bm y}.
\end{eqnarray*}
By property \eqref{e:tempered_o.s.s.} and by using polar coordinates with respect to $E$ (Lemma \ref{l:polar_integration}), we obtain
\begin{equation}\label{tmp10}
\begin{split}
&\Gamma_{\varphi, H, \lambda}^2 (\bm{x}) = \tau(\bm{x})^{2H} \Gamma_{\varphi, H, \tau(\bm{x}) \lambda}^2 ( l(\bm{x}) )  \\
&= \tau(\bm{x})^{2H} \int_{\mathbb{R}^d} \left| e^{- \tau(\bm{x}) \lambda \varphi(l(\bm{x}) - {\bm y})} \varphi(l(\bm{x}) - {\bm y})^{H - \frac{q}{2}} - e^{- \tau(\bm{x}) \lambda \varphi(-{\bm y})} \varphi(-{\bm y})^{H - \frac{q}{2}} \right|^2 d{\bm y}  \\
&= \tau(\bm{x})^{2H} \int_{\mathbb{R}^d} \left| \Big[1 - O\Big(\tau(\bm{x}) \lambda \varphi(l(\bm{x}) - {\bm y})\Big)\Big] \varphi(l(\bm{x}) - {\bm y})^{H - \frac{q}{2}} - \Big[1 - O\Big(\tau(\bm{x}) \lambda \varphi(-{\bm y})\Big)\Big] \varphi(-{\bm y})^{H - \frac{q}{2}} \right|^2 d{\bm y} \\
&= O(\tau(\bm{x})^{2H}), \quad \textnormal{as } \| \bm{x} \| \rightarrow \bm{0}.
\end{split}
\end{equation}
In order to apply to Proposition \ref{prop4.1}, we construct bounds on $\tau(\bm{x})$ based on the conventional Euclidean norm $\| \bm{x} \|$ (cf.\ Bierm\'{e} et al.\ \cite{bierme:meerschaert:scheffler:2007}, Theorem 5.4). Since we are interested in the behavior of $\bm{x}$ around the origin, without loss of generality we can assume $\| \bm{x} \| \leq 1$. Fix $\bm{r} \in W_i \backslash W_{i-1}$ for any $i = 1, \hdots, p$. Using the space $W_i$ instead of $\mathbb{R}^d$ in Lemma \ref{lem1.1}, we obtain that, for any small $\delta > 0$, there exists a constant $C_2 > 0$ such that $\tau(t \bm{r}) \leq C_2 |t|^{1/a_i - \delta}$ for $|t| \leq 1$, because the eigenvalues of $E |_{W_i}$ are $a_1, \hdots, a_i$. Moreover, write $\bm{r} = \bm{r}_i + \overline{\bm{r}}_{i-1}$ with $\bm{r}_i \in V_i$ and $\overline{\bm{r}}_{i-1} \in W_{i-1}$. Then, we can decompose $t \bm{r} = \tau(t \bm{r})^E l(t \bm{r}) = \tau(t \bm{r})^E l_i(t \bm{r}) + \tau(t \bm{r})^E \overline{l}_{i-1}(t \bm{r})$, where $l_i(t \bm{r}) \in V_i$ and $\overline{l}_{i-1}(t \bm{r}) \in W_{i-1}$. Moreover, rewrite $E = E_1 \oplus \hdots \oplus E_p$, where every real part of the eigenvalues of the matrix $E_i$ equals $a_i$. We arrive at the bound
$$
|t| \ \| \bm{r}_i \| = \| \tau(t \bm{r})^E l_i(t \bm{r}) \| = \| \tau(t \bm{r})^{E_i} l_i(t \bm{r}) \|
$$
\begin{eqnarray}\label{e:|t||ri|=<Ctau^(ai-delta)}
\leq \| \tau(t \bm{r})^{E_i} \| \ \|l_i(t \bm{r}) \| \leq C_3 \tau(t \bm{r})^{a_i - \delta}, \quad |t| \leq 1.
\end{eqnarray}
The last inequality in \eqref{e:|t||ri|=<Ctau^(ai-delta)} is a consequence of the facts that $\|l_i(t \bm{r}) \| \leq C$ for any $|t| \leq 1$, and that every real part of the eigenvalues of $E_i$ equals $a_i$. Therefore, there exists a constant $C_1 > 0$ such that $\tau(t \bm{r}) \geq C_1 |t|^{1/a_i + \delta}$ for any $|t| \leq 1$. Hence, we conclude that for all directions $\bm{r} \in W_i \backslash W_{i-1}$, and any small $\delta > 0$, there exist constants $C_1, C_2 > 0$, such that
\begin{eqnarray}\label{e:tau(tr)_upper_lower_bound}
C_1 |t|^{1/a_i + \delta} \leq \tau(t \bm{r}) \leq C_2 |t|^{1/a_i - \delta}, \quad |t| \leq 1.
\end{eqnarray}
Therefore, in view of \eqref{tmp10} and \eqref{e:tau(tr)_upper_lower_bound},
\begin{eqnarray*}
C_1^{\prime} |t|^{2H/a_i + \delta} \leq  \Gamma_{\varphi, H, \lambda}^2 (t\bm{r}) \leq C_2^{\prime} |t|^{2H/a_i - \delta}, \quad |t| \leq 1.
\end{eqnarray*}
By Proposition \ref{prop4.1}, this shows that $X_{\lambda}$ has regularity $H/a_i$ in the direction $\bm{r}$, as claimed.
%\if0 We therefore get that for any direction $\bm{r} \in W_i \backslash W_{i-1}$ and any $\delta > 0$ there exist constants $C_1, C_2 > 0$ such that $C_1 |t|^{2H/a_i + \delta} \leq  \Gamma_{\varphi, H, \lambda}^2 (t\bm{r}) \leq C_2 |t|^{2H/a_i - \delta}$ for $|t| \leq 1$, which by Proposition \ref{prop4.1} shows that $X_{\varphi/\psi}$ admits $H/a_i$ as directional regularity in direction $\bm{r}$.\fi
To establish the H\"older critical exponent, we can adapt the argument. Since $H/a_p$ is the H\"older critical exponent of $X_{\lambda}$ in any direction of $W_p \backslash W_{p-1}$, then for any $\beta \in (H/a_p, 1)$ the sample paths of $X_{\lambda}$ do not almost surely satisfy any uniform H\"older condition of order $\beta$. Moreover, from Lemma \ref{lem1.1}, we get that for any $\delta > 0$, there exists a constant $C > 0$ such that $ \Gamma_{\varphi, H, \lambda}^2 (\bm{x}) \leq C \| \bm{x} \|^{2H/a_p - \delta}$ for $\| \bm{x} \| \leq 1$. Proposition \ref{prop4.1} then implies that any continuous version of $X_{\lambda}$ almost surely satisfies a uniform H\"older condition of order $\beta < H/a_p$ on any compact set. This concludes the proof of $(i)$.

To show $(ii)$, in view of the proof of $(i)$, it suffices to establish the asymptotic behavior of the second moment of $\widetilde{X}_{\lambda} (\bm{x})$ around $\bm{x} = \bm{0}$. So, let $\bm{x} \in \mathbb{R}^d$. By property \eqref{e:tempered_o.s.s.} and by using polar coordinates with respect to $E$ (Lemma \ref{l:polar_integration}),
$$
\bbE \widetilde{X}^{2}_{\lambda}({\bm x}) = \tau({\bm x})^{2H} \bbE \widetilde{X}^{2}_{\tau({\bm x})\lambda}(l({\bm x}))
$$
$$
= \tau({\bm x})^{2H} \int_{\bbR^d} |e^{- \imag \langle l({\bm x}),{\boldsymbol \xi} \rangle}-1|^2 \frac{1}{(\tau({\bm x})\lambda + \varphi({\boldsymbol \xi}))^{2H+q}} d {\boldsymbol \xi}
$$
$$
= \tau({\bm x})^{2H} \int^{\infty}_{0}\int_{S_0} |e^{- \imag \langle l({\bm x}),r^{E}\theta\rangle}-1|^2 \frac{1}{(\tau({\bm x})\lambda + r^{2H+q}\varphi(\theta))^{2H+q}} r^{q-1}\sigma(d \theta) dr
$$
\begin{equation}\label{e:EB-tilde2_tau(x)(2H)}
\sim \tau({\bm x})^{2H} \int^{\infty}_{0}\int_{S_0} |e^{- \imag \langle l({\bm x}),r^{E}\theta\rangle}-1|^2 \frac{1}{r^{2H+1}\varphi(\theta)^{2H+q}} \sigma(d \theta) dr,
\end{equation}
where the integral on the right-hand side of \eqref{e:EB-tilde2_tau(x)(2H)} is finite by condition \eqref{e:UpsilonH<varphiE}. This establishes $(ii)$. $\Box$\\

\noindent {\sc Proof of Corollary \ref{cor6.3}}: The argument for showing \eqref{e:Haus_box_dim_MA-OFBF} resembles the proof of Theorem 5.6 in Bierm\'{e} et al.\ \cite{bierme:meerschaert:scheffler:2007}. For the reader's convenience, we provide the details.

Recall that, for any scalar-valued random field $X$,
\begin{eqnarray}
\label{Haus_Box_ineq}
dim_{Haus} \mathcal{G}( X ) \leq \underline{dim}_{Box} \mathcal{G}( X ) \leq \overline{dim}_{Box} \mathcal{G}( X ),
\end{eqnarray}
where $\underline{dim}_{Box}$ and $\overline{dim}_{Box}$ denote the lower and upper box-counting dimension, respectively. From the inequality (\ref{Haus_Box_ineq}), it suffices to show that, almost surely,
\begin{equation}\label{e:dimbox_dimHaus}
\overline{dim}_{ Box } \mathcal{G} (X_{\lambda}) \leq d + 1 - H/a_p, \quad d + 1 - H/a_p \leq dim_{ Haus } \mathcal{G} (X_{\lambda}).
\end{equation}
We first show the left inequality in \eqref{e:dimbox_dimHaus}. Consider a continuous version of $X_{\lambda}$. From Theorem \ref{thm6.1} and a $d$-dimensional version of Corollary 11.2 in Falconer \cite{falconer:1990}, we obtain that $\overline{dim_{ Box }} \mathcal{G} (X_{\lambda}) \leq d + 1 - \beta$ a.s.\ for any $\beta < H/a_p$. Therefore,
$$
\overline{dim_{ Box }} \mathcal{G} (X_{\lambda}) \leq d + 1 - H/a_p \quad \textnormal{a.s.}
$$
We now prove the right inequality in \eqref{e:dimbox_dimHaus}. Fix $\beta > 1$. If we show
\begin{eqnarray}\label{Ibeta1}
I_{\beta} := \int_{K \times K} \mathbb{E} [ ( X_\lambda(\bm{x}) - X_\lambda(\bm{y}))^{2} + (\| \bm{x} - \bm{y} \|^2 )^{-\beta/2} ] d\bm{x} d\bm{y} < \infty,
\end{eqnarray}
then the Frostman criterion (Falconer \cite{falconer:1990}, Theorem 4.13, (a)), implies that $dim_{Haus} \mathcal{G}(X_\lambda) \geq \beta$ a.s.
So, to prove \eqref{Ibeta1}, first note that $(x^2 + 1)^{-\beta/2} \in L^1(\mathbb{R})$ for $\beta > 1$. Thus, its Fourier transform $\hat{f}_{\beta} (\xi)$
\begin{eqnarray*}
(x^2 + 1)^{-\beta/2} = \frac{1}{2 \pi} \int_{\mathbb{R}} e^{i x \xi} \hat{f}_{\beta} (\xi) d\xi.
\end{eqnarray*}
is not only in $L^{\infty}$ but also it is in $L^{1}(\mathbb{R})$ as well. Using this fact and the Gaussian assumption on $X_{\lambda}$ we have
\begin{eqnarray*}
&& \mathbb{E} [ \{ (X_{\lambda}(\bm{x}) - X_{\lambda}(\bm{y}))^{2} + \| \bm{x} - \bm{y} \|^2 \}^{-\beta/2} ] \\
&& \qquad = \frac{1}{2\pi} \| \bm{x} - \bm{y} \|^{-\beta} \int_{\mathbb{R}} \mathbb{E} \left[ e^{i \xi \frac {X_{\lambda}(\bm{x}) - X_{ \lambda}(\bm{y})} {\| \bm{x} - \bm{y} \|} }
\right] \hat{f}_\beta(\xi) d\xi \\
&& \qquad = \frac{1}{2\pi} \| \bm{x} - \bm{y} \|^{-\beta} \int_{\mathbb{R}} e^{- \frac{\xi^2}{2} \frac {\mathbb{E} \left[ (X_{\lambda}(\bm{x}) - X_{\lambda}(\bm{y}) )^2 \right] } { \| \bm{x} - \bm{y} \|^2 } } \hat{f}_\beta(\xi) d\xi.
\end{eqnarray*}
Since $f_\beta \in L^{\infty} (\mathbb{R})$, then there exists $C > 0$ such that
\begin{eqnarray}
\label{key_ineq_dim}
&& \mathbb{E} [ ( (X_{\lambda}(\bm{x}) - X_{\lambda}(\bm{y}))^{2} + \| \bm{x} - \bm{y} \|^2 )^{-\beta/2} ] \nonumber \\
&& \qquad \leq C \| \bm{x} - \bm{y} \|^{1-\beta} (\mathbb{E} [ (X_{\lambda}(\bm{x}) - X_{\lambda}(\bm{y}))^2 ] )^{-1/2} \nonumber \\
&& \qquad \leq C^{\prime} \| \bm{x} - \bm{y} \|^{1-\beta} \tau(\bm{x} - \bm{y})^{-H},
\end{eqnarray}
where we use \eqref{tmp10} and the fact that $X_{\lambda}$ has stationary increments. Next, pick $a > 0$ satisfying $K \subset \{ \bm{x} \in \mathbb{R}^d ; \| \bm{x} \| \leq a/2 \}$. Then, there exists a constant $C > 0$ such that
\begin{equation}\label{Ibeta2}
I_{\beta} \leq C \int_{ \| \bm{x} \| \leq a} \| \bm{x} \|^{1-\beta} \tau( \bm{x} )^{-H} d\bm{x}
\end{equation}
is finite as long as the right-hand side of \eqref{Ibeta2} is bounded. For $p = 1$ (see \eqref{e:Wi}), Lemma \ref{lem1.1} implies that, for $\delta > 0$, there exists a $C > 0$ such that, for $\| \bm{x} \| \leq a$,
\begin{eqnarray*}
\tau( \bm{x} )^{-H} \leq \| \bm{x} \|^{-H/a_p - \delta}.
\end{eqnarray*}
Therefore, $I_{\beta}$ is finite if $1 - \beta - H/a_p - \delta > -d$, that is,  $\beta < d + 1 - H/a_p - \delta$. Now suppose $p \geq 2$. Decompose $\bm{x} = \bm{x}_p + \overline{\bm{x}}_{p-1} = \tau(\bm{x})^E l_p(t \bm{x}) + \tau(\bm{x})^E \overline{l}_{p-1}(\bm{x})$ with $\bm{x}_p, l_p(\bm{x}) \in V_p$ and $\overline{\bm{x}}_{p-1}, \overline{l}_{p-1}(\bm{x}) \in W_{p-1}$ as we did in the proof of Theorem \ref{thm6.1}. Again without loss of generality, we can choose an inner product $(\cdot, \cdot)$ on $\mathbb{R}^d$ that makes these spaces mutually orthogonal and set the norm $\| \bm{x} \| = (\bm{x}, \bm{x})^{1/2}$. Therefore, $V_p$ and $W_{p-1}$ are orthogonal, and $\| \bm{x_p} \| \leq a$ and $\| \overline{\bm{x}}_{p-1} \| \leq a$. Moreover, as discussed in the proof of Theorem \ref{thm6.1}, by Lemma \ref{lem1.1} restricted to the spaces $V_p$ and $W_{p-1}$, respectively, we obtain that for any $\delta > 0$ there exists a constant $c > 0$ such that, for $\| \bm{x} \| \leq a$,
\begin{eqnarray*}
\tau(\bm{x})^H \geq c \| \bm{x_p} \|^{H/a_p + \delta} \quad \textnormal{and} \quad \tau(\bm{x})^H \geq c \| \overline{\bm{x}}_{p-1} \|^{H/a_1 + \delta}.
\end{eqnarray*}
Hence, $\tau(\bm{x})^H \geq c/2 \left( \| \bm{x_p} \|^{H/a_p + \delta} + \| \overline{\bm{x}}_{p-1} \|^{H/a_1 + \delta} \right)$. Therefore, by the triangle inequality,
\begin{eqnarray*}
I_{\beta} \leq C \int_{ \| \bm{x_p} \| \leq a} \int_{ \| \bm{y} \| \leq a} \left( \| \bm{x_p} \|^2 + \| \overline{\bm{x}}_{p-1} \|^2 \right)^{1/2 - \beta/2} \left( \| \bm{x_p} \|^{H/a_p + \delta} + \| \overline{\bm{x}}_{p-1} \|^{H/a_1 + \delta} \right)^{-1} d\overline{\bm{x}}_{p-1}d\bm{x_p}.
\end{eqnarray*}
Let $k$ be the dimension of $V_p$. We know that $1 \leq k \leq d-1$. By using polar coordinates for both $\bm{x_p}$ and $\overline{\bm{x}}_{p-1}$, and we get that there exists a constant $C > 0$ such that $I_{\beta} \leq C J_{\beta}$, where
\begin{eqnarray*}
J_{\beta} = \int_0^a \int_0^a \left( r^2 + s^2 \right)^{1/2 - \beta/2} \left( r^{H/a_p + \delta} + s^{H/a_1 + \delta} \right)^{-1} r^{k-1} s^{d-1-k} dr ds.
\end{eqnarray*}
By a change of variable $r = ts$,
\begin{eqnarray*}
J_{\beta} &=& \int_0^a \int_0^{a/s} s^{d - \beta - H/a_p - \delta} (t^2 + 1)^{1/2 - \beta/2}
\left( t^{H/a_p + \delta} + s^{H/a_1 - H/a_p} \right)^{-1} t^{k-1} dt ds \\
&\leq& \left( \int_0^a s^{d - \beta - H/a_p - \delta} ds \right) \left( \int_0^{\infty} (t^2 + 1)^{1/2 - \beta/2} t^{- H/a_p - \delta + k - 1} dt \right).
\end{eqnarray*}
The first term is finite when $d - \beta - H/a_p - \delta > -1$, i.e. $\beta < d + 1 - H/a_p - \delta$, and the second term is bounded when $- \beta - H/a_p - \delta + k < -1$, that is, $\beta > k + 1 - H/a_p - \delta$. These together show \eqref{Ibeta1}. Hence, as anticipated, by the Frostman criterion we conclude that $dim_{Haus} \mathcal{G}(X_{ \lambda}) \geq d + 1 - H/a_p - \delta \ a.s.$ $\Box$\\

\section{The exponential of a matrix in Jordan canonical form} \label{s:Jordan_form}

Let $J_{\lambda} \in {\mathcal M}(n,\bbC)$ be a Jordan block of
size $n_\lambda$, whose expression is
\begin{equation} \label{e:Jordan_block}
J_{\vartheta} = \left( \begin{array}{ccccc}
\vartheta & 0 & 0 & \ldots & 0 \\
1   & \vartheta & 0 & \ldots & 0 \\
0   &    1   & \vartheta & \ldots & 0 \\
\vdots & \vdots & \vdots & \ddots & \vdots\\
0   &    0   & \ldots & 1 & \vartheta \\
\end{array} \right).
\end{equation}
Then, for $z > 0$,
\begin{equation} \label{e:z^Jlambda}
z^{J_{\vartheta}} = \left( \begin{array}{ccccc}
z^{\vartheta}      &    0    &    0    &  \ldots & 0\\
(\log z)z^{\vartheta} & z^{\vartheta} &    0     & \ldots & 0\\
\frac{(\log z)^{2}}{2!} z^{\vartheta} & (\log z)z^{\vartheta}   &    z^{\vartheta} & \ddots & 0 \\
\vdots   &  \vdots  &  \ddots   & \ddots  & 0 \\
\frac{(\log z)^{n_{\vartheta}-1}}{(n_{\vartheta}-1)!} z^{\vartheta} &
\frac{(\log z)^{n_{\vartheta}-2}}{(n_{\vartheta}-2)!} z^{\vartheta} &
\ldots &  (\log z)z^{\vartheta} & z^{\vartheta}\\
\end{array} \right).
\end{equation}
The expression for $z^{J}$, where $J$ is, more generally, a matrix
in Jordan canonical form (i.e., whose diagonal is made up of
Jordan blocks), follows promptly.

\section{Auxiliary technical results} \label{s:auxiliary}

%\fi

Lemmas \ref{lem1.1}--\ref{lem1.4}, stated next, are used in Sections \ref{sec:time_domain} and \ref{sec:frequency_domain}. They are established in Bierm\'{e} et al.\ \cite{bierme:meerschaert:scheffler:2007}, and hence their proofs are omitted. Recall that, in the notation \eqref{e:a1<...<ai},
$$
a_1 < \hdots < a_p, \quad 1 \leq p \leq d,
$$
denote the $p$ distinct real parts of the eigenvalues of the matrix $E$. In particular,
$$
\varpi_E = a_1, \quad \Upsilon_E = a_p.
$$

\begin{lem}
\label{lem1.1}
Let $\|\cdot\|_0$ , $\tau_E(\cdot)$ be as in \eqref{e:S0} and \eqref{e:x=polar_coord}, respectively. For any (small) $\delta > 0$ there exist constants $C_1, C_2, C_3, C_4 > 0$ such that for all $\| {\bm x} \|_0 \leq 1$ or $\tau_E( {\bm x} ) \leq 1$
\begin{equation*}
C_1\| {\bm x} \|_0^{ \frac{1}{a_1} + \delta } \leq \tau_E( {\bm x} ) \leq C_2 \| {\bm x} \|_0^{ \frac{1}{a_p} - \delta }
\end{equation*}
and, for all $\| {\bm x} \|_0 \geq 1$ or $\tau_E( {\bm x} ) \geq 1$
\begin{equation*}
C_3 \| {\bm x} \|_0^{ \frac{1}{a_p} - \delta } \leq \tau_E( {\bm x} ) \leq C_4 \| {\bm x} \|_0^{ \frac{1}{a_1} + \delta }
\end{equation*}
\end{lem}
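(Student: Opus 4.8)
The plan is to reduce the statement to two-sided operator-norm bounds on the matrix exponential $r^E$ and then to invert them. Writing the anisotropic polar decomposition \eqref{e:x=polar_coord} as $\mathbf{x} = r^E \theta$ with $r = \tau_E(\mathbf{x}) > 0$ and $\theta = l(\mathbf{x}) \in S_0$, we have $\|\mathbf{x}\|_0 = \|r^E \theta\|_0$. Since all norms on $\mathbb{R}^d$ are equivalent and $\|\theta\|_0 = 1$ for $\theta \in S_0$ (see \eqref{e:S0}), sandwiching by the operator norm gives, for constants $c, C > 0$ independent of $\mathbf{x}$,
\begin{equation*}
\frac{c}{\|r^{-E}\|} \le \|\mathbf{x}\|_0 \le C\,\|r^E\|,
\end{equation*}
where $\|\cdot\|$ now denotes the operator norm induced by $\|\cdot\|_0$. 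The equivalence $\tau_E(\mathbf{x}) \le 1 \Leftrightarrow \|\mathbf{x}\|_0 \le 1$ is immediate from the construction of polar coordinates (Meerschaert and Scheffler \cite{meerschaert:scheffler:2001}, Lemma 6.1.5), since $r \mapsto \|r^E\theta\|_0$ is strictly increasing and equals $1$ at $r = 1$.

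The technical heart is the asymptotics of $\|r^E\|$ and $\|r^{-E}\|$, which I would read off from the Jordan canonical form. Writing $E = P J P^{-1}$ and using the explicit block formula \eqref{e:z^Jlambda}, every entry of $r^{J_\vartheta}$ has the form $\tfrac{(\log r)^k}{k!}\, r^{\vartheta}$, whose modulus is $\tfrac{|\log r|^k}{k!}\, r^{\Re \vartheta}$. Since $\Re\vartheta \in [a_1, a_p]$ for every eigenvalue $\vartheta$ and the polynomial factor $|\log r|^k$ is absorbed by an arbitrarily small power of $r$, for each (small) $\epsilon > 0$ there are constants such that, for $0 < r \le 1$,
\begin{equation*}
\|r^E\| \le C_\epsilon\, r^{a_1 - \epsilon}, \qquad \|r^{-E}\| \le C_\epsilon\, r^{-a_p - \epsilon},
\end{equation*}
with the two exponents exchanged ($a_1 \leftrightarrow a_p$) when $r \ge 1$. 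These are exactly the bounds proved in Meerschaert and Scheffler \cite{meerschaert:scheffler:2001} and Bierm\'{e} et al.\ \cite{bierme:meerschaert:scheffler:2007}, and I would cite them rather than reprove them.

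Combining the two displays, for $\|\mathbf{x}\|_0 \le 1$ (equivalently $r \le 1$) we obtain $c_\epsilon\, r^{a_p + \epsilon} \le \|\mathbf{x}\|_0 \le C_\epsilon\, r^{a_1 - \epsilon}$. Solving each inequality for $r$ yields $r \ge c' \|\mathbf{x}\|_0^{1/(a_1 - \epsilon)}$ and $r \le c'' \|\mathbf{x}\|_0^{1/(a_p + \epsilon)}$. Finally, given $\delta > 0$ I would fix $\epsilon = \epsilon(\delta)$ small enough that $\tfrac{1}{a_1 - \epsilon} \le \tfrac{1}{a_1} + \delta$ and $\tfrac{1}{a_p + \epsilon} \ge \tfrac{1}{a_p} - \delta$; since the base satisfies $\|\mathbf{x}\|_0 \le 1$, raising it to a smaller (resp.\ larger) exponent increases (resp.\ decreases) the value, which converts the previous bounds into $C_1 \|\mathbf{x}\|_0^{1/a_1 + \delta} \le \tau_E(\mathbf{x}) \le C_2 \|\mathbf{x}\|_0^{1/a_p - \delta}$, as claimed. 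The regime $\|\mathbf{x}\|_0 \ge 1$ is handled identically after swapping the roles of $a_1$ and $a_p$ in the operator-norm bounds; there the base exceeds $1$, so the monotonicity directions reverse and deliver the second pair of inequalities. The only genuinely delicate point is bookkeeping the direction of each inequality when passing from $\epsilon$ to $\delta$, since it flips according to whether $\|\mathbf{x}\|_0$ lies below or above $1$; the analytic content is entirely contained in the Jordan-form operator-norm estimates.
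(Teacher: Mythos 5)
The paper itself contains no proof of Lemma \ref{lem1.1}: it is stated in the appendix with the proof omitted and attributed to Bierm\'{e} et al.\ \cite{bierme:meerschaert:scheffler:2007} (Lemma 2.1 there, which rests on the operator regular-variation bounds of Meerschaert and Scheffler \cite{meerschaert:scheffler:2001}). Your argument is correct and is essentially the standard proof behind that citation: the sandwich $\|r^{-E}\|^{-1} \leq \|{\bm x}\|_0 \leq \|r^{E}\|$, the Jordan-form estimates from \eqref{e:z^Jlambda} absorbing the $|\log r|^k$ factors into an arbitrarily small power of $r$, the radial monotonicity of $r \mapsto \|r^E {\bm \theta}\|_0$ (which correctly justifies treating the hypotheses $\|{\bm x}\|_0 \leq 1$ and $\tau_E({\bm x}) \leq 1$ as equivalent), and the final choice of $\epsilon = \epsilon(\delta)$ with the inequality directions flipping between the regimes $\|{\bm x}\|_0 \leq 1$ and $\|{\bm x}\|_0 \geq 1$ are all handled correctly.
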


\begin{lem}
\label{l:polar_integration}
There exists a unique finite Radon measure $\sigma$ on $S_0$ such that for all $f \in L^1 (\mathbb{R}^d, d{\bm x})$ we have
\begin{equation}\label{e:polar_integration}
\int_{\mathbb{R}^d} f({\bm x}) d{\bm x} = \int_{0}^{\infty} \int_{S_0} f(r^E {\bm \theta}) \sigma(d {\bm \theta}) r^{q-1} dr
\end{equation}
where $q = \textnormal{tr}(E)$.
\end{lem}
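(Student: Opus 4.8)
The plan is to construct the measure $\sigma$ explicitly from the homeomorphism $\Psi(r,\bm\theta)=r^{E}\bm\theta$ introduced just after \eqref{e:S0}, and then to verify \eqref{e:polar_integration} by a monotone class argument. The single computational fact driving everything is the behavior of Lebesgue measure under the linear map $r^{E}$: since $\det(r^{E})=\det\exp\{(\log r)E\}=\exp\{(\log r)\,\tr(E)\}=r^{q}$ by \eqref{e:c^E} and \eqref{e:q=tr(E)}, we have the scaling identity $\textnormal{Leb}_d(r^{E}A)=r^{q}\,\textnormal{Leb}_d(A)$ for every Borel set $A\subseteq\bbR^d$ and every $r>0$. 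This is what converts the factor $r^{q-1}$ in \eqref{e:polar_integration} into a Jacobian.

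First I would define the candidate measure. For a Borel set $B\subseteq S_0$, write $U(B):=\Psi((0,1]\times B)=\{\,s^{E}\bm\theta:0<s\le1,\ \bm\theta\in B\,\}$ and set
\[
\sigma(B):=q\,\textnormal{Leb}_d(U(B)).
\]
Because $\Psi$ is a homeomorphism (Meerschaert and Scheffler \cite{meerschaert:scheffler:2001}, Lemma 6.1.5), the assignment $B\mapsto U(B)$ sends Borel sets to Borel sets and is countably additive on disjoint unions, so $\sigma$ is a Borel measure on $S_0$; it is finite since $\sigma(S_0)=q\,\textnormal{Leb}_d(\{\tau_E(\bm x)\le1\})<\infty$, and a finite Borel measure on the compact sphere $S_0$ is automatically Radon.

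The core step is to check \eqref{e:polar_integration} on the $\pi$-system of anisotropic shells $C:=\Psi((a,b]\times B)$, $0<a<b$, $B\subseteq S_0$ Borel. Substituting $s=bu$ gives $\Psi((0,b]\times B)=b^{E}U(B)$, so the scaling identity yields $\textnormal{Leb}_d(\Psi((0,b]\times B))=b^{q}\sigma(B)/q$, and likewise for $a$; as the shell is the difference of two nested such cones,
\[
\textnormal{Leb}_d(C)=\frac{b^{q}-a^{q}}{q}\,\sigma(B).
\]
On the other hand, for $\bm\theta\in S_0$ one has $\tau_E(r^{E}\bm\theta)=r$ and $l(r^{E}\bm\theta)=\bm\theta$ by \eqref{e:x=polar_coord}, so $r^{E}\bm\theta\in C$ iff $a<r\le b$ and $\bm\theta\in B$; hence the right-hand side of \eqref{e:polar_integration} with $f=\mathbf 1_C$ equals $\int_a^b\sigma(B)\,r^{q-1}\,dr=\frac{b^{q}-a^{q}}{q}\sigma(B)$, which matches. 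These shells form a $\pi$-system generating the Borel $\sigma$-algebra of $\Gamma=\bbR^d\setminus\{0\}$, and both sides of \eqref{e:polar_integration}, viewed as set functions $A\mapsto\textnormal{Leb}_d(A)$ and $A\mapsto\int_0^\infty\int_{S_0}\mathbf 1_A(r^{E}\bm\theta)\,\sigma(d\bm\theta)\,r^{q-1}\,dr$, are $\sigma$-finite measures agreeing on it (take $B=S_0$, $(1/n,n]$ for the exhausting sequence). Uniqueness of measures then forces agreement on all Borel sets, the excluded point $\{0\}$ being Lebesgue-null and outside the range of $\Psi$. Passing from indicators to simple functions, then to nonnegative measurable $f$ by monotone convergence, and finally to $f\in L^1$ by linearity gives \eqref{e:polar_integration}; uniqueness of $\sigma$ is immediate, since any competing measure $\sigma'$ must satisfy $\frac{b^{q}-a^{q}}{q}\sigma'(B)=\textnormal{Leb}_d(C)=\frac{b^{q}-a^{q}}{q}\sigma(B)$ for all $B$.

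The only genuinely delicate point is bookkeeping rather than a real obstacle: ensuring $B\mapsto U(B)$ is Borel-to-Borel (guaranteed by the homeomorphism property cited above) and applying the Lebesgue scaling relation to precisely the right cones. Once those are in place, the remainder is routine $\pi$--$\lambda$ machinery.
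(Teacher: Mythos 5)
Your argument is correct and complete. One thing to be aware of, though: the paper itself contains no proof of this lemma. It is stated in the appendix among Lemmas \ref{lem1.1}--\ref{lem1.4}, which are explicitly quoted from Bierm\'{e} et al.\ \cite{bierme:meerschaert:scheffler:2007} (where this statement is Proposition 2.3) ``and hence their proofs are omitted.'' So the comparison here is between your self-contained argument and a citation; what you have written is essentially the standard proof underlying that citation. Your route --- pulling Lebesgue measure back through the homeomorphism $\Psi$, defining $\sigma(B) = q\,\textnormal{Leb}_d(\Psi((0,1]\times B))$, using $\det(r^E)=r^{\textnormal{tr}(E)}=r^{q}$ to evaluate both sides of \eqref{e:polar_integration} on the shells $\Psi((a,b]\times B)$, and concluding via uniqueness of $\sigma$-finite measures on a generating $\pi$-system followed by the indicator-to-$L^1$ bootstrap --- is sound at every step: the shells do form a $\pi$-system generating ${\mathcal B}(\Gamma)$ precisely because $\Psi$ is a homeomorphism, the sets $\Psi((1/n,n]\times S_0)$ exhaust $\Gamma$ with finite measure under both set functions, the point $\{0\}$ is null for both sides, and your uniqueness argument for $\sigma$ is valid since the shell indicators lie in $L^1$. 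The only point you assert without justification is the finiteness $\sigma(S_0)=q\,\textnormal{Leb}_d(\{\tau_E({\bm x})\le 1\})<\infty$: this requires the set $\{\tau_E({\bm x})\le 1\}$ to be bounded, which is exactly where the standing hypothesis \eqref{e:0<a1<...<ap} (i.e.\ $\varpi_E>0$) enters --- for instance via the first inequality of Lemma \ref{lem1.1}, which on that set gives $\|{\bm x}\|_0 \le C_1^{-(1/a_1+\delta)^{-1}}$, or equivalently via the fact that $r^E{\bm \theta}\to 0$ uniformly on $S_0$ as $r\to 0^{+}$. With that one line added, your proof stands on its own and could legitimately replace the citation.
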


\begin{rem}\label{r:converse_harmonizable_representation}
Let $h$ be a function that is continuous on $\bbR^d \backslash \{0\}$, and suppose we want to show that
\begin{equation}\label{e:exist_harmon_repres_integrability_condition}
\int_{\mathbb{R}^d} |h({\bm x})| d{\bm x} < \infty.
\end{equation}
It suffices to check the behavior of the kernel $h({\bm x})$ in \eqref{e:exist_harmon_repres_integrability_condition} for $x \in S_0$ and as $\| {\bm x}\| \rightarrow 0$ or $\infty$, where the latter limits can be equivalently expressed as $\tau_E({\bm x}) \rightarrow 0$ and $\infty$. This can be done conveniently by means of an entrywise application of the change-of-variables formula \eqref{e:polar_integration}. Since the latter formula assumes integrability of the original expression, one can build a truncation argument based on $h_A({\bm x}) = 1_{\{1/A \leq \tau_E({\bm x}) \leq A\}} h({\bm x})$, $A > 0$, and the dominated convergence theorem. It thus suffices to show that the condition
\begin{equation*}\label{e:int_polar_domain<infty}
\int^{\infty}_{0} \int_{S_0} |h(r^{E}\theta)| r^{\textnormal{tr}(E)-1} \sigma(d \theta) dr < \infty,
\end{equation*}
expressed in $E$-induced polar coordinates, holds.
\end{rem}

%\if0
\begin{lem}
\label{lem1.2}
There exists a constant $K \geq 1$ such that for all ${\bm x}, {\bm y} \in \mathbb{R}^d$ we have
\begin{equation*}
\tau( {\bm x} + {\bm y} ) \leq K ( \tau( {\bm x} ) + \tau( {\bm y} ) )
\end{equation*}
\end{lem}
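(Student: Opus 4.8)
The plan is to exploit the $E$-homogeneity of the radial component together with a compactness argument. First I would record the scaling identity $\tau(c^{E}{\bm x}) = c\,\tau({\bm x})$ for $c>0$, which follows at once from the polar decomposition \eqref{e:x=polar_coord}: since $c^{E}$ and $\tau({\bm x})^{E}$ commute, $c^{E}{\bm x} = (c\,\tau({\bm x}))^{E} l({\bm x})$ with $l({\bm x})\in S_0$, so uniqueness of the decomposition gives $\tau(c^{E}{\bm x}) = c\,\tau({\bm x})$. I would also extend $\tau$ to all of $\bbR^d$ by setting $\tau({\bm 0}) = 0$, and note that this extension is continuous: on $\Gamma$ continuity is part of the homeomorphism property of $\Psi$ from \eqref{e:x=polar_coord}, while continuity at the origin follows from Lemma \ref{lem1.1}, whose upper estimate gives $\tau({\bm x}) \leq C_2 \|{\bm x}\|_0^{1/a_p - \delta} \to 0$ as ${\bm x}\to {\bm 0}$.

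Next I would reduce to a normalized statement. The degenerate cases ${\bm x} = {\bm 0}$, ${\bm y} = {\bm 0}$, or ${\bm x}+{\bm y} = {\bm 0}$ hold trivially for any $K \geq 1$ (using $\tau({\bm 0})=0$). For ${\bm x},{\bm y}\neq{\bm 0}$, I would set $s = \tau({\bm x}) + \tau({\bm y}) > 0$ and replace $({\bm x},{\bm y})$ by $(s^{-E}{\bm x}, s^{-E}{\bm y})$. By the scaling identity this normalizes the right-hand side to $\tau(s^{-E}{\bm x}) + \tau(s^{-E}{\bm y}) = 1$, and since $s^{E}(s^{-E}{\bm x} + s^{-E}{\bm y}) = {\bm x}+{\bm y}$, another application of scaling gives $\tau({\bm x}+{\bm y}) = s\,\tau(s^{-E}{\bm x} + s^{-E}{\bm y})$. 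Hence it suffices to exhibit a finite bound $K$ with $\tau({\bm u}+{\bm v}) \leq K$ whenever $\tau({\bm u}) + \tau({\bm v}) = 1$.

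I would obtain this bound by compactness. On the set $A := \{({\bm u},{\bm v}) : \tau({\bm u}) + \tau({\bm v}) = 1\}$ one has $\tau({\bm u}), \tau({\bm v}) \leq 1$, so both points lie in the sublevel set $\{\tau \leq 1\}$; by the lower estimate of Lemma \ref{lem1.1}, namely $\tau({\bm x}) \geq C_1 \|{\bm x}\|_0^{1/a_1 + \delta}$ valid when $\tau({\bm x})\leq 1$, this sublevel set is bounded, and therefore so is $A$. Being the preimage of $\{1\}$ under the continuous map $({\bm u},{\bm v}) \mapsto \tau({\bm u}) + \tau({\bm v})$, the set $A$ is also closed, hence compact. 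The continuous function $({\bm u},{\bm v})\mapsto \tau({\bm u}+{\bm v})$ thus attains a finite maximum $K_0$ on $A$, and taking $K = \max\{K_0, 1\}$ completes the argument.

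The only real obstacle is the topological bookkeeping at the origin, namely verifying that the extended $\tau$ is continuous on all of $\bbR^d$ and that $\{\tau \leq 1\}$ is bounded; but both are immediate consequences of the two-sided estimates in Lemma \ref{lem1.1}, so no genuinely new work is required.
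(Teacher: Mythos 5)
Your proof is correct: the scaling identity $\tau(c^{E}{\bm x}) = c\,\tau({\bm x})$, the continuous extension $\tau({\bm 0})=0$, the normalization to $\tau({\bm u})+\tau({\bm v})=1$, and the compactness of that level set (bounded via Lemma \ref{lem1.1}, closed by continuity) together give exactly the desired bound. The paper itself omits the proof, deferring to Bierm\'{e} et al.\ \citeyear{bierme:meerschaert:scheffler:2007}, and your argument is essentially the same normalization-plus-compactness proof given there, so no discrepancy arises.
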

\begin{lem}
\label{lem1.4}
Let $\beta \in \mathbb{R}$, $q = \textnormal{tr}(E)$, and suppose $f \colon \mathbb{R}^d \rightarrow \mathbb{C}$ is measurable such that $| f({\bm x}) | = O(\tau( {\bm x} )^{\beta})$. If $\beta > -q$ then $f$ is integrable near 0, and if $\beta < -q$ then $f$ is integrable near infinity.
\end{lem}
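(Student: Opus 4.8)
The plan is to collapse the $d$-dimensional integral into a one-dimensional radial integral via the $E$-induced polar coordinate formula \eqref{e:polar_integration} of Lemma \ref{l:polar_integration}, and then to read the two integrability thresholds directly off the resulting power of $r$. The key preliminary observation is that, for the unique anisotropic polar decomposition $\bm{x} = \tau(\bm{x})^E l(\bm{x})$ in \eqref{e:x=polar_coord}, one has $\tau(r^E\theta) = r$ and $l(r^E\theta)=\theta$ for all $r>0$ and $\theta \in S_0$; this is immediate from the uniqueness of the decomposition furnished by the homeomorphism $\Psi$. Hence, under the hypothesis $|f(\bm{x})| = O(\tau(\bm{x})^\beta)$ there are $C>0$ and $\tau_0>0$ with $|f(r^E\theta)| \le C r^\beta$ for all $\theta\in S_0$ and all $r\le \tau_0$ (for the behaviour near the origin), and likewise $|f(r^E\theta)|\le Cr^\beta$ for $r \ge \tau_1$ with a suitable $\tau_1$ (near infinity).

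The one point requiring care, flagged in Remark \ref{r:converse_harmonizable_representation}, is that \eqref{e:polar_integration} presupposes an $L^1$ integrand, whereas integrability is exactly what we wish to prove. I would resolve this by truncation: for the near-origin case, apply \eqref{e:polar_integration} to $1_{\{1/A \le \tau(\bm{x}) \le \tau_0\}}|f(\bm{x})|$, which lies in $L^1$ since it is bounded (on the annulus $1/A \le \tau \le \tau_0$ both $\tau^\beta$ and hence $|f|$ are bounded) and supported on a bounded set (the image under the continuous map $\Psi$ of $[1/A,\tau_0]\times S_0$). Letting $A\to\infty$ and invoking the monotone convergence theorem on both sides then gives
\begin{equation*}
\int_{\{\tau(\bm{x}) \le \tau_0\}} |f(\bm{x})|\, d\bm{x} \le C \int_0^{\tau_0} \int_{S_0} r^{\beta}\, \sigma(d\theta)\, r^{q-1}\, dr = C\,\sigma(S_0) \int_0^{\tau_0} r^{\beta + q - 1}\, dr,
\end{equation*}
where $\sigma(S_0)<\infty$ because $\sigma$ is a finite Radon measure. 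The same scheme, truncating instead to $\{\tau_1 \le \tau(\bm{x}) \le A\}$, produces the analogous bound with $\int_{\tau_1}^{\infty} r^{\beta+q-1}\,dr$ for the region near infinity.

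It then remains only to count powers. The integral $\int_0^{\tau_0} r^{\beta+q-1}\,dr$ is finite exactly when $\beta+q-1 > -1$, i.e.\ $\beta > -q$, yielding integrability of $f$ near $0$; and $\int_{\tau_1}^{\infty} r^{\beta+q-1}\,dr$ is finite exactly when $\beta+q-1 < -1$, i.e.\ $\beta < -q$, yielding integrability near infinity. I do not anticipate a genuine obstacle here: the entire argument is power-counting once the polar formula is in force, and the only subtlety is the truncation and monotone-convergence justification for applying \eqref{e:polar_integration}, precisely the device recorded in Remark \ref{r:converse_harmonizable_representation}.
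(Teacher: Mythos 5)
Your proof is correct and follows the same route as the paper's source for this lemma (Bierm\'{e} et al.\ \cite{bierme:meerschaert:scheffler:2007}, whose proof the paper omits): change to $E$-induced polar coordinates via Lemma \ref{l:polar_integration}, use $\tau(r^E\theta)=r$, and count powers of $r$, with the truncation/monotone-convergence device being exactly the justification the paper itself records in Remark \ref{r:converse_harmonizable_representation}.
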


The following proposition establishes an integral representation for the primary matrix function $K_{N}(x)$, and is used in the construction of $X^{Bes}_{\lambda}$ (see Definition \ref{defn:MA-B-TRF}).
\begin{prop}\label{p:Bessel}
For $N \in {\mathcal M}(n,\bbC)$, let $K_{N}(u)$ be a primary matrix function defined based on the modified Bessel function of the second kind. Then, expression \eqref{e:Bessel} holds, where $\cosh(N t) := (e^{-Nt}+e^{Nt})/2$, $t > 0$, is also a primary matrix function.
\end{prop}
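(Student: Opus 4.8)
The plan is to reduce the identity \eqref{e:Bessel} to its action on a single Jordan block and then to identify the defining entries of the primary matrix function $K_N(u)$ with the derivatives, in the Bessel index $\nu$, of the scalar integral \eqref{e:mod_Bessel_second_kind}. Write $N = PJP^{-1}$ as in \eqref{e:Lambda=PJP^(-1)}, with $J$ in Jordan form having blocks $J_{\vartheta_1},\dots,J_{\vartheta_s}$. Since a primary matrix function respects the block decomposition $h(J) = \textnormal{diag}(h(J_{\vartheta_1}),\dots,h(J_{\vartheta_s}))$ and conjugation by $P$ (Definition \ref{d:matrix_function}), and since $\cosh(Nt)$ is by definition the primary matrix function of $N$ with stem function $\nu \mapsto \cosh(\nu t)$, it suffices to prove \eqref{e:Bessel} for a single Jordan block $J_\vartheta$ of size $m$ and then reassemble. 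The two relevant scalar stem functions are $\nu \mapsto K_\nu(u)$ (with $u>0$ fixed), appearing on the left-hand side, and $\nu \mapsto \cosh(\nu t)$ (with $t>0$ fixed), appearing inside the integral on the right-hand side.

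On a Jordan block $J_\vartheta$ of size $m$, both sides of \eqref{e:Bessel} are lower-triangular Toeplitz matrices, exactly as in the pattern of \eqref{e:z^Jlambda} and Definition \ref{d:matrix_function}. By Definition \ref{d:matrix_function}, the $(i,k)$-entry of $K_{J_\vartheta}(u)$ for $i \geq k$ equals $\tfrac{1}{(i-k)!}\,\partial_\nu^{\,i-k} K_\nu(u)\big|_{\nu=\vartheta}$, while the $(i,k)$-entry of the primary matrix function $\cosh(J_\vartheta t)$ equals $\tfrac{1}{(i-k)!}\,\partial_\nu^{\,i-k}\cosh(\nu t)\big|_{\nu=\vartheta}$. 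Integrating the latter against $e^{-u\cosh t}\,dt$, the claimed block identity reduces, entry by entry, to the family of scalar relations
\begin{equation*}
\partial_\nu^{\,j} K_\nu(u)\big|_{\nu=\vartheta}
= \int_0^\infty e^{-u\cosh t}\,\partial_\nu^{\,j}\cosh(\nu t)\big|_{\nu=\vartheta}\,dt,
\quad j = 0,1,\dots,m-1,
\end{equation*}
each of which is precisely the statement that one may differentiate \eqref{e:mod_Bessel_second_kind} $j$ times under the integral sign with respect to the index $\nu$.

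The main (and essentially only) analytic point is to justify this interchange, and this is where I would spend the effort. For $t \geq 0$ one has $\cosh t \geq \tfrac12 e^{t}$, so $e^{-u\cosh t} \leq e^{-(u/2)e^{t}}$, which decays faster than any exponential as $t \to \infty$. Meanwhile $\partial_\nu^{\,j}\cosh(\nu t)$ equals $t^{j}\cosh(\nu t)$ or $t^{j}\sinh(\nu t)$ according to the parity of $j$, and for complex $\nu$ both $|\cosh(\nu t)|$ and $|\sinh(\nu t)|$ are bounded by $\cosh(|\Re \nu|\,t) \leq e^{|\Re \nu|\,t}$. Hence, for $\nu$ in any compact neighborhood of $\vartheta$, the integrand $e^{-u\cosh t}\,\partial_\nu^{\,j}\cosh(\nu t)$ is dominated by the $\nu$-independent integrable function $t^{j} e^{C t} e^{-(u/2)e^{t}}$, and it is continuous near $t = 0$. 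By the standard theorem on differentiation under the integral sign, each $\partial_\nu^{\,j} K_\nu(u)$ exists and equals the displayed integral.

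Finally, these uniform bounds also show that $\nu \mapsto K_\nu(u)$ is analytic (indeed entire) for each fixed $u > 0$, so condition (M2) of Definition \ref{d:matrix_function} is satisfied at every characteristic root $\vartheta_k$ of $N$ and the primary matrix function $K_N(u)$ is well defined for all $N \in {\mathcal M}(n,\bbC)$. Reassembling the Jordan blocks and conjugating by $P$ then yields \eqref{e:Bessel}, with $\cosh(Nt)$ interpreted as the primary matrix function of $N$ with stem function $\nu \mapsto \cosh(\nu t)$, as claimed.
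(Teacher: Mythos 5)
Your proposal is correct and follows essentially the same route as the paper: both reduce \eqref{e:Bessel} to justifying differentiation of the scalar integral \eqref{e:mod_Bessel_second_kind} under the integral sign with respect to the index $\nu$, using the same exponential domination $e^{-u\cosh t}\le e^{-(u/2)e^{t}}$ against the growth $t^{j}\cosh(|\Re\nu|\,t)$ of $\partial_\nu^{\,j}\cosh(\nu t)$. If anything, your write-up is more explicit than the paper's: the paper asserts that holomorphy of $\nu\mapsto K_\nu(u)$ suffices and verifies only the first-order interchange (via a complex mean value theorem combined with dominated convergence), leaving the Jordan-block entry identification and the higher-order derivatives implicit, both of which you spell out.
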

\begin{proof}
For $u > 0$, it suffices to show that the function $K_{\nu}(u)$ is holomorphic (analytic) in $\nu$. So, fix $\nu_0 \in \bbC$ and for $\delta > 0$ consider any $\nu$ in the disc $D(\nu_0, \delta) \subseteq \bbC$. Let
$$
f(\nu,t) = e^{-u ( \frac{e^{-t}+e^{t}}{2})}\Big( \frac{e^{- \nu t}+e^{\nu t}}{2}\Big) \in \bbC.
$$
For any $t \geq 0$, $f(\nu,t)$ is holomorphic in $\nu$, where
$$
\frac{\partial}{\partial \nu}f(\nu,t) = e^{-u ( \frac{e^{-t}+e^{t}}{2})}t \hspace{0.5mm} \Big( \frac{e^{\nu t} - e^{-\nu t} }{2}\Big).
$$
In particular,
\begin{equation}\label{e:|deriv_f(nu,t)|}
\Big|\frac{\partial}{\partial \nu}f(\nu,t) \Big| \leq e^{-u ( \frac{e^{-t}+e^{t}}{2})} t \hspace{0.5mm} \Big( \frac{e^{|\nu_0 + \delta| t}+1 }{2}\Big),
\end{equation}
where the bounding function is integrable in $t$. So, rewrite
$$
\frac{1}{\nu - \nu_0}\Big\{\int^{\infty}_{0}f(\nu,t)dt - \int^{\infty}_{0}f(\nu_0,t) dt\Big\}
$$
\begin{equation}\label{e:(int-int)/nu-nu0}
=  \int^{\infty}_{0} \Re\Big( \frac{f(\nu,t) - f(\nu_0,t)}{\nu - \nu_0}\Big) + \imag \Im\Big( \frac{f(\nu,t) - f(\nu_0,t)}{\nu - \nu_0}\Big) dt.
\end{equation}
However, by the mean value theorem for $\bbC$-valued functions (Evard and Jafari \cite{evard:jafari:1992}, Theorem 2.2), there are $\nu_1(\nu_0,\nu,t), \nu_2(\nu_0,\nu,t) \in \bbC$ in the segment $(\nu_0,\nu) = \{z \in \bbC: z = \alpha \nu_0 + (1-\alpha), \alpha \in (0,1)\nu\}$ such that
\begin{equation}\label{e:derivRef(nu1)+derivImf(nu2)}
\Re \frac{\partial}{\partial \nu}f(\nu_1(\nu_0,\nu,t),t) + \imag \hspace{0.5mm}\Im \frac{\partial}{\partial \nu}f(\nu_2(\nu_0,\nu,t),t)
\end{equation}
Therefore, by expressions \eqref{e:derivRef(nu1)+derivImf(nu2)} and \eqref{e:(int-int)/nu-nu0}, as well as the bound \eqref{e:|deriv_f(nu,t)|}, the dominated convergence theorem implies that
$$
\frac{\partial}{\partial \nu} \int^{\infty}_0 f(\nu,t) dt= \int^{\infty}_0 \frac{\partial}{\partial \nu} f(\nu,t) dt.
$$
Therefore, $K_\nu(u)$ is holomorphic in $\nu$, as claimed.
\end{proof}

\bibliographystyle{agsm}

\bibliography{tempered}

\end{document}